\newcommand{\paper}[1]{#1}
\newcommand{\thesis}[1]{}
\newcommand{\primarydivision}[1]{\section{#1}}
\newcommand{\secondarydivision}[1]{\subsection{#1}}
\newcommand{\primarydivname}{Section}
\newcommand{\primarydivnamepl}{Sections}
\newcommand{\secondarydivname}{Subsection}
\newtheorem{proposition}{Proposition}
\newtheorem{lemma}{Lemma}
\newtheorem{corollary}{Corollary}
\newtheorem{theorem}{Theorem}
\newcommand{\VP}{\mathcal{V}}
\newcommand{\WP}{\mathcal{W}}
\newcommand{\PP}{\mathcal{P}_2}
\newcommand{\BPP}{\mathcal{BP}_2}
\newcommand{\IBPP}{\mathcal{BP}_2^\IndexSet}
\newcommand{\IBPPi}{\IBPP(\infty)}
\newcommand{\IBPPn}{\IBPP(\bn,\bzero)}
\newcommand{\IPP}{\mathcal{P}_2^\IndexSet}
\newcommand{\IPPi}{\IPP(\infty)}
\newcommand{\PPi}{\PP(\infty)}
\newcommand{\BC}{\mathbb{C}}
\newcommand{\BZ}{\mathbb{Z}}
\newcommand{\BN}{\mathbb{N}}
\newcommand{\BR}{\mathbb{R}}
\newcommand{\SN}{\mathcal{N}}
\newcommand{\SX}{\mathcal{X}}
\newcommand{\BO}{\mathcal B}
\newcommand{\vf}{\mathbf{f}}
\newcommand{\Pt}{\mathbf{t}}
\newcommand{\FS}{\mathcal F}
\newcommand{\HH}{\mathcal H}
\newcommand{\HK}{\mathcal K}
\newcommand{\COA}{\mathcal C}
\newcommand{\FOA}{\mathcal A}
\newcommand{\COAI}{\mathcal C^{\IndexSet}}
\newcommand{\FOAI}{\mathcal A^{\IndexSet}}
\newcommand{\IndexSet}{\mathcal{I}}
\DeclareMathOperator{\wlim}{w-lim}
\DeclareMathOperator{\crossings}{cr}
\DeclareMathOperator{\linspan}{span}
\newcommand{\bn}{\mathbf{n}}
\newcommand{\bzero}{\mathbf{0}}
\newcommand{\bpd}{\mathbf{d}}
\newcommand{\bw}{\mathbf{w}}
\newcommand{\bone}{\mathbf{1}}
\newcommand{\basis}{\mathcal{B}}
\newcommand{\TensorIndices}{\mathcal J}
\newcommand{\elem}{\mathcal{E}}
\newcommand{\NO}{\mathbf{N}}
\newcommand{\SP}{\mathfrak{P}}
\newcommand{\nc}{\mathfrak{c}}
\newcommand{\na}{\mathfrak{a}}
\newcommand{\vl}{\mathfrak{p}}
\newcommand{\vls}{\mathfrak{r}}
\newcommand{\Comp}{\mathfrak{C}}
\newcommand{\ZB}{Z}
\newcommand{\MS}{\mathcal{Y}}
\newcommand{\vcequiv}{\overset{\VP,c}{\sim}}
\theoremstyle{definition}
\newtheorem{notation}{Notation}
\newtheorem{definition}{Definition}
\newtheorem{example}{Example}
\theoremstyle{remark}
\newtheorem{remark}{Remark}
\newcommand{\lprimarydivname}{\MakeLowercase{\primarydivname}}
\newcommand{\lprimarydivnamepl}{\MakeLowercase{\primarydivnamepl}}
\newcommand{\lsecondarydivname}{\MakeLowercase{\secondarydivname}}
\newcommand{\dom}{\mathcal{D}}
\newcommand{\subo}{\mathcal{S}}
\title{Remarks on multi-dimensional noncommutative generalized Brownian motions}
\author{Adam Merberg}
\address{Department of Mathematics,
  University of California,
  Berkeley, CA, USA 94720}
\email{amerberg@math.berkeley.edu}
\begin{document}
\begin{abstract}

We consider certain questions pertaining to noncommutative generalized Brownian motions with multiple processes.
We establish a framework for generalized Brownian motion with multiple processes similar to that defined by Gu{\c{t}}{\u{a}} and prove multi-dimensional analogs of some results of Gu{\c{t}}{\u{a}} and Maassen.
We then consider examples of processes indexed by a two-element set and characterize the function on $\IndexSet$-indexed pair partitions associated via the $\IndexSet$-indexed generalized Brownian motion construction to certain pairs of representations connected to certain spherical representations of infinite symmetric groups.
In doing so, we generalize the notion (introduced by Bo{\.z}ejko and Gu{\c{t}}{\u{a}}) of the cycle decomposition of a pair partition.
We then generalize Gu{\c{t}}{\u{a}}'s $q$-product of generalized Brownian motions to a product corresponding to a (possibly infinite) matrix $(q_{ij})$ and show that this $q_{ij}$-product satisfies a central limit theorem.
\end{abstract}
\maketitle
%%%%%%%%%%%%%%%%%%%%%%%%%%%%%%%%%%%%%%%%%%%%%%%%%%%%%%%%%%%%%%
\primarydivision{Introduction}
\label{pri:intro}
Bo{\.z}ejko and Speicher initiated the study of noncommutative generalized Brownian motions, introducing operators satisfying an interpolation between Fermionic and Bosonic commutation relations \cite{BS1991}.
Specifically, for $q\in [-1,1]$ and a complex Hilbert space $\HH$, they constructed a $q$-twisted Fock space $\FS_q(\HH)$ with creation operators
$c^*(f)$ and annihilation operators $c(f)$ for $f\in\HH$ satisfying the relations
\begin{equation}
 c(f)c^*(g)-qc^*(g)c(f)=\left<f,g\right>\cdot 1.
\end{equation}
Subsequently, they developed a broader framework of generalized Brownian motion which incorporated this example \cite{BS1996}.
In this general framework, one considers the algebra obtained by applying the GNS construction to the  free tensor algebra of a real Hilbert space $\HH$ with certain states, called Gaussian states, associated to a class of functions, called positive definite, on pair partitions via a pairing prescription.

{Gu{\c{t}}{\u{a}} and Maassen further explored this notion of generalized Brownian motion \cite{GM1,GM2}.
They showed that Gaussian states $\rho_\Pt$ can be alternatively characterized by sequences of complex Hilbert spaces $(V_n)_{n=1}^\infty$ with densely defined maps $j_n:V_n\to V_{n+1}$ and representations $U_n$ of the symmetric group $S_n$ on $V_n$ satisfying $j_n\cdot U(\pi)=U(i_n(\pi))\cdot j_n$ where $i_n:S_n\to S_{n+1}$ is the inclusion arising from the map $\{1,\ldots, n\}\hookrightarrow \{1,\ldots, n+1\}$, data which give rise to a symmetric Fock space with creation and annihilation operators.
They also provided an algebraic characterization of the notion of positive definiteness for a function $\Pt$ on pair partitions and characterized the functions $\Pt$ which give rise to analogs of the Gaussian functor.
Separately \cite{GM2}, they examined a class of Brownian motions arising from the combinatorial notion of species of structure.

Bo{\.z}ejko and {Gu{\c{t}}{\u{a}} \cite{BG} considered a special case of the  generalized Brownian motion of {Gu{\c{t}}{\u{a}} and Maassen arising from $II_1$-factor representations of the infinite symmetric group $S_\infty$ constructed by Vershik and Kerov \cite{VK}.
Lehner \cite{Lehner} considered  these  generalized Brownian motions in the context of exchangeability systems, which generalize various notions of independence and give rise to cumulants analogous to the well-known free and classical cumulants.
Recent work of Avsec and Junge \cite{AJ} offers another point of view on the subject of noncommutative Brownian motion.

In \cite{Guta} {Gu{\c{t}}{\u{a}} extended the notion of generalized Brownian motion to multiple processes indexed by some set $\IndexSet$.
He went on to define for $-1\le q\le 1$  a $q$-product of generalized Brownian motions interpolating between the graded tensor product previously considered by Mingo and Nica \cite{MN} ($q=-1$), the reduced free product \cite{V1985} ($q=0$) and the usual tensor product ($q=1$).
He also showed that this $q$-product obeys a central limit theorem as the size of the index set $\IndexSet$ grows.

In this paper, we explore certain additional questions pertaining to the $\IndexSet$-indexed generalized Brownian motions.
As a warmup, we begin with the very simple case of a generalized Brownian motion arising from a tensor product of representations of the infinite symmetric group $S_\infty$.
We compute the functions on pair partitions associated to the Gaussian states in this context.
We then proceed to consider the generalized Brownian motions associated to spherical representations of the Gelfand pair $(S_\infty\times S_\infty,S_\infty)$.
Here again we give a combinatorial formula for the function on pair partitions arising from the associated Gaussian states, and in the course of doing so we generalize the notion of a cycle decomposition of a pair partition introduced by Bo{\.z}ejko and {Gu{\c{t}}{\u{a}} \cite{BG}. 
We also generalize {Gu{\c{t}}{\u{a}}'s $q$-product of generalized Brownian motions to a $q_{ij}$ product, where $i,j\in\IndexSet$ and show that a central limit theorem holds when $q_{ij}=q_{ji}$ and the $q_{ij}$ are periodic in both $i$ and $j$.

The paper has four sections, excluding this introduction.
In \primarydivname\ \ref{pri:prelim}, we expand upon the notion of generalized Brownian motion with multiple processes established by {Gu{\c{t}}{\u{a}} \cite{Guta}, proving analogs of some results of {Gu{\c{t}}{\u{a}} and Maassen \cite{GM1}.
We also review Vershik and Kerov's factor representations of symmetric groups \cite{VK} and Bo{\.z}ejko and {Gu{\c{t}}{\u{a}}'s work on generalized Brownian motions with one process associated to the infinite symmetric group \cite{BG}.
In \primarydivname\ \ref{pri:tp}, we move on to consider generalized Brownian motions indexed by a two-element set associated to tensor products of factor representations of the infinite symmetric group $S_\infty$.
In \primarydivname\ \ref{pri:spherical}, we consider generalized Brownian motions associated to spherical representations of $(S_\infty\times S_\infty,S_\infty)$.
In \primarydivname\ \ref{pri:qij}, we generalize {Gu{\c{t}}{\u{a}}'s $q$-product to a $q_{ij}$ product, where $i,j\in\IndexSet$.
%%%%%%%%%%%%%%%%%%%%%%%%%%%%%%%%%%%%%%%%%%%%%%%%%%%%%%%%%%%%
\subsection*{Acknowledgments}
While working on this paper, the author was supported in part by a National Science Foundation (NSF) Graduate Research Fellowship. He was also supported in part by funds from NSF grant DMS-1001881.
The author also benefited from attending the Focus Program on Noncommutative Distributions in Free Probability Theory at the Fields Institute at the University of Toronto in July of 2013.
The author's travel expenses for this conference were funded by NSF grant DMS-1302713.

The author would like to thank Dan-Virgil Voiculescu for suggesting the problems and Stephen Avsec and Natasha Blitvi{\'c} for a number of thoughtful discussions.

%%%%%%%%%%%%%%%%%%%%%%%%%%%%%%%%%%%%%%%%%%%%%%%%%%%%%%%%%%%%%%
\primarydivision{Preliminaries}
\label{pri:prelim}
\secondarydivision{Generalized Brownian motion}
We begin by establishing the notion of a noncommutative generalized Brownian motion with multiple processes.
Our framework for generalized Brownian motion with multiple processes is slightly more general than that defined in \cite{Guta}.
However everything in this \lprimarydivname\ is in the spirit of results found in \cite{Guta} and \cite{GM1}.

Throughout the \lprimarydivname, we assume that $\IndexSet$ is some fixed index set.
In later \lprimarydivnamepl\, we will specialize to specific index sets.
\begin{notation}
 We will make extensive use of notations for integer intervals, which appear frequently in the combinatorial literature:
\begin{equation}
 \begin{split}
  [m,n]:=\{m,m+1,\ldots,n-1,n\};\\
    [n]:=[1,n]=\{1,2,\ldots,n-1,n\}.
 \end{split}
\end{equation}
for $m,n\in\BZ$.
\end{notation}

\begin{notation}
For a real Hilbert space $\HK$,  $\FOA^{\IndexSet}(\HK)$ will denote the quotient of the free unital $*$-algebra with generators $\omega_{i}(h)$ for $h\in\HK$  and $i\in\IndexSet$ by the relations
\begin{equation}
\omega_i(cf+dg)=c\omega_i(f)+d\omega_i(g),\quad\omega_i(f)=\omega_i(f)^*
\end{equation}
for all $f,g\in\HK$, $i\in \IndexSet$ and $c,d\in \BR$.
\end{notation}

\begin{notation}
If $\HH$ is a complex Hilbert space, $\COA^{\IndexSet}(\HH)$ denotes the free unital $*$-algebra with generators $a_i(h)$ and $a^*_i(h)$ for all $h\in\HH$ and $i\in\IndexSet$ divided by the relations
\begin{equation}
a^*_i(\lambda f+\mu g)=\lambda a^*_i(f)+\mu a_i^*(g),\quad a_i^*(f)=a_i(f)^*,
\end{equation}
for all $f,g\in\HH$, $i\in\IndexSet$, and $\lambda,\mu\in\BC$.
We will also use the notations $a_i^1(h):=a_i(h)$ and $a_i^2(h):=a_i^*(h)$.

Consistent with the notation used in \cite{GM1}, we will assume that the inner product on a complex Hilbert space is linear in the second variable and conjugate linear in the first.
\end{notation}

\begin{definition}\label{def:pp}
 If $P$ is a finite ordered set, let $\PP(P)$ be the set of pair partitions of $P$.
That is,
\begin{equation}
 \PP(P):=\left\{\left\{(l_1,r_1),\ldots,(l_m,r_m)\right\}:  l_k<r_k, \bigcup_{k=1}^n\{l_k,r_k\}=P, \{l_p,r_p\}\cap \{l_{q},r_{q}\}=\emptyset\text{ if }p\ne q\right\}. 
\end{equation}
The set of  $\IndexSet$-indexed pair partitions, $\PP^\IndexSet(P)$ is the set of pairs $(\VP,c)$ with $\VP\in\PP(P)$ and $c:\VP\to\IndexSet$.
We will sometimes refer to the elements of $\IndexSet$ as colors and the function $c$ as the coloring function.
If $P'$ is another finite ordered set and $\alpha:P\to P'$ is an order-preserving bijection, then $\alpha$ induces a bijection $\PP^\IndexSet(P)\to\PP^\IndexSet(P')$.
Considering all order-preserving bijections gives an equivalence relation on the union of $\PP^\IndexSet(P)$ over sets of cardinality $2m$.
Let $\IPP(2m)$ be the set of equivalence classes under this relation, and let $\IPPi:=\bigcup_{n=1}^\infty \IPP(2m)$.
\end{definition}
\begin{figure}
\begin{center}\begin{tikzpicture}
 \draw (0,-1) node[anchor=south]{1} -- ++(0,-0.5) node[anchor=east]{-1} -- ++(3,0) -- ++(0,0.5) node[anchor=south]{4};
 \draw[dotted] (1,-1) node[anchor=south,color=black]{2} -- ++(0,-1) node[anchor=east,color=black]{1} -- ++(3,0) -- ++(0,1) node[anchor=south,color=black]{5};
\draw (2,-1) node[anchor=south]{3} -- ++(0,-1.5) node[anchor=east]{-1} -- ++(3,0) -- ++(0,1.5) node[anchor=south]{6};
\end{tikzpicture}\end{center}
\caption[Example of an indexed pair partition]{The $\{-1,1\}$-indexed pair partition $(\VP,c)$ where $\VP$ is the pair partition $\{(1,4),(2,5), (3,6)\}$ and $c(1,4)=-1$, $c(3,6)=-1$ and $c(2,5)=1$. Solid lines represent the color $-1$ and dotted lines denote the color $1$.}
\label{fig:pp}
\end{figure}
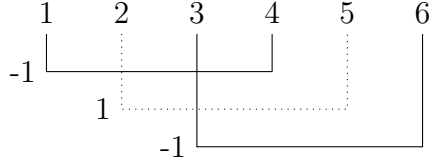
We can create a visual representation of an $\IndexSet$-colored pair partition by connecting the pairs $(l_j,r_j)$ by a path and labeling that path with the color $c(l_j,r_j)$.
When the number of colors is small, we may find it convenient to use different line styles to indicate colors, instead of an explicit label.
Figure \ref{fig:pp} gives the diagram for a simple example with $\IndexSet=\{-1,1\}$.

It is clear that the coloring function $c:\VP\to\IndexSet$ defines a function $c:[2m]\to\IndexSet$.
It should not create confusion to refer to this function by the same name $c$.
Note that $c(l)=c(r)$ when $(l,r)\in\VP$.
We will use these two descriptions of the coloring function $c$ interchangeably.

\begin{definition}
A Fock state on the algebra $\COAI(\HH)$ is a positive unital linear functional $\rho_\Pt:\COAI(\HH)\to\BC$ which for some  $\Pt:\IPPi\to\BC$ satisfies 
\begin{equation}
 \rho_\Pt\left(\prod_{k=1}^m a_{i_k}^{e_k}(f_k)\right)=\sum_{(\VP,c)\in\IPP(m)}\Pt(\VP,c)\prod_{(l,r)\in\VP}\left<f_l,f_r\right>\delta_{i_lc(l,r)}\delta_{i_r,c(l,r)}B_{e_le_r},\label{eqn:fock}
\end{equation}
where for $k\in [m]$, $f_k\in\HH$ the $e_k$ are chosen from $\{1,2\}$ and $i_k\in\IndexSet$.
\begin{equation}
B:= \begin{pmatrix}
     0&1\\
     0&0
    \end{pmatrix}.
\end{equation}
\end{definition}

\begin{definition}
A Gaussian state on $\FOAI(\HK)$  is a  positive unital linear function $\tilde \rho_\Pt:\FOAI(\HH)\to\BC$ which for some $\Pt:\IPPi\to\BC$ satisfies 
\begin{equation}\label{eqn:gauss}
 \tilde \rho_\Pt\left(\prod_{k=1}^m \omega_{i_k}(f_k)\right)=\sum_{(\VP,c)\in\IPP(m)}\Pt(\VP,c)\prod_{(l,r)\in\VP}\left<f_l,f_r\right>\delta_{i_l,c(l,r)}\delta_{i_r,c(l,r)},
\end{equation}
 any $f_k\in \HH$ ($k\in [m]$) and $i_k\in \IndexSet$.
\end{definition}
\begin{remark}
If $\HK$ is a real Hilbert space, then there is a canonical injection $\FOAI(\HK)\to \COAI(\HK_\BC)$  given by
\begin{equation}
\omega_i(h)\mapsto a_i(h)+a_i^*(h),
\end{equation}
where $\HK_\BC$ denotes the complexification of $\HK$.
Considering $\COAI(\HK_\BC)$ as a subalgebra of $\FOAI(\HK)$, the restriction of a Fock state $\rho_\Pt$ on $\COAI(\HK_\BC)$ to $\FOAI(\HK)$ is a Gaussian state.
\end{remark}
While we can use \eqref{eqn:gauss} and \eqref{eqn:fock} to define linear functionals $\tilde\rho_\Pt$ and $\rho_\Pt$ for any choice of $\Pt:\IPPi\to\BC$, these linear functionals are  not always positive.
This leads to the following definition.
\begin{definition}
A function $\Pt:\IPPi\to\BC$ is positive definite if $\rho_\Pt$ is a positive linear functional on $\COAI(\HK)$ for any complex Hilbert space $\HK$.
\end{definition}
\begin{remark}
Our definition of a positive definite function $\Pt:\IPPi\to\BC$ is modeled after the definition made for a single process in \cite{GM1} and is not obviously the same as the definition in \cite{Guta}.
However, we will see in Theorem \ref{thm:pd} that the definitions are equivalent.
\end{remark}

Suppose that for each $\bn:\IndexSet\to\BN\cup\{0\}$, $V_\bn$ is a complex Hilbert space with unitary representation $U_\bn$ of the direct product group
\begin{equation}
 S_\bn:=\prod_{b\in\IndexSet}S_{\bn(b)},
\end{equation}
If $\HH$ is a complex Hilbert space, then the Fock-like space is given by
\begin{equation}\label{eqn:focklike}
 \FS_{V}(\HH):=\bigoplus_{\bn}\frac{1}{\bn!}V_\bn\otimes_s\HH^{\otimes \bn}.
\end{equation}
Here $\HH^{\otimes \bn}$ means
\begin{equation}
\HH^{\otimes \bn}:=\bigotimes_{b\in\IndexSet}\HH^{\otimes\bn(b)},
\end{equation}
and $\bn!$ means $\prod_{b\in\IndexSet}\bn(b)!$, and the factor $\frac{1}{\bn!}$ modifies the inner product.
Also, we set $\HH^{\otimes 0}=\BC\Omega$ for some distinguished unit vector $\Omega$.
The notation $\otimes_s$ denotes the subspace of vectors which are invariant under the action of $S_{\bn}$ given by $U_\bn\otimes \tilde U_\bn$, where $\tilde U_\bn$ is the natural action of $S_{\bn}$ on $\HH^{\otimes\bn}$.
That is, $\tilde U_\bn(\pi)$ permutes the vectors according to $\pi$.
The projection onto the subspace $\frac{1}{\bn!}V_\bn\otimes_s\HH^{\otimes\bn}$ is given by
\begin{equation}
 P_\bn:=\frac{1}{\bn!}\sum_{\sigma\in S_\bn}U(\sigma)\otimes\tilde U(\sigma).
\end{equation}
For $v\in V_\bn$ and $\vf\in \HH^{\bn}$ we denote by $v\otimes_s \vf$ the vector $P_\bn(v\otimes \vf)$

To define creation and annihilation operators on the Fock-like space, we will also require densely defined operators $j_b: V_\bn\to V_{\bn+\delta_b}$ (where $\delta_b(b')=\delta_{b,b'}$) satisfying the following intertwining relations:
\begin{equation}
 j_b U_{\bn}(\sigma)=U_{\bn+\delta_b}(\iota_\bn^{(b)}(\sigma)) j_b\label{eqn:intertwine},
\end{equation}
where $\iota_{\bn}^{(b)}$ is the natural embedding of $S_{\bn}$ into $S_{\bn+\delta_b}$.
Note that we have used the same notation for the maps $V_\bn\to V_{\bn+\delta_b}$ for different $\bn$, but the choice of $\bn$ should be clear from context so confusion should not result.
We will call the maps $j_b$ the \textit{transition maps} for our Hilbert spaces $V_\bn$.

Given transition maps, we can define creation and annihilation operators on the Fock-like space $\FS_V(\HH)$ for each $b\in \IndexSet$ and each $h\in\HH$.
Let $\left(r_b^{(\bn)}\right)^*(h)$ be the operator
\begin{equation}
 \left(r_b^{(\bn)}\right)^*(h):\HH^{\otimes\bn}\to \HH^{\otimes\bn+\delta_{b}}
\end{equation}
which acts as right creation operator on $\HH^{\otimes \bn(b)}$ and identity on $\HH^{\otimes \bn(b')}$ for $b'\ne b$.
Let $r_b^{(\bn)}(h)$ be the adjoint of $\left(r_b^{(\bn)}\right)^*(h)$.
If $\bn(b)\ne 0$, then the annihilation operator $a_b^{V,j}(f)$ is defined on the level $\bn$ component of the Fock-like space by
\begin{equation}
\begin{split}
 a_b^{V,j}(f)&:V_\bn\otimes_s\HH^{\otimes\bn}\to V_{\bn-\delta_b}\otimes_s\HH^{\otimes(\bn-\delta_{b})}\\
a_b^{V,j}(f)&:v\otimes_s \bigotimes_{b\in\IndexSet}h_1^{(b)}\otimes\cdots\otimes h_{\bn(b)}^{(b)}
\\ &\quad \mapsto \sum_{k=1}^{\bn(b)}\frac{\left<f,h_k\right>}{\bn(b)} j_b^*v\otimes_s \bigotimes_{b'\in\IndexSet\setminus\{b\}}\left(h_1^{(b')}\otimes\cdots\otimes h_{\bn(b')}^{(b')}\right)\otimes h_1^{(b)}\otimes\cdots\otimes\hat h_{k}^{(b)}\otimes\cdots\otimes h_{\bn(b)}^{(b)}
\end{split}.
\end{equation}
If $\bn(b)\ne 0$, then $a_b^{V,j}(f)\left(V_\bn\otimes_s\HH^{\otimes\bn}\right)=0$.

The creation operator $(a_b^{V,j})^*(h)$ is the adjoint of $a_b^{V,j}(h)$, and its action on a vector $v\otimes_s \vf$ is given by
\begin{equation}
 (a_b^{V,j})^*(h)\left(v_\bn\otimes_s\vf\right) = (\bn(b)+1)(j_bv_\bn)\otimes_s \left(r_b^{\bn}\right)^*(h)\vf.
\end{equation}
We denote by $\COA_{V,j}(\HH)$ the $*$-algebra generated by the operators $a_b^{V,j}(f)$ and $(a_b^{V,j})^*(f)$ for $f\in\HH$, and $b\in\IndexSet$.

There is a representation $\nu_{V,j}$ of $\COAI(\HH)$ on the Fock-like space $\FS_{V}(\HH)$ satisfying
\begin{equation}
 \nu_{V,j}: a_b(f)\mapsto a_b^{V,j}(f)\quad\text{and}\quad a_b^*(f)\mapsto (a_b^{V,j})^*(f)
\end{equation}
for all $b\in\IndexSet$ and $f\in\HH$.
We will usually identify $X\in \COAI(\HH)$ with its image $ \nu_{V,j}(X)$.

We will sometimes write $a_b(f)$ and $a_b^*(f)$ for the annihilation and creation operators $a_b^{V,j}(f)$ and $\left(a_b^{V,j}\right)^*(f)$.
We will also use the notation $a_b^{V,j,1}(f)$ or simply $a_b^{1}(f)$ for $a_b^{V,j}(f)$ and likewise $a_b^{V,j,2}(f)$ or simply $a_b^{2}(f)$ for $(a_b^{V,j})^*(f)$.

The following is an $\IndexSet$-indexed generalization of Theorem 2.6 of \cite{GM1}.
\begin{theorem}\label{thm:pf}
 Let $\IndexSet$ be an index set and $\HH$ a complex Hilbert space.
 Let $(U_\bn,V_\bn)$ be representations of $S_\bn$ with maps $j_b:V_{\bn}\to V_{\bn+\delta_b}$ satisfying the intertwining relation \eqref{eqn:intertwine}.
 Let $\xi_V\in V_0$ be a unit vector.
  The state $\rho_{V,j}$ on $\COAI(\HH)$ defined by 
\begin{equation}
\rho_{V,j}(X)=\left< \xi_V\otimes_s\Omega,X(\xi_V\otimes_s\Omega) \right>
\end{equation}
is a Fock state.
That is, there is a positive definite function $\Pt$ such that $\rho_{V,j}=\rho_{\Pt}$.
\end{theorem}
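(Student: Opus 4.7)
The plan is to compute $\rho_{V,j}\!\left(\prod_{k=1}^m a_{i_k}^{e_k}(f_k)\right)$ directly by applying the operators right-to-left to $\xi_V \otimes_s \Omega$ and pairing the result against $\xi_V \otimes_s \Omega$, and then to read off a function $\Pt : \IPPi \to \BC$ from the answer. Since $\rho_{V,j}$ is a vector state on the Fock-like space $\FS_V(\HH)$, it is automatically a positive unital linear functional on $\COAI(\HH)$; consequently, once we show $\rho_{V,j} = \rho_\Pt$ for every complex Hilbert space $\HH$, positive definiteness of $\Pt$ is immediate.

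The rightmost operator acts first, so for a pair of indices $l < r$ to contribute nontrivially the operator at position $r$ must be a creator and the one at position $l$ an annihilator, yielding the factor $B_{e_l e_r} = B_{1,2} = 1$ from \eqref{eqn:fock}. Applying a creator $a_{i_r}^*(f_r)$ raises the color-$i_r$ level by one, applies $j_{i_r}$ to the $V$-component, and appends $f_r$ into the $i_r$-th tensor slot with resymmetrization; applying an annihilator $a_{i_l}(f_l)$ lowers the color-$i_l$ level by one, applies $j_{i_l}^*$ to the $V$-component, and sums over positions in the $i_l$-th tensor slot, producing an inner product $\langle f_l, f_{r}\rangle$ with the removed vector together with the combinatorial factor $\tfrac{1}{\bn(i_l)}$. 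For the final overlap with $\xi_V \otimes_s \Omega$ to be nonzero, each creation must ultimately be contracted by a later-applied (smaller-indexed) annihilator of the same color, singling out an $\IndexSet$-indexed pair partition $(\VP, c) \in \IPP(m)$ with $c(l, r) = i_l = i_r$. Collecting the scalar contributions from all contractions yields exactly $\prod_{(l,r)\in\VP} \langle f_l, f_r\rangle \delta_{i_l, c(l,r)} \delta_{i_r, c(l,r)} B_{e_l e_r}$ multiplied by a residual scalar coming from the $V$-component.

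We then define $\Pt(\VP, c)$ to be this residual scalar: it takes the form $\langle \xi_V, T(\VP, c)\, \xi_V\rangle$, where $T(\VP, c)$ is a composition of transition maps $j_b$ and their adjoints $j_b^*$ in the order dictated by the pair structure, interleaved with representation operators $U_\bn(\sigma)$ arising from the symmetrization projections $P_\bn$ (together with the $\tfrac{1}{\bn!}$ weights). The main obstacle, and where the real work lies, is to verify that $T(\VP, c)$ is well-defined on the equivalence class in $\IPP(m)$, independent of the particular $m$-tuple of operators, the choice of $\HH$, and any ambiguities in writing down symmetrization. This is precisely where the intertwining relation \eqref{eqn:intertwine} enters: it lets one commute a transition map past a representation operator at the cost of replacing $\sigma$ by $\iota_\bn^{(b)}(\sigma)$, which is exactly the bookkeeping identity needed to align symmetrizations with the pair order. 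Once this well-definedness is in hand, equation \eqref{eqn:fock} holds by construction with this $\Pt$, and positive definiteness is inherited from the fact that $\rho_\Pt = \rho_{V,j}$ is a vector state on $\FS_V(\HH)$ for every complex Hilbert space $\HH$.
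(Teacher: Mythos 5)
Your route is genuinely different from the paper's. The paper does not expand the word operator by operator and track the $V$-component; instead it defines $\Pt(\VP,c)$ as the value of $\rho_{V,j}$ on a canonical monomial built from mutually orthogonal basis vectors (one per pair), introduces the differential second quantization operators $d\Gamma_V^b(A)$ with the commutation relations \eqref{eqn:gcom}, and, taking $A=|f_{i_0}\rangle\langle f_i|$ with a fresh index $i_0$, derives the recursion \eqref{eqn:annihilate}: an annihilator acting on $a_{b_1,i_1}^{e_1}\cdots a_{b_n,i_n}^{e_n}(\xi_V\otimes_s\Omega)$ resolves into a sum over the creators it can pair with, each pairing replaced by a fresh creator--annihilator couple. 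Iterating this yields simultaneously the Wick/pairing structure \eqref{eqn:fock} and the identification of the coefficients with the canonical matrix elements, without ever having to see how $j_b$, $j_b^*$ and the $U_\bn(\sigma)$ interleave. Your approach, by contrast, is essentially the standard-form computation behind \eqref{eqn:stdform}--\eqref{eqn:pairstd}: if completed it buys an explicit operator formula for $\Pt(\VP,c)$, which the paper's argument does not directly provide.

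The difficulty is that the step you defer as ``the main obstacle'' is precisely the substance of the theorem, and it does not follow from the displayed operator formulas by mere bookkeeping of the kind you indicate. If you apply the annihilation formula literally (the $V$-component just picks up $j_{i_l}^*$, independently of which tensor slot is contracted), then the residual scalar would depend only on the sequence of levels $\bn_k$, i.e.\ on the ``profile'' of the word, and not on the crossing structure of $(\VP,c)$ --- which is false already in the one-color Vershik--Kerov examples of Theorem \ref{thm:bg}. The correct adjoint of the creation operator carries, in addition to $j_b^*$, a representation operator $U_\bn(\gamma_k)$ for a cycle $\gamma_k$ depending on the contracted slot, and it is exactly these permutations (moved around using \eqref{eqn:intertwine}) that encode the dependence on crossings and make $T(\VP,c)$ well defined on the equivalence class in $\IPP(m)$. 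So your outline is viable, and you correctly identify where the intertwining relation must enter, but as written the proof asserts rather than establishes that the expansion coefficients take the form $\langle\xi_V,T(\VP,c)\xi_V\rangle$ with $T(\VP,c)$ depending only on $(\VP,c)$; carrying out that combinatorial verification (or invoking the semigroup/standard-form machinery that packages it) is the real work, and it is what the paper's $d\Gamma$ trick is designed to avoid.
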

The proof is very similar to the proof of Theorem 2.6 of \cite{GM1}, but we include it for completeness.
\begin{proof}
Let $\HH$ be an infinite-dimensional  complex Hilbert space, and let $\{f_k\}_{k=1}^\infty$ be an orthonormal basis for $\HH$. 
Also let $\VP=\{(l_k,r_k):k\in[n]\}$ with $l_k<r_k$ for $1\le k\le n$ and $l_k<l_{k'}$ for $k<k'$.

Define 
\begin{equation}
\begin{split}
 \Pt((\VP,c))&=\rho_{V,j}\left(\prod_{k=1}^n a_{b_k}^{e_k}(f_k)\right)\\
&=\left< \xi_V\otimes_s\Omega,\left(\prod_{k=1}^n a_{b_k}^{e_k}(f_k)\right)(\xi_V\otimes_s\Omega) \right>
\end{split}
\end{equation}
where $b_k$ and $e_k$ are chosen as follows.
Each $k$ is an element of one pair $(l_i,r_i)\in \VP$ for some $i$.
If $k=l_i$, we let $e_k=1$, and if $k=r_i$ we let $e_k=2$.
In either case, we let $b_k=c(l_i,r_i)$.

For $A\in\BO(\HH)$ and $b\in\IndexSet$, define the operator $d\Gamma_V^{b}(A)$ on $\FS_{V}(\HH)$ by
\begin{equation}
\begin{split}
 d\Gamma_V^{b}(A):& v\otimes_s\bigotimes_{b'\in \IndexSet} h_{b',1}\otimes\cdots\otimes h_{b',m_b'}\mapsto\\  &\sum_{k=1}^{m_b}v\otimes_s\left(\bigotimes_{b'\ne b} h_{b',1}\otimes\cdots\otimes h_{b',m_{b'}}\right)\otimes \left(h_{b,1}\otimes\cdots\otimes A h_{b,k} \otimes\cdots\otimes h_{b,m_{b}}\right).
 \end{split}
\end{equation}
The operators $ d\Gamma_V^b(A)$ satisfies the commutation relations
\begin{equation}
 \left[a_{b'}(f),d\Gamma_V^{b}(A)\right]=\delta_{b,b'}a_{b'}(A^*f)
\quad\text{and}\quad
 \left[d\Gamma_V^{b}(A),a_{b'}^*(f)\right]=\delta_{b,b'}a^*_{b'}(Af)
.\label{eqn:gcom}
\end{equation}
We write $a^{e}_{b,k}$ for $a^{V,j,e}_b(f_k)$, and denote by $|f_{i_0}\rangle\langle f_{i}|$ the rank-one operator $X$ on $\HH$ which is $0$ on the orthogonal complement of $f_{i}$ and such that $Xf_{i}=f_{i_0}$.
Applying \eqref{eqn:gcom} with $A=|f_{i_0}\rangle\langle f_{i}|$, we have when $i_k\ne i_0$
\begin{equation}\label{eqn:bcom}
\left[d\Gamma_V^b(|f_{i_0}\rangle\langle f_{i}|), a_{b, i_k}^{e_k}\right] = \delta_{i_k,i}\cdot\delta_{e_k,2}\cdot a_{b,i_0}^*
\end{equation}
Consider a vector of the form $\phi=a_{b_1, i_1}^{e_1}\cdots a_{b_n, i_n}^{e_n}(\xi_V\otimes_s \Omega)$.
Choose $i_0$ different from $i_1,\ldots, i_n$, so that $a_{b,i_0}\phi=0$ for any $b\in\IndexSet$.
Applying \eqref{eqn:bcom},
\begin{equation}
 a_{b,i}\phi = \left[d\Gamma_V^b(|f_{i_0}\rangle\langle f_{i}|), a_{b, i_k}^{e_k}\right]\phi=a_{b,i_0} d\Gamma_V^b(|f_{i_0}\rangle\langle f_{i}|)\phi
\end{equation}
Applying \eqref{eqn:bcom} repeatedly now yields
\begin{equation}
\begin{split}\label{eqn:annihilate}
a_{b,i}\phi &=\sum_{k=1}^n \delta_{i,i_k}\delta_{e_k,2}\delta_{b,b_k}\cdot a_{b, i_0}\left(\prod_{r=1}^{k-1} a_{b_r, i_r}^{e_r}\right)\cdot a_{b,i_0}^* \left(\prod_{r=k+1}^{n} a_{b_r,i_r}^{e_r}\right)\left(\xi_V\otimes_s \Omega\right)
\end{split}
\end{equation}
Considering a monomial in creation and annihilation operators $\prod_{k=1}^n a_{b_k,i_k}^{e_k}$, the theorem follows by applying \eqref{eqn:annihilate} for each annihilation operator in the product with a new index $i_0$.
\end{proof}
There is also the following partial converse.
\begin{theorem}\label{thm:hs}
Let $\HH$ be a separable, infinite-dimensional complex Hilbert space, and let $\IndexSet$ be a countable index set.
Let $\Pt$ be a positive definite function on $\IndexSet$-indexed pair partitions.
Then there exist Hilbert spaces $V_\bn$ with representations $U_{\bn}$ of $S_{\bn}$ and densely defined transition maps $j_b:V_{\bn}\to V_{\bn+\delta_b}$ for $b\in\IndexSet$ satisfying \eqref{eqn:intertwine} and a unit vector $\xi_V\in V_\bzero$ such that the GNS representation of $(\COAI(\HH),\rho_\Pt)$ is unitarily equivalent to $(\FS_V(\HH),\COA_{V,j}(\HH),\xi_V\otimes_s \Omega)$.
\end{theorem}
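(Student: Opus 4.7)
The plan is to adapt the proof of the analogous single-color existence result in \cite{GM1} to the $\IndexSet$-indexed setting, reading off the data $(V_\bn, U_\bn, j_b, \xi_V)$ from the GNS representation of $(\COAI(\HH), \rho_\Pt)$.

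First, form the GNS triple $(\HK_\rho, \pi_\rho, \xi_\rho)$, which exists since $\rho_\Pt$ is positive by hypothesis. Decompose $\HK_\rho$ orthogonally by ``color count'': for each $\bn:\IndexSet \to \BN\cup\{0\}$, let $\HK_\bn$ denote the closed linear span of vectors $a^*_{b_1}(f_1) \cdots a^*_{b_m}(f_m)\xi_\rho$ as $(b_1,\ldots,b_m)$ ranges over color sequences of type $\bn$ (meaning $|\{k : b_k = b\}| = \bn(b)$ for each $b$). Formula \eqref{eqn:fock}, combined with the upper-triangular form of $B$, forces any contributing pair partition to match each annihilation with a creation of the same color, so vectors of distinct color-count type are orthogonal and $\HK_\rho = \bigoplus_\bn \HK_\bn$ with $\HK_\bzero = \BC\xi_\rho$.

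For each $\bn$ with $m = |\bn|$, construct $V_\bn$ along the lines of \cite{GM1}. Fix a canonical sequence $\mathbf{b}_0$ of type $\bn$ (say, with colors grouped in some fixed order) and a countable orthonormal family $\{e_k\}_{k \ge 1} \subset \HH$ (available by separability and infinite-dimensionality). Consider the pre-Hilbert space spanned by formal symbols $\Theta_\sigma$ for $\sigma \in S_m$, with sesquilinear form pulled back from the GNS inner product via the assignment $\Theta_\sigma \mapsto a^*_{b_{0,1}}(e_{\sigma(1)}) \cdots a^*_{b_{0,m}}(e_{\sigma(m)})\xi_\rho$ (explicit in $\Pt$ by \eqref{eqn:fock}). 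Positive semi-definiteness of this form is automatic, since it is the pullback of an actual inner product. Define $V_\bn$ as the completion modulo null vectors; the formula $U_\bn(\pi) \Theta_\sigma := \Theta_{\pi\sigma}$ gives an $S_m$-representation preserving the form, and its restriction to $S_\bn \subset S_m$ (the stabilizer of $\mathbf{b}_0$) is the required representation $U_\bn$.

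Define the transition maps $j_b: V_\bn \to V_{\bn + \delta_b}$ on the dense algebraic subspace by choosing the canonical sequence for $\bn + \delta_b$ to be $\mathbf{b}_0$ with a color $b$ appended at position $m+1$, and setting $j_b \Theta_\sigma := \Theta_{\tilde\sigma}$, where $\tilde\sigma \in S_{m+1}$ extends $\sigma$ by fixing $m+1$. The intertwining relation \eqref{eqn:intertwine} is then immediate from the definitions, since $\iota_\bn^{(b)}$ is precisely the embedding of $S_\bn$ into $S_{\bn + \delta_b}$ that fixes this appended position. Setting $\xi_V := \Theta_e \in V_\bzero$, one then shows that the assignment $\Theta_\sigma \otimes (f_1 \otimes \cdots \otimes f_m) \mapsto a^*_{b_{0,1}}(f_{\sigma(1)}) \cdots a^*_{b_{0,m}}(f_{\sigma(m)})\xi_\rho$ descends to an isometric isomorphism $V_\bn \otimes_s \HH^{\otimes \bn} \to \HK_\bn$ (with the tensor factors of $\HH^{\otimes \bn}$ ordered by color according to $\mathbf{b}_0$). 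Assembling these over $\bn$ yields the desired unitary $U: \FS_V(\HH) \to \HK_\rho$ sending $\xi_V \otimes_s \Omega \mapsto \xi_\rho$ and intertwining creation operators by construction (hence annihilation operators by taking adjoints).

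The main obstacle will be verifying the isomorphism $V_\bn \otimes_s \HH^{\otimes \bn} \cong \HK_\bn$. One must show that the combinatorial factor $\bn(b)+1$ appearing in the Fock-like creation formula, together with the $S_\bn$-symmetrization on the source, exactly reproduces the multiplicities arising from the sum over pair partitions in \eqref{eqn:fock}---in particular, that distinct color sequences $\mathbf{b}$ of type $\bn$ (all producing vectors in $\HK_\bn$ through the various orderings of creation operators) are identified consistently under the symmetrization. This is bookkeeping-intensive but closely parallels the single-color argument of \cite{GM1}, the main additional complication being to track $S_\bn$-invariance (rather than the $S_m$-invariance of the single-color case) across distinct colors.
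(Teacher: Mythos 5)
Your outline founders on the very first step: the claim that $\HK_\rho=\bigoplus_\bn\HK_\bn$ where $\HK_\bn$ is the closed span of \emph{pure creation} monomials $a^*_{b_1}(f_1)\cdots a^*_{b_m}(f_m)\xi_\rho$ of color type $\bn$. Orthogonality of distinct color types is fine, but completeness is not: the GNS space is the closure of \emph{all} words in creation and annihilation operators applied to $\xi_\rho$, and for a general positive definite $\Pt$ there is no commutation relation allowing a mixed word to be reduced to (or approximated by) pure creation words. Already at color count $\bzero$ the vector $a_b(f)a_b^*(f)\xi_\rho$, which in the Fock-like picture is $(j_b^*j_b\xi_V)\otimes_s\Omega$, need not be proportional to $\xi_\rho$, so even ``$\HK_\bzero=\BC\xi_\rho$'' fails in general; one can produce such examples by choosing data $(V_\bn,U_\bn,j_b)$ with $j_b^*j_b\xi_V\notin\BC\xi_V$ and feeding the resulting $\Pt$ from Theorem \ref{thm:pf} back into the statement. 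This is exactly why the paper's proof does not span $V_\bn$ by creation monomials alone: the spanning sets $R_\bn^\alpha$ consist of words containing, besides the $\bn(b)$ distinguished creation operators, an arbitrary number $p$ of additional matched creation--annihilation pairs with each annihilator to the left of its creator. Those extra vectors are what make the reconstructed $V_\bn$ large enough for $\xi_V\otimes_s\Omega$ to be cyclic for $\COA_{V,j}(\HH)$, and Remark \ref{rk:diff} and Example \ref{ex:N} are precisely about the fact that the reconstructed spaces can be strictly larger than a ``minimal'' model one might guess. With your $V_\bn$ (spanned by the at most $|\bn|!$ symbols $\Theta_\sigma$), the map you build can at best be a unitary onto the subrepresentation generated by creation operators, so the asserted unitary equivalence with the GNS representation fails whenever annihilation operators lead out of that subspace.

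The remainder of your sketch (invariance of the pulled-back form under $\sigma\mapsto\pi\sigma$, the formal intertwining \eqref{eqn:intertwine} for the appended-leg definition of $j_b$, and the flagged bookkeeping for the isometry $V_\bn\otimes_s\HH^{\otimes\bn}\to\HK_\bn$) is reasonable as far as it goes, but it cannot be completed without first enlarging the spanning family as the paper does, after which you must also prove the two things the paper's proof actually spends its effort on: that the resulting state $\rho_{V,j}$ coincides with $\rho_\Pt$ (done there via the commutative diagrams relating $j_b$ and $j_b^*$ to creation and annihilation operators on $\FS_V(\HH)$), and that the vacuum vector is cyclic in $\FS_V(\HH)$ --- a step that genuinely requires words containing annihilation operators and is invisible in your too-small model.
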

\begin{remark}\label{rk:diff}
If we are given complex Hilbert spaces $V_\bn$ with representations $U_{\bn}$ of $S_{\bn}$ and maps $j_b:V_{\bn}\to V_{\bn+\delta_b}$, then Theorem \ref{thm:pf} gives a corresponding positive definite function $\Pt:\IPPi\to\BC$.
Applying Theorem \ref{thm:hs} gives complex Hilbert spaces $V_\bn'$ with representations $U_{\bn}'$ of $S_{\bn}$ and transition maps $j_b':V_{\bn}'\to V_{\bn+\delta_b}'$.
We will see in Example \ref{ex:N} that the Hilbert spaces $V_\bn'$ need not be the same as the original Hilbert spaces  $V_\bn$.
\end{remark}
The proof of Theorem \ref{thm:hs} is very similar to the proof of Theorem 2.7 in \cite{GM1} (though we are only able to prove it for infinite-dimensional $\HH$). Regardless, we will include the proof of the $\IndexSet$-indexed theorem here for the sake of completeness.

\begin{proof}[Proof of Theorem \ref{thm:hs}]
Choose an orthonormal basis $\{f_{k,b}\}_{k\in \BN, b\in\IndexSet}$ for $\HH$.
Let $\FS_\Pt(\HH)$, $\COA_\Pt(\HH)$, and $\Omega_\Pt$ be the complex Hilbert space, operator algebra, and distinguished unit vector of the GNS construction of $\COAI(\HH)$ with respect to the state $\rho_\Pt$. 
We denote the image of $a_b(f)$ in $\COA_\Pt(\HH)$ by $a^\Pt_b(f)$ and the image of $a_b^*(f)$ in $\COA_\Pt(\HH)$ by $(a^\Pt_b)^*(f)$.
We will use the notation $a^{\Pt,e}_{b}(f)$ to mean $\left(a^{\Pt}\right)^*_{b}(f)$ for $e=2$ and $a^{\Pt}_{b}(f)$ for $e=1$.
We will construct the complex Hilbert spaces $V_\bn$ as subspaces of $\FS_\Pt(\HH)$.

Suppose that for each function $\bn:\IndexSet\to\BN\cup\{0\}$ which is $0$ at all but finitely many points in $\IndexSet$ and each $i\in \IndexSet$, we have an injective function $\alpha_{\bn,i}: [\bn(i)]\to \BN$ and that $\alpha_{\bn,i}(j)=\alpha_{\bn',i}(j)$ when $j<\bn(i),\bn'(i)$.

Denote by $R_{\bn}^\alpha$ the set of the vectors of the form
\begin{equation}
a^{\Pt,e_1}_{b_1}(f_{b_1, i_1})\cdots a^{\Pt,e_{2p+|\bn|}}_{b_{2p+|\bn|}}(f_{b_{2p+|\bn|}, i_{2p+|\bn|}})\Omega_{\Pt},
\end{equation}
where $|\bn|=\sum_{b\in\IndexSet}\bn(b)$ satisfying the following conditions:
\begin{enumerate} 
\item In the product $a^{\Pt,e_1}_{b_1}(f_{b_1, i_1})\cdots a^{\Pt,e_{2p+|\bn|}}_{b_{2p+|\bn|}}(f_{b_{2p+|\bn|}, i_{2p+|\bn|}})$, a creation operator $a^{\Pt,2}_{b}(f_{b,\alpha_b(j)})$ appears exactly once provided that $1\le j\le \bn(b)$.
\item Among the remaining $2p$ operators in the product, there are $p$ creation operators $(a^{\Pt,2}_{b_q}(f_{b_q, l_q}))_{q=1}^p$ and $p$ annihilation operators $(a^{\Pt,1}_{b_q}(f_{b_q, l_q}))_{q=1}^p$. 
Moreover, each annihilation operator appears to the left of the corresponding creation operator in the product.
\end{enumerate}
We also let $V_{\bn}^\alpha$ be the span of the vectors in $R_{\bn}^\alpha$.
We define the map $j_{b'}^\alpha :V_\bn^\alpha\to V_{\bn+\delta_{b'}}^\alpha$ by restricting the creation operator $a^{\Pt,2}_{b'}(f_{b',\alpha_{\bn,b'}(\bn(b'))})$ to the subspace $V_\bn^\alpha$ of $\FS_\Pt(\HH)$.
It follows immediately from the definition of $V_\bn^\alpha$ that the image of this restriction lies in $V_{\bn+\delta_{b'}}^\alpha$.

We define a unitary representation $U_\bn^\alpha$ of $S_\bn$ on $V_\bn^\alpha$.
Since $\rho_\Pt$ is a Fock state, it is invariant under unitary transformations $U$ on $\HH$ in the sense that
\begin{equation}
\rho_\Pt\left(\prod_{k=1}^n a^{e_k}_{b_k}(f_{b_k,i_k})\right) = \rho_\Pt\left(\prod_{k=1}^n a^{e_k}_{b_k}(Uf_{b_k,i_k})\right).
\end{equation}
Therefore, there is a unitary map $\FS_\Pt(U)$ given by
\begin{equation}
\FS_\Pt(U):\prod_{k=1}^n a^{\Pt, e_k}_{b_k}(f_{b_k,i_k})\Omega_\Pt\mapsto \prod_{k=1}^n a^{e_k}_{b_k}(Uf_{b_k,i_k})\Omega_\Pt.
\end{equation}
The map $\FS_\Pt(U)$ induces an automorphism on the algebra of creation and annihilation operators by
\begin{equation}
\FS_\Pt(U) a^{\Pt, e_k}_{b_k}(h)\FS_\Pt(U^*) = a^{\Pt, e_k}_{b_k}(Uh).
\end{equation}
For $\sigma\in S_\bn$, let $U_\sigma^\alpha$ be the unitary operator on $\HH$ which for each $b\in\IndexSet$ acts by permuting $\{f_{b,\alpha_{\bn,b}(1)},\ldots, f_{b,\alpha_{\bn,b}(\bn(b))}\}$ according to $\sigma$ and fixes $f_{b,r}$ when $r>\bn(b)$.
The map $U_{\bn}^\alpha:\sigma\mapsto U_\sigma^\alpha$ is a unitary representation of $S_\bn$ on $V_\bn^\alpha$.

Define $\iota _{\bn,i}:[\bn(i)]\to\BN$ by $\iota_{\bn,i}(j)=j$, and let $R_\bn:=R_{\bn}^\iota$, $V_\bn:=V_{\bn}^\iota$, $j_{b}:=j_b^\iota$, and let $U_{\bn}:=U_\bn^\alpha$.
It follows from the definitions that these data satisfy the intertwining property \eqref{eqn:intertwine}.
We also define $\xi_V:=\Omega_\Pt \in V_\bzero$.

We now show that $(\FS_V(\HH),\COA_{V,j}(\HH),\xi_V\otimes_s \Omega)$ is unitarily equivalent to the GNS representation of $(\COAI(\HH),\rho_\Pt)$.
We will begin by showing that $\rho_\Pt=\rho_{V,j}$.
By Theorem \ref{thm:pf}, $\rho_{V,j}$ is a Fock state associated to some positive definite function $\Pt':\IPPi\to\BC$, so it will suffice to show that $\Pt'=\Pt$.

For the proof, we will also need to define for a unitary map $U$ on $\HH$,
\begin{equation}
\begin{split}
 \FS_{V}(U)&:\FS_{V}(\HH)\to\FS_{V}(\HH)\\
&v\otimes_s\left(\bigotimes_{b\in\IndexSet}h_{b,1}\otimes \cdots\otimes h_{b,\bn(b)}\right)\mapsto v\otimes_s\left(\bigotimes_{b\in\IndexSet}Uh_{b,1}\otimes \cdots\otimes Uh_{b,\bn(b)}\right)
\end{split},
\end{equation}
for all $v\in V_{\bn}$.
This induces an action on the creation and annihilation operators satisfying
\begin{equation}
 \FS_{V}(U)a_{V,j}^{e}(f) \FS_{V}(U^*)=a_{V,j}^{e}(Uf)
\end{equation}

For $\alpha_{\bn,i}$ as before, define
\begin{equation}
 \tilde V_{\bn}^\alpha:=\overline{\linspan\{v\otimes_s \bigotimes_{b\in\IndexSet}f_{b, \alpha(1)}\otimes\cdots\otimes f_{b,\alpha(\bn(b))}\}}.
\end{equation}
Define an isometry $ T_\bn:V_\bn \to \FS_{V,j}(\HH)$ by
\begin{equation}
v\mapsto v\otimes_s \bigotimes_{b\in\IndexSet}f_{b, 1}\otimes\cdots\otimes f_{b,\bn(b)}.
\end{equation}
Let $U_{\alpha, \bn}$ be a unitary map on $\HH$ which permutes the basis vectors $f_{b,j}$ such that $U_{\alpha}f_{\bn,j}=f_{\bn,\alpha(j)}$ whenever $1\le j\le \bn(b)$.
Define a map $T_\bn^\alpha: V^\alpha_\bn\to \tilde V^\alpha_\bn$ by
\begin{equation}
 T^{\alpha}_\bn:=\FS_\VP(U_{\alpha,\bn})T_\bn\FS_\Pt(U_{\alpha,\bn}^*).
\end{equation}
This map does not depend on the choice of $U_{\alpha, \bn}$ permuting the basis vectors according to $\alpha$.
It follows immediately from the definitions that the diagram 
\begin{equation}
\begin{CD}
V_{\bn}@> T_\bn>> \tilde V_{\bn}\\
@VVj_bV @VV(a^{V,j}_b)^*(f_{b,\bn(b)+1})V\\
V_{\bn+\delta_b}@>T_{\bn+\delta_b}>> \tilde V_{\bn+\delta_b}
\end{CD}
\end{equation}
is commutative, whence the diagram 
\begin{equation}
\begin{CD}
V_{\bn}^\alpha@> T_\bn^\alpha>> \tilde V_{\bn}\alpha \\
@VVj_b^\alpha V @VV(a^{V,j}_b)^*(f_{b,\alpha_{\bn,b}(\bn(b)+1)})V\\
V_{\bn+\delta_b}^\alpha@>T_{\bn+\delta_b}^\alpha>> \tilde V_{\bn+\delta_b}
\end{CD}
\end{equation}
also commutes.
A similar argument gives a corresponding commutative diagram for the annihilation operators, and this implies the equality of the states $\rho_\Pt$ and $\rho_{V,j}$, which implies $\Pt=\tilde \Pt$.

Finally, we must prove that the vacuum vector $\Omega_V:=\xi_{V}\otimes \Omega$ is cyclic for $\COAI(\HH)$.
It will suffice to show that for any $\bn$, any $v\in R_\bn$ and any vectors $h_1,\ldots, h_n\in \HH$, there is some $X\in \COAI(\HH)$ with 
\begin{equation}\label{eqn:cyclic}
X\Omega_{V}=v\otimes_s\bigotimes_{b\in\IndexSet} h_{b,1}\otimes\cdots\otimes h_{b,\bn(b)}.
\end{equation}
By the definition of $R_\bn$, we can write 
\begin{equation}\
v= \left(\prod_{k=1}^{2p+|\bn|}a^{\Pt,e_k}_{b_k}(f_{b_k, i_k})\right)\Omega_{\Pt}
\end{equation}
where a creation operator $a^{\Pt,2}_{b}(f_{b,j})$ appears exactly once for $1\le j\le \bn(b)$, and among the remaining $2p$ operators in the product, there are $p$ creation operators $(a^{\Pt,2}_{b_q}(f_{b_q, l_q}))_{q=1}^p$ and $p$ annihilation operators $a^{\Pt,1}_{b_q}(f_{b_q, l_q})_{q=1}^p$, with each annihilation operator appearing to the left of the corresponding creation operator in the product.
We need simply choose $X$ of the form
\begin{equation}
 X:=\frac{1}{\bn!}\cdot \prod_{k=1}^{2p+|\bn|}a^{e_k}_{b_k}(g_k),
\end{equation}
where the $g_k$ satisfy:
\begin{equation}
g_k:= 
\begin{cases}
  h_{b_k, r_k},&\text{if $1\le i_k\le \bn(b_k)$}\\
  h'_{l_k}, &\text{otherwise}
 \end{cases},
\end{equation}
where $r_k$ is defined so that $k$ is the $r_k$-th smallest element of the set $\{u: b_{u}=b_k, 1\le u\le \bn(b_k)\}$, the $(h'_i)_{i=1}^\infty$ are an orthonormal sequence of vectors which are orthogonal to each $h_{b,k}$, and $l_k=l_k'$ if and only if $b_k=b_{k'}$ and $i_k=i_{k'}$.
It follows from the definitions that this $X$ satisfies \eqref{eqn:cyclic}, so the proof is complete.
\end{proof}

We now pursue an algebraic characterization of positive definiteness for functions on $\IndexSet$-indexed pair partitions.
This will involve Gu{\c{t}}{\u{a}}'s $*$-semigroup of $\IndexSet$-indexed broken pair partitions \cite{Guta}.
\begin{definition}
 Let $X$ be an arbitrary finite ordered set and $(L_a,P_a,R_a)_{a\in\IndexSet}$ a disjoint partition of $X$ into triples of subsets indexed by elements of $\IndexSet$.
Suppose that for each $a\in\IndexSet$, we have a triple $(\VP_a,f_a^{(l)},f_a^{(r)})$ where $\VP_a\in\PP(P_a)$ and 
\begin{equation}
 f_a^{(l)}:L_a\to\{1,\ldots,|L_a|\}\quad\text{and}\quad f_a^{(r)}:R_a\to\{1,\ldots,|R_a|\}
\end{equation}
are bijective.
An order preserving bijection $\alpha:X\to Y$ induces a map
\begin{equation}
 \alpha_a:(\VP_a,f_a^{(l)},f_a^{(r)})\to (\alpha\circ\VP_a,f_a^{(l)}\circ\alpha^{-1},f_a^{(r)}\circ\alpha^{-1})
\end{equation}
where $\alpha\circ \VP:=\{(\alpha(i),\alpha(j)):(i,j)\in\VP\}$.
This determines an equivalence relation on the set of such $\IndexSet$-indexed triples, and we call an equivalence class under this relation an $\IndexSet$-indexed broken pair partition.
We denote the set of all $\IndexSet$-indexed broken pair partition by $\BPP^\IndexSet(\infty)$.
\end{definition}

There is a convenient diagrammatic representation of a broken pair partition.
Given a broken pair partition as just defined, we write the elements of the base set $X$ in order.
For each pair $(x,x')\in \VP_a$, we connect $x$ and $x'$ by a piecewise-linear path and label that path with the index $a$.
For each index $a\in\IndexSet$ such that $L_a\ne \emptyset$, we write the numbers $1,\ldots, |L_a|$ in order on the left side and connect each $y\in L_a$ to the number $f_a^{(l)}(y)$.
Likewise, for each color $a\in\IndexSet$ such that $R_a\ne \emptyset$, we write the numbers $1,\ldots, |R_a|$ in order on the left side and connect each $y\in R_a$ to the number $f_a^{(l)}(y)$.
When $|\IndexSet|$ is small, we may also use different line styles (e.g. dotted and solid lines) to indicate the different colors $a\in\IndexSet$. 
Figure \ref{fig:bpp} gives two examples of these diagrams.

The diagrammatic representations of the $\IndexSet$-colored broken pair partitions inspires some additional terminology.
Namely, we call the functions $f_a^{(l)}$ and $f_a^{(r)}$ the left and right leg functions for the color $a$.
Moreover, we call the piecewise-linear paths from the domains of $f_a^{(l)}$ and $f_a^{(r)}$ to the numbers $f_a^{(l)}(y)$ and $f_a^{(r)}(y)$ the left and right legs of the $\IndexSet$-colored broken pair partitions.
This terminology will be useful in describing the semigroup structure on $\IBPPi$.
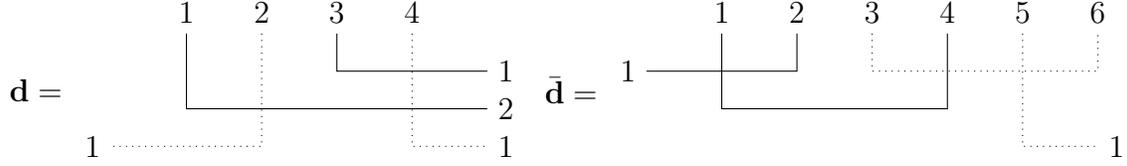
\begin{figure}
\begin{tikzpicture}
 \draw (1,0) node[anchor=south]{1} -- ++(0,-1) -- ++(4,0) node[anchor=west]{2};
 \draw[dotted] (2,0) node[anchor=south,color=black]{2}-- ++(0,-1.5) -- ++(-2,0) node[anchor=east,color=black]{1};
\draw (3,0) node[anchor=south]{3}-- ++(0,-0.5) -- ++(2,0) node[anchor=west]{1};
\draw[dotted] (4,0) node[color=black,anchor=south]{4}-- ++(0,-1.5) -- ++(1,0) node[color=black,anchor=west]{1};
\draw (-1,-.75) node{$\bpd=$};
\end{tikzpicture}
\begin{tikzpicture}
 \draw (1,0) node[anchor=south]{1} -- ++(0,-1) -- ++(3,0) -- ++(0,1) node[anchor=south]{4};
 \draw (2,0) node[anchor=south]{2} -- ++(0,-0.5) -- ++(-2,0) node[anchor=east]{1};
 \draw[dotted] (3,0) node[anchor=south]{3} -- ++(0,-0.5) -- ++(3,0) -- ++(0,0.5) node[anchor=south]{6};
 \draw[dotted] (5,0) node[anchor=south]{5} -- ++(0,-1.5) -- ++(1,0) node[anchor=west]{1};
\draw (-1,-.75) node{$\bar \bpd=$};
\end{tikzpicture}
\caption[Example of an indexed broken pair partition]{The diagram of the $\{-1,1\}$-colored broken pair partitions $\bpd$ and $\bar \bpd$. 
Here $\bpd=\{\VP_a,f_a^{(l)},f_a^{(r)}\}_{a\in\{-1,1\}}$ and $\bar \bpd=\{\bar \VP_a,\bar f_a^{(l)},\bar f_a^{(r)}\}_{a\in\{-1,1\}}$, with the following definitions. 
For $\bpd$, $\VP_{-1}=\VP_{1}$ is the unique pair partition on the empty set and the right and left leg functions are defined by $f^{(l)}_{-1}:\emptyset\to \emptyset$,$f^{(l)}_{1}:\{2\}\to \{1\}$ given by $f^{(l)}_1(2)=1$, $f^{(r)}_{-1}:\{1,3\}\to \{1,2\}$ given by $f^{(r)}_{-1}(1)=2$ and $f^{(r)}_{-1}(3)=1$, and
$f^{(r)}_{1}:\{4\}\to \{1\}$ given by $f^{(r)}_{1}(4)=1$.
 For $\bar \bpd$, $\bar \VP_{2}=\{(1,4)\}$, $\bar \VP_{1}=\{(3,6)\}$ and the right and left leg functions are defined by $\bar f^{(l)}_{-1}:\{2\}\to \{1\}$ with $\bar f^{(l)}_{-1}(2)=1$, $\bar f^{(l)}_{1}:\emptyset\to\emptyset$, $\bar f^{(r)}_1:\emptyset\to\emptyset$, and
$\bar f^{(r)}_{1}:\{5\}\to \{1\}$ given by $\bar f^{(r)}_{1}(5)=1$.
  The solid lines represent the ``color'' $-1$ and the dotted lines represent the ``color'' $1$.}
\label{fig:bpp}
\end{figure}

In the case that $|\IndexSet|=1$, we recover the (uncolored) broken pair partitions of {Gu{\c{t}}{\u{a}} and Maassen \cite{GM1}. 
Moreover, each $\bpd\in\IBPPi$ gives for each $a\in\IndexSet$ a broken pair partition $\bpd_a$ in the sense of \cite{GM1}.
However, all but finitely many of the $\bpd_a$ are the unique broken pair partition on the empty set.

The space $\IBPPi$ can be given the structure of a semigroup with involution, similar to the $*$-semigroup of broken pair partitions of \cite{GM1}.
In terms of the diagrams, multiplication of  two $\IndexSet$-colored broken pair partitions corresponds to concatenation of diagrams.
Right legs of the first diagram are joined with left legs of the second diagram of the same color to form pairs.
In the event that the second diagram has more left legs of some color $a$ than the first diagram has right legs of color $a$, we join the right legs of the first diagram with the largest-numbered left legs of the second diagram, and the remaining left legs of the second diagram are extended to become low-numbered left legs in the product.
An analogous rule is used when the first diagram has more right legs of some color $a$ than the second diagram has left legs of color $a$.

The precise definition of the product on $\IBPPi$ is as follows.
For $i=1,2$, let $\bpd_i=(\VP_{a,i},f_{a,i}^{(l)},f_{a,i}^{(r)})_{a\in\IndexSet}$ be an $\IndexSet$-colored broken pair partition on the ordered base set $X_i$.
The product is a broken pair partition on the base set $X:=X_1\coprod X_2$ with the order relation $x<x'$ if either $x<x'$ in $X_i$ or $x\in X_1$ and $x\in X_2$.
For each $a\in\IndexSet$, define $M_a=\min(|R_{a,1}|,|L_{a,2}|)$.
Following \cite{Guta}, we define
\begin{equation}
 \bpd_1\cdot\bpd_2=(\VP_a,f_a^{(l)},f_a^{(r)})_{a\in\IndexSet},
\end{equation}
where
\begin{equation}
 \VP_a=\VP_{a,1}\cup\VP_{a,2}\cup\left\{\left((f_{a,1}^{(r)})^{-1}([|R_{a,1}|-j]),(f_{a,2}^{(l)})^{-1}([|L_{a,2}|-j])\right): j\in [M_a]\right\}
\end{equation}
and $f_{a}^{(l)}$ is defined on the disjoint union of $L_{a,1}$ and $(f_{a,2}^{(l)})^{-1}([|L_{a,2}|-M_a])$ by
\begin{equation}
 f_a^{(l)}(i)=
\begin{cases}
 f_{a,1}^{(l)}(i),&\text{if }i\in L_{a,1}\\
 f_{a,2}^{(l)}(i)+|L_{a,1}|-M_a,&\text{if } i\in (f_{a,2}^{(l)})^{-1}([|L_{a,2}|-M_a])
\end{cases}.
\end{equation}
The function of right legs, $f_a^{(r)}$ is defined on the disjoint union of $R_{a,2}$ and $(f_{a,1}^{(r)})^{-1}([|R_{a,1}|-M_a])$ by
\begin{equation}
 f_a^{(r)}(i)=
\begin{cases}
 f_{a,2}^{(r)}(i),& \text{if }i\in R_{a,2} \\
 f_{a,1}^{(r)}(i)+|R_{a,2}|-M_a,&\text{if }i\in (f_{a,1}^{(r)})^{-1}([|R_{a,1}|-M_a])
\end{cases}.
\end{equation}
An example of multiplication of $\IndexSet$-colored broken pair partitions is illustrated in Figure \ref{fig:bppm}.\footnote{The multiplication for $\IBPPi$ stated here differs slightly from that stated in \cite{Guta}. We believe that the rule stated here is the one intended by the author of that work, as it ensures that condition \eqref{eqn:intertwine} is satisfied. However, we do not believe that this discrepancy is consequential for Gu{\c{t}}{\u{a}}'s results.}
\begin{figure}

\begin{tikzpicture}[scale=0.6]
 \draw (1,0) node[anchor=south]{1} -- ++(0,-1) -- ++(4,0) node[anchor=west]{2};
 \draw[dotted] (2,0) node[anchor=south,color=black]{2}-- ++(0,-1.5) -- ++(-2,0) node[anchor=east,color=black]{1};
\draw (3,0) node[anchor=south]{3}-- ++(0,-0.5) -- ++(2,0) node[anchor=west]{1};
\draw[dotted] (4,0) node[color=black,anchor=south]{4}-- ++(0,-1.5) -- ++(1,0) node[color=black,anchor=west]{1};
\draw (5.8,-.5) node{$\cdot$};
 \draw (7.5,0) node[anchor=south]{1} -- ++(0,-1) -- ++(3,0) -- ++(0,1) node[anchor=south]{4};
 \draw (8.5,0) node[anchor=south]{2} -- ++(0,-0.5) -- ++(-2,0) node[anchor=east]{1};
 \draw[dotted] (9.5,0) node[anchor=south]{3} -- ++(0,-0.5) -- ++(3,0) -- ++(0,0.5) node[anchor=south]{6};
 \draw[dotted] (11.5,0) node[anchor=south]{5} -- ++(0,-1.5) -- ++(1,0) node[anchor=west]{1};
\draw (13.25,-.5) node{$=$};
 \draw (15,0) node[anchor=south]{1} -- ++(0,-0.5) -- ++(5,0) -- ++(0,0.5) node[anchor=south]{6};
\draw[dotted] (16,0) node[anchor=south]{2}-- ++(0,-1.5) -- ++(-2,0) node[anchor=east]{1};
\draw (17,0) node[anchor=south]{3}-- ++(0,-1) -- ++(8,0) node[anchor=west]{1};
\draw[dotted] (18,0) node[anchor=south]{4}-- ++(0,-2.5) -- ++(7,0) node[anchor=west]{2};
\draw (19,0) node[anchor=south]{5} -- ++(0,-0.25) -- ++(3,0) -- ++(0,0.25) node[anchor=south]{8};
\draw[dotted] (21,0) node[anchor=south]{7} -- ++(0,-0.5) -- ++(3,0) -- ++(0,0.5) node[anchor=south]{10};
\draw[dotted] (23,0) node[anchor=south]{9}-- ++(0,-2) -- ++(2,0) node[anchor=west]{1};
\end{tikzpicture}
 \caption[Multiplication of indexed broken pair partitions]{The multiplication $\bpd\cdot\bar \bpd$ of the $\{-1,1\}$-colored broken pair partitions defined in Figure \ref{fig:bpp}.}
\label{fig:bppm}
\end{figure}
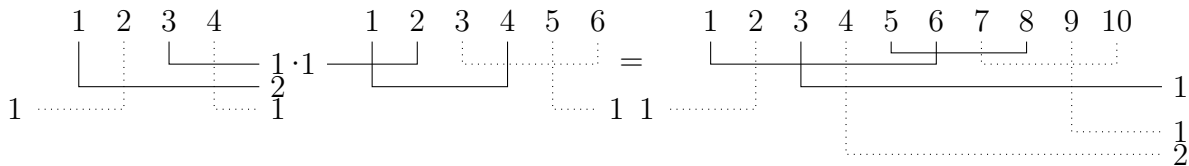

The involution is given by mirror reflection of the $\IndexSet$-colored broken pair partitions. 
Formally, if $\bpd=(\VP_a,f_a^{(l)},f_a^{(r)})_{a\in\IndexSet}$ with underlying set $X$ then $\bpd^*=(\VP_a^*,f_a^{(r)},f_{a}^{(l)})_{a\in\IndexSet}$ is an $\IndexSet$-colored broken pair partition with underlying set $X^*$, the same as $X$ but with the order reversed, where $\VP_a^*=\{(i,j):(j,i)\in\VP_a\}$.
The involution is illustrated in Figure \ref{fig:inv}.

\begin{figure}
 \begin{tikzpicture}
 \draw (-1,0) node[anchor=south]{6} -- ++(0,-1) -- ++(-3,0) -- ++(0,1) node[anchor=south]{3};
 \draw (-2,0) node[anchor=south]{5} -- ++(0,-0.5) -- ++(2,0) node[anchor=west]{1};
 \draw[dotted] (-3,0) node[anchor=south]{4} -- ++(0,-0.5) -- ++(-3,0) -- ++(0,0.5) node[anchor=south]{1};
 \draw[dotted] (-5,0) node[anchor=south]{2} -- ++(0,-1.5) -- ++(-1,0) node[anchor=east]{1};
\draw (-7,-.75) node{$\bar \bpd^*=$};
\end{tikzpicture}
\caption[Involution of an indexed broken pair partition]{The involution of the $\{-1,1\}$-colored broken pair partition $\bar \bpd$ depicted in Figure \ref{fig:pp}.}
\label{fig:inv}
\end{figure}
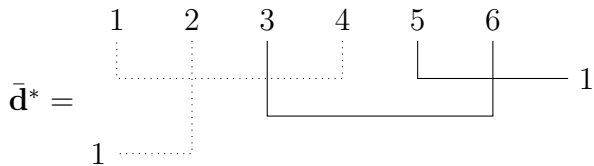

For each function $\bn:\IndexSet\to\BN$ which is zero except on finitely many elements of $\IndexSet$, let $\IBPPn$ be the subset of $\IBPPi$ consisting of elements having exactly $\bn(a)$ left legs of color $a$ and no right legs.

%Begin standard form.
Let $d_{a}$ be the unique element of $\IBPPi$ with no right legs, no pairs, and only one left leg, colored $a\in \IndexSet$.
We call $d_a$ the $a$-colored left hook and $d_a^*$ the $a$-colored right hook.

An $\IndexSet$-colored broken-pair partition can be written as a sequence of left hooks, followed by permutations acting on the legs of the same color, followed by right hooks connecting with left legs of the same index, possibly followed by additional sequences of left hooks, permutations, and right hooks.
The ``standard form'' of an element of $\IPPi$ is the sequence of this form such that if two like-colored pairs cross, they do so in the rightmost permutation possible.
Figure \ref{fig:stdform} depicts the standard form of one example.

\begin{figure}
 \begin{tikzpicture}
  \draw (0,0) node[anchor=south]{1} -- ++(0,-1) -- ++(3.5,0) -- ++(0.5,0.5) -- ++(0.5,0) --++(0,.5) node[anchor=south]{5};
  \draw (1,0) node[anchor=south]{2} -- ++(0,-0.5) -- ++(2.5,0) -- ++(0.5,-0.5) -- ++(3.0,0) -- ++(0.5,0.5) --++(0.5,0)--++(0,0.5) node[anchor=south]{8};
  \draw[style=dotted] (2,0) node[anchor=south]{3} -- ++(0,-2.0) -- ++(1.5,0) -- ++(0.5,0.5) -- ++(1.5,0) --++(0,1.5) node[anchor=south]{6};
  \draw[style=dotted] (3,0) node[anchor=south]{4} -- ++(0,-1.5) -- ++(0.5,0) -- ++(0.5,-0.5) -- ++(6,0) --++(0,2) node[anchor=south]{10};
  \draw (6.5,0) node[anchor=south]{7} -- ++(0,-0.5)-- ++(0.5,0)-- ++(0.5,-0.5)--++(1.5,0)--++(0,1) node[anchor=south]{9};
  \draw[style=dashed] (3.5,0.25)-- ++(0,-2.5);
  \draw[style=dashed] (4.0,0.25)-- ++(0,-2.5);
  \draw[style=dashed] (7,0.25)-- ++(0,-2.5);
  \draw[style=dashed] (7.5,0.25)-- ++(0,-2.5);
 \end{tikzpicture}
\caption[Standard form of an indexed pair partition]{The standard form of the pair partition $(\VP,c)\in\IPP(10)$ with $\IndexSet=\{-1,1\}$ and $\VP=\{(1,5),(2,8),(3,6),(4,10),(7,9)\}$ and $c((1,5))=c((2,8))=c((7,9))=-1$ and $c((3,6))=c((4,10))=1$. The solid lines represent the ``color'' $-1$ and the dotted lines represent the ``color'' $1$.}
\label{fig:stdform}
\end{figure}

As in \cite{Guta}, we can use the standard form of $(\VP,c)$ for $\VP\in \PP(2m)$ to compute the value of $\Pt((\VP,c))$ as follows.
Consider $c$ as a function $[2m]\to \IndexSet$ taking the same value on points belonging to the same pair of $\VP$.
Partition $[2m]$ into $2t$ blocks $B_i^{(r)}$, $B_i^{(l)}$ for $i=1,\ldots, t$ such that the $B_i^{(l)}$ contain left legs of the pairs of $\VP$ and the $B_i^{(r)}$ contain right legs of the pairs of $\VP$.
Write $B_i^{(r)}:=\{k_{i-1},\ldots, p_{i}\}$ and $B_j^{(l)}:=\{p_{i+1},\ldots, k_{i}\}$ with $k_0=1$ and $k_r=2m$.
Then there are permutations $\pi_j$ such that
\begin{equation}\label{eqn:stdform}
(\VP,c)= \prod_{l=1}^{p_1}d_{c(l)}^* U_{\bn_1}(\pi_1)\prod_{l=p_1+1}^{k_1}d_{c(l)}\cdots U_{\bn_r}(\pi_r)\prod_{l=p_t+1}^{2m} d_{c(l)}
\end{equation}
The function on pair partitions can then be calculated as
\begin{equation}\label{eqn:pairstd}
 \Pt_{V,j}((\VP,c))=\left<\xi_{V}, \prod_{l=1}^{p_1}j_{c(l)}^* U_{\bn_1}(\pi_1)\prod_{l=p_1+1}^{k_1}j_{c(l)}\cdots U_{\bn_r}(\pi_r)\prod_{l=p_t+1}^{2m} j_{c(l)}\xi_{V}\right>. 
\end{equation}

An $\IndexSet$-colored pair partition $\VP$ can be considered as an element of $\IBPPi$ having no left or right legs in the obvious way. A function $\Pt:\IPPi\to\BC$ thus extends to a function $\hat\Pt:\IBPPi\to\BC$ by
\begin{equation}
 \hat\Pt(\bpd)=
\begin{cases}
 \Pt(\bpd),&\text{if }\bpd\in\IPPi,\\
0,&\text{otherwise}
\end{cases}.
\end{equation}
The following is an $\IndexSet$-indexed generalization of Theorem 3.2 of \cite{GM1}.
\begin{theorem}\label{thm:pd}
 A function $\Pt:\IPPi\to\BC$ is positive definite if $\hat\Pt:\IBPPi\to\BC$ is positive definite in the usual sense of positive definiteness for a function on a semigroup with involution.
\end{theorem}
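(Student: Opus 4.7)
The plan is to adapt the proof of Theorem 3.2 of \cite{GM1} to the $\IndexSet$-indexed setting. The idea is to set up an explicit correspondence between elements of the $*$-semigroup $\IBPPi$ and a distinguished family of monomials in $\COAI(\HH)$, chosen on mutually orthogonal basis vectors, so that $\rho_\Pt$ computed on these monomials agrees with $\hat\Pt$ on products in $\IBPPi$. Concretely, for each $\bpd = (\VP_a, f_a^{(l)}, f_a^{(r)})_{a\in\IndexSet} \in \IBPPi$ with underlying ordered set $[n]$, I would construct a monomial $M(\bpd) \in \COAI(\HH)$ of length $n$ as follows: at a position $p \in L_a$ insert $a_a(f_{a, f_a^{(l)}(p)})$; at a position $p \in R_a$ insert $a_a^*(f_{a, f_a^{(r)}(p)})$; at the two positions forming a pair $(k,r) \in \VP_a$, insert matched $a_a^*(g)$ and $a_a(g)$ for an auxiliary basis vector $g$ reserved for this pair. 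The external vectors $f_{a,i}$ and the internal vectors $g$ are chosen jointly orthonormal and disjoint across different $\bpd$'s.

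Expanding $\rho_\Pt(M(\bpd_1)^* M(\bpd_2))$ via \eqref{eqn:fock}, the delta factors together with orthogonality of the basis force exactly one surviving indexed pair partition, which I would identify as the image in $\IPPi$ of $\bpd_1^* \cdot \bpd_2$, with the expectation being zero unless this product has no remaining free legs. This yields the identity
\begin{equation}
\rho_\Pt(M(\bpd_1)^* M(\bpd_2)) = \hat\Pt(\bpd_1^* \cdot \bpd_2).
\end{equation}
Granted the identity, the ``if'' direction follows immediately: for any finite linear combination $X = \sum_k c_k M(\bpd_k)$ one computes $\rho_\Pt(X^*X) = \sum_{k,l} \overline{c_k} c_l \hat\Pt(\bpd_k^* \cdot \bpd_l) \ge 0$, and since $\HH$ is infinite-dimensional, every element of $\COAI(\HH)$ is such a linear combination after a suitable unitary change of basis on $\HH$ (which leaves $\rho_\Pt$ invariant, as observed in the proof of Theorem \ref{thm:hs}). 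For the converse direction implicit in Remark \ref{rk:diff}, I would use Theorems \ref{thm:pf} and \ref{thm:hs} to realize $\rho_\Pt = \rho_{V,j}$ on a Fock-like space, then extend formula \eqref{eqn:pairstd} to broken pair partitions in standard form to write $\hat\Pt(\bpd) = \langle \xi_V, T_\bpd \xi_V\rangle$ for an operator $T_\bpd$ built from the transition maps $j_b$ and the representations $U_\bn$. Checking $T_{\bpd_1^* \cdot \bpd_2} = T_{\bpd_1}^* T_{\bpd_2}$ on the standard form then yields $\hat\Pt(\bpd_1^* \cdot \bpd_2) = \langle T_{\bpd_1}\xi_V, T_{\bpd_2}\xi_V\rangle$, from which positive definiteness on the $*$-semigroup is immediate.

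The main obstacle is the combinatorial bookkeeping in the identification step: the semigroup multiplication in $\IBPPi$ (illustrated in Figure \ref{fig:bppm}) joins right legs of $\bpd_1^*$ with left legs of $\bpd_2$ of the same color from the outside in, extending any unmatched legs, and I must verify that the unique pair partition surviving in the expansion of $\rho_\Pt(M(\bpd_1)^* M(\bpd_2))$ is exactly the one produced by this rule. Because the matching is color-local and orthogonality across colors is automatic from the $\delta_{i_l, c(l,r)}\delta_{i_r, c(l,r)}$ factors in \eqref{eqn:fock}, the verification reduces essentially to the single-process argument of \cite{GM1} applied in parallel over $\IndexSet$, together with careful tracking of the leg-ordering conventions that make the multiplication of $\IndexSet$-indexed broken pair partitions well-defined as in \cite{Guta}.
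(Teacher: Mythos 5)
Your reduction of positivity of $\rho_\Pt$ to positivity of the semigroup kernel has a genuine gap at the final step. Granting your identity $\rho_\Pt(M(\bpd_1)^*M(\bpd_2))=\hat\Pt(\bpd_1^*\cdot\bpd_2)$, you only get $\rho_\Pt(X^*X)\ge 0$ for $X$ in $\linspan\{M(\bpd):\bpd\in\IBPPi\}$, and the claim that every element of $\COAI(\HH)$ lies in this span after a unitary change of basis is false. In the free $*$-algebra, words in a fixed orthonormal family are linearly independent, and in every $M(\bpd)$ each leg vector $f_{a,j}$ occurs at most once with a given adjoint type (the leg functions are bijections) while each auxiliary vector occurs exactly once as a creator and once as an annihilator; hence a word such as $a_b^*(h)a_b^*(h)$ is not in the span, and applying the automorphism $a^e_b(f)\mapsto a^e_b(Uf)$ and expanding $Uh$ in the basis always leaves diagonal terms $a_b^*(h_i)a_b^*(h_i)$ that no linear combination of $M(\bpd)$'s can produce. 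There is also no topology available before positivity is established, so no approximation argument can substitute. Transferring positivity from the distinguished monomials to all of $\COAI(\HH)$ is exactly the content of the theorem, and the paper does it by a different mechanism: from positive definiteness of $\hat\Pt$ it takes the GNS representation $\chi$ of the $*$-semigroup $\IBPPi$, decomposes the representation space as $\bigoplus_\bn V_\bn$ with the $S_\bn$-actions and transition maps $j_b=\chi(d_b)$, forms the Fock-like space $\FS_V(\HH)$, and invokes Theorem \ref{thm:pf} so that $\rho_\Pt$ is realized as a vacuum vector state (automatically positive on the whole algebra), the identification $\Pt'=\Pt$ being made through the standard form \eqref{eqn:stdform}--\eqref{eqn:pairstd}.

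Separately, the key identity is false as you set it up, for two reasons. First, the orientation of your dictionary is backwards for the paper's conventions: in \eqref{eqn:fock} a pair contributes only with the annihilator at the left point and the creator at the right point, and the left hook $d_a$ corresponds to the level-raising map $j_a$, so left legs must be sent to creation operators and right legs to annihilation operators; with your assignment, already $\bpd_1=\bpd_2=d_a$ gives $\rho_\Pt(a_a^*(f_{a,1})a_a(f_{a,1}))=0$ while $\hat\Pt(d_a^*\cdot d_a)=\Pt$ of the single $a$-colored pair. Second, using one family $f_{a,\cdot}$ for both left and right legs creates spurious contractions inside a single diagram: for the two-point diagram $\bpd_0$ with one left leg and one right leg of color $a$ (ordered so that your word is an annihilator followed by a creator on $f_{a,1}$), $\rho_\Pt(M(\bpd_0))=\Pt(\{(1,2)\},a)\ne 0$ although $\hat\Pt(\bpd_0)=0$, since legs of one factor are never joined to each other by the semigroup product. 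Both defects are repairable (swap the roles of creators and annihilators, and use disjoint orthonormal families for left legs, right legs, and internal pairs, per color), and your sketch of the converse direction via Theorems \ref{thm:pf} and \ref{thm:hs} is in line with the paper's argument; but these repairs do not close the spanning gap above, which is where the proof of the stated direction actually fails.
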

The proof is very similar to the proof of Theorem 3.2 of \cite{GM1}, but we include it for completeness.
\begin{proof}
Suppose that $\hat\Pt$ is positive definite on the $*$-semigroup $\IBPPi$.
Then there is a representation $\chi$ of $\IBPPi$ on a complex Hilbert space $V$ having cyclic vector $\xi\in V$ such that
\begin{equation}
 \left<\xi,\chi(d)\xi\right>=\hat\Pt(d)
\end{equation}
for all $d\in\IBPPi$.

The complex Hilbert space $V$ is expressible as a direct sum
\begin{equation}
 V=\bigoplus_{\bn} V_\bn,
\end{equation}
where the sum is over functions $\bn:\IndexSet\to \BN$ with only finitely many nonzero values and 
\begin{equation}
V_\bn=\overline{\linspan\{\chi_t(d)\xi:d\in\IBPPn\}}.
\end{equation}
The action of $S_\bn$ on $\IBPPn$ (by permutation of the left legs) gives a unitary representation $U_\bn$ of $S_\bn$ on $V_\bn$.
Restriction of $j_b:=\chi(d_b)$ (where, as before $d_b$ is the $b$-colored left hook) gives a map $j_b:V_\bn\to V_{\bn+\delta_b}$ satisfying \eqref{eqn:intertwine}.
Choose a unit vector $\xi_V\in V_0$.

Let $\HH$ be an infinite-dimensional complex Hilbert space. Using the $U_\bn$, $V_\bn$ and $j_b$, we can construct the Fock space $\FS_V(\HH)$ and the algebra $\COA_{V,j}(\HH)$ with vacuum vector $\Omega_V$.
By Theorem \ref{thm:pf}, the vacuum state is a Fock state arising from some positive definite function $\Pt':\IBPPi\to\BC$.
It will suffice to show that $ \Pt'=\Pt$, whence it will follow that $\Pt$ is positive definite.
In fact, this follows from Theorem 2.3 of \cite{Guta}, but we also provide a proof for completeness.

Given an $\IndexSet$-indexed pair partition $(\VP,c)\in\IPP(2m)$ with $\VP=\{(l_1,r_1),\ldots,(l_m,r_m)\}$, let the standard form of $(\VP,c)$ be
\begin{equation}
 (\VP,c)=\prod_{l=1}^{p_1}d_{c(l)}^* U_{\bn_1}(\pi_1)\prod_{l=p_1+1}^{k_1}d_{c(l)}\cdots U_{\bn_r}(\pi_r)\prod_{l=p_t+1}^{2m} d_{c(l)}.
\end{equation}
Let $\HH=\ell^2(\BZ)$ have an orthonormal basis $(f_k)_{k=1}^\infty$, and choose a monomial 
\begin{equation}
M:=\prod_{k=1}^{2m}a_{b_k}^{e_k}(f_{i_k}),
\end{equation}
where $i_k$ is chosen such that either $k=l_{i_k}$ or $k=r_{i_k}$, and $b_k=c(l_{i_k},r_{i_k})$, and $e_k=1$ if $k=l_{i_k}$ and $e_k=2$ if $k=r_{i_k}$.
From the definition of a Fock state,
\begin{equation}
\Pt'(\VP, c)=\left<\Omega_V,M\Omega_V \right>.
\end{equation}
Using the definition of the creation operator and \eqref{eqn:intertwine}, we get
\begin{equation}
\begin{split}
\Pt'(\VP, c)&= \prod_{l=1}^{p_1}d_{c(l)}^* U_{\bn_1}(\pi_1)\prod_{l=p_1+1}^{k_1}d_{c(l)}\cdots U_{\bn_r}(\pi_r)\prod_{l=p_t+1}^{2m} d_{c(l)}\\
&=\hat \Pt(\VP, c)\\
&=\Pt(\VP, c).
\end{split}
\end{equation}
This completes the proof that $\Pt:\IPPi\to\BC$ is positive definite.

Suppose now that $\Pt:\IPPi\to\BC$ is positive definite.
Applying Theorem \ref{thm:hs} we get complex Hilbert spaces $V_\bn$ with representations $U_\bn$ of $S_\bn$ and densely defined maps $j_b:V_\bn\to V_{\bn+\delta_b}$ satisfying the intertwining relation \eqref{eqn:intertwine}.
Let
\begin{equation}
V=\bigoplus_{\bn}V_{\bn}.
\end{equation}
Since the left hooks $\{d_b:b\in\IndexSet\}$ and the actions $\lambda_\bn$ of the symmetric groups $S_\bn$ on $\IBPPn$ generate $\IBPPi$, we have a representation $\chi$ of $\IBPPi$, and it is easily verified that $\left<\xi,\chi(d)\xi\right>=\hat \Pt(d)$ for any unit vector $\xi\in V_0$ and any $d\in \IBPPi$, whence $\hat\Pt$ is positive definite.
\end{proof}

Gu{\c{t}}{\u{a}} \cite{Guta} considered the case in which the $V_\bn$ and the $j_a$ are defined as follows.
Let $\Pt:\IPPi\to\BC$ be a positive definite function.
As before, denote by $\IBPP(\bn,\bzero)$ the set of $\bpd\in\IBPPi$ having $|R_a|=0$ and $|L_a|=\bn(a)$ for each $a\in\IndexSet$.
Consider the GNS representation $(\chi_\Pt, V,\xi_\Pt)$ of $\IBPPn$ with respect to $\hat \Pt$, characterized by
\begin{equation}
 \left<\chi_\Pt(\bpd_1)\xi_\Pt,\chi_\Pt(\bpd_2)\xi_\Pt\right>_V=\hat \Pt(\bpd_1^*\bpd_2).
\end{equation}
The complex Hilbert space $V$ is given by
\begin{equation}
 V:=\bigoplus_{\bn} V_{\bn}\quad\text{where}\quad V_\bn=\overline{\linspan\{\chi_\Pt(\bpd)\xi_\Pt:\bpd\in\IBPPn\}}.
\end{equation}
Each $V_\bn$ has a representation of $S_\bn$ with $S_{\bn(a)}$ acting by permuting the left legs of $\IBPPn$ of color $a$.
That is for $\pi=(\pi_a)_{a\in \IndexSet}\in S_\bn$ and $(\VP_a,f_a^{(l)},f_a^{(r)})_{a\in\IndexSet}\in \IBPPn$,
\begin{equation}
 U_\bn(\pi)(\VP_a,f_a^{(l)},f_a^{(r)})_{a\in\IndexSet}=(\VP_a,\pi_a^{-1}\circ f_a^{(l)},f_a^{(r)})_{a\in\IndexSet}.
\end{equation}
We denote by $\FS_\Pt(\HH)$ the Fock-like space arising from using these $V_\bn$ in \eqref{eqn:focklike}
\begin{equation}
  \FS_{\Pt}(\HH):=\bigoplus_{\bn}\frac{1}{\bn!}V_\bn\otimes_s\bigotimes\HH^{\otimes\bn}.
\end{equation}
The creation and annihilation operators on $\FS_\Pt(\HH)$ will be assumed to be those associated to the following operators $j_a$.
For $a\in\IndexSet$, denote by $j_a$ the operator $\chi_\Pt(\bpd_{a})$, where $\bpd_{a}$ is the broken pair partition with no pairs, no right legs, and one $a$-colored left leg.

We can now state the following theorem of Gu{\c{t}}{\u{a}} \cite{Guta}.
\begin{theorem}
 Let $f_1,\ldots, f_n$ be vectors in a complex Hilbert space $\HH$. Then the expectation values with respect to the vacuum state $\rho_\Pt$ of the monomials in creation and annihilation operators on the Fock space $\FS_\Pt(\HH)$ have the expression
\begin{equation}
 \rho_\Pt\left(\prod_{i=1}^m a_{b_i}^{e_i}(f_i)\right)=\sum_{(\VP,c)\in\IPP(n)}\Pt(\VP,c)\prod_{(i,j)\in\VP}\left<f_i,f_j\right>\delta_{b_i,b_j}B_{e_ie_j},
\end{equation}
where the $e_i$ are chosen from $\{1,2\}$ and
\begin{equation}
B:= \begin{pmatrix}
     0&1\\
     0&0
    \end{pmatrix}.
\end{equation}
\end{theorem}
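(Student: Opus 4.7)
The plan is to combine Theorem \ref{thm:pf} with the standard-form formula \eqref{eqn:pairstd}. Since this theorem is essentially a specialization of Theorems \ref{thm:pf} and \ref{thm:pd} to the particular $V_\bn$ and $j_a$ arising from the GNS construction for $\hat\Pt$ on $\IBPPi$, the argument will mostly consist of unwinding definitions.

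First I would verify that the given data $(V_\bn, U_\bn, j_a)$ satisfy the intertwining relation \eqref{eqn:intertwine}. This is immediate from the fact that the $*$-semigroup $\IBPPi$ is generated by the left hooks $d_a$ and the actions $\lambda_\bn$ of $S_\bn$, together with the relation in $\IBPPi$ expressing that pre-composing $\bpd_a$ with a permutation of the left legs of a broken pair partition agrees with the natural embedding $\iota_\bn^{(a)}$. Since $j_a = \chi_\Pt(\bpd_a)$ and $U_\bn(\sigma) = \chi_\Pt(\lambda_\bn(\sigma))$, applying $\chi_\Pt$ to both sides of the corresponding identity in $\IBPPi$ gives \eqref{eqn:intertwine}. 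Theorem \ref{thm:pf} then supplies a positive definite function $\Pt':\IPPi \to \BC$ such that $\rho_\Pt$ is the Fock state $\rho_{\Pt'}$; it remains to show $\Pt' = \Pt$.

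For this, fix $(\VP,c) \in \IPP(2m)$ and write it in standard form \eqref{eqn:stdform} as an alternating product of left hooks $d_{c(l)}$, symmetric-group elements $U_{\bn_j}(\pi_j)$, and right hooks $d_{c(l)}^*$. By the formula \eqref{eqn:pairstd},
\begin{equation*}
\Pt'((\VP,c)) = \left\langle \xi_\Pt,\, \prod_{l=1}^{p_1} j_{c(l)}^* \cdot U_{\bn_1}(\pi_1) \cdot \prod_{l=p_1+1}^{k_1} j_{c(l)} \cdots U_{\bn_r}(\pi_r) \cdot \prod_{l=p_t+1}^{2m} j_{c(l)}\, \xi_\Pt \right\rangle.
\end{equation*}
Since $\chi_\Pt$ is a $*$-representation, $j_{c(l)}^* = \chi_\Pt(d_{c(l)}^*)$, and $U_{\bn_j}(\pi_j) = \chi_\Pt(\lambda_{\bn_j}(\pi_j))$, the operator inside the inner product equals $\chi_\Pt((\VP,c))$, viewed as an element of $\IBPPi$ via its standard-form decomposition. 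By the defining property of the GNS representation for $\hat\Pt$, this matrix element equals $\hat\Pt((\VP,c)) = \Pt((\VP,c))$.

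The main obstacle is conceptual rather than technical: one must be comfortable viewing $(\VP,c)$ simultaneously as an element of $\IPPi$ and as an element of the $*$-semigroup $\IBPPi$ (with empty sets of left and right legs), and interpreting \eqref{eqn:stdform} as an actual factorization in $\IBPPi$ that is respected by $\chi_\Pt$. Once the correspondence $j_a \leftrightarrow \chi_\Pt(d_a)$, $j_a^* \leftrightarrow \chi_\Pt(d_a^*)$, $U_\bn(\pi) \leftrightarrow \chi_\Pt(\lambda_\bn(\pi))$ is set up carefully, everything reduces to invoking \eqref{eqn:pairstd} and the GNS property. No positivity argument beyond Theorem \ref{thm:pf} is needed, since $\Pt$ is positive definite by hypothesis and this is what permitted forming the GNS data in the first place.
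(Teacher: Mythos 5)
Your proposal is correct and takes essentially the same approach found in the paper: the paper states this result as a theorem of Gu{\c{t}}{\u{a}} without a separate proof, but the identical argument---checking \eqref{eqn:intertwine} for $j_a=\chi_\Pt(\bpd_a)$, invoking Theorem \ref{thm:pf} to obtain a Fock state $\rho_{\Pt'}$, and then identifying $\Pt'=\Pt$ by writing $(\VP,c)$ in standard form \eqref{eqn:stdform} and using the GNS property as in \eqref{eqn:pairstd}---is exactly what appears in the first half of the paper's proof of Theorem \ref{thm:pd}. No gaps.
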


%%%%%%%%%%%%%%%%%%%%%%%%%%%%%%%%%%%%%%%%%%%%%%%%%%%%%%%%%%%%%%
\secondarydivision{Factor representations of $S_\infty$}
In \thesis{\primarydivname\ \ref{pri:spherical}}\paper{\primarydivnamepl\ \ref{pri:tp} and \ref{pri:spherical}}, we will take an interest in noncommutative generalized Brownian motions which are related to factor representations of the group $S_\infty$ of permutations of $\BN$ which fix all but finitely many points.
Here we  briefly recall some relevant background information pertaining to those representations.

The finite factor representations of a group are determined by the group's characters, that is, the positive, normalized indecomposable functions which are constant on conjugacy classes.
In the case of $S_\infty$, the characters are given by the following famous result.
\begin{theorem}[Thoma's Theorem \cite{Thoma}]\label{thm:thoma}
 The normalized finite characters of $S_\infty$ are given by the formula
\begin{equation}
 \phi_{\alpha,\beta}(\sigma)=\prod_{m\ge2}\left(\sum_{i=1}^\infty\alpha_i^m+(-1)^{m+1}\sum_{i=1}^\infty\beta_i^m\right)^{\rho_m(\sigma)}
\end{equation}
where $\rho_m(\sigma)$ is the number of cycles of length $m$ in the permutation $\sigma$, and $(\alpha_i)_{i=1}^\infty$ and $(\beta_i)_{i=1}^\infty$ are decreasing sequences of positive real numbers such that $\sum_{i}\alpha_i+\sum_i\beta_i\le 1$.
\end{theorem}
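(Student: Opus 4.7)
The plan is to follow the Vershik--Kerov approach, recasting the classification of finite characters of $S_\infty$ as a problem about ergodic central measures on the Young graph $\mathbb{Y}$. First I would observe that any normalized character $\phi$ is determined by its values on conjugacy classes, which in $S_\infty$ are indexed by the cycle-type away from fixed points; thus $\phi$ is encoded by its values $\phi(c_m)$ on single $m$-cycles for $m \ge 2$. The target identity factorizes as a product over cycle lengths, so the problem reduces to determining which multiplicative expressions in the $\phi(c_m)$ can arise from an indecomposable character.

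Next I would restrict $\phi$ to the finite subgroups $S_n$. Because $\phi|_{S_n}$ is a central positive-definite function, it decomposes as $\phi|_{S_n} = \sum_{\lambda \vdash n} M_n(\lambda)\, \chi^\lambda / \dim \lambda$ where $\chi^\lambda$ is the irreducible character indexed by $\lambda$. The consistency of these decompositions under the branching $S_n \hookrightarrow S_{n+1}$ is exactly the statement that $M_n$ is a harmonic (central) probability measure on the $n$th level of $\mathbb{Y}$ with respect to the transition kernel given by branching multiplicities. The set of characters is thus in bijection with the Choquet simplex of central probability measures on path space of $\mathbb{Y}$, and indecomposable (extremal) characters correspond to ergodic central measures.

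The main step is to classify these ergodic measures. The Vershik--Kerov ring theorem asserts that an ergodic central measure is determined by the almost-sure limits $\alpha_i = \lim_{n\to\infty} \lambda_i^{(n)}/n$ and $\beta_i = \lim_{n\to\infty} (\lambda^{(n)})'_i / n$ of the normalized row and column lengths of a $\mu$-random path $(\lambda^{(n)})$, and that every decreasing pair $(\alpha,\beta)$ with $\sum \alpha_i + \sum \beta_i \le 1$ arises. To get the explicit Thoma formula, one then computes the value $\phi(c_m)$ for such an ergodic measure by expressing the normalized irreducible character $\chi^\lambda(c_m)/\dim\lambda$ in terms of the power sum $p_m$ of the modified Frobenius coordinates of $\lambda$; passing to the limit and using that $p_m$ converges to $\sum_i \alpha_i^m + (-1)^{m+1}\sum_i \beta_i^m$ produces the stated multiplicative formula.

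The hard part is precisely the passage to the limit in the preceding step: controlling $\chi^\lambda(c_m)/\dim\lambda$ uniformly as $|\lambda|\to\infty$ for $\lambda$ drawn from an arbitrary ergodic central measure. This is where genuine input is required, whether via Vershik--Kerov's asymptotics for normalized characters on cycles, Okounkov's spectral/shifted-Schur-function approach, or Kerov--Olshanski's algebra of polynomial functions on the Young graph; once such an asymptotic formula is available, the Thoma parametrization and the multiplicativity over cycle types fall out immediately. Since this is a classical theorem well beyond the scope of the present paper, in practice I would simply cite \cite{Thoma} (and refer the reader to \cite{VK} for the Vershik--Kerov proof) and proceed.
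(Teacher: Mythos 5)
The paper does not prove Thoma's Theorem at all: it is quoted as classical background with a citation to \cite{Thoma}, exactly as you propose to do, so your approach coincides with the paper's. Your sketch of the Vershik--Kerov route (characters as central measures on the Young graph, ergodicity, limits of normalized row and column lengths, and asymptotics of $\chi^\lambda(c_m)/\dim\lambda$) is an accurate outline of the standard proof, but for the purposes of this paper the citation suffices.
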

The pairs of sequences $(\alpha_i)_{i=1}^\infty$ and $(\beta_i)_{i=1}^\infty$ satisfying the conditions in Theorem \ref{thm:thoma} are commonly called Thoma parameters.

We now recall Vershik and Kerov's representation of the symmetric group $S_n$ (for $n\in\{0,1,2,\ldots,\infty\}$) \cite{VK}.
\begin{notation}\label{not:vk}
Fix sequences $(\alpha_i)_{i=1}^\infty$ and $(\beta_i)_{i=1}^\infty$, and let $\gamma = 1-\sum_{i}\alpha_i-\sum_i\beta_i$ and let $\SN_+$ and $\SN_-$ be two copies of the set $\BN=\{1,2,\ldots\}$.
Let $Q:=\SN_+\cup\SN_-\cup[0,\gamma]$, and define a measure $\mu$ on $Q$ to be the Lebesgue measure on $[0,\gamma]$ and such that $\mu(i)=\alpha_i$ for $i\in\SN_+$ and $\mu(j)=\beta_j$ for $j\in\SN_-$.
Let $\SX_n$ denote the $n$-fold Cartesian product of $Q$ with the product measure $m_n=\prod_1^n\mu$, and let $S_n$ act on $\SX_n$ by $\sigma(x_1,\ldots, x_n)=(x_{\sigma^{-1}(1)},\ldots,x_{\sigma^{-1}(n)})$.
For $x,y\in \SX_n$, say that $x\sim y$ if there exists $\sigma\in S_n$ such that $x=\sigma y$.
Let $\tilde \SX_n=\{(x,y)\in \SX_n\times\SX_n: x\sim y\}$.
The complex Hilbert space $V_n^{(\alpha,\beta)}$  defined by
\begin{equation}\label{eqn:vkhs}
 V_n^{(\alpha,\beta)}:=\left\{f:\tilde \SX_n\to\BC\vert\infty>\|f\|^2=\int_{\SX_n}\sum_{y\sim x}|f(x,y)|^2dm_n^{(\alpha,\beta)}(x)\right\}
\end{equation}
carries a unitary representation $U_{n}^{(\alpha,\beta)}$ of $S(n)$ given by
\begin{equation}
 (U_n^{(\alpha,\beta)}(\sigma)h)(x,y)=(-1)^{i(\sigma,x)}h(\sigma^{-1}x,y),
\end{equation}
where $i(\sigma,x)$ is the number of inversions in the sequence $(\sigma i_1(x),\sigma i_2(x),\ldots)$ of indices $i_r(x)$ for which $\sigma x_i\in\SN_-$.
Denote by $\bone_n$ the indicator function of the diagonal $\{(x,x)\}\subset \tilde \SX_n$.
\end{notation}
Vershik and Kerov showed the following.
\begin{theorem}[\cite{VK}]
 On $V_n^{(\alpha,\beta)}$,
\begin{equation}
 \left\langle U_n^{(\alpha,\beta)}(\sigma)\bone_n,\bone_n\right\rangle = \phi_{\alpha,\beta}(\sigma).
\end{equation}
For $n=\infty$ we get the representation of $S_\infty$ associated to $\phi_{\alpha,\beta}$ in the convex hull of $\bone_\infty$.
\end{theorem}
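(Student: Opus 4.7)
My plan is to compute the matrix coefficient on the left side directly from the definitions and match the result to Thoma's formula. First I would expand the inner product using the definition of the Hilbert space in \eqref{eqn:vkhs}. Since $\bone_n$ is supported on the diagonal $\{(x, x)\}$ of $\tilde \SX_n$, only the term $y = x$ survives in the summation over $y \sim x$, reducing the matrix coefficient to
\begin{equation*}
\langle U_n^{(\alpha, \beta)}(\sigma) \bone_n, \bone_n \rangle = \int_{\SX_n} (-1)^{i(\sigma, x)} \bone_n(\sigma^{-1} x, x) \, dm_n^{(\alpha, \beta)}(x).
\end{equation*}
The indicator $\bone_n(\sigma^{-1} x, x)$ vanishes unless $\sigma^{-1} x = x$ as tuples in $Q^n$, i.e., unless $x$ is constant along each cycle of $\sigma$.

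Next I would decompose this fixed-point set according to the cycle structure of $\sigma$ and observe that the integrand factors into a product of independent contributions, one per cycle. On a cycle of length $m$, the common value ranges over $Q$, and the measure of the set of $m$-tuples $(q, q, \ldots, q)$ under $\mu^{\otimes m}$ equals $\sum_i \alpha_i^m + \sum_j \beta_j^m$ for $m \geq 2$ (since the continuous component of $\mu$ contributes measure zero on such diagonals) and equals $\mu(Q) = 1$ for $m = 1$. Cycles of length $1$ therefore contribute a trivial factor, consistent with the fact that the product in Thoma's formula starts at $m \geq 2$.

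The main obstacle, as I see it, is keeping careful track of the sign $(-1)^{i(\sigma, x)}$. The key observation is that on the fixed set of $\sigma$, the set of indices $i$ with $x_i \in \SN_-$ is $\sigma$-invariant and hence a union of cycles of $\sigma$; restricted to such a cycle of length $m$, $\sigma$ acts as a single $m$-cycle with sign $(-1)^{m-1} = (-1)^{m+1}$. Thus a $\sigma$-cycle of length $m$ whose common value lies in an $\SN_-$-atom $j$ contributes the signed factor $(-1)^{m+1} \beta_j^m$, while one whose common value lies in an $\SN_+$-atom $i$ contributes $+\alpha_i^m$. Summing over the choice of atom for each cycle produces, for every cycle of length $m \geq 2$, the factor $\sum_i \alpha_i^m + (-1)^{m+1} \sum_j \beta_j^m$, and multiplying over all cycles yields precisely $\phi_{\alpha, \beta}(\sigma)$ as in Theorem \ref{thm:thoma}.

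For the case $n = \infty$, the computation above identifies the matrix coefficient of $\bone_\infty$ under $U_\infty^{(\alpha, \beta)}$ with the character $\phi_{\alpha, \beta}$. Since $\phi_{\alpha, \beta}$ is a normalized indecomposable character, the cyclic subrepresentation of $U_\infty^{(\alpha, \beta)}$ generated by $\bone_\infty$ is, by uniqueness of the GNS construction, unitarily equivalent to the factor representation associated to $\phi_{\alpha, \beta}$, giving the second assertion.
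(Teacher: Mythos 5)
Your computation is correct, but note that the paper itself offers no proof of this statement: it is quoted verbatim from Vershik and Kerov \cite{VK}, so there is no internal argument to compare against. Your direct verification is the standard one and it goes through: restricting the inner product to the diagonal support of $\bone_n$, the matrix coefficient becomes the signed measure of the fixed-point set $\{x:\sigma^{-1}x=x\}$, which factors over the cycles of $\sigma$ by independence of the coordinates; each cycle of length $m\ge 2$ contributes $\sum_i\alpha_i^m+(-1)^{m+1}\sum_j\beta_j^m$ (the continuous part of $\mu$ giving zero mass to the diagonal), while fixed points contribute $\sum_i\alpha_i+\sum_j\beta_j+\gamma=1$, matching Thoma's formula. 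The only step that deserves to be made explicit is the sign bookkeeping: you are using that on the fixed-point set the index set $I_-=\{i:x_i\in\SN_-\}$ is $\sigma$-invariant and that the parity of the inversion count $i(\sigma,x)$ equals the parity of the permutation $\sigma|_{I_-}$, whose sign is multiplicative over cycles; this is exactly right and is the cleanest way to read the paper's slightly informal definition of $i(\sigma,x)$. For $n=\infty$ you should also remark that the infinitely many untouched coordinates contribute an unconstrained factor of total mass $1$, so the same product formula holds for each $\sigma\in S_\infty$; your GNS/uniqueness argument then correctly identifies the cyclic subrepresentation generated by $\bone_\infty$ with the factor representation attached to the indecomposable character $\phi_{\alpha,\beta}$.
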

There is an isometry $j_n:V_n^{(\alpha,\beta)}\to V_{n+1}^{(\alpha,\beta)}$ defined by
\begin{equation}
 (j_nh)(x,y)=\delta_{x_{n+1},y_{n+1}}h((x_1,\ldots, x_n), (y_1,\ldots, y_n))
\end{equation}
\secondarydivision{Generalized Brownian motions arising from factor representations of $S_\infty$}
Before considering multi-dimensional noncommutative generalized Brownian motions associated to representations associated to infinite symmetric groups, we review some of the work of \cite{BG} on Brownian motions connected with representations of $S_\infty$ with one process.
The Vershik-Kerov factor representations of the symmetric groups $S_n$ give all the data needed for a $1$-dimensional generalized Brownian motion.
Bo{\.z}ejko and Gu{\c{t}}{\u{a}} \cite{BG} were able to characterize the function on pair partitions arising from Theorem 2.6 of \cite{GM1} (the one-dimensional version of Theorem \ref{thm:pf}).
Their result depends on the following terminology.
\begin{definition}[\cite{BG}]\label{def:cycles}
 Let $\VP\in\PP(2m)$, and denote by $\hat\VP$ the unique noncrossing pair partition such that the set of left points of $\VP$ and $\hat \VP$ coincide.
A cycle in $\VP$ is a sequence of pairs $((l_1,r_1),\ldots,(l_m,r_m))$ of $\VP$ such that the pairs $(l_1,r_2), (l_2,r_3),\ldots,(l_m,r_1)$ belong to $\hat \VP$.
(In the case that $m=1$ we interpret this condition as $(l_1,r_1)\in\hat \VP$.)
The number $m$ is called the length of the cycle.
Denote by $\rho_m(\VP)$ the number of cycles of length $m$ in the pair partition $\VP$.
\end{definition}
Bo{\.z}ejko and Gu{\c{t}}{\u{a}}'s formula is as follows.
\begin{theorem}[\cite{BG}]\label{thm:bg}
 Let $(\alpha_i)_{i=1}^\infty$ and $(\beta_i)_{i=1}^\infty$ be decreasing sequences of positive real numbers such that $\sum_{i}\alpha_i+\sum_i\beta_i\le 1$.
Let $V_{n}^{(\alpha,\beta)}$ be the complex Hilbert space of the Vershik-Kerov representation of $S_n$, and let $j_n:V_n^{(\alpha,\beta)}\to V_{n+1}^{(\alpha,\beta)}$ be the natural isometry.
Let $\xi_{V^{(\alpha,\beta)}}=\bone_0$.
Denote by $\Pt_{\alpha,\beta}$ the function on $\PP(\infty)$ associated to these representations by Theorem \ref{thm:pf}.
Then 
\begin{equation}
 \Pt_{\alpha,\beta}(\VP)=\prod_{m\ge 2}\left(\sum_{i=1}^\infty\alpha_i^m+(-1)^{m+1}\sum_{i=1}^\infty\beta_i^m\right)^{\rho_m(\VP)}.
\end{equation}
\end{theorem}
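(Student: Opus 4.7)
The plan is to produce an explicit permutation $\sigma_\VP \in S_m$ (with $m = |\VP|$) whose cycle type matches the cycle structure of $\VP$ given by Definition \ref{def:cycles}, to show $\Pt_{\alpha,\beta}(\VP) = \phi_{\alpha,\beta}(\sigma_\VP)$ by direct reduction of the inner product produced by Theorem \ref{thm:pf} and \eqref{eqn:pairstd}, and then to conclude via Thoma's formula (Theorem \ref{thm:thoma}).

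First I would define $\sigma_\VP$ as follows. Order the pairs of $\VP$ as $(l_i, r_i)_{i=1}^m$ with $l_1 < \cdots < l_m$. Since $\hat\VP$ has the same left points and the same set of right points as $\VP$, there is a unique $\sigma_\VP \in S_m$ with $\hat\VP = \{(l_i, r_{\sigma_\VP(i)}) : i \in [m]\}$. Unpacking Definition \ref{def:cycles}, a sequence $((l_{i_1}, r_{i_1}), \ldots, (l_{i_k}, r_{i_k}))$ is a cycle of $\VP$ precisely when $(l_{i_j}, r_{i_{j+1}}) \in \hat\VP$ for each $j$ (indices mod $k$), equivalently when $\sigma_\VP(i_j) = i_{j+1}$. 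Hence the cycles of $\VP$ correspond bijectively to those of $\sigma_\VP$ with matching lengths, and $\rho_m(\VP) = \rho_m(\sigma_\VP)$ for every $m \geq 1$.

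Next I would evaluate $\Pt_{\alpha,\beta}(\VP)$ via the standard-form formula \eqref{eqn:pairstd}. A direct computation from Notation \ref{not:vk} (using that $\bone_n$ is the indicator of the diagonal and the explicit formula for $j_n$) gives $j_n \bone_n = \bone_{n+1}$ and hence $j_n^* \bone_{n+1} = \bone_n$. Starting from
$$\Pt_{\alpha,\beta}(\VP) = \left\langle \bone_0, \; j^{*p_1} U_{n_1}(\pi_1) j^{k_1 - p_1} \cdots U_{n_t}(\pi_t) j^{2m - p_t} \bone_0 \right\rangle,$$
I would iteratively push $j$'s outward using the intertwining relation \eqref{eqn:intertwine}, embedding the $\pi_i$'s up to level $m$ via the $\iota_n$'s, until the expression reduces to
$$\Pt_{\alpha,\beta}(\VP) = \left\langle U_m^{(\alpha,\beta)}(\sigma_L) \bone_m, \; U_m^{(\alpha,\beta)}(\sigma_R) \bone_m \right\rangle = \phi_{\alpha,\beta}(\sigma_L^{-1} \sigma_R)$$
for explicit $\sigma_L, \sigma_R \in S_m$ built from the $\pi_i$'s and $\iota_n$'s.

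The main obstacle will be verifying that $\sigma_L^{-1}\sigma_R$ is conjugate in $S_m$ to $\sigma_\VP$; conjugacy suffices since $\phi_{\alpha,\beta}$ is a class function. I would handle this by induction on the number of crossings of $\VP$: for a noncrossing $\VP$ the standard form contains no nontrivial $\pi_i$ and $\VP = \hat\VP$, so both $\sigma_L^{-1}\sigma_R$ and $\sigma_\VP$ equal the identity and the formula evaluates to $1$; introducing one elementary crossing corresponds on the operator side to inserting an adjacent transposition into some $\pi_i$ and on the combinatorial side to composing $\sigma_\VP$ with the matching transposition, so the two permutations change by the same conjugation. Once this matching is established, Thoma's theorem applied to $\sigma_\VP$, combined with $\rho_m(\sigma_\VP) = \rho_m(\VP)$, delivers the claimed product formula.
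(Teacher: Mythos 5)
First, note that the paper does not prove Theorem \ref{thm:bg} at all: it is quoted from \cite{BG}, and the closest argument in the paper is the author's proof of the two-colored analog, Theorem \ref{thm:sppf}. So your proposal has to be judged against the known argument, and there it has a genuine gap. Your preliminary observations are fine: $\sigma_\VP$ defined by $\hat\VP=\{(l_i,r_{\sigma_\VP(i)})\}$ does have the same cycle data as $\VP$ (this is exactly the remark following the theorem), and $j_n\bone_n=\bone_{n+1}$, $j_n^*\bone_{n+1}=\bone_n$ are easy computations. The problem is the central step: ``push the $j$'s outward using the intertwining relation until the expression reduces to $\left\langle U_m(\sigma_L)\bone_m, U_m(\sigma_R)\bone_m\right\rangle$.'' This reduction is not available from \eqref{eqn:intertwine} together with $j^*j=1$. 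Whenever some right point of $\VP$ precedes some left point, the standard-form word in \eqref{eqn:pairstd} contains interior factors $j\,j^*$, and $jj^*$ is a proper orthogonal projection, not the identity; moreover the vector it hits is typically not in the range of $j$ (e.g.\ $U_2((12))\bone_2$ is supported on $\{y=(x_2,x_1)\}$, not on $\{x_2=y_2\}$). Concretely, $j_1^*U_2((12))\bone_2$ is $\pm\mu(x_1)\bone_1$, a multiplication operator, not $U$ of any permutation. It is exactly the bookkeeping of these multiplication operators by $\mu(\cdot)$ along the cycles of $\VP$ — done in \cite{BG}, and in this paper's proof of Theorem \ref{thm:sppf} via the $\basis$-elementary vectors and the functions $S_\eta$, $x_\eta$ — that produces the factors $\sum_i\alpha_i^m+(-1)^{m+1}\sum_i\beta_i^m$ per cycle. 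In other words, the identity $\Pt_{\alpha,\beta}(\VP)=\phi_{\alpha,\beta}(\sigma_\VP)$ is the substance of the theorem; it cannot follow from the abstract data $(U_n, j_n, \phi_{\alpha,\beta})$ alone (the function $\Pt_{V,j}$ depends on the concrete choice of $V_n$, $j_n$ and the vector, cf.\ Remark \ref{rk:diff} and Example \ref{ex:N}), so asserting the reduction amounts to assuming what is to be proved.

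A secondary problem is the induction on crossings used to identify $\sigma_L^{-1}\sigma_R$ with $\sigma_\VP$ up to conjugacy: ``adding one elementary crossing'' is not a well-defined local move on standard forms, and multiplying by a transposition is not a conjugation (it changes the cycle type), so the sentence ``the two permutations change by the same conjugation'' does not parse into an argument as written. This part could likely be repaired by a careful word-by-word comparison, but only after the main reduction step is replaced by an honest computation in the Vershik--Kerov model (tracking $j^*U(\cdot)j$ as integration over the last coordinate and the resulting $\mu$-weights), which is the route taken in \cite{BG} and mirrored in \primarydivname\ \ref{pri:spherical} of this paper.
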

\begin{remark}
There is another equivalent characterization of the cycle decomposition of a pair partition.
As in Definition \ref{def:cycles}, let $\VP=\{(a_1,z_1),\ldots,(a_n,z_n)\}\in\PP(2m)$ and let $\hat\VP$ be the noncrossing pair partition whose left points coincide with those of $\VP$.
Let $\sigma\in S_n$ be the permutation such that $\hat\VP=\{(a_1,z_{\sigma_{-1}(1)}),\ldots,(a_n,z_{\sigma^{-1}(n)})\}$.
If the cycles of $\sigma$ are $\tau_i=(b_{i1}\cdots b_{ir_i})\in S_n$ ($1\le i\le m$), then the cycles of $\VP$ are $\{(a_{b_{i1}},z_{b_{i1}}),\ldots,(a_{b_{ir_i}},z_{b_{ir_i}})\}$.
Moreover, Theorem \ref{thm:bg} says that $\Pt_{\alpha,\beta}(\VP)=\phi_{\alpha,\beta}(\sigma)$.
\end{remark}

We are now in a position to show that the framework for multi-dimensional generalized Brownian motion presented here is more general than that presented in \cite{Guta}.
More precisely, we will exhibit complex Hilbert spaces $V_\bn$ with representations $U_{\bn}$ of $S_{\bn}$ and transition maps $j_b:V_{\bn}\to V_{\bn+\delta_b}$ and $V_\bn'$ with representations $U_{\bn}'$ of $S_{\bn}$ and maps $j_b:V_{\bn}'\to V_{\bn+\delta_b}'$ such that both sets of data give rise to the same function on pair partitions according to Theorem \ref{thm:pf}.
%now an example
\begin{example}\label{ex:N}
We work with the index set $\IndexSet=\{1\}$, which places us in the setting of the generalized Brownian motion with only one process, developed by Gu{\c{t}}{\u{a}} and Maassen in \cite{GM1}.
Fix an integer $N$ with $|N|>1$ and let $\HH$ be a complex Hilbert space.
We will consider generalized Brownian motions associated to the character $\phi_N$ of $S_\infty$ given by the sequences 
\begin{equation}
\alpha_n=
\begin{cases}
\frac{1}{N},&\text{if $1\le n\le N$}\\
0,&\text{otherwise}
\end{cases}
\quad\text{and}\quad
\beta_n=
\begin{cases}
\frac{1}{N},&\text{if $1\le n\le -N$}\\
0,&\text{otherwise}
\end{cases}
\end{equation}
For each $n$, let $V_{n}^{(N)}$ be the complex Hilbert space of the Vershik-Kerov representation of $S_n$, and let $j^{(N)}:V_n^{(N)}\to V_{n+1}^{(N)}$ be the natural isometry.
Let $\xi_{V^{(N)}}=\bone_0$ be the indicator function of the diagonal.
Denote by $\Pt_N$ the function on $\PP(\infty)$ associated to these representations by Theorem \ref{thm:pf}.
It was shown in  \cite{BG} that
\begin{equation}
\Pt_N(\VP)=\left(\frac{1}{N}\right)^{n-\rho(\VP)}.
\end{equation}

We will exhibit another sequence of complex Hilbert spaces $\hat V_n^{(N)}$ with unitary representations $\hat U_n^{(N)}$ which gives rise to the same positive function on pair partitions.
%also gives an algebra of creation and annihilation operators whose action is unitarily equivalent to the $*$-algebra $\COA_{V^{(N)},j^{(N)}}(\HH)$ acting on $\FS_{V^{(N)}}(\HH)$.
Since the character $\phi_N: S_\infty\to\BC$ restricts to a positive definite function on $S_n$, there is a representation $\hat U_n^{(N)}$ of $S_n$ on a complex Hilbert space $\hat V_n^{(N)}$ with a cyclic vector $\xi_n$ such that 
\begin{equation}
\left<\xi_n,\hat U_n^{(N)}(\pi)\xi_n\right>=\phi_N(\pi).
\end{equation}
for every $\pi\in S_n$.
There is also a natural inclusion $\hat j^{(N)}: \hat V_n^{(N)}\to\hat V_{n+1}^{(N)}$ satisfying 
\begin{equation}
\hat j(\hat U_n^{(N)}(\pi)\xi_n)=\hat U_{n+1}^{(N)}(\iota_n\pi)\xi_{n+1},
\end{equation}
 where $\iota_n$ is the inclusion $S_n\to S_{n+1}$ induced by the natural inclusion $[n]\subset[n+1]$.
By construction, the maps $\hat j^{(N)}$ and representations $\hat V_n^{(N)}$ satisfy the intertwining relation \eqref{eqn:intertwine}, so we can construct the Fock space $\FS_{\hat V^{(N)},\hat j^{(N)}}$ with creation and annihilation operators $a_{\hat V^{(N)}, \hat j^{(N)}}^*(f)$ and $a_{\hat V^{(N)}, \hat j^{(N)}}(f)$.

The action of the algebra $\COA_{\hat V^{(N)}, \hat j^{(N)}}(\HH)$ on $\FS_{\hat V^{(N)}, \hat j^{(N)}}(\HH)$ is unitarily equivalent to the action of the algebra creation and annihilation operators on the following deformed Fock space.
Let $\FS^{(alg)}(\HH)=\bigoplus_n \HH^{\otimes n}$.
Define a sesquilinear form on $\FS^{(alg)}(\HH)$ by sesquilinear extension of
\begin{equation}
\left<f_1\otimes\cdots f_n, g_1\otimes\cdots g_m \right>_N=\delta_{mn}\sum_{\pi\in S_n}\phi_N(\pi)\left<f_1,g_{\pi(1)}\right>\cdots\left<f_n,g_{\pi(n)}\right>.
\end{equation}
This form is positive definite and thus gives an inner product on $\FS^{(alg)}(\HH)$.
Let $\FS_N(\HH)$ be the completion of $\FS^{(alg)}(\HH)$ with respect to the inner product $\left<\cdot,\cdot\right>_N$.
Let $D_N$ be the operator in $\FS^{(alg)}(\HH)$ whose restriction to $\HH^{\otimes n}$ is given by
\begin{equation}
D_{N}^{(n)}:=
\begin{cases}
1+\frac{1}{N}\sum_{k=2}^{n} \tilde U_{n}(\tau_{1,k}),&\text{if $n>0$}\\
1,&\text{otherwise},
\end{cases}
\end{equation}
where $\tau_{i,k}\in S_n$ is the permutation transposing $i$ and $k$ and fixing all other elements of $[n]$ and $\tilde U_{n}$ is the representation of $S_n$ such that  $\tilde U_n(\pi)$ permutes the tensors in $\HH^{\otimes n}$ according to $\pi$.
For $f\in \HH$ let $l(f)$ and $l^*(f)$ denote the left annihilation and creation (respectively) operators on the free Fock space over $\HH$.
We define annihilation and creation operators on $\FS^{(alg)}(\HH)$ by
\begin{equation}
\begin{split}
a_N(f)&=l(f)D_N\\
a_N^*(f)&=l^*(f).
\end{split}
\end{equation}
It was shown in \cite{BG} that these operators are bounded with respect to $\left<\cdot,\cdot\right>_N$ and thus extend to bounded linear operators on $\FS_N(\HH)$.

The map
\begin{equation}
\begin{split}
\FS_N(\HH)&\to \FS_{\hat V_N}(\HH)\\
v_1\otimes\cdots v_n&\mapsto \xi_n\otimes_s v_n\otimes\cdots\otimes v_1
\end{split}
\end{equation}
is unitary.
It was shown in \cite{BG} that the vacuum state on the algebra of creation and annihilation operators on $\FS_{N}(\HH)$ is the Fock state associated to the function $\Pt_N(\VP)=\left(\frac{1}{N}\right)^{n-\rho(\VP)}$.
This shows that $\Pt_{V^{(N)},j^{(N)}}=\Pt_{\hat V^{(N)},\hat j^{(N)}}$ even though $\dim V^{(N)}_n<\dim \hat V^{(N)}_n$ for $n\ge1$.
\end{example}
%%%%%%%%%%%%%%%%%%%%%%%%%%%%%%%%%%%%%%%%%%%%%%%%%%%%%%%%%%%%%%
\primarydivision{Generalized Brownian motions associated to tensor products of representations of $S_\infty$}
\label{pri:tp}
In this \paper{\lprimarydivname} \thesis{\lsecondarydivname}, we are interested in the case where $\IndexSet=\{-1,1\}$ and the $V_\bn$ arise from unitary representations of the group $S_\infty$ of permutations of $\BN$ fixing all but finitely many points.
\begin{equation}
f_n(x)=\begin{cases}
1,&\text{if $x=q_i$ for $1\le i\le n$}\\
0,&\text{otherwise}
\end{cases}
\end{equation}

\begin{notation}
When $\IndexSet=\{1,-1\}$, we will represent a function $\bn:\IndexSet\to\BN$ by the pair $\bn(-1),\bn(1)$, so we write $V_{r,s}$ for $V_\bn$ where $\bn(-1)=r$ and $\bn(1)=s$.
We will also represent an element $(\VP, c)$ of $\IPPi$ as $(\VP_{-1},\VP_{1})$, where $\VP_b=c^{-1}(b)$.
\end{notation}
One of the simplest such cases is that arising from the tensor product of two unitary representations of $S_\infty$.
In this setting, we can prove the following.
\begin{proposition}\label{prop:2}
 Let $(U^{(i)},V^{(i)})$  be unitary representations of $S_\infty$ for $i\in\IndexSet$.
Suppose that each $V^{(i)}_n$ is a subspace of $V^{(i)}$ carrying a unitary representation $U^{(i)}_n$ of $S_n$ with $j^{(i)}_n: V^{(i)}_{n}\to V^{(i)}_{n+1}$ an isometry.
Assume that we have distinguished unit vectors $\xi_{V^{(i)}}\in V^{(i)}_0$ and let $\xi_{V}=  \xi_{V^{(-1)}}\otimes \xi_{V^{(1)}}$.
Let $V_{m,n}=V^{(-1)}_m\otimes V^{(1)}_n$, $j_{-1}=j^{(-1)}\otimes 1$, and $j_1=1\otimes j^{(1)}$.
Then
\begin{equation}
 \Pt_{V,j}(\VP,c)=\Pt_{V^{(-1)},j^{(-1)}}(\VP_{-1})\cdot \Pt_{V^{(1)},j^{(1)}}(\VP_1).
\end{equation}
\end{proposition}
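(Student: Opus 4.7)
The plan is to apply the standard-form computation \eqref{eqn:pairstd} and exploit the tensor-product structure of the operators involved. First, I would fix a standard form for $(\VP,c)$,
$$(\VP,c) = \prod_{l=1}^{p_1} d_{c(l)}^* \cdot U_{\bn_1}(\pi_1) \cdot \prod_{l=p_1+1}^{k_1} d_{c(l)} \cdots U_{\bn_r}(\pi_r) \cdot \prod_{l=p_t+1}^{2m} d_{c(l)},$$
so that \eqref{eqn:pairstd} writes $\Pt_{V,j}(\VP,c)$ as the corresponding matrix element with each $d_{c(l)}$ replaced by $j_{c(l)}$ (and each $d_{c(l)}^*$ by $j_{c(l)}^*$), sandwiched between copies of $\xi_V$.

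Next I would substitute the tensor factorizations $j_{-1}=j^{(-1)}\otimes 1$, $j_1=1\otimes j^{(1)}$ and, writing any $\pi\in S_\bn=S_{\bn(-1)}\times S_{\bn(1)}$ as $\pi=(\pi^{(-1)},\pi^{(1)})$, also $U_\bn(\pi)=U^{(-1)}_{\bn(-1)}(\pi^{(-1)})\otimes U^{(1)}_{\bn(1)}(\pi^{(1)})$. Every factor in the expanded product then has the form $A\otimes 1$ or $1\otimes B$, and factors of the two types commute on $V^{(-1)}\otimes V^{(1)}$. Reordering so that all ``$V^{(-1)}$-factors'' precede all ``$V^{(1)}$-factors'', and using $\xi_V=\xi_{V^{(-1)}}\otimes \xi_{V^{(1)}}$ together with the factorization of inner products on tensor products, splits the matrix element into a product of two matrix elements, one on each $V^{(i)}$.

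What remains is to identify the $i$-th factor with $\Pt_{V^{(i)},j^{(i)}}(\VP_i)$. The ``color-$i$ residue'' of our product (retaining only the $j^{(i)}$'s, $(j^{(i)})^*$'s, and $U^{(i)}_{\bn_k(i)}(\pi_k^{(i)})$'s, in their original order) is a product of generators of the one-color $*$-semigroup $\BPPi$ whose value in $\BPPi$ is exactly $\VP_i$; this is forced by the fact that the full product equals $(\VP,c)\in\IPPi$, which has no unpaired legs of either color. Since $\hat\Pt_{V^{(i)},j^{(i)}}$ is a well-defined function on $\BPPi$ (the one-process cases of Theorems \ref{thm:pf} and \ref{thm:pd}), the color-$i$ matrix element evaluates to $\Pt_{V^{(i)},j^{(i)}}(\VP_i)$, completing the proof.

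The main obstacle I anticipate is this last identification: one must check carefully that reshuffling the standard form of $(\VP,c)$ by color yields a genuine product in $\BPPi$ whose value equals $\VP_i$, not merely a broken pair partition projecting to $\VP_i$. Tensor commutativity and the absence of unpaired legs in $(\VP,c)$ make this essentially automatic, but the bookkeeping with the leg functions $f_a^{(l)}$ and $f_a^{(r)}$ in the semigroup structure is where most of the care is needed.
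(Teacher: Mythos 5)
Your proposal is correct in substance, but it takes a genuinely different route from the paper's proof, which never touches the standard form or \eqref{eqn:pairstd}. The paper argues at the level of the Fock space: from $j_{-1}=j^{(-1)}\otimes 1$ and $j_1=1\otimes j^{(1)}$ it deduces that creation and annihilation operators of different colors commute on $\FS_V(\HH)$, writes $\Pt_{V,j}(\VP,c)$ as the vacuum expectation of a monomial in the $a^{V,j,e_p}_{c(p)}(h_{k_p})$ with distinct basis vectors assigned to distinct pairs (Theorem \ref{thm:pf}), pulls all color $-1$ operators past the color $1$ operators, and factors the vacuum expectation over the tensor product; each one-color factor is then \emph{literally} a defining vacuum expectation for the one-process Fock state, so it equals $\Pt_{V^{(b)},j^{(b)}}(\VP_b)$ with no further identification needed. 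Your route through the standard form and \eqref{eqn:pairstd} is viable and stays entirely at the level of the transition maps and symmetric-group unitaries, but it costs you exactly the step you flag at the end: the color-$i$ residue of the standard form of $(\VP,c)$ is in general \emph{not} the standard form of $\VP_i$, so to evaluate the color-$i$ matrix element you need (a) that restriction to color $i$ is a $*$-semigroup homomorphism $\IBPPi\to\BPPi$ (true: the multiplication joins legs color by color, so the color-$i$ part of a product is the product of the color-$i$ parts), and (b) that the assignment $d\mapsto j^{(i)}$, $d^*\mapsto (j^{(i)})^*$, $\pi\mapsto U^{(i)}(\pi)$ respects all relations of $\BPPi$, so that the matrix element depends only on the semigroup product and not on the particular word. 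Fact (b) is exactly the semigroup-representation property coming from the intertwining relation \eqref{eqn:intertwine}; it is what implicitly underwrites \eqref{eqn:pairstd} and the proof of Theorem \ref{thm:pd}, so your argument does close, but it invokes heavier machinery than the statement requires, whereas the paper's operator-level commutation argument makes the factorization and the identification of the two factors immediate.
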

\begin{proof}
From the definitions, it is clear that for any $v_1\in V_m^{(-1)}$ and $v_2\in V_n^{(1)}$, 
\begin{equation}
\begin{split}
j_1j_{-1}(v_1\otimes v_2)&=j_{-1}j_1(v_1\otimes v_2)\\
j_1^*j_{-1}^*(v_1\otimes v_2)&=j_{-1}^*j_1^*(v_1\otimes v_2)\\
j_1^*j_{-1}(v_1\otimes v_2)&=j_{-1}j_1^*(v_1\otimes v_2)\\
j_1j_{-1}^*(v_1\otimes v_2)&=j_{-1}^*j_1(v_1\otimes v_2).
\end{split}
\end{equation}
Consequently, if $b\ne b'$ the operators $a_b^{V,j,e}(f)$ and $a_{b'}^{V,j,e'}(f')$ commute for all $e,e'\in \{1,2\}$ and all $f,f'\in\HH$.

Assume that $\VP:=\{(l_1,r_1),\ldots,(l_m,r_m)\}$ with $l_k<r_k$ and $l_k<l_{k+1}$ for all $k$.
Let $\HH$ be $\ell^2(\BN)$ with orthonormal basis $(h_k)_{k=1}^\infty$
We can compute $ \Pt_{V,j}(\VP,c)$ as
\begin{equation}
 \Pt_{V,j}(\VP,c)=\left<\left(\prod_{p=1}^{2m}a_{c(p)}^{V,j, e_p}(h_{k_p}) \right)\xi_V\otimes_s \Omega,\xi_V\otimes_s \Omega \right>
\end{equation}
where $k_p$ is the unique $k\in [n]$ such that $p$ is an element of the $k$-th pair of $\VP$ and $e_p=2$ if $p$ is a right point in $\VP$ and $e_p=1$ if $p$ is a right point.

Now, using the fact that $a_{c(p)}^{e_r}(h_{k_p})$ commutes with $a_{c(p')}^{e_{p'}}(h_{k_{p'}})$ when $c(p)\ne c(p')$, we have
\begin{equation}
\begin{split}
  \Pt_{V,j}(\VP,c)&=\left<\left(\prod_{c(p)=-1}a_{-1}^{V,j, e_p}(h_{k_p}) \right)\left(\prod_{c(p)=1}a_{1}^{V,j, e_p}(h_{k_p}) \right)\xi_V\otimes_s \Omega,\xi_V\otimes_s \Omega \right>\\
&=\prod_{b\in\IndexSet}\left<\prod_{c(p)=b}a_{b}^{V^{(b)},j^{(b)}, e_p}(h_{k_p})\xi_{V^{(b)}}\otimes_s \Omega,\xi_{V^{(b)}}\otimes_s \Omega\right>\\
&=\Pt_{V^{(-1)},j^{(-1)}}(\VP_1)\cdot \Pt_{V^{(1)},j^{(1)}}(\VP_2).
\end{split}
\end{equation}
This completes the proof.
\end{proof}

Combining Theorem \ref{thm:bg} with our Proposition \ref{prop:2} immediately gives the following.
\begin{corollary}
Let $\IndexSet=\{1,2\}$. Fix $(\alpha_i)_{i=1}^\infty$ and $(\beta_i)_{i=1}^\infty$ decreasing sequences of positive real numbers such that $\sum_{i}\alpha_i+\sum_i\beta_i\le 1$ and let $V_n^{(1)}=V_n^{(2)}=V_n^{(\alpha,\beta)}$ with the Vershik-Kerov representation of $S_n$.
For $i\in\{1,2\}$, let $j^{(i)}:V^{(i)}_n\to V^{(i)}_{n+1}$ be the natural isometry.
Let $\xi_{V^{(i)}}=\bone_0$ and let $\xi_{V}=  \xi_{V^{(1)}}\otimes \xi_{V^{(2)}}$.
Let $V_{m,n}=V^{(-1)}_m\otimes V^{(1)}_n$, $j_{-1}=j^{(-1)}\otimes 1$, and $j_1=1\otimes j^{(1)}$.
Then for $(\VP,c)\in\IPPi$,
\begin{equation}
 \Pt_{V,j}(\VP,c)=\prod_{m\ge 2}\left(\sum_{i=1}^\infty\alpha_i^m+(-1)^{m+1}\sum_{i=1}^\infty\beta_i^m\right)^{\rho_m(\VP_1)+\rho_m(\VP_2)}
\end{equation}
\end{corollary}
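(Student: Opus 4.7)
The proof I have in mind is essentially a one-line combination of the two cited results; the main task is simply to verify that the hypotheses of each line up correctly.

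First I would invoke Proposition \ref{prop:2} with the data specified in the corollary: both $V^{(-1)}$ and $V^{(1)}$ are taken to be the Vershik--Kerov representation space $V^{(\alpha,\beta)}$, the transition maps $j^{(i)}$ are the natural isometries, and the distinguished unit vectors are $\xi_{V^{(i)}} = \bone_0$. The proposition applies directly since these are unitary representations of $S_\infty$ with the required properties, and yields the factorization
\begin{equation}
\Pt_{V,j}(\VP,c) = \Pt_{V^{(-1)},j^{(-1)}}(\VP_{-1}) \cdot \Pt_{V^{(1)},j^{(1)}}(\VP_1).
\end{equation}

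Next I would apply Theorem \ref{thm:bg} to each of the two factors separately. Since the Vershik--Kerov data on each side are exactly those to which Theorem \ref{thm:bg} applies (with the same Thoma parameters $(\alpha_i), (\beta_i)$ on both sides), each factor equals
\begin{equation}
\Pt_{\alpha,\beta}(\VP_b) = \prod_{m \ge 2}\left(\sum_{i=1}^\infty \alpha_i^m + (-1)^{m+1}\sum_{i=1}^\infty \beta_i^m\right)^{\rho_m(\VP_b)}
\end{equation}
for $b \in \{-1,1\}$.

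Finally, multiplying the two expressions together and using that exponents add over a common base gives the claimed formula, with the total exponent for each $m$ being $\rho_m(\VP_{-1}) + \rho_m(\VP_1)$ (written in the corollary as $\rho_m(\VP_1) + \rho_m(\VP_2)$ under the notational identification of $\{-1,1\}$ with $\{1,2\}$). There is no genuine obstacle here: every step is an immediate appeal to a previously proved result, and the only thing to double-check is the trivial bookkeeping that the Hilbert spaces, transition maps, and vacuum vectors described in the corollary genuinely satisfy the hypotheses of Proposition \ref{prop:2} and that each tensor factor independently satisfies the hypotheses of Theorem \ref{thm:bg}.
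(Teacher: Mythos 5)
Your proposal is correct and is exactly the paper's argument: the paper derives this corollary by noting that it follows immediately from combining Proposition \ref{prop:2} (the tensor-product factorization $\Pt_{V,j}(\VP,c)=\Pt_{V^{(-1)},j^{(-1)}}(\VP_{-1})\cdot\Pt_{V^{(1)},j^{(1)}}(\VP_{1})$) with Theorem \ref{thm:bg} applied to each Vershik--Kerov factor. The only work is the bookkeeping you describe, namely checking that the Vershik--Kerov spaces, natural isometries, and the vector $\bone_0$ satisfy the hypotheses of both cited results, so nothing further is needed.
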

%%%%%%%%%%%%%%%%%%%%%%%%%%%%%%%%%%%%%%%%%%%%%%%%%%%%%%%%%%%%%%
\primarydivision{Generalized Brownian motions associated to spherical representations of $(S_\infty\times S_\infty, S_\infty)$}
\label{pri:spherical}
In this \lprimarydivname, we again use the index set $\IndexSet=\{-1,1\}$. Of course, we could have taken $\IndexSet$ to be any two-element set, but we have chosen $\{-1,1\}$ for the reason that if $b\in\IndexSet$ then we can concisely refer to the other index as $-b\in\IndexSet$.

\begin{notation}
With $\IndexSet=\{-1,1\}$, we will represent a function $\bn:\IndexSet\to\BN$ by the pair $\bn(-1),\bn(1)$, so we write $V_{r,s}$ for $V_\bn$ where $\bn(-1)=r$ and $\bn(1)=s$.
\end{notation}

G. Olshanski initiated the study of a broad class of representations of infinite symmetric groups \cite{Olshanski}, and this study has been further developed by Okounkov \cite{Okounkov}.
In this framework, one considers unitary representations of a pair of groups $K\subset G$ forming a Gelfand pair.
Two groups $(G,K)$ form a Gelfand pair if for every unitary representation $(T,\HH)$ of $G$, the operators $P_KT(g)P_K$ commute with each other as $g$ ranges over $G$.
Here $P_K$ denotes the orthogonal projection of $\HH$ onto the subspace of $K$-invariant vectors for the representation $T$.

Of interest to us are the spherical representations, which are defined as irreducible unitary representations of $G$ with a nonzero $K$-fixed vector $\xi$.
If $T$ is such a representation of the pair $(G,K)$, then the function $g\mapsto\left<\xi,T(g)\xi\right>$ is called a spherical function of $(G,K)$.
Here we consider the case where $G=S_\infty\times S_\infty$ and $K=S_\infty$ is the diagonal subgroup.
It is well-known (c.f. \cite{Olshanski}) that the finite factor representations of a discrete group $G$ are in bijective correspondence with the spherical representations of the Gelfand pair $(G\times G, G)$, where $G$ is a subgroup of $G\times G$ by the diagonal embedding.

 In light of Thoma's Theorem (Theorem \ref{thm:thoma}) this means that the spherical functions of  $(S_\infty\times S_\infty,S_\infty)$ are parametrized by the Thoma parameters and that the spherical function associated to the pairs $(\alpha_{i})_{i=1}^\infty$ and $(\beta_{i})_{i=1}^\infty$ is given by the formula
\begin{equation}
 \chi_{\alpha,\beta}\left(\pi,\pi'\right)=\phi_{\alpha,\beta}\left(\pi'\pi^{-1}\right)=\prod_{m\ge2}\left(\sum_{i=1}^\infty\alpha_i^m+(-1)^{m+1}\sum_{i=1}^\infty\beta_i^m\right)^{\rho_m(\pi'\pi^{-1})}.
\end{equation}

For the generalized Brownian motion construction, we can consider the following data.
Let $(\alpha_i)_{i=1}^\infty$ and $(\beta_i)_{i=1}^\infty$ be a Thoma parameter.
That is, let $(\alpha_i)_{i=1}^\infty$ and $(\beta_i)_{i=1}^\infty$ be decreasing sequences of positive real numbers such that $\sum_{i}\alpha_i+\sum_i\beta_i\le 1$.
Given $n_{-1},n_1\in \BN\cup\{0\}$ let $n=\max(n_{-1},n_1)$ and define
\begin{equation}
 V_{n_{-1},n_1}=V_n^{(\alpha,\beta)},
\end{equation}
where $V_n^{(\alpha,\beta)}$ is as in \eqref{eqn:vkhs}.
Then $V_{n_{-1},n_1}$ carries a natural representation of $S_n\times S_n$ defined by
\begin{equation}\label{eqn:sphrep}
(U_n^{(\alpha,\beta)}(\sigma,\pi)h)(x,y)=(-1)^{i(\sigma, x)+i(\pi, y)}h(\sigma^{-1}x,\pi^{-1}y),
\end{equation}
and thus a representation of $S_{n_{-1}}\times S_{n_1}$ considering $S_{n_{-1}}\times S_{n_1}$ as a subgroup of $S_{n}\times S_{n}$.
For $n_{-1}=n_1$, it is easy to see that the indicator function of the diagonal is fixed by the diagonal subgroup.

Moreover, we define the map $j_{-1}: V_{n_{-1},n_1}\to V_{n_{-1}+1,n_1}$ to be the natural embedding.
When $\sum \alpha_i+\sum\beta_i=1$, this means that $j_{-1}$ is given by
\begin{equation}
j_{-1}\delta_{(x^{(-1)},x^{(1)})} =
\begin{cases}
\delta_{(x^{(-1)},x^{(1)})},&\text{if $n_1>n_{-1}$},\\
\sum_{z\in Q} \delta_{((x_1^{(-1)},\ldots, x_n^{(-1)},z),(x_1^{(1)},\ldots, x_n^{(1)},z))}, &\text{otherwise}.
\end{cases}
\end{equation}
Likewise, we define the map $j_{1}: V_{n_{-1},n_1}\to V_{n_{-1},n_1+1}$ to be the natural embedding.

We will also need to make use of the maps $j_{b}^*$ for $b\in\IndexSet$.
The map $j_{-1}^*$ is given by
\begin{equation}
 j_{-1}^*\delta_{\left(x^{(-1)},x^{(1)}\right)} =
\begin{cases}
\delta_{(x^{(-1)},x^{(1)})},&\text{if $n_1\ge n_{-1}$},\\
 \mu\left(x_{n}^{(-1)}\right)\delta_{x^{(-1)}_{n},x^{(1)}_{n}}\delta_{\left(\left(x_1^{(-1)},\ldots, x_{n-1}^{(-1)}\right),\left(x_1^{(1)},\ldots, x_{n-1}^{(1)}\right)\right)},&\text{otherwise}.
\end{cases}
\end{equation}
Here $x^{(n-1)}$ refers to the first $n-1$ terms of the $n$-tuple $(x_1,\ldots, x_{n_{-1}-1})$ and the measure $\mu$ is as in Notation \ref{not:vk}.
The maps $j_1^*$ are defined analogously.

To motivate our results in the 2-colored case, we will consider another interpretation of the cycle decomposition of a pair partition.
This interpretation involves some graph theory.
We assume that a reader is familiar with the notion of a directed graph, a subgraph of a directed graph, and a cycle in a directed graph.
These definitions can be found, for instance, in \cite{BJG}.
For a subgraph $H$ of $G$, we write $V(H)$ to mean the vertex set of $H$ and $A(H)$ to mean the arc set of $H$.

Given a pair partition $\VP\in \PP(2m)$, we define a directed graph $G_{\VP}$ with vertex set $[2m]$.
For each $(l,r)\in \VP$ with $l<r$, we add an arc $(l,r)$ to $G_\VP$.
For each $(l',r')\in \hat\VP$ (as defined in Definition \ref{def:cycles}) with $l'<r'$, we add an arc $(r',l')$ to $G_{\VP}$.

The directed graph $G_{\VP}$ is the union of vertex-disjoint cycles, and the cycles of the graph $G_{\VP}$ give the cycles of the pair partition $\VP$.
More precisely, if $C$ is a cycle of $G_{\VP}$ then $A(C)\cap \VP$ is a cycle of $\VP$.
In particular, this means that $\rho_m(\VP)$ is the number of cycles of $G_{\VP}$ of length $2m$.

\begin{definition}\label{def:monpath}
For a directed graph $G$ whose vertex set $V$ has a total order $<$, an increasing path $P$ in $G$ is a sequence of arcs $(s_1,s_2), (s_2,s_3),\ldots, (s_r,s_{r+1})$ of $G$ such that $s_1<s_2<\cdots<s_{r+1}$.
We call $r$ the length of $P$.
A maximal increasing path in $G$ is an increasing path which is not contained in any increasing path in $G$ of greater length.
We define the notions of decreasing paths and maximal decreasing paths in $G$ analogously.
A monotone path is a path which is either increasing or decreasing, and a maximal monotone path is a monotone path which is not contained in any longer monotone path.
\end{definition}

In the directed graph $G_{\VP}$, each arc is a maximal monotone path, so the length of a cycle is the same as the number of maximal increasing paths in that cycle.

Combining with Theorem \ref{thm:bg},
\begin{equation}
 \Pt_{\alpha,\beta}(\VP)=\prod_{m\ge 2}\left(\sum_{i=1}^\infty\alpha_i^{m}+(-1)^{m+1}\sum_{i=1}^\infty\beta_i^{m}\right)^{\gamma_{m}(G_{\VP})}.
\end{equation}
where $\gamma_{m}(G_{\VP})$ denotes the number of cycles of $G_{\VP}$ having $m$ maximal increasing paths.

The case of a 2-colored pair partition is naturally more complicated.
As in the uncolored case, our function on 2-colored pair partitions $(\VP,c)$ will be calculated with the aid of the cycle decomposition of a directed graph (denoted $G_{\VP,c}$), but the construction of a graph from a 2-colored pair partition will be rather more involved.
However, in the case that the coloring function is the constant function $c(l,r)=1$, the graph $G_{\VP,c}$ will be identical to the graph $G_{\VP}$ just described.

Before defining the graph $G_{\VP,c}$ we fix some notation.
\begin{notation}
 For $(\VP,c)\in\IPPi$ with $\VP=\{(l_1,r_1),\cdots, (l_m,r_m)\}$, let $L_\VP:=\{l_1,\ldots,l_n\}$ be the set of left points and $R_\VP:=\{r_1,\ldots, r_n\}$ denote the set of right points.
 If $c:\VP\to\{-1,1\}$ is a coloring function, define for $b\in\IndexSet$ the functions
 \begin{equation}
  \begin{split}
   \vl_{\VP,c}^{b}&:[0,2m+1]\to \BN\cup\{0\}\\
   \vl_{\VP,c}^{b}(u)&=\left|\left\{j\in[n]: l_j\le  u\le  r_j, c(l_j,r_j)=b\right\}\right|
  \end{split}
 \end{equation}
Also define
\begin{equation}
  \vl_{\VP,c}(u)=\max\{\vl_{\VP,c}^{-1}(u),\vl_{\VP,c}^{1}(u)\}\quad \text{and}\quad \vls_{\VP,c}(u)=\vl_{\VP,c}^{c(u)}(u).
\end{equation}
\end{notation}
\begin{remark}
In terms of the diagrams (e.g. Figure \ref{fig:pp}), $\vl_{\VP,c}^{b}(m)$ is the number of $b$-colored paths intersecting the vertical line drawn through $m$ (provided that the diagrams are drawn so as to minimize this quantity).
Furthermore, $\vls_{\VP,c}(m)$ is the number of paths of the same color as $m$ which intersect the vertical line drawn through $m$ (provided that the diagrams are drawn so as to minimize this quantity).
\end{remark}

The following properties are immediate consequences of the definitions and will be used frequently.
\begin{proposition}\label{prop:pprops}
Suppose that $(\VP,c)\in\IBPPi$, $\VP=\left\{(l_1,r_1),\ldots, (l_m,r_m)\right\}$ and $b\in\IndexSet=\{-1,1\}$.
\begin{enumerate}
\item If $k\in [2m]$ and $\vl^{b}_{\VP,c}(k)>\vl^{b}_{\VP,c}(k-1)$ then $c(k)=b$ and $k\in L_\VP$. \label{itm:ls}
\item If $k\in [2m]$ and $\vl^{b}_{\VP,c}(k+1)<\vl^{b}_{\VP,c}(k)$ then $c(k)=b$ and $k\in R_\VP$. \label{itm:rs}
\item If $k\in [2m+1]$ then $\vl^{b}_{\VP,c}(k)-\vl^{b}_{\VP,c}(k-1)\in \{-1,0,1\}$. \label{itm:jump}
\item If $k,k'\in [0, 2m+1]$ with $k<k'$, $\vl^{b}_{\VP,c}(k)<\vl^{b}_{\VP,c}(k')$ and $u \in [\vl^{b}_{\VP,c}(k), \vl^{b}_{\VP,c}(k')]$, then there is some $l\in [k,k']$ such that $\vl^b_{\VP,c}(l)=u$.\label{itm:ivpl}
\item If $k,k'\in [0, 2m+1]$ with $k<k'$, $\vl^{b}_{\VP,c}(k)>\vl^{b}_{\VP,c}(k')$ and $u \in [\vl^{b}_{\VP,c}(k'), \vl^{b}_{\VP,c}(k)]$, then there is some $l\in [k,k']$ such that $\vl^b_{\VP,c}(l)=u$.\label{itm:ivpr}
\item If $k\in L_\VP$ and $k\in[2m]$ then $\vls_{\VP,c}(k)\ge \vl_{\VP,c}^{c(k)}(k-1)$ and $\vl^{b}_{\VP,c}(k)\ge \vl_{\VP,c}^{b}(k+1)$ for $b\in\IndexSet$.\label{itm:l}
\item If $k\in R_\VP$ and $k\in [2m]$ then $\vls_{\VP,c}(k)\ge \vl_{\VP,c}^{c(k)}(k+1)$ and $\vl^{b}_{\VP,c}(k)\ge \vl_{\VP,c}^{b}(k-1)$ for $b\in\IndexSet$.\label{itm:r}
\end{enumerate}
\end{proposition}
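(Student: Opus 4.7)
The plan is to reduce all seven assertions to a single elementary identity for the one-step differences of $\vl^b_{\VP,c}$. Fix $b\in\IndexSet$ and $k\in[2m+1]$; a pair $(l_j,r_j)$ with $c(l_j,r_j)=b$ contributes to $\vl^b_{\VP,c}(u)$ precisely when $l_j\le u\le r_j$, so its contribution changes between $u=k-1$ and $u=k$ if and only if either $l_j=k$ (the pair ``enters'' at $k$) or $r_j=k-1$ (the pair ``exits'' at $k-1$). Since each element of $[2m]$ is an endpoint of exactly one pair, at most one pair can produce each type of change. This yields the fundamental identity
\begin{equation*}
\vl^b_{\VP,c}(k)-\vl^b_{\VP,c}(k-1) \;=\; \mathbf{1}_{\{k\in L_\VP,\,c(k)=b\}} \;-\; \mathbf{1}_{\{k-1\in R_\VP,\,c(k-1)=b\}},
\end{equation*}
valid for every $k\in[2m+1]$.

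From this identity, items (\ref{itm:ls})--(\ref{itm:ivpr}) follow quickly. Item (\ref{itm:jump}) is immediate because each indicator takes values in $\{0,1\}$. Item (\ref{itm:ls}) is read off directly: a positive jump from $k-1$ to $k$ forces the first indicator to equal $1$, so $k\in L_\VP$ with $c(k)=b$. Item (\ref{itm:rs}) is the symmetric observation applied to the transition $k\to k+1$, where a negative jump forces the ``exit'' indicator to equal $1$. Items (\ref{itm:ivpl}) and (\ref{itm:ivpr}) are then discrete intermediate value statements: iterating item (\ref{itm:jump}), the function $\vl^b_{\VP,c}$ changes by at most one at each step, so no integer value between $\vl^b_{\VP,c}(k)$ and $\vl^b_{\VP,c}(k')$ can be skipped.

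For items (\ref{itm:l}) and (\ref{itm:r}) I would apply the fundamental identity at both transitions $k-1\to k$ and $k\to k+1$, treating the two colors $b=c(k)$ and $b\ne c(k)$ separately. The decisive inputs are that pairs of $\VP$ are disjoint, so a left point $k\in L_\VP$ cannot simultaneously be a right point, which forces certain indicators to vanish; and when $b=c(k)$, the ``entry'' indicator at $k$ is automatically $1$. Item (\ref{itm:r}) is the mirror-image argument obtained by swapping the roles of $k-1$ and $k+1$.

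The main obstacle is really bookkeeping rather than ideas: items (\ref{itm:l}) and (\ref{itm:r}) require a short case analysis over the color $b$ and over whether the neighboring positions $k\pm1$ are themselves endpoints of pairs. No deeper input is needed beyond the one-line difference identity above.
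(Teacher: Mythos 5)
Your reduction to the one-step difference identity is the right move and is essentially the argument the paper has in mind: the paper supplies no proof at all (it declares the proposition an immediate consequence of the definitions), and the identity $\vl^b_{\VP,c}(k)-\vl^b_{\VP,c}(k-1)=\mathbf{1}_{\{k\in L_\VP,\,c(k)=b\}}-\mathbf{1}_{\{k-1\in R_\VP,\,c(k-1)=b\}}$ is exactly the right formalization; items (\ref{itm:ls})--(\ref{itm:ivpr}) follow just as you say.

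For items (\ref{itm:l}) and (\ref{itm:r}), however, there is a genuine issue you should not dismiss as bookkeeping. Carrying out your own case analysis: if $k\in L_\VP$ then at the step $k\to k+1$ the ``exit'' indicator vanishes (a left point is not a right point), so your identity gives $\vl^{b}_{\VP,c}(k+1)\ge\vl^{b}_{\VP,c}(k)$ for every $b$ --- the \emph{reverse} of the printed second inequality $\vl^{b}_{\VP,c}(k)\ge\vl^{b}_{\VP,c}(k+1)$; similarly, for $k\in R_\VP$ one gets $\vl^{b}_{\VP,c}(k-1)\ge\vl^{b}_{\VP,c}(k)$, again the reverse of what is printed in (\ref{itm:r}). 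The printed second inequalities are in fact false as stated: take $\VP=\{(1,3),(2,4)\}$ with both pairs of color $b$; then $k=1\in L_\VP$ but $\vl^{b}_{\VP,c}(1)=1<2=\vl^{b}_{\VP,c}(2)$. The directions your identity actually yields are the ones the paper uses later (for instance, the proof that $\{k'>k:k'\vcequiv k\}\ne\emptyset$ for $k\in L_\VP$ cites item (\ref{itm:l}) in the form $\vl^{c(k)}_{\VP,c}(k+1)\ge\vl^{c(k)}_{\VP,c}(k)$), so the statement contains a direction typo. The first inequalities in (\ref{itm:l}) and (\ref{itm:r}) are correct as printed and do follow from your identity, since for $k\in L_\VP$ the entry indicator at the step $k-1\to k$ with $b=c(k)$ equals $1$. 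So you should state and prove the corrected second inequalities explicitly; as written, your assertion that the printed items (\ref{itm:l})--(\ref{itm:r}) ``follow with bookkeeping'' is the one step of your proposal that cannot be completed, because your identity proves the opposite direction.
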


We need some additional notation.
\begin{notation}
For an $\IndexSet$-indexed pair partition $(\VP,c)$, define 
\begin{equation}
\begin{split}
\dom_{\VP,c}&=\{k\in[2m]: \vls_{\VP,c}(k)> \vl_{\VP,c}^{-c(k)}(k)\}\\
\subo_{\VP,c} &= [2m]\setminus \dom_{\VP,c}.
\end{split}
\end{equation}
\end{notation}
The next remark should clarify the importance of these terms.
\begin{remark}
If $(\VP,c)\in \IPP(2n)$ then by \eqref{eqn:fock},  $\Pt_{\alpha,\beta}(\VP,c)$ can be computed by evaluating the vacuum state at a word in creation and annihilation operators.
More precisely, we can write
\begin{equation}
\Pt_{\alpha,\beta}(\VP,c) = \left<\xi_{V^{\alpha,\beta}}\otimes \Omega,  A_1\cdots A_{2n}\left(\xi_{V^{\alpha,\beta}}\otimes \Omega\right)\right>
\end{equation}
where $A_k$ is a creation operator if $k\in R_{\VP,c}$ and an annihilation operator if $k\in L_{\VP,c}$.
 In either case, the color of the operator $A_k$ is $c(k)$.
One can characterize these operators more precisely, but we will not need to do so at this point.

For any $k\in [2n]$, the vector $\left(A_{k+1}\cdot\cdots\cdot A_{2n}\right)\xi_{V^{\alpha,\beta}}\otimes \Omega$ lies in the space $V_{\bn_k}^{\alpha,\beta}\otimes \HH^{\otimes \bn_k}$ for some function $\bn_k:\IndexSet\to\BZ$.  
A $c(k)$-colored creation operator maps the space $V_{\bn_k}^{\alpha,\beta}\otimes \HH^{\otimes \bn_k}$ to $V_{\bn_k+\delta_{c(k)}}^{\alpha,\beta}\otimes \HH^{\otimes \bn_k+\delta_{c(k)}}$.
If $k\in R_\VP$, then $k\in\subo_{\VP,c}$ if and only if the transition map 
\begin{equation}
j_{c(k)}: V_{\bn_k}^{\alpha,\beta}\to V_{\bn_k+\delta_{c(k)}}^{\alpha,\beta}
\end{equation}
is the identity map on $V_{n_k}^{\alpha,\beta}$, where $n_k=\max \{\bn_k(b): b\in\IndexSet\}$.
The analogous statement also holds for $k\in L_{\VP}$.

\end{remark}
We will make use of the following equivalence relation on $[2m]$.
\begin{notation}
For $k,k'\in[2m]$, say that $k\vcequiv k'$ if $\vls_{\VP,c}(k)=\vls_{\VP,c}(k')$.
\end{notation}
\begin{proposition} 
 If $k\in L_\VP$ then $\{k'>k: k'\vcequiv k\}\ne \emptyset$.
 If $k\in R_\VP$ then $\{k'<k: k'\vcequiv k\}\ne \emptyset$.
\end{proposition}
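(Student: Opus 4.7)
The plan is to treat the two cases separately; by symmetry they amount to the same argument, so I will focus on the case $k\in L_\VP$. Write $b:=c(k)$ and $h:=\vls_{\VP,c}(k)=\vl^b_{\VP,c}(k)$. Since the pair of $\VP$ starting at $k$ itself contributes to this count, $h\ge 1$; and since no pair of $\VP$ covers position $2m+1$, we have $\vl^b_{\VP,c}(2m+1)=0<h$.

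The goal is to exhibit a $b$-colored right point $k'>k$ at which $\vl^b_{\VP,c}$ still equals $h$; then automatically $\vls_{\VP,c}(k')=\vl^b_{\VP,c}(k')=h$, giving $k'\vcequiv k$. Let $k^{**}$ be the smallest $u>k$ with $\vl^b_{\VP,c}(u)<h$; this exists because the function $\vl^b_{\VP,c}$ eventually vanishes. Set $k':=k^{**}-1$. By minimality of $k^{**}$ we have $\vl^b_{\VP,c}(k')\ge h$, while the single-step bound (item \ref{itm:jump} of Proposition \ref{prop:pprops}) gives $\vl^b_{\VP,c}(k')\le \vl^b_{\VP,c}(k^{**})+1=h$. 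Hence $\vl^b_{\VP,c}(k')=h$ and $\vl^b_{\VP,c}(k'+1)=h-1<\vl^b_{\VP,c}(k')$, and item \ref{itm:rs} then forces $c(k')=b$ and $k'\in R_\VP$, so $\vls_{\VP,c}(k')=\vl^b_{\VP,c}(k')=h$.

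The one subtlety is to confirm $k'>k$, i.e., $k^{**}\ne k+1$. If instead $k^{**}=k+1$, then $\vl^b_{\VP,c}(k+1)<\vl^b_{\VP,c}(k)$, and item \ref{itm:rs} would force $k\in R_\VP$, contradicting $k\in L_\VP$. Thus $k'>k$, as required. The case $k\in R_\VP$ is entirely analogous: one uses $\vl^b_{\VP,c}(0)=0$, lets $k^{**}$ be the \emph{largest} $u<k$ with $\vl^b_{\VP,c}(u)<h$, sets $k':=k^{**}+1$, and replaces appeals to item \ref{itm:rs} with item \ref{itm:ls}.

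I do not foresee a substantive obstacle. The argument amounts to the discrete intermediate value property for $\vl^b_{\VP,c}$ together with the fact that a unit drop of $\vl^b_{\VP,c}$ can occur only at a $b$-colored right point; the apparent edge case $k'=k$ is ruled out precisely because $k$ has the wrong type ($L_\VP$ rather than $R_\VP$).
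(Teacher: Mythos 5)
Your proof is correct and follows essentially the same route as the paper: both locate the first place after $k$ (respectively before $k$, in the $R_\VP$ case) where $\vl^{c(k)}_{\VP,c}$ drops below $\vls_{\VP,c}(k)$, use the unit-step property to pin the value at exactly $\vls_{\VP,c}(k)$ just before the drop, and invoke the "a drop forces a same-colored right point" item to conclude $k'\vcequiv k$. The only cosmetic difference is that the paper takes the \emph{last} index where the count is still $\ge \vls_{\VP,c}(k)$ (and gets $k'>k$ from the monotonicity at a left point), whereas you take the first drop and rule out $k'=k$ by the type of $k$; both are sound.
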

\begin{proof}
 We will consider the case in which $k\in L_\VP$.
 By Proposition \ref{prop:pprops} (item \ref{itm:l}), $\vl_{\VP,c}^{c(k)}(k+1)\ge \vl_{\VP,c}^{c(k)}(k)$.
 Let 
 \begin{equation}
 r=\max\{k'>k: \vl_{\VP,c}^{c(k)}(k') \ge \vls_{\VP,c}(k)\},
 \end{equation}
 so that $\vl_{\VP,c}^{c(k)}(r+1) < \vls_{\VP,c}(k)$
By Proposition \ref{prop:pprops} (item \ref{itm:jump}), $\vl_{\VP,c}^{c(k)}(r)-\vl_{\VP,c}^{c(k)}(r+1)=1$, so it must be the case that $\vl_{\VP,c}^{c(k)}(r) = \vls_{\VP,c}(k)$.
By Proposition \ref{prop:pprops} (item \ref{itm:rs}), $c(r)=c(k)$ whence $\vls_{\VP,c}(r) = \vls_{\VP,c}(k)$ and $k\vcequiv r$.
\end{proof}
In defining the graph $G_{\VP,c}$, the following function will be very important.
\begin{definition}
Define a map $\ZB_{\VP,c}(k):[2m]\to[2m]$ by
\begin{equation}
\ZB_{\VP,c}(k):=
\begin{cases}
\min\{k'>k: k'\vcequiv k\},&\text{if $k\in L_\VP$ and $k\in \dom_{\VP,c}$};\\
\max\{k'<k: k'\vcequiv k\},&\text{if $k\in R_\VP$ and $k\in \dom_{\VP,c}$};\\
\max\{k'<k: k'\vcequiv k \},&\text{if $k\in L_\VP$ and $k\in \subo_{\VP,c}$};\\
\min\{k'>k:  k'\vcequiv k\} ,&\text{if $k\in R_\VP$ and $k\in \subo_{\VP,c}$}.
\end{cases}
\end{equation}
\end{definition}

\begin{notation}
 For $k\in[2m]$, let $I_{\VP,c}(k)$ be the interval
\begin{equation}
 I_{\VP,c}(k):=
\begin{cases}
 [k+1,\ZB_{\VP,c}(k)-1],&\text{if $\ZB_{\VP,c}(k)>k$}\\
 [\ZB_{\VP,c}(k)+1,k-1],&\text{if $\ZB_{\VP,c}(k)<k$}.
\end{cases}
\end{equation}
\end{notation}

\begin{proposition}\label{prop:dom}
Suppose that $k\in \dom_{\VP,c}$ and $k'\in I_{\VP,c}(k)$.
Then 
\begin{equation}
\vl_{\VP,c}^{-c(k)}(k')<\vls_{\VP,c}(k)\le \vl_{\VP,c}^{c(k)}(k')
\end{equation}
\end{proposition}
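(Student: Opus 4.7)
The plan is to prove both inequalities by contradiction, exploiting the defining feature of the interval $I_{\VP,c}(k)$: by construction of $\ZB_{\VP,c}$, no element of $I_{\VP,c}(k)$ is $\vcequiv$-equivalent to $k$. The argument splits naturally into the two cases $k\in L_\VP$ and $k\in R_\VP$, which are mirror images of each other. At the starting point $k$ itself, the definition of $\dom_{\VP,c}$ already gives $\vl_{\VP,c}^{-c(k)}(k)<\vls_{\VP,c}(k)=\vl_{\VP,c}^{c(k)}(k)$, so the conclusion holds at $k$; the task is to propagate it throughout $I_{\VP,c}(k)$.

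For the lower bound $\vl_{\VP,c}^{c(k)}(k')\ge\vls_{\VP,c}(k)$, I would suppose for contradiction that some $k'\in I_{\VP,c}(k)$ satisfies $\vl_{\VP,c}^{c(k)}(k')<\vls_{\VP,c}(k)$. Combined with $\vl_{\VP,c}^{c(k)}(k)=\vls_{\VP,c}(k)$, the intermediate-value property (item \ref{itm:ivpl} if $k\in R_\VP$ so $k'<k$, or item \ref{itm:ivpr} if $k\in L_\VP$ so $k'>k$) together with the unit-jump property (item \ref{itm:jump}) would locate a point $k^*$ strictly between $k$ and $k'$ at which $\vl_{\VP,c}^{c(k)}$ drops from $\vls_{\VP,c}(k)$ to $\vls_{\VP,c}(k)-1$ when moving away from $k$. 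Item \ref{itm:rs} (when $k\in L_\VP$, $k'>k$) or item \ref{itm:ls} (when $k\in R_\VP$, $k'<k$) then forces $k^*$ to be a pair-endpoint of color $c(k)$, giving $\vls_{\VP,c}(k^*)=\vls_{\VP,c}(k)$ and hence $k^*\vcequiv k$. But $k^*$ lies in the open interval between $k$ and $\ZB_{\VP,c}(k)$, contradicting the extremality definition of $\ZB_{\VP,c}(k)$.

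For the upper bound $\vl_{\VP,c}^{-c(k)}(k')<\vls_{\VP,c}(k)$, the argument is entirely parallel but using the opposite color. Assuming $\vl_{\VP,c}^{-c(k)}(k')\ge\vls_{\VP,c}(k)$ while $\vl_{\VP,c}^{-c(k)}(k)<\vls_{\VP,c}(k)$, the intermediate-value and unit-jump properties again locate a transition point $k^*$ strictly between $k$ and $k'$. This time the jump is in the direction that, by items \ref{itm:ls}/\ref{itm:rs}, identifies $k^*$ as a pair-endpoint of color $-c(k)$ with $\vl_{\VP,c}^{-c(k)}(k^*)=\vls_{\VP,c}(k)$, so again $\vls_{\VP,c}(k^*)=\vls_{\VP,c}(k)$ and $k^*\vcequiv k$, contradicting the definition of $\ZB_{\VP,c}(k)$.

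The main obstacle is really just bookkeeping: in each of the four subcases (left versus right endpoint $k$, and the two colors) one must pick the correct version of the intermediate-value item and the correct jump direction in order to obtain the pair-endpoint of the desired color. Once those are lined up, the contradiction with the minimality or maximality in the definition of $\ZB_{\VP,c}(k)$ is immediate, and no further combinatorial input beyond Proposition \ref{prop:pprops} is needed.
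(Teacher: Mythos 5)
Your proof is correct and takes essentially the same route as the paper's: both argue by contradiction, using the unit-jump and endpoint-identification items of Proposition \ref{prop:pprops} to produce a point $\vcequiv$-equivalent to $k$ lying strictly between $k$ and $\ZB_{\VP,c}(k)$, contradicting the extremality in the definition of $\ZB_{\VP,c}(k)$ (the paper locates your transition point by choosing the extremal violating $k'$ and examining its neighbor, which is the same maneuver). The only loose end is your unproved claim that $k^*$ lies strictly between $k$ and $k'$ in the lower-bound case; this needs the one-line remark that a drop at $k$ itself would force $k\in R_\VP$ (resp.\ $k\in L_\VP$) via item \ref{itm:rs} (resp.\ item \ref{itm:ls}), contradicting the standing case assumption --- the same point the paper dismisses as straightforward.
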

\begin{proof}
We will assume that $k\in L_\VP$ since the case of $k\in R_\VP$ is similar.
Suppose that $k'\in I_{\VP,c}(k)$ satisfies $\vl_{\VP,c}^{c(k)}(k')< \vl_{\VP,c}^{c(k)}(k)$.
We can assume that $k'$ is the smallest element of $I_{\VP,c}(k)$ satisfying this inequality, so that by Proposition \ref{prop:pprops} ,  $\vl_{\VP,c}^{c(k)}(k'-1)= \vl_{\VP,c}^{c(k)}(k)$ (by item \ref{itm:jump}) and $c(k'-1)=c(k)$ (by item \ref{itm:rs}).
Thus, $k'-1\vcequiv k$, which contradicts the definition of $I_{\VP,c}(k)$.

%TODO: the following might be simplified a bit
Now suppose that $\vl_{\VP,c}^{-c(k)}(k')\ge \vl_{\VP,c}^{c(k)}(k)$.
Assume that $k'$ is the smallest element of $I_{\VP,c}(k)$ satisfying this condition so that $\vl_{\VP,c}^{-c(k)}(k'-1)< \vl_{\VP,c}^{c(k)}(k)$ (again it is straightforward to rule out the possibility that $k'=k+1$).
By Proposition \ref{prop:pprops}, $\vl_{\VP,c}^{-c(k)}(k'-1)-\vl_{\VP,c}^{-c(k)}(k')=-1$ (by item \ref{itm:jump}) and $c(k'-1)=-c(k)$ (by item \ref{itm:l}).
Thus, $\vl_{\VP,c}^{-c(k)}(k'-1)= \vl_{\VP,c}^{c(k)}(k)$ whence $k'-1\vcequiv k$, contradicting the definition of $I_{\VP,c}(k)$.
\end{proof}

\begin{proposition}\label{prop:sub}
%TODO: think about rewriting or removing this
If $k\in\subo_{\VP,c}$ and $k'\in I_{\VP,c}(k)$ then 
\begin{equation}
\vl_{\VP,c}^{c(k)}(k')< \vl_{\VP,c}^{-c(k)}(k)\le \vl_{\VP,c}^{-c(k)}(k').
\end{equation}
\end{proposition}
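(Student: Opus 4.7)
The plan is to mirror the argument used to prove Proposition \ref{prop:dom}, treating the two inequalities separately and in each case assuming the conclusion fails at some $k'\in I_{\VP,c}(k)$, then using the one-step-jump property (item \ref{itm:jump}) together with the left/right-point criteria (items \ref{itm:ls} and \ref{itm:rs}) and the intermediate value property (items \ref{itm:ivpl} and \ref{itm:ivpr}) to locate a point $l\in I_{\VP,c}(k)$ which is $\vcequiv$-equivalent to $k$, violating the extremality in the definition of $\ZB_{\VP,c}(k)$. I restrict attention to the subcase $k\in L_\VP$, as the case $k\in R_\VP$ is analogous by time-reversal; write $b=c(k)$.

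For the first inequality, I would suppose that some $k'\in I_{\VP,c}(k)$ satisfies $\vl^b(k')\ge\vl^{-b}(k)$. The sub-hypothesis gives $\vl^b(k)\le\vl^{-b}(k)\le\vl^b(k')$, and $k\in L_\VP$ with color $b$ yields $\vl^b(k-1)=\vl^b(k)-1$. By item \ref{itm:ivpl} some $l\in[k',k-1]$ has $\vl^b(l)=\vl^b(k)$; take $l$ to be the largest such. Then items \ref{itm:jump} and \ref{itm:rs} force $\vl^b(l+1)=\vl^b(l)-1$, $c(l)=b$, and $l\in R_\VP$. Hence $\vls_{\VP,c}(l)=\vls_{\VP,c}(k)$, so $l\vcequiv k$ with $l\in(\ZB_{\VP,c}(k),k)$, contradicting the defining maximality of $\ZB_{\VP,c}(k)$.

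For the second inequality, I would suppose that some $k^*\in I_{\VP,c}(k)$ satisfies $\vl^{-b}(k^*)<\vl^{-b}(k)$, taking $k^*$ largest. Because no color-$(-b)$ transition happens at $k$, $\vl^{-b}(k-1)=\vl^{-b}(k)$, so $k^*\le k-2$ and $\vl^{-b}(k^*+1)\ge\vl^{-b}(k)$; items \ref{itm:jump} and \ref{itm:ls} then give $c(k^*+1)=-b$, $k^*+1\in L_\VP$, and $\vl^{-b}(k^*+1)=\vl^{-b}(k)$. Applying the first inequality at $k^*+1$, $\vl^b(k^*+1)<\vl^{-b}(k^*+1)$, so $k^*+1\in\dom_{\VP,c}$, and I can invoke Proposition \ref{prop:dom} at $k^*+1$ to obtain $\vl^b(m)<\vl^{-b}(k)\le\vl^{-b}(m)$ for all $m\in I_{\VP,c}(k^*+1)$. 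Let $R$ denote the right partner of $k^*+1$ in $\VP$.

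The plan is then to analyze both possible locations of $R$ relative to $k$. If $R<k$, then $R\in I_{\VP,c}(k)\cap R_\VP$ of color $-b$, and the maximality of $k^*$ together with item \ref{itm:jump} forces $\vl^{-b}(R)>\vl^{-b}(k)$; a second application of Proposition \ref{prop:dom}, this time to $R$, should produce a point of $I_{\VP,c}(k)$ that is $\vcequiv k$, contradicting maximality of $\ZB_{\VP,c}(k)$. If $R>k$, then the $-b$-pair $(k^*+1,R)$ crosses the $b$-pair starting at $k$, and the sub-hypothesis at $k$ together with the first inequality on $I_{\VP,c}(k^*+1)$ should again force the existence of a $\vcequiv$-equivalent to $k$ in $(\ZB_{\VP,c}(k),k)$. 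The hard part will be bookkeeping this crossing/nesting structure: in general one may have to iterate the two-application argument several times, which I expect requires an induction on the number of color-$(-b)$ pair endpoints in $(\ZB_{\VP,c}(k),k)$ to ensure that the chain of applications of Proposition \ref{prop:dom} terminates with a genuine contradiction rather than merely compatible inequalities.
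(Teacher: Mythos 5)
Your handling of the first inequality is essentially sound and is the same mechanism the paper uses: produce a point of $I_{\VP,c}(k)$ at which the $c(k)$-count equals $\vls_{\VP,c}(k)$, recognize it via items \ref{itm:jump} and \ref{itm:rs} of Proposition \ref{prop:pprops} as a $c(k)$-colored right point, conclude it is $\vcequiv k$, and contradict the extremality in the definition of $\ZB_{\VP,c}(k)$. Two small repairs: the intermediate-value step you want is item \ref{itm:ivpr} (the decreasing version), not \ref{itm:ivpl}; and the identity $\vl_{\VP,c}^{c(k)}(k-1)=\vl_{\VP,c}^{c(k)}(k)-1$ fails precisely when $k-1$ is a $c(k)$-colored right point, but in that case $k-1\vcequiv k$, so $I_{\VP,c}(k)=\emptyset$ and there is nothing to prove.

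The genuine gap is in the second inequality, and it is not a gap your proposed iteration can close. Your own computation exposes the obstruction: the extremal point $k^*+1$ is a $-c(k)$-colored left point with $\vl_{\VP,c}^{-c(k)}(k^*+1)=\vl_{\VP,c}^{-c(k)}(k)$, which is $\vcequiv k$ only if $\vl_{\VP,c}^{c(k)}(k)=\vl_{\VP,c}^{-c(k)}(k)$, and membership in $\subo_{\VP,c}$ gives only ``$\le$''. The chain of applications of Proposition \ref{prop:dom}, the case analysis on the partner $R$, and the induction you defer are never carried out, and in fact they cannot be, because the inequality $\vl_{\VP,c}^{-c(k)}(k)\le\vl_{\VP,c}^{-c(k)}(k')$ is false as stated: take $\VP=\{(1,2),(3,11),(4,12),(5,13),(6,7),(8,9),(10,14)\}$ with the first six pairs colored $-1$ and $(10,14)$ colored $1$, and $k=10$. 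Then $\vls_{\VP,c}(10)=1\le 3=\vl_{\VP,c}^{-1}(10)$, so $10\in\subo_{\VP,c}$, the values $\vls_{\VP,c}(u)$ for $u=1,\ldots,9$ are $1,1,1,2,3,4,4,4,4$, so $\ZB_{\VP,c}(10)=3$ and $4\in I_{\VP,c}(10)$, yet $\vl_{\VP,c}^{-1}(4)=2<3$. (The paper's own proof reaches the stated bound only by asserting $\vl_{\VP,c}^{-c(k)}(k'+1)=\vl_{\VP,c}^{c(k)}(k)$, which is exactly the unjustified step; so your instinct that the one-step argument does not suffice was correct.) What your part-one technique does prove is the true mirror of Proposition \ref{prop:dom} with middle term $\vls_{\VP,c}(k)$: if $k'$ is the largest element of $I_{\VP,c}(k)$ with $\vl_{\VP,c}^{-c(k)}(k')<\vls_{\VP,c}(k)$, then items \ref{itm:jump} and \ref{itm:ls} force $k'+1$ to be a $-c(k)$-colored left point with $\vl_{\VP,c}^{-c(k)}(k'+1)=\vls_{\VP,c}(k)$, hence $k'+1\vcequiv k$, a genuine contradiction; so the correct conclusion is $\vl_{\VP,c}^{c(k)}(k')<\vls_{\VP,c}(k)\le\vl_{\VP,c}^{-c(k)}(k')$, and no amount of crossing/nesting bookkeeping will upgrade it to the version with $\vl_{\VP,c}^{-c(k)}(k)$ in the middle.
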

\begin{proof}
We will again assume that $k\in L_\VP$ as the case of $k\in R_\VP$ is similar.
Suppose that $k'\in I_{\VP,c}(k)$ is such that $\vl_{\VP,c}^{c(k)}(k')\ge  \vl_{\VP,c}^{-c(k)}(k)$.
Assume that $k'$ is the largest element of $I_{\VP,c}(k)$ satisfying this inequality.
One can check that if $k'=k-1$ then $k-1\vcequiv k$ and $I_{\VP,c}(k)=\emptyset$, so we assume that $k'+1\in I_{\VP,c}(k)$ and thus $\vl_{\VP,c}^{c(k)}(k'+1)< \vl_{\VP,c}^{-c(k)}(k)$.
By Proposition \ref{prop:pprops}, it follows that $\vl_{\VP,c}^{c(k)}(k'+1)-\vl_{\VP,c}^{c(k)}(k')=-1$ and $c(k'+1)=c(k)$.
Thus, $k'+1\vcequiv k$, which contradicts the definition of $I_{\VP,c}(k)$.

Now suppose that there is some $k'\in I_{\VP,c}(k)$ such that $\vl_{\VP,c}^{-c(k)}(k')< \vl_{\VP,c}^{-c(k)}(k)$.
Again assume that $k'$ is the largest element of $I_{\VP,c}(k)$ satisfying this inequality.
Since $\vl_{\VP,c}^{-c(k)}(k-1)\ge \vl_{\VP,c}^{-c(k)}(k)$, we must have $k\ne k-1$, so that $k'\in I_{\VP,c}(k)$ and $\vl_{\VP,c}^{-c(k)}(k'+1)> \vl_{\VP,c}^{-c(k)}(k)$.
By Proposition \ref{prop:pprops}, $\vl_{\VP,c}^{-c(k)}(k'+1)-\vl_{\VP,c}^{-c(k)}(k)=1$ and $c(k'+1)=-c(k)$.
But also $\vl_{\VP,c}^{-c(k)}(k'+1)= \vl_{\VP,c}^{c(k)}(k)$ whence $k'+1\vcequiv k$, contradicting the definition of $I_{\VP,c}(k)$.
\end{proof}
\begin{proposition}\label{prop:Zs}
Suppose that $(\VP,c)\in \IPP(2m)$ and $k\in[2m]$.
Then the following hold:
\begin{enumerate}
\item If $k\in \dom_{\VP,c}$ then $\ZB_{\VP,c}(k)\in \dom_{\VP,c}$ if and only if $c(k)=c(\ZB_{\VP,c}(k))$;\label{itm:dom}
\item If $k\in \subo_{\VP,c}$ then $\ZB_{\VP,c}(k)\in \subo_{\VP,c}$ if and only if $c(k)=c(\ZB_{\VP,c}(k))$;\label{itm:sub}
\item If $k\in L_\VP$ then $\ZB_{\VP,c}(k)\in L_\VP$ if and only if $c(\ZB_{\VP,c}(k))=-c(k)$;\label{itm:L}
\item If $k\in R_\VP$ then $\ZB_{\VP,c}(k)\in R_\VP$ if and only if $c(\ZB_{\VP,c}(k))=-c(k)$; \label{itm:R}
\item $\ZB_{\VP,c}(\ZB_{\VP,c}(k))=k$. \label{itm:inv}
\end{enumerate}
\end{proposition}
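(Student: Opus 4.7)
The plan is to establish items (1)--(4) simultaneously by a case analysis on whether $k\in L_\VP$ or $k\in R_\VP$ and whether $k\in\dom_{\VP,c}$ or $k\in\subo_{\VP,c}$, and then to deduce item (5) from items (1)--(4) by unpacking the definition of $\ZB_{\VP,c}$ applied to $\ZB_{\VP,c}(k)$. The four cases in the definition of $\ZB_{\VP,c}$ are interchanged by reversing the order of $[2m]$ (which swaps $L_\VP\leftrightarrow R_\VP$ and exchanges the role of $\max$ with $\min$) and by interchanging the two elements of $\IndexSet$ (which preserves the definitions of $\dom_{\VP,c}$ and $\subo_{\VP,c}$). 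Thus it suffices to treat the representative case $k\in L_\VP\cap\dom_{\VP,c}$ carefully and then invoke symmetry.

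In that case, let $k'':=\ZB_{\VP,c}(k)=\min\{k'>k:k'\vcequiv k\}$, so that $\vl^{c(k'')}_{\VP,c}(k'')=\vls_{\VP,c}(k'')=\vls_{\VP,c}(k)$. Applying Proposition \ref{prop:dom} to the point $k''-1\in I_{\VP,c}(k)$ (or, in the boundary case $k''=k+1$, directly using the definition of $\dom_{\VP,c}$ at $k$), one obtains $\vl^{-c(k)}_{\VP,c}(k''-1)<\vls_{\VP,c}(k)\le \vl^{c(k)}_{\VP,c}(k''-1)$. Distinguishing according to the sign of $c(k'')$ and combining this two-sided estimate with the one-step jump bounds and the left/right jump criteria of Proposition \ref{prop:pprops}, one forces $k''\in R_\VP\cap\dom_{\VP,c}$ when $c(k'')=c(k)$ and $k''\in L_\VP\cap\subo_{\VP,c}$ when $c(k'')=-c(k)$. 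This establishes items (1) and (3) in the representative case; items (2) and (4) follow from the remaining three cases via the symmetry reductions above.

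For item (5), I would feed items (1)--(4) into the definition of $\ZB_{\VP,c}(k'')$ to identify which of the four clauses applies to $k''$. In every resulting sub-case, $k$ lies on the correct side of $k''$ and satisfies $k\vcequiv k''$, so it is a candidate for $\ZB_{\VP,c}(k'')$; one then needs to show that no element strictly between $k$ and $k''$ lies in the $\vcequiv$-class of $k''$. But any such element would itself be an element of the $\vcequiv$-class of $k$ strictly between $k$ and $k''$, contradicting the minimality of $k''$ in the definition of $\ZB_{\VP,c}(k)$. Hence the appropriate extremal element of the $\vcequiv$-class of $k''$ on the correct side of $k''$ is precisely $k$, giving $\ZB_{\VP,c}(\ZB_{\VP,c}(k))=k$.

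The main obstacle I anticipate is the boundary case $k''=k+1$, where $I_{\VP,c}(k)$ is empty and one cannot invoke Proposition \ref{prop:dom} (or, in the analogous case, Proposition \ref{prop:sub}) at a point of that interval; the required inequalities at $k$ itself must be extracted directly from the definitions of $\dom_{\VP,c}$ and $\subo_{\VP,c}$ combined with items (\ref{itm:l}), (\ref{itm:r}), and (\ref{itm:jump}) of Proposition \ref{prop:pprops}. A secondary concern is simply keeping the sign and side conventions consistent across the symmetry reductions, though the underlying computations are essentially mechanical once the representative case is in hand.
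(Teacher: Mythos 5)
Your route is essentially the paper's: in the representative case you combine the two-sided estimate of Proposition \ref{prop:dom} evaluated at $\ZB_{\VP,c}(k)-1$ with the jump/endpoint criteria of Proposition \ref{prop:pprops} and the minimality in the definition of $\ZB_{\VP,c}$, and your treatment of item (\ref{itm:inv}) (candidacy of $k$ plus the observation that any closer member of the $\vcequiv$-class of $\ZB_{\VP,c}(k)$ would contradict the extremality defining $\ZB_{\VP,c}(k)$) as well as of the boundary case $\ZB_{\VP,c}(k)=k+1$ is, if anything, spelled out more carefully than in the paper.

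The one genuine flaw is the symmetry reduction. Reversing the order of $[2m]$ swaps $L_\VP\leftrightarrow R_\VP$ and $\min\leftrightarrow\max$ but preserves both $\dom_{\VP,c}$ and $\subo_{\VP,c}$ (the quantities $\vl^{b}_{\VP,c}$ at a point are unchanged under reflection), and interchanging the two colors also preserves $\dom_{\VP,c}$ and $\subo_{\VP,c}$, as you note yourself. Hence the group generated by your two symmetries has two orbits on the four clauses in the definition of $\ZB_{\VP,c}$, namely $\{L_\VP\cap\dom_{\VP,c},\,R_\VP\cap\dom_{\VP,c}\}$ and $\{L_\VP\cap\subo_{\VP,c},\,R_\VP\cap\subo_{\VP,c}\}$, and the conclusion ``it suffices to treat $k\in L_\VP\cap\dom_{\VP,c}$'' does not follow. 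In particular item (\ref{itm:sub}), and items (\ref{itm:L})--(\ref{itm:R}) for $k\in\subo_{\VP,c}$, are not reached by your reduction; they require the mirror argument in which Proposition \ref{prop:sub} replaces Proposition \ref{prop:dom} and the estimate is taken at $\ZB_{\VP,c}(k)+1$ (this is exactly the second case, $k\in L_\VP\cap\subo_{\VP,c}$, that the paper writes out explicitly before appealing to order reversal for the $R_\VP$ cases). Your parenthetical mention of Proposition \ref{prop:sub} suggests you are aware of this; adding that second representative case closes the gap, and the rest of the argument then goes through as you describe.
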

\begin{proof}
We fix an $\IndexSet$-indexed pair partition $(\VP,c)$.
For compactness and readability, we will abbreviate $\ZB_{\VP,c}(k)$ by $k^*$.

If $k\in L_\VP\cap \dom_{\VP,c}$ then by definition $k^*>k$.
By Proposition \ref{prop:dom}, 
\begin{equation}
\vl_{\VP,c}^{-c(k)}(k^*-1)<\vls_{\VP,c}(k)< \vl_{\VP,c}^{c(k)}(k^*-1).
\end{equation}
Using the fact that $k^*\vcequiv k$ and applying Proposition \ref{prop:pprops}, it  follows that if $c(k^*)=c(k)$ then $k^*\in R_\VP$ and $k^*\in \dom_{\VP,c}$ whereas if $c(k^*)=c(-k)$ then $k^*\in L_\VP$ and $k^*\in\subo{\VP,c}$.

If $k\in L_\VP\cap\subo_{\VP,c}$ then by definition $k^*<k$.
By Proposition \ref{prop:sub}, 
\begin{equation}
\vl_{\VP,c}^{c(k)}(k^*+1)< \vl_{\VP,c}^{-c(k)}(k)<\vl_{\VP,c}^{-c(k)}(k^*+1).
\end{equation}
Using the fact that $k^*\vcequiv k$ and applying Proposition \ref{prop:pprops}, it  follows that if $c(k^*)=c(k)$ then $k^*\in R_\VP$ and $k^*\in\subo_{\VP,c}$ whereas if $c(k^*)=c(-k)$ then $k^*\in L_\VP$ and $k^*\in \dom_{\VP,c}$.

Collecting all of these cases, as well as the analogous statements for $k\in R_\VP$ gives items \ref{itm:dom}, \ref{itm:sub}, \ref{itm:L}, and \ref{itm:R}.
Making use of the definition of $\ZB_{\VP,c}$, it follows that $(k^*)^*<k^*$ if and only if $k<k^*$, whence $k=(k^*)^*$.
\end{proof}

\begin{corollary}
 If $(\VP,c)\in \IPP(2m)$ then the function $\ZB_{\VP,c}:[2m]\to [2m]$ defines a pair partition $\bar\VP^{(c)}\in \IPP(2m)$ by
\begin{equation}
\bar\VP^{(c)}:=\left\{\left(k,\ZB_{\VP,c}(k)\right): k<\ZB_{\VP,c}(k) \right\}.
\end{equation}
\end{corollary}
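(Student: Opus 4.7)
The plan is to derive the corollary formally from item \ref{itm:inv} of Proposition \ref{prop:Zs}. Observe that a function $f\colon[2m]\to[2m]$ defines a pair partition via $\{(k,f(k)):k<f(k)\}\in\PP(2m)$ if and only if $f$ is a fixed-point-free involution of $[2m]$, so the entire task reduces to checking these two properties for $\ZB_{\VP,c}$.

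The involution identity $\ZB_{\VP,c}\circ\ZB_{\VP,c}=\mathrm{id}_{[2m]}$ is exactly item \ref{itm:inv} of Proposition \ref{prop:Zs}. This statement tacitly presupposes that $\ZB_{\VP,c}(k)$ is defined for every $k\in[2m]$. For $k\in L_\VP\cap\dom_{\VP,c}$ and $k\in R_\VP\cap\dom_{\VP,c}$ the non-emptiness of $\{k'>k:k'\vcequiv k\}$ and $\{k'<k:k'\vcequiv k\}$, respectively, is the proposition proved immediately after the definition of $\vcequiv$. The two remaining $\subo_{\VP,c}$ cases then follow automatically: items \ref{itm:dom}--\ref{itm:R} of Proposition \ref{prop:Zs} show that on $\dom_{\VP,c}$-elements $\ZB_{\VP,c}$ lands in the $\subo_{\VP,c}$-cases of complementary color/side pattern, so every element of $\subo_{\VP,c}$ is already in the image and the relevant sets in its defining clauses are therefore non-empty.

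Fixed-point freeness is immediate from the definition, since each of the four clauses sets $\ZB_{\VP,c}(k)$ equal to either $\min\{k'>k:\cdots\}$ or $\max\{k'<k:\cdots\}$, both of which are strictly different from $k$. Combining these observations, $\ZB_{\VP,c}$ is a fixed-point-free involution of $[2m]$, its orbits partition $[2m]$ into $m$ unordered pairs, and extracting the ordered representatives $(k,\ZB_{\VP,c}(k))$ with $k<\ZB_{\VP,c}(k)$ gives the desired $\bar\VP^{(c)}\in\PP(2m)$. There is no substantive obstacle; the only point that is not purely formal is the well-definedness of $\ZB_{\VP,c}$ on the $\subo_{\VP,c}$ cases, and that is already packaged inside Proposition \ref{prop:Zs}.
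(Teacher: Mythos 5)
Your overall route is the same as the paper's: the corollary is exactly the statement that $\ZB_{\VP,c}$ is a fixed-point-free involution of $[2m]$, with involutivity supplied by the last item of Proposition \ref{prop:Zs} and fixed-point-freeness read off from the strict $\min$/$\max$ clauses. The gap is in the one step you yourself isolate as non-formal, namely well-definedness of $\ZB_{\VP,c}$ on $\subo_{\VP,c}$, and the argument you give for it does not work. First, items (1)--(4) of Proposition \ref{prop:Zs} do \emph{not} say that $\ZB_{\VP,c}$ sends $\dom_{\VP,c}$ into the $\subo_{\VP,c}$ clauses: by item (1) a point of $\dom_{\VP,c}$ is sent to another point of $\dom_{\VP,c}$ whenever the two colors agree. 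Second, a point of $\subo_{\VP,c}$ need not be in the image of any point of $\dom_{\VP,c}$ at all: in the paper's worked example (Figure \ref{fig:ppex}, Table \ref{tab:pex}) the points $5$ and $6$ both lie in $\subo_{\VP,c}$ and form the pair $(5,6)$ of $\bar\VP^{(c)}$, so neither is the image of a $\dom_{\VP,c}$ point. Third, any appeal to Proposition \ref{prop:Zs} itself (in particular to the involution identity, hence to bijectivity) in order to get non-emptiness is circular, because that proposition is stated for $\ZB_{\VP,c}(k)$ for all $k$ and therefore already presupposes that all four defining sets are non-empty; the proposition proved right after the definition of $\vcequiv$ only covers the two $\dom_{\VP,c}$ clauses ($k'>k$ for left points, $k'<k$ for right points), not the two $\subo_{\VP,c}$ clauses.

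The missing step can be closed directly from Proposition \ref{prop:pprops}, without invoking Proposition \ref{prop:Zs}. Let $k\in L_\VP\cap\subo_{\VP,c}$, set $b=c(k)$ and $u=\vls_{\VP,c}(k)\ge 1$; by the definition of $\subo_{\VP,c}$ we have $\vl^{-b}_{\VP,c}(k)\ge u$. Let $l$ be the smallest index with $\vl^{-b}_{\VP,c}(l)\ge u$. Since $\vl^{-b}_{\VP,c}(0)=0<u$, the function $\vl^{-b}_{\VP,c}$ increases at $l$, so by Proposition \ref{prop:pprops} we get $c(l)=-b$, $l\in L_\VP$, and (jumps being of size one) $\vl^{-b}_{\VP,c}(l)=u$, hence $\vls_{\VP,c}(l)=u$ and $l\vcequiv k$; moreover $l\le k$ and $l\ne k$ because $c(l)\ne c(k)$, so $l<k$ and the set $\{k'<k:k'\vcequiv k\}$ is non-empty. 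The case $k\in R_\VP\cap\subo_{\VP,c}$ is symmetric: take the largest $l$ with $\vl^{-b}_{\VP,c}(l)\ge u$ and use $\vl^{-b}_{\VP,c}(2m+1)=0$ to see that $\vl^{-b}_{\VP,c}$ decreases at $l+1$, giving $l>k$ with $l\vcequiv k$. With this repair your argument is complete and coincides with the paper's.
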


We define a coloring function on $\bar\VP^{(c)}$ by:
\begin{equation}
 \bar c\left(k,\ZB_{\VP,c}(k)\right)=
\begin{cases}
 c(k),&\text{if $k\in\subo_{\VP,c}$}\\
-c(k),&\text{if $k\in\dom_{\VP,c}$}.
\end{cases}
\end{equation}
\begin{remark}
This definition of $\bar c\left(k,\ZB_{\VP,c}(k)\right)$ does not depend on which point of a pair is chosen as $k$ because $c(k)=c(\ZB_{\VP,c}(k))$ if and only if $k$ and $\ZB_{\VP,c}(k)$ are either both in $\subo_{\VP,c}$ or both in $\dom_{\VP,c}$.
\end{remark}
\begin{notation}\label{not:inv}
Define a map $(\cdot,\cdot)^{(-1)}$ on $[2m]\times [2m]$ by $(u,v)^{(-1)}=(v,u)$.
Also let $(\cdot,\cdot)^{(1)}$ be the identity map on $[2m]\times [2m]$.
\end{notation}

We are now ready to define the graph $G_{\VP,c}$.
\begin{definition}
If $(\VP,c)\in\IPP(2m)$, then $G_{\VP,c}$ is the directed graph with vertices $[2m]$ and arcs defined as follows.
Let 
\begin{equation}
\begin{split}
F_{\VP,c}&=\{(l,r)^{(c(l,r))}:(l,r)\in \VP \}\\
\bar F_{\VP,c}&=\{(k,k')^{(\bar c(k,k'))}: (k,k')\in \bar\VP^{(c)} \}.
\end{split}
\end{equation}

The graph $G_{\VP,c}$ has arc set 
\begin{equation}
A_{\VP,c}:=F_{\VP,c}\cup \bar F_{\VP,c}
\end{equation}
\end{definition}

\begin{proposition}
 The sets $F_{\VP,c}$ and $\bar F_{\VP,c}$ have empty intersection.
\end{proposition}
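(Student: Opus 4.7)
The plan is to argue by contradiction: suppose there exists an arc $(a,b)$ that lies in both $F_{\VP,c}$ and $\bar F_{\VP,c}$, and derive an inconsistency about its direction.

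First I would strip the arc down to the underlying unordered pair. By definition of $F_{\VP,c}$, the set $\{a,b\}$ is a pair of $\VP$, and by definition of $\bar F_{\VP,c}$, the set $\{a,b\}$ is a pair of $\bar\VP^{(c)}$. Letting $l = \min(a,b)$ and $r = \max(a,b)$, we have $(l,r) \in \VP$ (so $l \in L_\VP$ and $r \in R_\VP$) and $(l,r) \in \bar\VP^{(c)}$, the latter meaning $\ZB_{\VP,c}(l) = r$ by the way $\bar\VP^{(c)}$ was defined.

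The key step is then to locate $l$ in the partition $[2m] = \dom_{\VP,c} \sqcup \subo_{\VP,c}$. Inspecting the four cases in the definition of $\ZB_{\VP,c}$, the only branches in which $\ZB_{\VP,c}(k) > k$ are those with $k \in L_\VP \cap \dom_{\VP,c}$ or $k \in R_\VP \cap \subo_{\VP,c}$. Since $l \in L_\VP$ and $\ZB_{\VP,c}(l) = r > l$, we must be in the first case, so $l \in \dom_{\VP,c}$. Then the definition of $\bar c$ gives $\bar c(l,r) = -c(l) = -c(l,r)$.

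Finally I would read off the directions. The arc of $F_{\VP,c}$ corresponding to $(l,r) \in \VP$ is $(l,r)^{(c(l,r))}$, while the arc of $\bar F_{\VP,c}$ corresponding to $(l,r) \in \bar\VP^{(c)}$ is $(l,r)^{(\bar c(l,r))} = (l,r)^{(-c(l,r))}$. By Notation~\ref{not:inv}, these two arcs differ by reversal, so they are distinct ordered pairs, contradicting the assumption that a single arc lies in both sets. The only step requiring real care is the case analysis that forces $l \in \dom_{\VP,c}$; once this is in hand, the sign flip in $\bar c$ does all the work.
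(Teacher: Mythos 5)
Your proof is correct and follows essentially the same route as the paper: reduce to a pair $(l,r)$ lying in both $\VP$ and $\bar\VP^{(c)}$, use the definition of $\ZB_{\VP,c}$ (only the $L_\VP\cap\dom_{\VP,c}$ branch sends a left point forward) to conclude $l\in\dom_{\VP,c}$, and then note that $\bar c(l,r)=-c(l,r)$ forces the two arcs to be reversals of one another, hence distinct. The only cosmetic difference is that the paper also cites Proposition~\ref{prop:Zs}, while your direct case analysis of $\ZB_{\VP,c}$ suffices.
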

\begin{proof}
We need only show that if $(l,r)\in \VP$ with $\ZB_{\VP,c}(l)=r$ then $c(l,r)=-\bar c(l,r)$.
By the definition of $\bar c$, it will suffice to show that $l\in\dom_{\VP,c}$.
Since $l<\ZB_{\VP,c}(l)$, $\ZB_{\VP,c}(l)=r\in R_\VP$ and $c(l)=c(\ZB_{\VP,c}(l))$, this follows from Proposition \ref{prop:Zs} and the definition of $\ZB_{\VP,c}$.
\end{proof}

\begin{example}
We consider the example of the graph $G_{\VP,c}$ for the $\IndexSet$-indexed pair partition $(\VP,c)$ with
\begin{equation}\label{eqn:ppex}
\begin{split}
\VP=\{(1,5),(2,10),(3,8),(6,7),(4,12),(9,11)\};\\
c(1,5)=c(2,9)=c(8,10)=c(6,7)=-1;\\
c(3,8)=c(4,12)=1.
\end{split}
\end{equation}
The $\IndexSet$-indexed pair partition $(\VP,c)$ is depicted in Figure \ref{fig:ppex}.

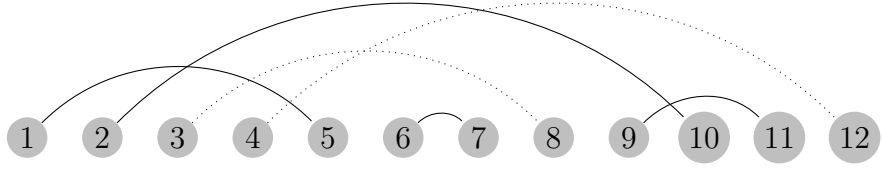
\begin{figure}
 \begin{tikzpicture}[shorten >=1pt,->]
  \tikzstyle{vertex}=[circle,fill=black!25,minimum size=14pt,inner sep=2pt]
  \foreach \x in {1,...,12} 
    \node[vertex] (\x) at (\x,0) {\x};
  \draw [-] (1) to[out=45,in=135] (5);
  \draw [-] (2) to[out=45,in=135] (10);
  \draw [-] (9) to[out=45,in=135] (11);
  \draw [-,style=dotted] (3) to[out=45,in=135] (8);
  \draw [-,style=dotted] (4) to[out=45,in=135] (12);
  \draw [-] (6) to[out=45,in=135] (7);
\end{tikzpicture}
\caption[An indexed pair partition $(\VP, c)$]{The $\{-1,1\}$-indexed pair partition $(\VP,c)$ given in \eqref{eqn:ppex}.}
\label{fig:ppex}
\end{figure}

%The values of $\vl_{\VP,c}^b(k)$  for $b\in\IndexSet$ and $k\in[2m]$ are given in Table \ref{tab:pex}.
%From the tables, it can be seen that $1$, $2$, $8$, $9$, $10$ and $12$ are dominant in $(\VP,c)$, and $3$, $4$, $5$, $6$, $7$ and $11$ are subordinate in $(\VP,c)$.

We can determine the values of $\vl_{\VP,c}^{-1}(k)$ and $\vl_{\VP,c}^{1}(k)$ by drawing a vertical line through the diagram at $k$ and counting the intersections with paths of the respective colors.
For instance, a vertical line through $k=3$ intersects 2 solid paths and 1 dotted path (at the endpoint), so $
\vl^{(1)}_{\VP,c}(3)=1$ and $\vl^{(-1)}_{\VP,c}(3)=-1$.
Since $c(3)=1$, this means that $3\in\subo_{\VP,c}$.

Repeating the process for the other elements of $[12]$ we can fill in the first few rows of Table \ref{tab:pex}.
From the data, one sees that the equivalence class of $3$ under $\vcequiv$ is $\{1, 3, 11,12\}$ whence 
\begin{equation}
 \ZB_{\VP,c}(3)=1.
\end{equation}

Continuing in this way for other elements of $2m$, the pair partition $\bar\VP^{(c)}$ is given by
\begin{equation}
 \bar\VP^{(c)}:=\{(1,3),(2,4), (5,6), (7,8), (9,10), (11,12) \}
\end{equation}
and the color function $\bar c:\bar\VP^{(c)}\to \IndexSet$ is given by 
\begin{equation}
\begin{split}
&\bar c(5,6)=\bar c(7,8))=\bar c(11,12)=-1;\\
&\bar c(1,3)=\bar c(2,4))=\bar c(9,10)=1.
 \end{split}
\end{equation}
This $\IndexSet$-indexed broken pair partition is depicted in Figure \ref{fig:ppexb}.
\begin{figure}
 \begin{tikzpicture}[shorten >=1pt,->]
  \tikzstyle{vertex}=[circle,fill=black!25,minimum size=14pt,inner sep=2pt]
  \foreach \x in {1,...,12} 
    \node[vertex] (\x) at (\x,0) {\x};
  \draw [-, style=dotted] (1) to[out=45,in=135] (3);
  \draw [-, style=dotted] (2) to[out=45,in=135] (4);
  \draw [-] (5) to[out=45,in=135] (6);
  \draw [-] (7) to[out=45,in=135] (8);
  \draw [-, style=dotted] (9) to[out=45,in=135] (10);
  \draw [-] (11) to[out=45,in=135] (12);
\end{tikzpicture}
\caption[The indexed pair partition $(\bar{\VP}^{(c)},\bar c)$]{The $\IndexSet$-indexed pair partition $(\bar \VP^{(c)},\bar c)$ where $(\VP,c)$ is the pair partition depicted in the Figure \ref{fig:ppex}.}
\label{fig:ppexb}
\end{figure}

Accordingly, the sets $F_{\VP,c}$ and $\bar F_{\VP,c}$ are given by
\begin{equation}
 \begin{split}
  F_{\VP,c}&=\{ (5,1), (10,2), (3,8), (4,12), (7,6), (11,9)\}\\
  \bar F_{\VP,c}&=\{ (6,5), (12,11), (1,3), (2,4), (7,8), (9,10)\}
 \end{split}
\end{equation}

\begin{table}
\begin{tabular}{*{6}{|c}|}\hline
$k$	&$c(k)$	&$\vls_{(\VP,c)}(k)$&$\vl_{(\VP,c)}^{-c(k)}(k)$	& $\dom_{\VP,c}$ or $\subo_{\VP,c}$	&$Z_{(\VP,c)}(k)$ \\ \hline
1	&-1	&1			&0			& $\dom_{\VP,c}$	&3 \\ \hline
2	&-1	&2			&0			& $\dom_{\VP,c}$	&4 \\ \hline
3	&1	&1			&2			& $\subo_{\VP,c}$		&1 \\ \hline
4	&1	&2			&2			& $\subo_{\VP,c}$		&2 \\ \hline
5	&-1	&2			&2			& $\subo_{\VP,c}$		&6 \\ \hline
6	&-1	&2			&2			& $\subo_{\VP,c}$		&7 \\ \hline
7	&-1	&2			&2			& $\subo_{\VP,c}$		&8 \\ \hline
8	&1	&2			&1			& $\dom_{\VP,c}$	&7 \\ \hline
9	&-1	&2			&1			& $\dom_{\VP,c}$	&10 \\ \hline
10	&-1	&2			&1			& $\dom_{\VP,c}$	&9 \\ \hline
11	&-1	&1			&1			& $\subo_{\VP,c}$		&12 \\ \hline
12	&-1	&1			&1			& $\dom_{\VP,c}$	&11 \\ \hline
\end{tabular}
\caption{Data pertaining to the $\IndexSet$-indexed pair partition depicted in Figure \ref{fig:stdform}.}
\label{tab:pex}
\end{table}

The directed graph $G_{\VP,c}$ is depicted in Figure \ref{fig:graph}.
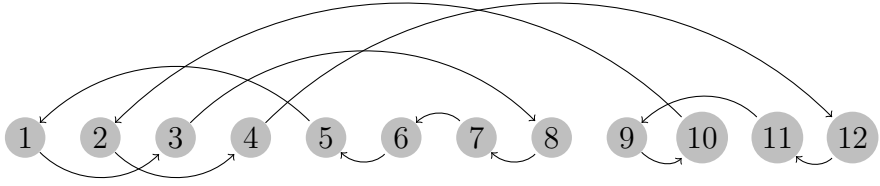
\begin{figure}
 \begin{tikzpicture}[shorten >=1pt,->]
  \tikzstyle{vertex}=[circle,fill=black!25,minimum size=14pt,inner sep=2pt]
  \foreach \x in {1,...,12} 
    \node[vertex] (\x) at (\x,0) {\x};
  %$F_(\VP,c)$.
  \draw [<-] (1) to[out=45,in=135] (5);
  \draw [<-] (2) to[out=45,in=135] (10);
  \draw [->] (3) to[out=45,in=135] (8);
  \draw [->] (4) to[out=45,in=135] (12);
  \draw [<-] (6) to[out=45,in=135] (7);
  \draw [<-] (9) to[out=45,in=135] (11);
  %$\bar F_(\VP,c)$
  \draw [<-] (5) to[out=-45,in=-135] (6);
  \draw [<-] (11) to[out=-45,in=-135] (12);
  \draw [->] (1) to[out=-45,in=-135] (3);
  \draw [->] (2) to[out=-45,in=-135] (4);
  \draw [<-] (7) to[out=-45,in=-135] (8);
  \draw [->] (9) to[out=-45,in=-135] (10);
\end{tikzpicture}
\caption[The directed graph associated to a two-colored pair partition]{The directed graph $G_{\VP,c}$ for the $\{-1,1\}$-colored pair partition depicted in Figure \ref{fig:ppex}. The graph $G_{\VP,c}$ has 2 cycles, one with vertex set $\{1,3,8,7,6,5\}$ and one with vertex set $\{2,4,12,11,9,10\}$. The former cycle has one maximal increasing path, and the latter has 2 maximal increasing paths.}

\label{fig:graph}
\end{figure}
\end{example}

\begin{proposition}\label{lem:cycles}
 Let $(\VP,c)\in\IPPi$ be a $\{-1,1\}$-colored pair partition.
Then the graph $G_{\VP,c}$ is the union of vertex-disjoint directed cycles.
\end{proposition}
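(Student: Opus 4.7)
The plan is to show that in $G_{\VP,c}$ every vertex has in-degree exactly one and out-degree exactly one, from which the desired decomposition into vertex-disjoint directed cycles follows by a standard graph-theoretic fact (a finite directed graph in which every vertex has in-degree and out-degree equal to one is a disjoint union of directed cycles, by repeatedly following the unique outgoing arc from each vertex).

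The starting observation is that each $k\in[2m]$ lies in exactly one pair of $\VP$ and, by Proposition \ref{prop:Zs}(\ref{itm:inv}), in exactly one pair of $\bar\VP^{(c)}$. Hence $k$ is incident to exactly one arc of $F_{\VP,c}$ and exactly one arc of $\bar F_{\VP,c}$, so the total degree at $k$ is $2$. It remains to show that of these two arcs, exactly one is outgoing and exactly one is incoming. I would do this by a case split on the four pairs $(k\in L_\VP\text{ vs. }R_\VP)\times(c(k)=1\text{ vs. }c(k)=-1)$, with a further sub-case based on whether $k\in\dom_{\VP,c}$ or $k\in\subo_{\VP,c}$.

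From Notation \ref{not:inv} and the definitions of $F_{\VP,c}$ and $\bar F_{\VP,c}$, the directionality of a pair $(l,r)$ or $(k,Z_{\VP,c}(k))$ is entirely determined by its color. So for the $F_{\VP,c}$-arc at $k$, the sign $c(k)$ together with whether $k\in L_\VP$ or $R_\VP$ determines direction; and for the $\bar F_{\VP,c}$-arc at $k$, the sign $\bar c$ together with whether $k$ is the smaller or larger of $\{k,Z_{\VP,c}(k)\}$ determines direction. The definition of $Z_{\VP,c}$ tells me which side of the pair $k$ occupies (it is the smaller iff either $k\in L_\VP\cap\dom_{\VP,c}$ or $k\in R_\VP\cap\subo_{\VP,c}$), and the definition of $\bar c$ tells me that $\bar c(k,Z_{\VP,c}(k))=c(k)$ iff $k\in\subo_{\VP,c}$ and $\bar c(k,Z_{\VP,c}(k))=-c(k)$ iff $k\in\dom_{\VP,c}$. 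Running through all eight sub-cases, one checks directly that $F_{\VP,c}$ and $\bar F_{\VP,c}$ contribute opposite directions at $k$: for instance, if $k\in L_\VP$, $c(k)=1$, $k\in\dom_{\VP,c}$, then $k$ is the tail of its $F_{\VP,c}$-arc; and $k<Z_{\VP,c}(k)$ with $\bar c=-1$, so $k$ is the head of its $\bar F_{\VP,c}$-arc. The remaining seven cases are analogous and each confirms the in/out dichotomy.

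The only mildly delicate part of the argument is bookkeeping: there is nothing conceptually hard, but one must be careful to line up the three pieces of data (position in the pair, color, and dom/sub membership) consistently, and to invoke Proposition \ref{prop:Zs} to pin down which of $k$ and $Z_{\VP,c}(k)$ is the smaller. No further results beyond Proposition \ref{prop:Zs} and the definitions of $\bar c$ and of $F_{\VP,c}$, $\bar F_{\VP,c}$ are needed.
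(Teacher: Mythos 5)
Your proposal is correct and follows essentially the same route as the paper: count that each vertex meets exactly one arc from $F_{\VP,c}$ and one from $\bar F_{\VP,c}$, then use the relation between $c$ and $\bar c$ together with the position of $k$ relative to $\ZB_{\VP,c}(k)$ (via Proposition \ref{prop:Zs}) to check that one arc is outgoing and one incoming at every vertex. The paper merely compresses your eight sub-cases by treating the two colors simultaneously, so the difference is purely presentational.
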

\begin{proof}
Let $m=|\VP|$.
It is an immediate consequence of the definition of $G_{\VP,c}$ that for each $k\in[2m]$ there are exactly two arcs having $k$ as either the start point or the end point.
We must show that each vertex $k\in[2m]$ is the starting point of one edge and the end point of another edge. 

We will assume $k\in L_\VP$, so that $(k,r)\in \VP$ for some $r\in[2m]$.
If $k\in\dom_{\VP,c}$ then $\ZB_{\VP,c}(k)>k$, so $(k,\ZB_{\VP,c}(k))\in \bar\VP^{(c)}$.
Since $c(k,r)=-\bar c(k,\ZB_{\VP,c}(k))$, $k$ is the starting point of exactly one of these arcs.
If $k\in\subo_{\VP,c}$ then $\ZB_{\VP,c}(k)<k$, so $(\ZB_{\VP,c}(k), k)\in \bar\VP^{(c)}$.
Furthermore, $c(k,r)=\bar c(\ZB_{\VP,c}(k), k)$, whence $k$ is again the starting point of exactly one of the arcs.
\end{proof}
%The graph in Figure \ref{fig:graph} is colored according to Definition \ref{def:col}.
\begin{remark}
 If $c(d)=1\in\IndexSet$ for every $d\in \VP$ then every $\dom_{\VP,c}=[2m]$ and $\bar\VP^{(c)} = \hat{\VP}$ (where $\hat{\VP}$ is as in Definition \ref{def:cycles}) with $\bar c(d)=-1$ for all $d\in \bar\VP^{(c)}$.
Thus  $G_{\VP,c}=G_{\VP}$.
\end{remark}
A cycle $C$ in the graph $G_{\VP,c}$ can be decomposed into maximal monotone paths (Definition \ref{def:monpath}).
That is, there are maximal monotone paths $P_1,\ldots, P_s$ such that each arc of $C$ lies along exactly one of the $P_j$.

\begin{notation}
For a directed graph $G$ on a totally ordered vertex set, we denote by $\gamma_m(G)$ the number of cycles of $G$ having exactly $m$ maximal increasing paths (equivalently, $m$ maximal decreasing paths).
\end{notation}

With this established, we are able to state the main result of this section.
\begin{theorem}\label{thm:sppf}
Let $(\alpha_i)_{i=1}^\infty$ and $(\beta_i)_{i=1}^\infty$ be decreasing sequences of positive real numbers such that $\sum_{i}\alpha_i+\sum_i\beta_i\le 1$.
Let $V_{n_{-1},n_{1}}^{(\alpha,\beta)}$ be the complex Hilbert space of the Vershik-Kerov representation of $S_{\max(n_{-1},n_1)}$ endowed with the representation of \eqref{eqn:sphrep}, and let
\begin{equation}
j_n^{-1}:V_{n_{-1},n_{1}}^{(\alpha,\beta)}\to V_{n_{-1}+1,n_{1}}^{(\alpha,\beta)}\quad\text{and}\quad
j_n^{1}:V_{n_{-1},n_{1}}^{(\alpha,\beta)}\to V_{n_{-1},n_{1}+1}^{(\alpha,\beta)}
\end{equation}
 be the natural embedding.
Let $\xi_{V^{(\alpha,\beta)}}=\bone_0$ be the indicator function of the diagonal.
Denote by $\Pt_{\alpha,\beta}$ the function on $\IPPi$ associated to this sequence of representations by Theorem \ref{thm:pf}.

 Let $(\VP,c)$ be a $\{-1,1\}$-colored pair partition.
Then
\begin{equation}
 \Pt_{\alpha,\beta}(\VP,c)=\prod_{m\ge 2}\left(\sum_{i=1}^\infty\alpha_i^{m}+(-1)^{m+1}\sum_{i=1}^\infty\beta_i^{m}\right)^{\gamma_m(G_{\VP,c})}.
\end{equation}
\end{theorem}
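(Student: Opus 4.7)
The plan is to reduce $\Pt_{\alpha,\beta}(\VP,c)$ to an evaluation of a spherical function of $(S_\infty\times S_\infty, S_\infty)$ at a specific pair of permutations, and then to invoke Thoma's theorem after matching the cycle structure of the corresponding permutation with the combinatorics of $G_{\VP,c}$.

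First, I would write $(\VP,c)$ in the standard form \eqref{eqn:stdform} and apply \eqref{eqn:pairstd} to express $\Pt_{\alpha,\beta}(\VP,c)$ as a matrix coefficient
$\left\langle \bone_0, X\,\bone_0\right\rangle$
where $X$ is a composition of the transition maps $j_{\pm1}$, $j_{\pm1}^*$, and permutation operators $U_{\bn}(\pi)$ acting on the chain of spherical Hilbert spaces $V_{\bn_k}^{(\alpha,\beta)}$. As we scan the pair partition from left to right, the level $\bn_k = (\vl^{-1}_{\VP,c}(k), \vl^1_{\VP,c}(k))$ evolves, and Proposition \ref{prop:pprops} records how it moves; the dichotomy between $\dom_{\VP,c}$ and $\subo_{\VP,c}$ precisely controls whether the relevant transition map is a proper diagonal embedding (increasing $\max(n_{-1},n_1)$ by one) or an identity map (leaving $\max(n_{-1},n_1)$ unchanged).

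Next I would collapse this composition to a single top-level inner product in $V_n^{(\alpha,\beta)}$, where $n = \max_k\vl_{\VP,c}(k)$. Using that $\bone_0$ is spherical, that every nontrivial embedding $j_b$ symmetrizes only along the diagonal, and that the $j_b$'s intertwine the permutation actions via \eqref{eqn:intertwine}, I would rewrite
\[
\Pt_{\alpha,\beta}(\VP,c) = \left\langle \bone_n,\,U_n^{(\alpha,\beta)}(\sigma,\pi)\,\bone_n\right\rangle
\]
for certain permutations $\sigma,\pi\in S_n$ determined by $(\VP,c)$. Roughly, $\sigma$ encodes how the Vershik--Kerov slots are matched by following the $(-1)$-colored pair structure, while $\pi$ encodes the matching produced by the $1$-colored pair structure; the function $\ZB_{\VP,c}$ records precisely the ``dual'' matching in the opposite color that results whenever an embedding crosses a level change. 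Because $\bone_n$ is the spherical vector, this matrix coefficient equals $\chi_{\alpha,\beta}(\sigma,\pi) = \phi_{\alpha,\beta}(\pi\sigma^{-1})$, and Thoma's theorem (Theorem \ref{thm:thoma}) converts it to $\prod_{m\ge 2}\!\left(\sum_i \alpha_i^m + (-1)^{m+1}\sum_i\beta_i^m\right)^{\rho_m(\pi\sigma^{-1})}$.

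The proof is completed by the combinatorial identity $\rho_m(\pi\sigma^{-1}) = \gamma_m(G_{\VP,c})$. Here one reads a cycle of $\pi\sigma^{-1}$ as an alternating sequence of $\sigma$-steps (tracking $(-1)$-colored matchings, both from $\VP$ and from $\bar\VP^{(c)}$) and $\pi$-steps (tracking $1$-colored matchings), which is precisely what the arcs of $G_{\VP,c}$ record once one accounts for the orientation convention $(\cdot,\cdot)^{(c)}$ in Notation \ref{not:inv}. A monotone run in the cycle of $\pi\sigma^{-1}$ corresponds to a single maximal increasing (resp.\ decreasing) path in $G_{\VP,c}$, so the number of ``$\sigma$-$\pi$ alternations'' equals the number of maximal increasing paths, giving the desired equality of exponents. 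The bookkeeping is controlled by Proposition \ref{prop:Zs}, which guarantees that $\ZB_{\VP,c}$ behaves compatibly with color and with the $L_\VP$/$R_\VP$ dichotomy along each cycle.

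\textbf{Main obstacle.} The hardest step is the reduction to a single top-level matrix coefficient and the identification of the permutations $\sigma,\pi$. Whenever a $c(k)$-colored embedding is applied at a point $k\in\subo_{\VP,c}$, the level $n$ does not grow, but the Fock-space slot does; the new slot is forced by the spherical structure to be paired diagonally with an existing $(-c(k))$-colored slot. Tracking which earlier slot it pairs with is exactly what the definition of $\ZB_{\VP,c}$ (together with its interaction with $I_{\VP,c}$ in Propositions \ref{prop:dom} and \ref{prop:sub}) is engineered to record. Verifying that the resulting bookkeeping produces a well-defined pair $(\sigma,\pi)\in S_n\times S_n$ whose cycle structure is $\gamma_m(G_{\VP,c})$, and that the sign factors $(-1)^{i(\sigma,x)+i(\pi,y)}$ from \eqref{eqn:sphrep} reassemble into the $(-1)^{m+1}$ prefactor of the $\beta$ sums, will be the principal combinatorial labor.
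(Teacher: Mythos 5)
Your plan takes a genuinely different route from the paper (which never passes through a character evaluation: it computes the vacuum expectation directly on the Fock-like space, decomposing each creation operator at a point of $R^{D}_{\VP,c}$ into the operators $a^*_{b,z}$, tracking how the data $S_\eta$, $x_\eta$, $D(\eta)$ of $\basis$-elementary vectors evolve via Propositions \ref{prop:elemanndom}--\ref{prop:elemcrsub}, and showing the labels $H_{P,\lambda}$ are constant along maximal monotone paths of $G_{\VP,c}$, so that only labelings constant on cycles survive). However, your proposal has a genuine gap at precisely the step you flag as the main obstacle: the rewriting of the word in \eqref{eqn:pairstd} as a single matrix coefficient $\left\langle \bone_n, U_n^{(\alpha,\beta)}(\sigma,\pi)\bone_n\right\rangle$ is asserted, not established, and it does not follow formally from sphericity of $\bone_n$ or from \eqref{eqn:intertwine}. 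The interior compressions $j_b^*\,(\cdot)\,j_b$ are not unitary once a permutation operator has moved the newly created diagonal slot: already in the one-colored case one computes $j_n^*\,U_{n+1}\bigl((k\ n{+}1)\bigr)\,j_n\,\delta_{(x,y)}=\mu(x_k)\,\delta_{(x,y)}$, a $\mu$-weighted multiplication operator rather than anything of the form $U_n(\sigma')$. So the statement that ``the new slot is forced by the spherical structure to be paired diagonally with an existing slot'' is exactly what has to be proved, not a starting point. In addition, the permutations in the standard form lie in $S_{\bn(-1)}\times S_{\bn(1)}$ rather than $S_n\times S_n$, at subordinate points the transition maps are identities, and the level profile of a general $(\VP,c)$ is not unimodal, so the word factors through low levels; extracting from this a well-defined pair $(\sigma,\pi)\in S_n\times S_n$ and proving $\rho_m(\pi\sigma^{-1})=\gamma_m(G_{\VP,c})$ amounts to the same bookkeeping with $\ZB_{\VP,c}$, $\dom_{\VP,c}$, $\subo_{\VP,c}$ and the paths of $G_{\VP,c}$ that constitutes the paper's proof. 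As written, your argument establishes only the (correct) expectation that the answer ought to be a character value.

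If the reduction were carried out, the remaining steps are sound: the spherical identity gives $\phi_{\alpha,\beta}(\pi\sigma^{-1})$, Thoma's theorem (Theorem \ref{thm:thoma}) then produces the stated product, and no separate ``reassembly of signs'' from \eqref{eqn:sphrep} is needed, since the factors $(-1)^{m+1}$ come from Thoma's formula itself. That route would in fact buy something the paper does not deliver explicitly: it would treat arbitrary Thoma parameters, including the $\beta$-terms, uniformly, whereas the paper writes out its Fock-space computation only in the case $\sum_i\alpha_i=1$. But until you show concretely how the chain of embeddings, identities, and $\mu$-weighted compressions collapses to a single spherical coefficient with an explicitly identified $(\sigma,\pi)$, the central step of your proof is missing.
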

We will present the proof of Theorem \ref{thm:sppf} in the case that $\sum_n\alpha_n=1$.
This case will contain the key ideas of the more general argument, but will simplify notation considerably and allow us to consider discrete sums instead of integrals.

\begin{notation}
For $(\VP,c)\in \IPP(2m)$, define
\begin{equation}
\begin{split}
R_{\VP,c}^D&:=\{k\in R_{\VP}: \text{$k\in\dom_{\VP,c}$} \}\\
L_{\VP,c}^D&:=\{k\in L_{\VP}: \text{$k\in\dom_{\VP,c}$} \}.
\end{split}
\end{equation}
\end{notation}

\begin{proposition}
Let $\IndexSet=\{-1,1\}$ and let $(\VP,c)\in \IPP(2m)$.
Every maximal increasing path in $(\VP,c)$ has its starting point in $L_{\VP,c}^D$ and its terminal point in $R_{\VP,c}^D$.
Every maximal decreasing path in $(\VP,c)$ has its starting point in $R_{\VP,c}^D$ and its terminal point in $L_{\VP,c}^D$.
\end{proposition}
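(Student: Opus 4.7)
The plan is to analyze, for each vertex $k\in[2m]$, the two arcs of $G_{\VP,c}$ incident to $k$, by classifying each arc according to whether it connects $k$ to a vertex larger than $k$ or to a vertex smaller than $k$. Recall from the proof of Proposition~\ref{lem:cycles} that every vertex $k$ is incident to exactly one arc from $F_{\VP,c}$ and one from $\bar F_{\VP,c}$, and that $k$ is the startpoint of exactly one of these arcs and the endpoint of the other. Since $G_{\VP,c}$ is a disjoint union of directed cycles, it follows that $k$ is the starting point of a maximal increasing path (equivalently, the terminal point of a maximal decreasing path) if and only if both arcs incident to $k$ connect $k$ to vertices greater than $k$; and $k$ is the terminal point of a maximal increasing path (equivalently, the starting point of a maximal decreasing path) if and only if both arcs incident to $k$ connect $k$ to vertices smaller than $k$. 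Otherwise $k$ is an internal vertex of a maximal monotone path.

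To execute the analysis I will read off arc directions from the definitions. The $F$-arc at a pair $(l,r)\in\VP$ runs from $l$ to $r$ if $c(l,r)=1$ and from $r$ to $l$ if $c(l,r)=-1$. The $\bar F$-arc at the pair of $\bar\VP^{(c)}$ containing $k$ runs from the smaller element of that pair to the larger if $\bar c=1$ and in the reverse direction if $\bar c=-1$, where by definition $\bar c(k,\ZB_{\VP,c}(k))=c(k)$ when $k\in\subo_{\VP,c}$ and $\bar c(k,\ZB_{\VP,c}(k))=-c(k)$ when $k\in\dom_{\VP,c}$. Combining these rules with the observation that $\ZB_{\VP,c}(k)>k$ precisely on $(L_\VP\cap\dom_{\VP,c})\cup(R_\VP\cap\subo_{\VP,c})$ (immediate from the definition of $\ZB_{\VP,c}$), I will run through the four cases determined by $L_\VP/R_\VP$ and $\dom_{\VP,c}/\subo_{\VP,c}$ membership, each split into two sub-cases according to the value of $c(k)\in\{-1,1\}$.

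The outcome of each branch of the case analysis will be as follows. For $k\in L_{\VP,c}^D$ with $c(k)=1$, the $F$-arc is outgoing from $k$ to $r>k$ and the $\bar F$-arc is incoming to $k$ from $\ZB_{\VP,c}(k)>k$, while for $c(k)=-1$ the roles of the two arcs are exchanged; in both sub-cases both arcs connect $k$ to vertices greater than $k$. For $k\in R_{\VP,c}^D$ the mirror-image calculation shows both arcs connect $k$ to smaller vertices. In each of the sub-cases with $k\in\subo_{\VP,c}$ (in either $L_\VP$ or $R_\VP$), one arc connects $k$ to a larger vertex and the other to a smaller vertex, so $k$ is a pass-through vertex on a maximal monotone path. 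Combined with the characterization of starting and terminal points in the first paragraph, this yields the claim.

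The main obstacle is simply careful bookkeeping across the eight sub-cases; there is no deeper difficulty, since the direction of every incident arc is determined combinatorially by data already made explicit in the definitions of $F_{\VP,c}$, $\bar F_{\VP,c}$, $\ZB_{\VP,c}$, and $\bar c$, together with Proposition~\ref{prop:Zs}.
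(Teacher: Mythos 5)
Your proposal is correct and follows essentially the same route as the paper's proof: classify each vertex $k$ by whether its $\VP$-partner and $\ZB_{\VP,c}(k)$ are larger or smaller than $k$, and observe that in the disjoint union of directed cycles only the "both larger" (resp.\ "both smaller") vertices, namely those of $L_{\VP,c}^D$ (resp.\ $R_{\VP,c}^D$), can begin (resp.\ end) a maximal increasing path. Your write-up merely makes explicit the orientation bookkeeping via $c$, $\bar c$, and Proposition~\ref{prop:Zs} that the paper's two-line argument leaves implicit.
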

\begin{proof}
This follows immediately from the definitions of the map $\ZB_{\VP,c}$ and the graph $G_{\VP,c}$.
A left point $l\in L_{\VP,c}^D$, is adjacent to vertices $u,v$ with $l<u$ and $l<v$ whereas if $l\in L_\VP\in\subo_{\VP,c}$, $l$ is adjacent to one vertex $u'$ with $u'<l$ and another vertex $v'$ with $l<v'$.
\end{proof}

The proof of Theorem \ref{thm:sppf} will be aided by some additional terminology.
\begin{definition} \label{def:elem}
 Let $\HH$ be an infinite-dimensional complex Hilbert space with orthonormal basis $\basis:=\{h_i\}_{i=1}^\infty$.
We will say that a vector $\eta\in \FS_V(\HH)$ is $\basis$-elementary if there is a constant $C$, and some $n_{-1},n_{1}$ such that there is a pair of tuples $x^{(-1)},x^{(1)}\in \SX_n$ of length $n:=\max(n_{-1},n_1)$  with $x^{(-1)}\sim x^{(1)}$ (where $\SX_n$ and the relation $\sim$ are as in Notation \ref{not:vk}) and indices $i_1^{(-1)},\ldots, i_{n_{-1}}^{(-1)}\in\BN$ and $i_1^{(1)},\ldots, i_{n_{1}}^{(1)}\in\BN$ all distinct such that 
\begin{equation} \label{eqn:elemform}
 \eta=C \cdot \delta_{\left(x^{(-1)},x^{(1)}\right)}\otimes_s h_{i_1^{(-1)}}\otimes \cdots\otimes h_{i_{n_{-1}}^{(-1)}}\otimes h_{i_1^{(1)}}\otimes \cdots\otimes h_{i_{n_{1}}^{(1)}}.
\end{equation}
Denote by $\elem_V^\basis(\HH)$ the set of all $\basis$-elementary vectors in $\FS_V(\HH)$.
\end{definition}
\begin{notation}
For the remainder of this section, we assume that $\HH$ is an infinite-dimensional complex Hilbert space with orthonormal basis $\basis:=\{h_i\}_{i=1}^\infty$.
 For a nonzero $\basis$-elementary vector $\eta\in V_{n_{-1},n_{1}}\otimes_s \HH^{\otimes n_{-1}}\otimes \HH^{\otimes n_{1}}\subset \FS_V(\HH)$, define $\bn_\eta:\IndexSet\to \BN$ by $\bn_\eta(b)=n_b$.
\end{notation}

The next proposition says that, up to permutations, a nonzero $\basis$-elementary vector can be expressed uniquely in the form \eqref{eqn:elemform}.
It follows immediately from the definition of the symmetric tensor product $\otimes_s$.
\begin{proposition}\label{prop:elemperm}
Suppose that a nonzero $\basis$-elementary vector $\eta$ has two expressions of the form \eqref{eqn:elemform},
\begin{equation} 
\begin{split}
 \eta&=C \cdot \delta_{(x^{(-1)},x^{(1)})}\otimes_s h_{i_1^{(-1)}}\otimes \cdots\otimes  h_{i_{n_{-1}}^{(-1)}}\otimes h_{i_1^{(1)}}\otimes \cdots \otimes h_{i_{n_{1}}^{(1)}}\\
&=\tilde C \cdot \delta_{(\tilde x^{(-1)},\tilde x^{(1)})}\otimes_s h_{\tilde i_1^{(-1)}}\otimes \cdots \otimes h_{\tilde i_{n_{-1}}^{(-1)}}\otimes h_{\tilde i_1^{(1)}}\otimes \cdots \otimes h_{\tilde i_{n_{1}}^{(1)}}
\end{split}
\end{equation} 
where for $b\in\IndexSet$, $x^{(b)}, \tilde x^{(b)}\in \SX_n$ and $n:=\max(n_{-1},n_1)$.
Then there is some $(\pi_{-1},\pi_1)\in S_{n_{-1}}\times S_{n_{1}}$ such that for each $b\in\IndexSet=\{-1,1\}$, $i_{\tilde s}=i_{\pi_{b}^{-1} s}$ for $s\in [n_{b}]$ and $\tilde x^{(b)}=(\iota_{n}^{n_b}\pi_{b})x^{(b)}$, where $\iota_{n}^{n_b}$ denotes the natural inclusion from $S_{n_b}\to S_n$.
\end{proposition}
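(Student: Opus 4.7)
The plan is to expand both representations of $\eta$ using the explicit form of the symmetrization projector $P_\bn=\frac{1}{\bn!}\sum_{\sigma\in S_\bn}U_\bn(\sigma)\otimes\tilde U_\bn(\sigma)$, and then exploit the distinctness hypothesis on the indices $i_j^{(b)}$ to read off the permutation. First, since $\eta$ lies in a single graded summand of $\FS_V(\HH)$, the pair $\bn=(n_{-1},n_1)$ is determined by $\eta$; in particular the two expressions share the same $\bn$. I would then write
\begin{equation*}
\eta=\frac{C}{\bn!}\sum_{\sigma=(\sigma_{-1},\sigma_1)\in S_\bn}U_\bn(\sigma)\delta_{(x^{(-1)},x^{(1)})}\otimes\bigotimes_{b\in\IndexSet}h_{i_{\sigma_b^{-1}(1)}^{(b)}}\otimes\cdots\otimes h_{i_{\sigma_b^{-1}(n_b)}^{(b)}},
\end{equation*}
and similarly for the tilde expression.

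Second, I would use the hypothesis that all the $i_j^{(b)}$ (respectively $\tilde i_j^{(b)}$) are distinct. This distinctness means that as $\sigma$ ranges over $S_\bn$, the $\HH^{\otimes\bn}$-factors in the above sum are $\bn!$ pairwise distinct basis vectors of $\HH^{\otimes\bn}$; likewise for the tilde expansion. Viewing $\FS_V(\HH)$ as $V\otimes\HH^{\otimes\bn}$ in the relevant degree and expanding in the basis of simple tensors of basis vectors of $\HH$, each simple tensor carries a $V_\bn$-valued coefficient. Comparing the two expansions, the supports in $\HH^{\otimes\bn}$ must coincide, which forces $\{i_j^{(b)}:j\in[n_b]\}=\{\tilde i_j^{(b)}:j\in[n_b]\}$ as sets for each $b\in\IndexSet$. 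Since the elements of each set are distinct, there is a unique $\pi_b\in S_{n_b}$ with $\tilde i_s^{(b)}=i_{\pi_b^{-1}(s)}^{(b)}$; set $\pi=(\pi_{-1},\pi_1)\in S_\bn$.

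Third, I would match the coefficient of the specific simple tensor $\bigotimes_{b}h_{i_1^{(b)}}\otimes\cdots\otimes h_{i_{n_b}^{(b)}}$. On the left this coefficient arises only from $\sigma=e$ and equals $\tfrac{C}{\bn!}\delta_{(x^{(-1)},x^{(1)})}$; on the right a short computation shows it arises only from $\sigma=\pi^{-1}$, yielding $\tfrac{\tilde C}{\bn!}U_\bn(\pi^{-1})\delta_{(\tilde x^{(-1)},\tilde x^{(1)})}$. Now I would apply the explicit action \eqref{eqn:sphrep}, which gives $U_\bn(\pi^{-1})\delta_{(\tilde x^{(-1)},\tilde x^{(1)})}=\pm\delta_{(\iota_n^{n_{-1}}(\pi_{-1})^{-1}\tilde x^{(-1)},\iota_n^{n_1}(\pi_1)^{-1}\tilde x^{(1)})}$ (the sign being the parity determined by inversions into $\SN_-$). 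Equating delta functions then forces $\tilde x^{(b)}=\iota_n^{n_b}(\pi_b)x^{(b)}$, and the sign is absorbed into the relation between $C$ and $\tilde C$.

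I do not expect a genuine obstacle here: the argument is purely a matter of tracking how the double symmetrization $P_\bn$ interacts with the natural basis of $\HH^{\otimes\bn}$. The only subtlety worth highlighting is to be careful that the representation $U_\bn$ acts on the $V_\bn$-factor of $V_\bn\otimes\HH^{\otimes\bn}$ simultaneously with $\tilde U_\bn$ on the $\HH^{\otimes\bn}$-factor, so that after matching the permutation on the tensor side, the corresponding $U_\bn(\pi^{-1})$ appears on the $V_\bn$-side and must be inverted via the explicit formula \eqref{eqn:sphrep} to recover the relation $\tilde x^{(b)}=\iota_n^{n_b}(\pi_b)x^{(b)}$.
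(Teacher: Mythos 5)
Your argument is correct, and it is exactly the verification the paper leaves implicit: the paper offers no written proof beyond the remark that the proposition ``follows immediately from the definition of the symmetric tensor product $\otimes_s$,'' and your expansion of $P_\bn$, the use of distinctness of the $i_j^{(b)}$ to match supports and extract $\pi=(\pi_{-1},\pi_1)$, and the coefficient comparison via \eqref{eqn:sphrep} (with the sign absorbed into the constants, using $\eta\ne 0$ so that $C,\tilde C\ne 0$) is a faithful unwinding of that definition. No gap.
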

Proposition \ref{prop:elemperm} invites the following.
\begin{corollary}
Let $\HH$ be an infinite-dimensional complex Hilbert space with orthonormal basis $\basis:=\{h_i\}_{i=1}^\infty$.
Let $\eta\in\FS_{V}(\HH)$ be a $\basis$-elementary vector.
Fix some expression for $\eta$ of the form \eqref{eqn:elemform}.
\begin{enumerate}
\item If $\eta\ne 0$ then the sets
\begin{equation}
\TensorIndices^{(b)}(\eta):=\{i^{(b)}_1,\ldots, i^{(b)}_{\bn_\eta(b)}\} \quad (b\in \IndexSet)\quad\text{and} \quad \TensorIndices(\eta):=\TensorIndices^{(-1)}(\eta)\cup \TensorIndices^{(1)}(\eta)
\end{equation}
do not depend on the choice of the expression for $\eta$ in the form \eqref{eqn:elemform}.
\item If $\eta\ne 0$ and $b\in\IndexSet$, the function $S_\eta^b:\TensorIndices(\eta)\to Q$ (with $Q$ as in Notation \ref{not:vk}) given by
\begin{equation}
 S_\eta(i_u^{(b)}):=x^{(b)}_u
\end{equation}
does not depend on the choice of the expression for $\eta$ in the form \eqref{eqn:elemform}.
\item If $\eta\ne 0$ and $b\in\IndexSet$ is such that $\bn_\eta(b)>\bn_\eta(-b)$ then the function
\begin{equation}
x_\eta:[\bn_\eta(-b)+1,\bn_\eta(b)]\to Q
\end{equation}
given by
\begin{equation}
x_\eta(k)=x^{(-b)}_k
\end{equation}
does not depend on the choice of the expression for $\eta$ in the form \eqref{eqn:elemform}.

\item Moreover,
\begin{equation}
D(\eta)=
\begin{cases}
0,&\text{if $\eta=0$}\\
C,&\text{if $\eta\ne0$ is expressed in the form \eqref{eqn:elemform}}
\end{cases}
\end{equation}
does not depend on the choice of the expression for $\eta$ in the form \eqref{eqn:elemform}.
\end{enumerate}
\end{corollary}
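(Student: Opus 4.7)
All four assertions follow directly from Proposition \ref{prop:elemperm}. Assume $\eta\ne 0$ admits two expressions of the form \eqref{eqn:elemform} (decorated by tildes in the second), and let $(\pi_{-1},\pi_1)\in S_{n_{-1}}\times S_{n_1}$ be the pair of permutations supplied by that proposition, so that $\tilde i_s^{(b)}=i_{\pi_b^{-1}(s)}^{(b)}$ for $s\in[n_b]$ and $\tilde x^{(b)}=(\iota_n^{n_b}\pi_b)x^{(b)}$.

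For items \textbf{(1)}, \textbf{(2)}, and \textbf{(3)} the verifications are essentially bookkeeping on the relations above. For \textbf{(1)}, since $\pi_b^{-1}$ is a bijection of $[n_b]$, the sets $\{i_s^{(b)}:s\in[n_b]\}$ and $\{\tilde i_s^{(b)}:s\in[n_b]\}$ coincide. For \textbf{(2)}, the $S_n$-action on $\SX_n$ from Notation \ref{not:vk} gives $(\sigma y)_u=y_{\sigma^{-1}(u)}$, so $\tilde x_u^{(b)}=x_{\pi_b^{-1}(u)}^{(b)}$ for $u\in[n_b]$; setting $v=\pi_b^{-1}(u)$, the defining equation $S_\eta^b(\tilde i_u^{(b)})=\tilde x_u^{(b)}$ from the second expression reads $S_\eta^b(i_v^{(b)})=x_v^{(b)}$, which is precisely the value obtained from the first expression. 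For \textbf{(3)}, the inclusion $\iota_n^{n_{-b}}$ fixes every integer in $[n_{-b}+1,n]$, so whenever $\bn_\eta(b)>\bn_\eta(-b)$ and $u\in[\bn_\eta(-b)+1,\bn_\eta(b)]$, we get $\tilde x_u^{(-b)}=x_u^{(-b)}$.

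The substantive step is \textbf{(4)}. Writing $w=\delta_{(x^{(-1)},x^{(1)})}\otimes h_{i_1^{(-1)}}\otimes\cdots\otimes h_{i_{n_1}^{(1)}}$ and $\tilde w$ analogously from the tilded data, the identifications above show that $\tilde w$ is related to $w$ through the action $(U_\bn\otimes\tilde U_\bn)(\pi_{-1},\pi_1)$ of the element $(\pi_{-1},\pi_1)\in S_\bn$. Since $P_\bn$ is the projection onto the $S_\bn$-invariant subspace of $V_\bn\otimes\HH^{\otimes\bn}$, we have $P_\bn\circ(U_\bn\otimes\tilde U_\bn)(\sigma)=P_\bn$ for every $\sigma\in S_\bn$, which gives $P_\bn(\tilde w)=P_\bn(w)$. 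Combining this with $\eta=C\cdot P_\bn(w)=\tilde C\cdot P_\bn(\tilde w)\ne 0$ forces $C=\tilde C$.

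The point requiring real care is the sign factor $(-1)^{i(\iota\pi_b,\cdot)}$ appearing in the representation $U_n^{(\alpha,\beta)}$ of \eqref{eqn:sphrep}: one must verify that the signs attached to the two colors combine consistently with the diagonal constraint $x^{(-1)}\sim x^{(1)}$ built into Definition \ref{def:elem}, so that $\tilde w$ relates to $w$ through the unsigned $S_\bn$-action and the equality $P_\bn(\tilde w)=P_\bn(w)$ holds without an intervening $\pm$. By contrast, the first three items require no such bookkeeping and follow straight from Proposition \ref{prop:elemperm}.
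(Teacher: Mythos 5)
Your items (1)--(3) are exactly the intended bookkeeping (the paper itself offers no proof, presenting the corollary as an immediate consequence of Proposition \ref{prop:elemperm}), and for item (4) the mechanism you use -- $P_\bn\circ(U_\bn\otimes\tilde U_\bn)(\sigma)=P_\bn$, hence $P_\bn(\tilde w)=P_\bn(w)$ and $C=\tilde C$ -- is the right one. The gap is the point you yourself flag and then leave as ``one must verify'': the sign of the Vershik--Kerov action. Moreover, the resolution you gesture at is not the correct one. The diagonal constraint $x^{(-1)}\sim x^{(1)}$ does not make the signs of the two colors combine to $+1$, because Proposition \ref{prop:elemperm} allows $\pi_{-1}\neq\pi_1$. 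Concretely, take $n_{-1}=2$, $n_1=0$, and $x^{(-1)}=x^{(1)}=(s,t)$ with $s\neq t$ both in $\SN_-$; applying $\pi_{-1}=(12)$, $\pi_1=e$ gives a second legitimate expression of the form \eqref{eqn:elemform}, and the sign for color $-1$ is $-1$ while that for color $1$ is $+1$, so $(U_\bn\otimes\tilde U_\bn)(\pi)w=-\tilde w$ and $P_\bn(\tilde w)=-P_\bn(w)\neq0$. The two expressions then carry constants $C$ and $-C$, so item (4), read for arbitrary Thoma parameters, fails as stated.

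What actually closes the gap is the restriction under which this whole elementary-vector apparatus is deployed: the proof of Theorem \ref{thm:sppf} is carried out only in the case $\sum_n\alpha_n=1$, where $Q=\BN$, no coordinate of any tuple lies in $\SN_-$, every inversion count $i(\sigma,x)$ vanishes, and $U_\bn(\pi)\delta_{(x,y)}=\delta_{(\pi x,\pi y)}$ with no sign. In that setting $\tilde w=(U_\bn\otimes\tilde U_\bn)(\pi_{-1},\pi_1)w$ exactly, your projection argument applies verbatim, and $C=\tilde C$ follows. So you should either state this restriction explicitly before invoking the unsigned action, or carry out a sign analysis (which, as the example shows, would force a weaker conclusion in the general Thoma-parameter case), rather than leaving the verification open with an incorrect suggested route.
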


We can now characterize the effect of an annihilation operator on a $\basis$-elementary vector.
In doing so, it will be convenient to fix the Hilbert spaces $V_\bn$ and the transition maps $j_b$ and omit the superscripts $V$ and $j$.
The next two propositions follow immediately from the definition of the annihilation operators.
\begin{proposition}\label{prop:elemanndom}
If $\eta\in \FS_V(\HH)$ is a $\basis$-elementary vector and $b\in\IndexSet$ is such that $\bn_\eta(b)> \bn_\eta(-b)$ and $i\in\TensorIndices_{\eta}^{(b)}$, then $a_b(h_i)\eta$  is also a  $\basis$-elementary vector and
\begin{equation}
D(a_b(h_i)\eta)=\frac{\delta_{x_\eta(\bn_\eta(b)), S_\eta(i)}\cdot \mu( S_\eta(i))}{\bn_\eta(b)}\cdot  D(\eta). 
\end{equation}

Furthermore if $a_b(h_i)\eta\ne0$,  then the following hold:
\begin{enumerate}
\item $\TensorIndices^{(b)}(a_b(h_i)\eta)=\TensorIndices^{(b)}(\eta)\setminus\{i\}$;
\item $\TensorIndices^{(-b)}(a_b(h_i)\eta)=\TensorIndices^{(-b)}(\eta)$;
\item For $i'\in [\bn_\eta(-b)+1, \bn_\eta(b)-1]$, $x_{a_b(h_i)\eta}(\bn_\eta(b))=x_{\eta}(\bn_\eta(b))$;
\item For any $i'\in \TensorIndices(a_b(h_i)\eta)=\TensorIndices({\eta})\setminus\{i\}$, $S_{a_b(h_i)\eta}(i')= S_{\eta}(i')$.
\end{enumerate}
\end{proposition}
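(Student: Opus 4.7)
The proof is essentially a direct computation, organized around three main ingredients: the formula for $a_b^{V,j}(f)$, the explicit formula for $j_b^*$ in the case $\bn_\eta(b) > \bn_\eta(-b)$, and Proposition \ref{prop:elemperm} (permutation-invariance of the elementary form).

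First I would fix a representation of $\eta$ in the form \eqref{eqn:elemform} and apply the definition of $a_b(h_i)$. Because $i \in \TensorIndices_\eta^{(b)}$ and the indices $\{i_1^{(b)},\ldots, i_{\bn_\eta(b)}^{(b)}\}$ are distinct, there is a unique $k_0 \in [\bn_\eta(b)]$ with $i_{k_0}^{(b)} = i$; all other terms of the sum $\sum_k \langle h_i, h_{i_k^{(b)}}\rangle/\bn(b) \cdot j_b^* \delta \otimes_s (\cdots)$ vanish. This gives
\begin{equation*}
a_b(h_i)\eta = \frac{C}{\bn_\eta(b)}\,j_b^*\delta_{(x^{(-1)},x^{(1)})} \otimes_s \bigotimes_{b'\ne b}\!\!\Bigl(h_{i_1^{(b')}}\otimes\cdots\otimes h_{i_{n_{b'}}^{(b')}}\Bigr)\otimes\Bigl(h_{i_1^{(b)}}\otimes\cdots\widehat{h_{i_{k_0}^{(b)}}}\cdots\otimes h_{i_{n_b}^{(b)}}\Bigr).
\end{equation*}

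Next I would invoke Proposition \ref{prop:elemperm} to choose instead a representation of $\eta$ where the removed index sits in the last slot of color $b$. Concretely, applying the transposition $(k_0,n_b)\in S_{n_b}$ to the color-$b$ part gives an equivalent expression of $\eta$ with new tuple $\tilde x^{(b)}$ and indices $\tilde i^{(b)}$ satisfying $\tilde x^{(b)}_{n_b} = x^{(b)}_{k_0} = S_\eta(i)$ and $\tilde i^{(b)}_{n_b} = i$. Since $a_b(h_i)$ is a well-defined operator on $\FS_V(\HH)$, this replacement does not change the output, and the removed tensor slot is now the last one.

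With this setup, $n = \max(n_{-1},n_1) = n_b$, and the hypothesis $\bn_\eta(b) > \bn_\eta(-b)$ places us in the second branch of the definition of $j_b^*$, so
\begin{equation*}
j_b^*\delta_{(\tilde x^{(-1)},\tilde x^{(1)})} = \mu\bigl(\tilde x^{(b)}_n\bigr)\,\delta_{\tilde x^{(b)}_n,\tilde x^{(-b)}_n}\,\delta_{(\tilde x^{(-1)}_{\le n-1},\tilde x^{(1)}_{\le n-1})}.
\end{equation*}
Now $\tilde x^{(b)}_n = S_\eta(i)$, and since the permutation in color $-b$ is the identity, $\tilde x^{(-b)}_n = x^{(-b)}_n = x_\eta(\bn_\eta(b))$. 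Collecting the scalar factors reads off the claimed formula for $D(a_b(h_i)\eta)$.

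Finally I would check that the remaining vector is $\basis$-elementary and read off the four bulleted identities. The new level is $\bn_\eta-\delta_b$, and in the alternative representation the remaining first coordinate is $\delta$ at a pair of $(n{-}1)$-tuples and the tensor factors carry the index sets $\TensorIndices^{(b)}(\eta)\setminus\{i\}$ and $\TensorIndices^{(-b)}(\eta)$. The new functions $S_{a_b(h_i)\eta}$ and $x_{a_b(h_i)\eta}$ are the restrictions of the corresponding quantities for the permuted representation, which equal those of $\eta$ on the relevant index sets since $\pi_{-b}=\mathrm{id}$ and $\pi_b$ fixes all positions except $k_0\leftrightarrow n_b$. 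The main bookkeeping obstacle is precisely this translation between the permuted representation (in which the computation is transparent) and the original labeling (in which the conclusions are stated); using Proposition \ref{prop:elemperm} to cleanly justify that translation is the one step that requires care.
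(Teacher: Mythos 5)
Your proof is correct and follows the same route the paper intends: the paper offers no written argument, stating only that the proposition ``follows immediately from the definition of the annihilation operators,'' and your computation—isolating the single surviving term of the sum, using Proposition \ref{prop:elemperm} to move the annihilated slot to the last position, and then applying the explicit formula for $j_b^*$ in the dominant-color case—is exactly the unwinding of those definitions. The care you take in translating between the permuted and original representations is the right (and only) delicate point, and you handle it correctly.
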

\begin{proposition}\label{prop:elemannsub} 
 Let $\eta\in\FS_V(\HH)$ be a nonzero $\basis$-elementary vector. Fix $i\in\TensorIndices_\eta$, and let $b\in\IndexSet$ be such that $\bn_\eta(b)\le \bn_\eta(-b)$.
 Then $a_b(h_i)\eta$ is a $\basis$-elementary vector and
\begin{equation}
 D(a_b(h_i)\eta)=\frac{1}{\bn_\eta(b)}\cdot D(\eta).
\end{equation}

 If $a_b(h_i)\eta\ne 0$ then the following also hold:
 \begin{enumerate}
 \item $\TensorIndices^{(b)}({a_b(h_i)\eta})=\TensorIndices^{(b)}(\eta)\setminus\{i\}$;
 \item $\TensorIndices^{(-b)}({a_b(h_i)\eta})=\TensorIndices^{(-b)}(\eta)$;
 \item For $i'\in [\bn_\eta(b)+1, \bn_\eta(b)]$, $x_{a_b(h_i)\eta}(\bn_\eta(b))=x_{\eta}(\bn_\eta(b))$;
 \item $x_{a_b(h_i)\eta}(\bn_\eta(b))=S_{\eta}(i)$;
 \item For any $i'\in \TensorIndices(a_b(h_i)\eta)=\TensorIndices({\eta})\setminus\{i\}$, $S_{a_b(h_i)\eta}(i')= S_{\eta}(i')$.
\end{enumerate}
\end{proposition}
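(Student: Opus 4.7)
The plan is a direct unpacking: apply the definition of the annihilation operator $a_b^{V,j}(h_i)$ to an expression of $\eta$ in the standard form \eqref{eqn:elemform}, and use the explicit formulas for $j_b^*$ on the Vershik--Kerov spaces recalled earlier in this \lsecondarydivname.

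First I would fix an expression for $\eta$ of the form \eqref{eqn:elemform}, and then invoke Proposition \ref{prop:elemperm} to permute the $b$-side tensor factors so that, when $i\in\TensorIndices^{(b)}(\eta)$, the index $i$ sits at the last position $\bn_\eta(b)$. If instead $i\in\TensorIndices^{(-b)}(\eta)$, then all terms in the defining sum of $a_b(h_i)\eta$ vanish by orthogonality of $\basis$, so $a_b(h_i)\eta=0$ and the asserted identity for $D$ holds with both sides zero.

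Next I would apply the definition of $a_b^{V,j}(h_i)$. Distinctness of the $b$-side tensor indices collapses the defining sum to the single term $k=\bn_\eta(b)$, producing the prefactor $1/\bn_\eta(b)$. The essential structural point, special to the subdominant regime $\bn_\eta(b)\le\bn_\eta(-b)$, is that the explicit formula for $j_b^*$ on $V^{(\alpha,\beta)}_{\bn_\eta}$ lands in its first case and acts as the identity on $\delta_{(x^{(-1)},x^{(1)})}$: no factor of $\mu$ and no Kronecker delta on endpoint coordinates appears, in sharp contrast with Proposition \ref{prop:elemanndom}. This is precisely what forces the coefficient to reduce to a bare $1/\bn_\eta(b)$.

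With the reduced explicit expression in hand, assertions 1, 2, and 5 can be read off: only the factor $h_i$ on the $b$-side has been removed, while the underlying $\delta$ and all remaining basis vectors are untouched. The main point requiring care is item 4: after $a_b$ is applied, the color $-b$ becomes strictly dominant and $x_{a_b(h_i)\eta}$ switches to reading from the $b$-side of the pair $(x^{(-1)},x^{(1)})$; reconciling this swap with the permutation choice identifies $x_{a_b(h_i)\eta}(\bn_\eta(b))$ with $S_\eta(i)$. I would handle this by invoking the well-definedness corollary following Proposition \ref{prop:elemperm}. Item 3 as displayed describes the empty interval $[\bn_\eta(b)+1,\bn_\eta(b)]$ and so is vacuous; if it is a typo for the range $[\bn_\eta(b)+1,\bn_\eta(-b)]$, the corresponding identity on entries of $x^{(-b)}$ is again immediate from $j_b^*$ acting trivially on the underlying $\delta$.
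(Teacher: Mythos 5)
Your proposal is correct in substance and is essentially the paper's own route: the paper gives no argument beyond the remark that this proposition (like Proposition \ref{prop:elemanndom}) follows immediately from the definition of the annihilation operators, and your unpacking --- collapsing the defining sum by distinctness of the $b$-side indices and using that $j_b^*$ acts as the identity in the regime $\bn_\eta(b)\le\bn_\eta(-b)$, so no $\mu$-factor or Kronecker delta appears --- is exactly that verification, including the correct treatment of item 4 and the sensible reading of item 3 as vacuous or a typo for $[\bn_\eta(b)+1,\bn_\eta(-b)]$. One caveat: your disposal of the case $i\in\TensorIndices^{(-b)}(\eta)$ is not right as stated, since there $a_b(h_i)\eta=0$ while $D(\eta)\ne 0$ (as $\eta\ne 0$), so the displayed identity does not hold ``with both sides zero''; that case must simply be excluded, i.e.\ the hypothesis should be read as $i\in\TensorIndices^{(b)}(\eta)$, parallel to the explicit hypothesis in Proposition \ref{prop:elemanndom}, which is an imprecision in the statement rather than a defect of your main argument.
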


A creation operator need not take a $\basis$-elementary vector to another $\basis$-elementary vector.
However, we can express a creation operator as a sum of operators which preserve the $\basis$-elementary property.
\begin{notation}
 For $z\in Q=\BN$, let $\epsilon_z:V_n\to V_{n+1}$ to be the linear map such that $\epsilon_z \delta_{(x,y)}=\delta_{((x,z),(y,z))}$.
 If $h\in\HH$ and $n_{-1}\ge n_{1}$ define an operator
\begin{equation}
a_{-1,z}^*(h_r): V_{n_{-1}}\otimes_s \HH^{\otimes n_{-1}}\otimes \HH^{\otimes n_{1}} \to V_{n_{-1}+1}\otimes_s \HH^{\otimes n_{-1}+1}\otimes \HH^{\otimes n_{1}}
\end{equation} 
  by
\begin{equation}
a_{-1,z}^*(h_r) \eta=\left(\bn_{\eta}(b)\epsilon_z\otimes r_{-1}(h)\right)\eta.
\end{equation}
Define $a_{1,z}^*(h_r)$ similarly for $n_{1}\ge n_{-1}$.
\end{notation}
\begin{remark}
In the case that $\sum \alpha_i=1$ (and thus $\beta_i=0$ and $\gamma=0$), we have
\begin{equation}
\sum_{z\in Q} a_{b,z}^*(h)\eta=a_{b}^*(h)\eta
\end{equation}
for a $\basis$-elementary vector $\eta\in \FS(V)$ with $\bn_{\eta}(b)>\bn_{\eta}(-b)$.
\end{remark}
%%%%%%%%%%%%%%%%%%%%%%%%%%%%%%%%%%%%%%%%%%%%%%
The following two propositions follow directly from the definition of the respective operators.
\begin{proposition} \label{prop:elemcrdom}
If $\eta\in\FS_V(\HH)$ is a $\basis$-elementary vector, $b\in\IndexSet$ is such that $\bn_\eta(b)\ge \bn_\eta(-b)$, and $i\in\BN\setminus \TensorIndices(\eta)$ then for any $z\in Q$, the vector $a_{b,z}^*(h_i)\eta$ is also $\basis$-elementary.
Furthermore,  the following hold:
\begin{enumerate}
\item If $\eta\ne 0$ then $\TensorIndices^{(b)}({a_{b,z}^*(h_i)\eta)}=\TensorIndices^{(b)}({\eta})\cup \{i\}$;
\item If $\eta\ne 0$ then $\TensorIndices^{(-b)}({a_{-b,z}^*(h_i)\eta})=\TensorIndices^{(-b)}({\eta})$;
\item For any $i'\in \TensorIndices({\eta})$, $S_{a_{b,z}^*(h_i)\eta}(i')=S_{\eta}(i')$;
\item $S_{a_{b,z}^*(h_i)\eta}(i)=z$;
\item $D(a_{b,z}^*(h_i)\eta)=(\bn_{\eta}(b)+1)\cdot D(\eta)$.
\end{enumerate}
\end{proposition}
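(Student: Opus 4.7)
The plan is to verify every claim directly from the definitions, applied to a chosen canonical representative of $\eta$. Fix an expression for $\eta$ of the form \eqref{eqn:elemform},
\begin{equation}
\eta = C\cdot \delta_{(x^{(-1)}, x^{(1)})}\otimes_s h_{i_1^{(-1)}}\otimes \cdots \otimes h_{i_{\bn_\eta(-1)}^{(-1)}}\otimes h_{i_1^{(1)}}\otimes \cdots \otimes h_{i_{\bn_\eta(1)}^{(1)}}.
\end{equation}
I would then unpack $a_{b,z}^*(h_i)$ as $(\bn_\eta(b)+1)\left(\epsilon_z\otimes \left(r_b^{(\bn_\eta)}\right)^*(h_i)\right)$ and apply each factor in turn: $\epsilon_z$ adjoins a $z$-coordinate to both tuples in the pair $(x^{(-1)}, x^{(1)})$ (using the hypothesis $\bn_\eta(b)\ge \bn_\eta(-b)$, which is exactly the condition under which $\epsilon_z$ is the correct replacement for $j_b$), and $\left(r_b^{(\bn_\eta)}\right)^*(h_i)$ appends $h_i$ to the end of the $b$-colored tensor factor.

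The output is then, up to the scalar $\bn_\eta(b)+1$, a vector of the form \eqref{eqn:elemform} with $b$-indexed tensor list $\bigl(i_1^{(b)}, \ldots, i_{\bn_\eta(b)}^{(b)}, i\bigr)$, with $-b$-indexed list unchanged, and with Vershik--Kerov data updated by appending $z$ in the $b$-diagonal direction. The hypothesis $i\notin \TensorIndices(\eta)$ ensures that the enlarged composite list of tensor indices remains pairwise distinct, so the result is a genuine element of $\elem_V^\basis(\HH)$. Items 1--5 are then read off this new canonical expression: items 1 and 2 from the updated and preserved index lists respectively, item 3 from the fact that $\epsilon_z$ leaves old tuple entries intact (so $S$ agrees with $S_\eta$ on $\TensorIndices(\eta)$), item 4 from the $z$ newly attached to the appended $b$-slot, and item 5 from the overall scalar factor.

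The only mild subtlety is well-definedness: each output datum above is computed from one chosen representative, and we need it to be intrinsic to the vector. This is immediate from Proposition \ref{prop:elemperm} together with the corollary following it: any two representatives differ by a pair $(\pi_{-1},\pi_1)\in S_{\bn_\eta(-1)}\times S_{\bn_\eta(1)}$, and both $\epsilon_z$ and the right creation operator commute with this permutation action after passing through the symmetrization built into $\otimes_s$. There is no substantive obstacle; the proposition is genuinely a mechanical unpacking of the definitions, and the work is just careful bookkeeping of tensor positions and tuple entries.
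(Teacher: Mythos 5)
Your proposal is correct and follows essentially the same route as the paper, which simply asserts that this proposition (like Proposition \ref{prop:elemannsub}) ``follows directly from the definition of the respective operators''; your write-up just fills in the bookkeeping of applying $(\bn_\eta(b)+1)\left(\epsilon_z\otimes\left(r_b^{(\bn_\eta)}\right)^*(h_i)\right)$ to a canonical representative and checking well-definedness via Proposition \ref{prop:elemperm}. No gap.
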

\begin{proposition} \label{prop:elemcrsub}
If $\eta\in \FS_V(\HH)$ is a $\basis$-elementary vector, $b\in\basis$ is such that $\bn_\eta(-b)>\bn_\eta(b)$, and $i\in\BN\setminus \TensorIndices_\eta$ then the vector $a_{b}^*(h_i)\eta$ is also $\basis$-elementary.
Furthermore, 
\begin{enumerate}
\item If $\eta\ne 0$ then $\TensorIndices^{(b)}({a_{b}^*(h_i)\eta})=\TensorIndices^{(b)}({\eta})\cup \{i\}$;
\item If $\eta\ne 0$ then $\TensorIndices^{(-b)}({a_{-b}^*(h_i)\eta})=\TensorIndices^{(-b)}({\eta})$;
\item For any $i'\in \TensorIndices({\eta})$, $S_{a_{b,z}^*(h_i)\eta}(i')=S_{\eta}(i')$;
\item If $\eta\ne 0$ then $S_{a_{b}^*(h_i)\eta}(i)= x_{\eta}(\bn_\eta(b))$.
\item $D(a_{b}^*(h_i)\eta)=(\bn_\eta(b)+1)D(\eta)$.
\end{enumerate}
\end{proposition}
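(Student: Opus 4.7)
The plan is a direct computation from the definition of the creation operator, exploiting the fact that in the subordinate regime $\bn_\eta(-b) > \bn_\eta(b)$ the transition map $j_b$ acts as the identity on the Vershik--Kerov space. I fix an expression for $\eta$ in the form \eqref{eqn:elemform} and abbreviate $\bn = \bn_\eta$. From the defining formula $(a_b^{V,j})^*(h_i)(v\otimes_s \vf) = (\bn(b)+1)(j_b v)\otimes_s (r_b^{(\bn)})^*(h_i)\vf$, the vector $a_b^*(h_i)\eta$ equals the scalar $(\bn(b)+1)C$ times $j_b\delta_{(x^{(-1)},x^{(1)})}\otimes_s \vf'$, where $\vf'$ is the original tensor of basis vectors with $h_i$ appended at the end of the $b$-colored slot.

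The key observation is that since $\bn(-b) > \bn(b)$, we have $\max(\bn(b)+1,\bn(-b)) = \bn(-b) = \max(\bn(b),\bn(-b))$, so the Vershik--Kerov Hilbert spaces $V_{\bn+\delta_b}^{(\alpha,\beta)}$ and $V_\bn^{(\alpha,\beta)}$ coincide, and $j_b$ reduces to the first case of its defining formula, namely $j_b\delta_{(x^{(-1)},x^{(1)})} = \delta_{(x^{(-1)},x^{(1)})}$. Consequently the expression above displays $a_b^*(h_i)\eta$ explicitly in the form \eqref{eqn:elemform} (possibly after a reordering of tensor factors permitted by Proposition \ref{prop:elemperm}), establishing $\basis$-elementarity. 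Item (5) then follows directly from the scalar factor $(\bn(b)+1)$.

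Items (1)--(4) can then be read off from the explicit form. The new tensor index $i$ is appended to the $b$-colored side of the tensor, yielding (1), while the $-b$-colored side and the underlying $\delta$-factor are left unchanged, yielding (2) and the invariance of $S$ on old indices asserted in (3). For (4), the new index $i$ occupies the next position on the $b$-colored side, and since the $\delta$-factor is unchanged, the associated element of $Q$ is precisely the corresponding coordinate of $x^{(b)}$ in that position, which matches the claimed value expressed via $x_\eta$. This proposition is primarily a bookkeeping exercise rather than a substantive mathematical obstacle; the main care required is verifying that the data $\TensorIndices$, $S$, $x_\eta$, and $D$ — defined only up to the equivalence of Proposition \ref{prop:elemperm} — can be unambiguously read from the particular expression produced by the computation, which is immediate once one recognizes that the computation produces an expression exactly in the form \eqref{eqn:elemform}.
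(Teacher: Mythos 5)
Your proposal is correct and matches the paper's intent: the paper gives no written argument for this proposition, stating only that it ``follows directly from the definition of the respective operators,'' and your computation (the factor $(\bn_\eta(b)+1)$ from the definition of $(a_b^{V,j})^*$, plus the observation that $j_b$ acts as the identity on the Vershik--Kerov space since $\max(\bn_\eta(b)+1,\bn_\eta(-b))=\bn_\eta(-b)$) is exactly that bookkeeping made explicit. The only caveat is a harmless indexing point in item (4): the appended index $i$ sits in position $\bn_\eta(b)+1$ of the $b$-colored slot, so the value is the corresponding coordinate of the unchanged $\delta$-tuple, as you say; this is a slip in the statement's notation rather than in your argument.
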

We are now ready to prove the main theorem.
\begin{proof}[Proof of Theorem \ref{thm:sppf}]
We reiterate that we are focusing on the case $\sum{\alpha_i}=1$ for simplicity.
The key ideas of the more general case $\sum\alpha_i+\sum\beta_i\le 1$ are in this case, but this case is slightly more straightforward in that it allows us to work with discrete sums instead of integrals.
In this case, we can assume that $Q=\BN$.

Let $n=|\VP|$ and let $\HH$ be an infinite-dimensional complex Hilbert space with an orthonormal basis $\basis:=\{h_i\}_{i=1}^\infty$.
By Theorem \ref{thm:pf}, we can compute $ \Pt_{V,j}(\VP,c)$ as
\begin{equation}
 \Pt_{V,j}(\VP,c)=\left<a_{c(1)}^{e_1}(h_{k_1})\cdots a_{c(n)}^{e_n}(h_{k_n}) \left(\bone_0\otimes_s \Omega\right),\bone_0\otimes_s \Omega \right>,\label{eqn:sppfdef}
\end{equation}
where  $i$ is an element of the $k_i$-th pair of $\VP$, and $e_i=2$ if $i\in R_\VP$ and $e_i=1$ if $i\in L_\VP$.
For each $k\in[2m]$ define
\begin{equation}
 A_{i}:=
\begin{cases}
 a_{c(i)}^*(h_{k_i}),&\text{if $i\in R_\VP$}\\
 a_{c(i)}(h_{k_i}),&\text{if $i\in L_\VP$},
\end{cases}
\end{equation}
so that
\begin{equation}
 \Pt_{V,j}(\VP,c)=\left<\left(A_{1}\cdots A_{2m} \right)\bone_0\otimes_s \Omega,\bone_0\otimes_s \Omega \right>,\label{eqn:sppfdefcpt}
\end{equation}

Denote by $\Lambda_{\VP,c}$ the space of functions $\lambda:R_{\VP,c}^D\to Q$.
For $\lambda\in\Lambda_{\VP,c}$ and $i\in [2m]$, define
\begin{equation}
 A_{\lambda,i}:=
\begin{cases}
a_{c(i),\lambda(i)}^*(h_{k_i}),&\text{if $i\in R_{\VP,c}^D$}\\
A_{i},&\text{otherwise}.
\end{cases}
\end{equation}
We define
\begin{equation}
 A_{\lambda}^{(k)}:= A_{\lambda,k}\cdots A_{\lambda,2m} 
\end{equation}
for $1\le k\le 2m$.
For convenience, we also set $A_{\lambda}^{(2m+1)}=1$.
It follows immediately from the definitions and \eqref{eqn:sppfdefcpt} that 
\begin{equation}
  \Pt_{V,j}(\VP,c)=\sum_{\lambda\in\Lambda_{\VP,c}}\left<A_\lambda^{(1)}\bone_0\otimes_s \Omega,\bone_0\otimes_s \Omega \right>
\end{equation}
Also define
\begin{equation}
\eta_{\lambda,k}:=A_{\lambda}^{(k)}\bone_0\otimes_s \Omega.
\end{equation}
It can be seen from the definition of $\vl_{\VP,c}^{b}$ that if $\eta_{\lambda,k}\ne 0$ then
\begin{equation}\label{eqn:nvl}
\bn_{\eta_{\lambda,k}}(b)=
\begin{cases}
 \vl_{\VP,c}^{(b)}(k)-1,&\text{if $k\in L_\VP$ and $c(k)=b$},\\
 \vl_{\VP,c}^{(b)}(k),&\text{otherwise}.
\end{cases}
\end{equation}
for $b\in\IndexSet$.

It follows from Propositions \ref{prop:elemanndom}, \ref{prop:elemannsub}, \ref{prop:elemcrdom} and \ref{prop:elemcrsub} that $\eta_{\lambda,k}$ is $\basis$-elementary for $1\le k\le 2m+1$.
Since $\eta_{\lambda,1}\in \BC \bone_0\otimes_s \Omega$, it is determined by the constant $D(\eta_{\lambda,1})$.
For $k\in[2m]$, define
\begin{equation}
B_{\lambda,k}:=
\begin{cases}
\delta_{x_{\eta_{\lambda,k+1}}(\bn_{\eta_{\lambda,k+1}}(b)), S_{\eta_{\lambda,k+1}}(i)}\cdot \mu( S_{\eta_{\lambda,k+1}}(i))&\text{if $k\in L_{\VP,c}^D$} \\
1,&\text{otherwise},
\end{cases}
\end{equation}
so that 
\begin{equation}
D(\eta_{\lambda,1})=\prod_{k=1}^{2m} B_{\lambda,k}=\prod_{k\in L_{\VP,c}^D} B_{\lambda,k}.
\end{equation}

For a maximal increasing path $P$ of $G_{\VP,c}$ from $l_s$ to $r_t$ with $l_s<r_t$, define a function  $H_{P,\lambda}:[l_s+1,r_t]\to Q$ as follows.
If $u\in [l_s+1,r_t]$, denote by $f_u=(a_u,z_u)$ the unique arc along the path $P$ such that $a_u< u\le z_u$.
Then we define
\begin{equation}
 H_{P,\lambda}(u):=
\begin{cases}
x_{\eta_{\lambda,u}}(\vls_{\VP,c}(a_u)),&\text{if $f_{u}=(a_u,z_u)\in \bar\VP^c$}\\
S_{\eta_{\lambda,u}}(i),&\text{if $f_{u}=(l_i,r_i)\in \VP$}.
\end{cases}
\end{equation}
It can be verified using \eqref{eqn:nvl} that if $b\in\IndexSet$ is such that $\vl^{c(b)}(a_u)>\vl^{c(-b)}(a_u)$ and $f_u\in\bar F_{\VP,c}$ then $\bn_{\eta_{\lambda,u}}(b)<\vls_{\VP,c}(a_u)\le \bn_{\eta_{\lambda,u}}(-b)$, so that $\vls_{\VP,c}(a_u)$ is indeed in the domain of $x_{\eta_{\lambda,u}}$.

We will show that $H_{P,\lambda}(u)=\lambda(l_s)$ for all $u\in [l_s+1,r_t]$.
By definition, 
\begin{equation}
\eta_{\lambda,r_t}= A_{\lambda,r_t}\eta_{\lambda,r_t+1}=a_{c(r_t),\lambda(r_t)}^*(h_{t})\eta_{\lambda,r_t+1}.
\end{equation}
As an endpoint of a maximal monotone path, $r_t\in \dom_{\VP,c}$,  
and  $\vl_{\VP,c}^{c(r_t)}(r_t)>\vl_{\VP,c}^{-c(r_t)}(r_t)$ implies that  $\bn_{\eta_{\lambda,r_t-1}}(c(r_t))> \bn_{\eta_{\lambda,r_t+1}}(-c(r_t))$.
If $f_{r_t}\in \bar F_{\VP,c}$ then by Proposition \ref{prop:elemcrdom},
\begin{equation}
 H_{P,\lambda}(r_t)=x_{a_{c(r_t),\lambda(r_t)}^*(h_{t})\eta_{\lambda,r_t+1}}(\vls_{\VP,c}(c(r_t)))=\lambda(r_t).
\end{equation}
If, on the other hand, $f_{u}=(l_t,r_t)\in \VP$ then
\begin{equation}
 H_{P,\lambda}(r_t)=S_{a_{c(r_t),\lambda(r_t)}^*(h_{t})\eta_{\lambda,r_t+1}}(t)=\lambda(r_t),
\end{equation}
where we have again applied Proposition \ref{prop:elemcrdom}. %TODO: check refs

To show that $H_{P,\lambda}(u)=\lambda(r_t)$ for all $u$, it will now suffice to show that $H_{P,\lambda}(u)=H_{P,\lambda}(u+1)$ for $u\in [l_s+1,r_t-1]$.
For $u$ such that $f_u=f_{u+1}$ this is an immediate consequence of the definition of $H_{P,\lambda}(u)$.
Otherwise, $u$ is a vertex along the path $P$ and $u\in\subo_{\VP,c}$.
If $u\in L_\VP$, say $u=l_i$, then $(a_u,z_u)=(u,z_u)\in \bar F_{\VP,c}$, thus 
\begin{equation}
\begin{split}
 H_{P,\lambda}(u)&=x_{\eta_{\lambda,u}}(\vls_{\VP,c}(c(a_u)))\\
&=x_{a_{c(u)}(h_{i})\eta_{\lambda,u+1}}(\vls_{\VP,c}(c(a_u)))\\
&=S_{\eta_{\lambda,u+1}}(i)\\
&=H_{P,\lambda}(u+1).
\end{split}
\end{equation}
Here we have made use of Proposition \ref{prop:elemannsub}.

If instead $u\in R_\VP$, say $u=r_i$, then  $(a_u,z_u)=(l_i,r_i)\in \VP$, so
\begin{equation}
\begin{split}
 H_{P,\lambda}(u)&=S_{\eta_{\lambda,u}}(i)\\
&=S_{a_{c(u)}^*(h_{i})\eta_{\lambda,u+1}}(i)\\
&=x_{\eta_{\lambda,u+1}}(\bn_{\eta_{\lambda,u+1}}(c(u)))\\
&=x_{\eta_{\lambda,u+1}}(\vls_{\VP,c}(r_i ))\\
&=H_{P,\lambda}(u+1),
\end{split}
\end{equation}
where we have used Proposition \ref{prop:elemcrsub}.

Thus, $H_{P,\lambda}(u)=\lambda(r_t)$ for all $u\in [l_s+1,r_t]$.
In particular, taking $u=l_s+1$ shows that $x_{\eta_{\lambda, l_s+1}}(\vls_{\VP,c}(l_s))=\lambda(r_t)$ if the initial arc of $P$ is in $\bar F_{\VP,c}$ and $S_{\eta_{\lambda, l_s+1}}(i)=\lambda(r_t)$ if the initial arc is $(l_i,r_i)\in\VP$.
By a similar argument, if $P'$ is a maximal decreasing path from some $r_{t'}\in R_{\VP,c}^D$ to $l_s$ then $x_{\eta_{\lambda, l_s+1}}(\vls_{\VP,c}(l_s))=\lambda(r_t)$ if the final arc of $P'$ is in $\bar F_{\VP,c}$ and $S_{\eta_{\lambda, l_s+1}}(i')=\lambda(r_t)$ if the final arc of $P'$ is $(l_{i'},r_{i'})\in\VP$.
Thus,
\begin{equation}
 B_{\lambda, l_s}=\delta_{x_{\eta_{\lambda,k+1}}(\bn_{\eta_{\lambda,k+1}}(b)), S_{\eta_{\lambda,k+1}}(i)}\cdot \mu( S_{\eta_{\lambda,k+1}}(i))=\delta_{\lambda(r_t),\lambda(r_{t'})}\cdot \mu(\lambda(r_t)).
\end{equation}
Therefore, if $\mathfrak{A}:L_{\VP,c}^D\to R_{\VP,c}^D$ is the function taking $l\in L_{\VP,c}^D$ to the starting point of the maximal  decreasing path terminating at $l$ and $\mathfrak{D}:L_{\VP,c}^D\to R_{\VP,c}^D$ is the function $l$ to the endpoint of the maximal  increasing path starting at $l$ then
\begin{equation}
D(\eta_{\lambda,1})=\prod_{k\in L_{\VP,c}^D} B_{\lambda,k}=\prod_{l\in L_{\VP,c}^D} \delta_{\lambda(\mathfrak{D}(l)),\lambda(\mathfrak{A}(l))}\cdot \mu(\mathfrak{D}(l)).
\end{equation}
This means that $D(\eta_{\lambda,1})=0$ unless $\lambda(r)=\lambda(r')$ whenever there is some $l\in L_{\VP,c}^D$ such that there are maximal monotone paths from $l$ to $r$ and $r$ to $l'$.
But this condition holds only if $\lambda(r)=\lambda(r')$ whenever $r$ and $r'$ lie along the same cycle of $G_{\VP,c}$. Denote by $\Lambda^C_{\VP,c}$ the set of functions in $\Lambda_{\VP,c}$ satisfying this condition.
For $\lambda\in\Lambda^C_{\VP,c}$ and a cycle $K$ of $G_{\VP,c}$ we write $\lambda(K)$ for the common value $\lambda(r)$ for any $r\in R_{\VP,c}^D$ along the cycle $K$.

 Denoting the set of cycles of $G_{\VP,c}$ by $C(G_{\VP,c})$ and the number of maximal increasing paths of a cycle $K$ by $M(K)$ we deduce that
\begin{equation}
D(\eta_{\lambda,1})=
\begin{cases}
\prod_{K\in C(G_{\VP,c})} \mu(\lambda(K))^{M(K)},&\text{if $\lambda\in \Lambda^C_{\VP,c}$},\\
0,&\text{otherwise}.
\end{cases}
\end{equation}
Finally,
\begin{equation}
\begin{split}
 \Pt_{\alpha,\beta}(\VP,c)&=\sum_{\lambda\in\Lambda_{\VP,c}}\left<A_\lambda^{(1)}\bone_0\otimes_s \Omega,\bone_0\otimes_s \Omega \right>\\
&=\sum_{\lambda\in\Lambda_{\VP,c}}D(\eta_{\lambda,1})\\
&=\sum_{\lambda\in\Lambda^C_{\VP,c}}\prod_{K\in C(G_{\VP,c})} \mu(\lambda(K))^{M(K)}\\
&=\prod_{m\ge 2}\left(\sum_{i=1}^\infty\alpha_i^{m}\right)^{\gamma_m(G_{\VP,c})}.
\end{split}
\end{equation}
\end{proof}

\primarydivision{A special case of the generalized Brownian motions associated to spherical representations of $(S_\infty\times S_\infty, S_\infty)$}\label{pri:Nex}

In this \lprimarydivname, we specialize the investigation begun in \primarydivname\ \ref{pri:spherical} to a countable class of Thoma parameters which were also considered in \cite{BG}.
Namely, for $N\in \BZ\setminus\{0\}$ we will consider the spherical function $\varphi_N$ of the Gelfand pair $(S_\infty\times S_\infty,S_\infty)$ arising from the Thoma parameters with
\begin{equation}
\begin{split}
\alpha_n=
\begin{cases}
1/N,&\text{$N>0$ and $1\le n\le N$};\\
0,&\text{otherwise}.
\end{cases}
\quad\text{and}\quad
\beta_n=
\begin{cases}
1/N,&\text{$N<0$ and $1\le n\le |N|$};\\
0,&\text{otherwise}.
\end{cases}
\end{split}
\end{equation}
The character $\varphi_N$ on $S_\infty$ is given by
\begin{equation}
\varphi_N(\pi)=\left(\frac{1}{N}\right)^{m-\gamma^{(m)}\left(\pi\right)}
\end{equation}
where $m$ is large enough so that $\sigma(k)=k$ for $k>m$ and $\gamma^{(m)}\left(\sigma\right)$ is the number of cycles in the permutation $\sigma\in S_\infty$ when $\sigma$  is considered as an element of $S_m$.
Although $\gamma^{(m)}\left(\sigma\right)$ depends on the choice of $m$, the quantity  $m-\gamma^{(m)}\left(\sigma\right)$ does not.

We denote by $\psi_N$ the associated spherical function on the Gelfand pair $(S_\infty\times S_\infty, S_\infty)$.
That is,
\begin{equation}
\psi_N(\pi_{-1},\pi_1)=\varphi_N(\pi_{1}^{-1}\pi_{-1})=\left(\frac{1}{N}\right)^{m-\gamma^{(m)}\left(\pi_{1}^{-1}\pi_{-1}\right)},
\end{equation}
where $(\pi_{-1}, \pi_1)\in S_\infty\times S_\infty$.

The function on $\{-1,1\}$-indexed pair partitions associated to $\psi_N$ by Theorem \ref{thm:sppf} is given by
\begin{equation}
\Pt_N(\VP,c)=\left(\frac{1}{N}\right)^{m(G_{\VP,c})-\gamma(G_{\VP,c})},
\end{equation}
where $m(G_{\VP,c})$ and $\gamma(G_{\VP,c})$ denote the number of maximal increasing paths and number of cycles of  the graph $G_{\VP,c}$ defined in \primarydivname\ \ref{pri:spherical}, respectively.

For a complex Hilbert space $\HH$, the function $\Pt_N$ gives rise to a Fock state $ \rho_{N}$ on the algebra $\COAI(\HH)$.
We denote by $\FS^\IndexSet_{N}(\HH)$, $\Omega_{N}$, and $\COAI_{N}(\HH)$ the Hilbert space, distinguished cyclic vector, and algebra of operators of the GNS construction for this pair.
We will see that for $N<0$, the field operators on $\FS_{N}(\HH)$ are bounded operators which generate a von Neumann algebra containing the projection onto vacuum vector.

\begin{notation}
Fix an integer $N\ne 0$ and an infinite-dimensional complex Hilbert space $\HH$ with orthonormal basis $\basis:=\{h_n:n\in\BN\}$.
Denote by $a_{b,i}^*$ the creation operator $a_{\Pt_N,b}^*(h_i)$ and by $a_{b,i}$ the annihilation operator $a_{\Pt_N,b}(h_i)$.
As we have done before, we will write $a_{b,i}^e$ for either a creation ($e=2$) or annihilation operator ($e=1$) and let $\omega_{b,i}=a_{b,i}+a_{b,i}^*$.
Let $\Gamma_N^\IndexSet(\HH)$ be the von Neumann algebra generated by the spectral projections of the $\omega_{b,i}$ ($b\in\IndexSet$, $i\in\BN$).

Let $\SP(\HH, \basis)$ be the set of finite words in the $a_{b,i}^e$.
Each word in $\SP(\HH, \basis)$ can be considered a (possibly unbounded) operator on $\FS_N^\IndexSet(\HH)$ simply by regarding it as a product of the creation and annihilation operators that comprise the word.
The set $\SP(\HH,\basis)$ inherits the involution $*$ from $\COAI(\HH)$.
For $A\in \SP(\HH, \basis)$, $b\in\IndexSet$ and $i\in\BN$, let $\nc_{b,i}(A)$ be the number of occurrences of the creation operator $a_{b,i}^*$ in the word $A$ and $\na_{b,i}(A)$ the number of occurrences of the annihilation operator $a_{b,i}$ in the word $A$.
For $b\in\IndexSet$, define $\bw^{A}_{b}:\BN\to\BZ$ by
\begin{equation}
\bw^{A}_{b}(n)=\nc_{b,n}(A)-\na_{b,n}(A).
\end{equation}

Given a function $\bw_b:\BN\to\BZ$ for each $b\in\IndexSet$ taking only finitely many nonzero values, define
\begin{equation}
\SP_\bw(\HH,\basis):=\left\{A\in\SP(\HH,\basis):\bw^{A}=\bw\right\}.
\end{equation}
Denote by $H_\bw(\HH,\basis)$ the space
\begin{equation}
H_\bw(\HH,\basis):=\linspan\{A\Omega_N: A\in\SP_\bw(\HH,\basis)\}.
\end{equation}
For a function $\bw_b:\BN\to\BZ$ which is $0$ at all but finitely many points, we define $|\bw_b|=\sum_{n=1}^\infty \bw_b(n)$.
\end{notation}
\begin{remark}
If $\bw_b(n)<0$ for some $b\in\IndexSet$ and some $n\in\BN$ then $H_\bw(\HH,\basis)=0$.
\end{remark}
\begin{definition}\label{def:comp}
Suppose that $A\in\SP(\HH,\basis)$ and the terms in the product $A$ are indexed by some ordered set $S$,
\begin{equation}
A=\prod_{k\in S} a_{b_k,i_k}^{e_k}.
\end{equation}
We will say that $(\VP,c)\in \IPP(S)$ is compatible with $A$ if 
\begin{equation}
\prod_{(l,r)\in\VP}\delta_{e_l,1}\delta_{e_r,2}\delta_{c(l,r), b_l}\delta_{c(l,r), b_r}\delta_{i_l,i_r}=1.
\end{equation}
Denote by $\Comp(A)$ the set of all $(\VP,c)\in \IPP(S)$ which are compatible with $A$.
\end{definition}
\begin{remark}
The motivation for Definition \ref{def:comp} is that \eqref{eqn:fock}, for $A\in\SP(\HH,\basis)$,
\begin{equation}\label{eqn:fockcompat}
 \rho_N(A)=\sum_{(\VP,c)\in\Comp(A) }\Pt_{N}(\VP,c).
\end{equation}
\end{remark}

\begin{remark}
The condition $(\VP,c)\in \Comp(A)$ uniquely determines $(\bar{\VP}^{(c)},\bar c)$. 
This is because once we know that $(\VP,c)\in \Comp(A)$, we can immediately determine the color function $c$ and which points are left points of $\VP$ and which are right points of $\VP$.
This, in turn, determines the index function $\bar{\VP}^{(c)}$ and the involution $\ZB_{\VP,c}$, which gives $\bar{\VP}^{(c)}$.
\end{remark}
\begin{notation}
For $T\subseteq S$, we denote by $A|_T$ the product of those elements in the word $A$ which are indexed by elements of $T$.
\end{notation}
The next proposition is an immediate consequence of the definitions.
\begin{proposition}\label{prop:domcomp}
Suppose that $A\in\SP_\bw(\HH,\basis)$ is given by
\begin{equation}
A=\prod_{k=1}^r a_{b_k,i_k}^{e_k}.
\end{equation}
 and let $(\VP,c)\in\Comp(A)$.
 For each $s\in [r]$ denote by $A_s$ the product $A_s=\prod_{k={s+1}}^r a_{b_k,i_k}^{e_k}$. 
 Then $s\in\dom_{\VP,c}$ if and only if one of the following holds:
 \begin{enumerate}
\item   $s\in R_\VP$ and $\left|\bw^{A_{s}}_{c(s)}\right| \ge \left|\bw^{A_{s}}_{-c(s)} \right|$;
\item $s\in L_\VP$ and $\left|\bw^{A_{s}}_{c(s)}\right| > \left|\bw^{A_{s}}_{-c(s)} \right|$.
\end{enumerate}
%Here $\left|\bw_b^{A_s}\right|$ means $\sum_{n=1}^\infty \bw_b^{A_s}(n)$ for $b\in\IndexSet$.
\end{proposition}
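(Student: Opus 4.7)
The plan is to relate both $\vls_{\VP,c}(s)$ and $\vl^{-c(s)}_{\VP,c}(s)$ directly to the net weight counts $|\bw^{A_s}_{c(s)}|$ and $|\bw^{A_s}_{-c(s)}|$, and then translate the defining inequality $\vls_{\VP,c}(s)>\vl^{-c(s)}_{\VP,c}(s)$ of $\dom_{\VP,c}$ into the case split in the proposition.

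First I would unpack the hypothesis $(\VP,c)\in\Comp(A)$: it forces each pair $(l_j,r_j)\in\VP$ to correspond to a $c(l_j,r_j)$-colored annihilation at position $l_j$ and a $c(l_j,r_j)$-colored creation at position $r_j$, both with the same index $i_{l_j}=i_{r_j}$. Consequently, for each color $b$ and each position $s$, every pair of color $b$ falls into exactly one of: (i) $r_j\le s$, contributing $0$ to $|\bw^{A_s}_b|$; (ii) $l_j>s$, also contributing $0$ (both its creation and annihilation are in $A_s$ and they cancel); (iii) $l_j\le s<r_j$, contributing $+1$ (the creation at $r_j$ lies in $A_s$ but the annihilation at $l_j$ does not). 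Summing, $|\bw^{A_s}_b|$ equals the number of color-$b$ pairs with $l_j\le s<r_j$.

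Next I would compare with $\vl^{b}_{\VP,c}(s)=|\{j:l_j\le s\le r_j,\,c(l_j,r_j)=b\}|$, which counts the same pairs as $|\bw^{A_s}_b|$ plus those color-$b$ pairs with $r_j=s$. Hence
\begin{equation}
\vl^{b}_{\VP,c}(s)=|\bw^{A_s}_b|+\bigl|\{j:r_j=s,\,c(l_j,r_j)=b\}\bigr|.
\end{equation}
The correction term is $1$ when $s\in R_\VP$ and $b=c(s)$, and is $0$ otherwise (in particular it is $0$ for $b=-c(s)$ always, and for $b=c(s)$ when $s\in L_\VP$).

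Finally I would split into the two cases. If $s\in L_\VP$ then $\vl^{c(s)}_{\VP,c}(s)=|\bw^{A_s}_{c(s)}|$ and $\vl^{-c(s)}_{\VP,c}(s)=|\bw^{A_s}_{-c(s)}|$, so the condition $\vls_{\VP,c}(s)>\vl^{-c(s)}_{\VP,c}(s)$ becomes $|\bw^{A_s}_{c(s)}|>|\bw^{A_s}_{-c(s)}|$, which is condition (2). If $s\in R_\VP$ then $\vl^{c(s)}_{\VP,c}(s)=|\bw^{A_s}_{c(s)}|+1$ and $\vl^{-c(s)}_{\VP,c}(s)=|\bw^{A_s}_{-c(s)}|$, so the same inequality becomes $|\bw^{A_s}_{c(s)}|+1>|\bw^{A_s}_{-c(s)}|$, equivalently $|\bw^{A_s}_{c(s)}|\ge|\bw^{A_s}_{-c(s)}|$, which is condition (1). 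The argument is essentially bookkeeping; the only subtle point is the off-by-one arising from whether the position $s$ is itself a left or right endpoint of its own pair, which is exactly what distinguishes the strict from the non-strict inequality in the two cases.
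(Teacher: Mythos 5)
Your proof is correct: the identity $\vl^{b}_{\VP,c}(s)=\bigl|\bw^{A_s}_b\bigr|$ plus a correction of $1$ exactly when $s\in R_\VP$ and $b=c(s)$ is the right bookkeeping, and pushing the defining inequality $\vls_{\VP,c}(s)>\vl^{-c(s)}_{\VP,c}(s)$ through it gives precisely the strict/non-strict split in the two cases. This matches the paper's (implicit) argument, since the paper states the proposition as an immediate consequence of the definitions and offers no further proof; your write-up simply makes that verification explicit.
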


The definition \eqref{eqn:fock} of the Fock state $\rho_N$ and the definition of $\bw^A_b$ give the following.
\begin{proposition}\label{prop:orthog}
Suppose that $\bw_{-1}, \bw_1,\bw_{-1}',\bw_1':\BN\to\BZ$ are zero except at finitely many points.
The spaces $H_\bw(\HH,\basis)$ and $H_{\bw'}(\HH,\basis)$ are orthogonal unless $\bw_b=\bw_b'$ for $b\in\IndexSet$.
\end{proposition}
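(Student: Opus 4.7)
The plan is to reduce the orthogonality claim to the observation that a word in creation and annihilation operators has a nonzero vacuum expectation only if, for each $(b,i)\in\IndexSet\times\BN$, the number of creation operators $a_{b,i}^*$ equals the number of annihilation operators $a_{b,i}$ in the word.

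First I would take arbitrary $A\in\SP_\bw(\HH,\basis)$ and $B\in\SP_{\bw'}(\HH,\basis)$ and compute
\begin{equation}
\left<A\Omega_N, B\Omega_N\right>=\rho_N(A^*B),
\end{equation}
using the fact that the GNS inner product is given by the Fock state. Then, using \eqref{eqn:fockcompat},
\begin{equation}
\rho_N(A^*B)=\sum_{(\VP,c)\in\Comp(A^*B)}\Pt_N(\VP,c),
\end{equation}
so it suffices to show that $\Comp(A^*B)=\emptyset$ whenever $\bw\neq\bw'$.

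Next I would compute the relevant creation/annihilation counts in $A^*B$. Since the involution on $\SP(\HH,\basis)$ swaps each $a_{b,i}^e$ with its adjoint and reverses the order of letters, we have $\nc_{b,i}(A^*)=\na_{b,i}(A)$ and $\na_{b,i}(A^*)=\nc_{b,i}(A)$, hence
\begin{equation}
\begin{split}
\nc_{b,i}(A^*B)&=\na_{b,i}(A)+\nc_{b,i}(B),\\
\na_{b,i}(A^*B)&=\nc_{b,i}(A)+\na_{b,i}(B).
\end{split}
\end{equation}
By the definition of $\Comp(A^*B)$ (Definition \ref{def:comp}), every point in a compatible pair partition pairs a unique annihilation letter with a unique creation letter of the same color and index; in particular, letters of color $b$ and index $i$ can be exhaustively paired only when $\nc_{b,i}(A^*B)=\na_{b,i}(A^*B)$. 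Rearranging the displayed equalities, this forces
\begin{equation}
\nc_{b,i}(B)-\na_{b,i}(B)=\nc_{b,i}(A)-\na_{b,i}(A),\qquad\text{i.e.}\qquad \bw'_b(i)=\bw_b(i),
\end{equation}
for every $b\in\IndexSet$ and $i\in\BN$. Thus if $\bw\neq\bw'$ then $\Comp(A^*B)=\emptyset$, so $\rho_N(A^*B)=0$.

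Finally, since $H_\bw(\HH,\basis)$ and $H_{\bw'}(\HH,\basis)$ are by definition the linear spans of vectors of the form $A\Omega_N$ with $A\in\SP_\bw(\HH,\basis)$, respectively $B\Omega_N$ with $B\in\SP_{\bw'}(\HH,\basis)$, sesquilinearity of the inner product extends the vanishing to arbitrary elements of the two spaces. There is no real obstacle here; the only mild care required is bookkeeping for the behavior of the involution on $\SP(\HH,\basis)$, which is why I would state the reversal of letters and swap of $e$-indices explicitly before computing the counts.
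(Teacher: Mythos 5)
Your proof is correct and is exactly the argument the paper leaves implicit (it merely asserts the proposition follows from the definition of the Fock state and of $\bw^A_b$): expand $\rho_N(A^*B)$ over compatible pair partitions and note that compatibility forces matching creation/annihilation counts for each color and index, hence $\bw=\bw'$.
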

This enables us to make the following definition.
\begin{definition}
Let $\NO_{b,n}$ be the operator defined on the dense subspace $\oplus_{\bw}H_{\bw}(\HH,\basis)$ of $\FS^\IndexSet_N(\HH)$ by linear extension of
\begin{equation}
\eta\mapsto \bw_b(n)\cdot \eta\quad\text{for }\eta\in H_{\bw}(\HH,\basis).
\end{equation}
\end{definition}

The next proposition is an exclusion principle analogous to that proven in \cite{BG}.
\begin{lemma}\label{lem:excl}
If $N<0$ and there is some $b\in\IndexSet$ and some $n\in\BN$ such that if $\bw_{b}(n)>|N|$, then  $H_\bw(\HH, \basis)=0$.
\end{lemma}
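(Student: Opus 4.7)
The strategy is to transfer the question from the abstract GNS space $\FS^{\IndexSet}_N(\HH)$ to the concrete Fock-like realization $\FS_V(\HH)$ built in \primarydivname\ \ref{pri:spherical} from the Vershik--Kerov data with Thoma parameters $\alpha_i=0$ and $\beta_i=1/|N|$ for $1\le i\le|N|$. Theorem \ref{thm:sppf} identifies the vacuum state on $\COAI(\HH)$ for this Fock-like space with $\rho_N$, so by the uniqueness of the GNS construction there is an isometric intertwiner $\Phi:\FS^{\IndexSet}_N(\HH)\to\FS_V(\HH)$ with $\Phi(\Omega_N)=\bone_0\otimes\Omega$, so that $\Phi(A\Omega_N)=A(\bone_0\otimes\Omega)$. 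It therefore suffices to prove $A(\bone_0\otimes\Omega)=0$ in $\FS_V(\HH)$ for every $A\in\SP_\bw(\HH,\basis)$.

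From the definitions of the creation and annihilation operators, $A(\bone_0\otimes\Omega)$ lies in $V_\bn\otimes_s\HH^{\otimes\bn}$ with $\bn(b')=|\bw_{b'}|$, and its tensor part contains exactly $\bw_{b'}(m)$ copies of $h_m$ in the color-$b'$ slots. The identity $P_\bn(u\otimes\tilde U_\bn(\tau)\psi)=P_\bn(U_\bn(\tau^{-1})u\otimes\psi)$, which follows from the $S_\bn$-invariance of $P_\bn$, allows us to permute the tensor factors into a canonical order, so that
\begin{equation}
A(\bone_0\otimes\Omega)=P_\bn(v_A\otimes\vf_0)
\end{equation}
for some $v_A\in V_\bn$, where $\vf_0\in\HH^{\otimes\bn}$ is a canonical tensor in which the $k:=\bw_b(n)>|N|$ copies of $h_n$ occupy a fixed subset $S\subset[\bn(b)]$ of color-$b$ slots.

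Let $H:=\mathrm{Sym}(S)\subset S_{\bn(b)}\subset S_\bn$. Since $H$ fixes $\vf_0$, averaging the identity $P_\bn(v_A\otimes\vf_0)=P_\bn(U_\bn(\sigma^{-1})v_A\otimes\vf_0)$ over $\sigma\in H$ yields $P_\bn(v_A\otimes\vf_0)=P_\bn(\bar v_A\otimes\vf_0)$, where $\bar v_A:=\frac{1}{|H|}\sum_{\sigma\in H}U_\bn(\sigma^{-1})v_A$ is the projection of $v_A$ onto the $H$-invariant subspace of $V_\bn$ for the Vershik--Kerov representation. The crux is that this invariant subspace is trivial: since $\alpha_i=0$ and $\sum_i\beta_i=1$, the essential support of the measure $\mu$ from Notation \ref{not:vk} reduces to the $|N|$-element set $Q_0:=\{1,\ldots,|N|\}\subset\SN_-$, and \eqref{eqn:sphrep} gives $U_\bn(\sigma)\delta_{(x^{(-1)},x^{(1)})}=\mathrm{sgn}(\sigma)\,\delta_{(\sigma x^{(b)},x^{(-b)})}$ for $\sigma\in S_{\bn(b)}$. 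Hence $H$-invariance forces the coefficient of $\delta_{(x^{(-1)},x^{(1)})}$, viewed as a function of the restriction $x^{(b)}|_S\in Q_0^k$, to be antisymmetric under $H$; since $k>|Q_0|$ the pigeonhole principle produces a repeated entry in every such restriction, so the antisymmetric coefficient must vanish. Thus $\bar v_A=0$ and $A(\bone_0\otimes\Omega)=0$. The main obstacle will be the bookkeeping needed to realize $A(\bone_0\otimes\Omega)$ in the factored form $P_\bn(v_A\otimes\vf_0)$, which requires tracking how the creation and annihilation operators (especially the sum structure of the latter) act on elements of $V_\bn\otimes\HH^{\otimes\bn}$ before symmetrization; once that reduction is in hand the vanishing follows cleanly from the pigeonhole-plus-antisymmetry mechanism.
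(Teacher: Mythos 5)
Your argument is correct in outline, but it takes a genuinely different route from the paper. The paper never leaves the combinatorial side: it expands $\rho_N(A^*A)=\sum_{(\VP,c)\in\Comp(A^*A)}\Pt_N(\VP,c)$, strips out the $|N|+1$ matched creation/annihilation occurrences of $a_{b,i}^{*}$ and $a_{b,i}$ (Proposition \ref{prop:match}), observes that each remaining choice is parametrized by a bijection between the two leftover sets, and that the resulting graphs differ only in the permutation $\sigma\in S_{|N|+1}$ of cycles glued in, so each group of terms sums to a constant times $\sum_{\sigma\in S_{|N|+1}}N^{\gamma(\sigma)}=0$ by the Stirling-number identity (Proposition \ref{prop:sumNpow}). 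You instead realize $\rho_N$ concretely on the Vershik--Kerov Fock-like space with pure-$\beta$ Thoma parameters, reduce $A(\bone_0\otimes_s\Omega)$ to the form $P_\bn(v_A\otimes\vf_0)$, and kill it by the sign twist in \eqref{eqn:sphrep} plus pigeonhole on the $|N|$-point support of $\mu$ --- a genuine Pauli-exclusion mechanism, which is conceptually illuminating and explains \emph{why} the bound is exactly $|N|$. Two caveats on what each approach buys: (i) your transfer step leans on Theorem \ref{thm:sppf} in the case $\sum_i\beta_i=1$, which the paper states in general but only proves in detail for $\sum_i\alpha_i=1$, whereas the paper's proof of Lemma \ref{lem:excl} needs nothing beyond the combinatorial formula defining $\Pt_N$; (ii) the bookkeeping you defer (that every word applied to the vacuum stays of the form $P_\bn(v\otimes\vf_0)$ with a fixed canonical tensor $\vf_0$, using $P_\bn(u\otimes\tilde U_\bn(\tau)\psi)=P_\bn(U_\bn(\tau^{-1})u\otimes\psi)$ and the fact that annihilation changes only the $V_\bn$-component termwise) does go through, and in fact your vanishing step needs only transpositions fixing the base point, not full antisymmetry, so the sign computation reduces to the parity of a single inversion count $i(\tau,x)$, which is odd since all coordinates lie in $\SN_-$. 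So the proposal is sound, at the cost of more machinery than the paper's self-contained cancellation argument.
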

\begin{corollary}\label{cor:numbounded}
If $N<0$ then $\NO_{b,n}$ is bounded for all $b\in\IndexSet$ and $n\in\BN$.
Moreover, $\|\NO_{b,n}\|=|N|$.
\end{corollary}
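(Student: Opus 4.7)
The plan is to read off the norm of $\NO_{b,n}$ from its eigenspace decomposition. By Proposition \ref{prop:orthog}, $\FS^\IndexSet_N(\HH)$ is the orthogonal direct sum of the subspaces $H_\bw(\HH,\basis)$, and on each summand $\NO_{b,n}$ acts as the scalar $\bw_b(n)$. The remark preceding Definition \ref{def:comp} rules out a nonzero $H_\bw$ with $\bw_b(n)<0$, and Lemma \ref{lem:excl} rules out a nonzero $H_\bw$ with $\bw_b(n)>|N|$. Combining these, every eigenvalue of $\NO_{b,n}$ lies in $\{0,1,\ldots,|N|\}$, so $\|\NO_{b,n}\|\le|N|$.

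For the reverse inequality I would exhibit a nonzero vector in the top eigenspace by setting $\eta:=(a^*_{b,n})^{|N|}\Omega_N$, which belongs to $H_\bw(\HH,\basis)$ for $\bw$ with $\bw_b(n)=|N|$ and all other entries zero. To verify $\eta\neq 0$ I would compute $\|\eta\|^2=\rho_N\bigl((a_{b,n})^{|N|}(a^*_{b,n})^{|N|}\bigr)$ directly from \eqref{eqn:fock}. Every operator in the monomial has the same color $b$ and the same index $n$, so the compatible $\IndexSet$-indexed pair partitions are parametrized by the permutations $\pi\in S_{|N|}$ matching annihilation positions to creation positions, each carrying the constant coloring $c\equiv b$. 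A direct inspection (given by the remark following Proposition \ref{lem:cycles} for $b=1$, and by a symmetric argument for $b=-1$ in which $G_{\VP,c}$ becomes the arc-reversal of $G_\VP$) shows that both the cycle count and the count of maximal increasing paths agree with those of $G_\VP$. Theorem \ref{thm:bg} then yields $\Pt_N(\VP_\pi,c)=(1/N)^{|N|-\rho(\pi)}$, where $\rho(\pi)$ is the number of cycles of $\pi$.

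Summing using the classical identity $\sum_{\pi\in S_k}x^{\rho(\pi)}=x(x+1)\cdots(x+k-1)$ then gives
\begin{equation}
\|\eta\|^2=\frac{N(N+1)\cdots(N+|N|-1)}{N^{|N|}},
\end{equation}
which for $N=-M$ with $M>0$ simplifies to $M!/M^M>0$. Hence $\eta\neq 0$ and $\|\NO_{b,n}\|\ge|N|$. The only substantive obstacle is establishing positivity of this norm: once one notices that fixing a single color and a single index collapses the multi-process picture to the one-process deformation $\phi_N$ of Bo{\.z}ejko and Gu{\c{t}}{\u{a}}, the computation reduces to a Stirling-type evaluation, and the positivity follows from the fact that the character $\phi_{-M}$ restricted to $S_M$ is an honest state.
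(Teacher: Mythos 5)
Your argument is correct, and for the boundedness half it is exactly the route the paper intends: Proposition \ref{prop:orthog} gives the orthogonal decomposition into the spaces $H_\bw(\HH,\basis)$ on which $\NO_{b,n}$ acts by the scalar $\bw_b(n)$, and Lemma \ref{lem:excl} (together with the vanishing of $H_\bw$ when $\bw_b(n)<0$) confines these scalars to $\{0,1,\ldots,|N|\}$, so $\|\NO_{b,n}\|\le|N|$. The paper states the equality $\|\NO_{b,n}\|=|N|$ without further comment, so your explicit attainment argument is a genuine addition: the vector $(a^*_{b,n})^{|N|}\Omega_N$ lies in the top eigenspace, and since the word is monochromatic the paper's remark after Proposition \ref{lem:cycles} (and, for $b=-1$, arc reversal, which preserves cycle counts and the number of maximal increasing paths) reduces $\Pt_N$ on the compatible pair partitions to the one-process formula of Theorem \ref{thm:bg}, giving $\|(a^*_{b,n})^{|N|}\Omega_N\|^2=N^{-|N|}\,N(N+1)\cdots(N+|N|-1)=|N|!/|N|^{|N|}>0$. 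One small imprecision: for an individual matching $\pi$ the exponent is the number of cycles of the pair partition $\VP_\pi$, which is the cycle count of $\tau\pi$ for the order-reversing involution $\tau(i)=|N|+1-i$ rather than of $\pi$ itself; since $\pi\mapsto\tau\pi$ is a bijection of $S_{|N|}$, your sum and the Stirling-type evaluation are unaffected.
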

Our proof of Lemma \ref{lem:excl} will make use of some basic combinatorics, which we now recall.
%The result may also be seen as a consequece of Proposition \ref{prop:commutation}, but we offer a separate proof for any insight it may provide into the combinatorics of the Hilbert space $\FS_N^\IndexSet(\HH)$.

\begin{notation}\label{not:stirling}
Denote by $|s(n, k)|$ the number of permutations in the symmetric group $S_n$ which can be written as the product of $k$ disjoint cycles.
\end{notation}
The numbers $|s(n, k)|$ are known as the unsigned Stirling number of the first kind.
It is well-known (c.f. \cite{Stanley}) that the unsigned Stirling numbers of the first kind satisfy the relation
\begin{equation}\label{eqn:stirid}
x(x+1)\cdots (x+n-1)=\sum_{k=0}^n |s(n,k)| x^k.
\end{equation}
\begin{proposition}\label{prop:sumNpow}
If $N\in\BN$ and $N<0$ then
\begin{equation}
\sum_{\sigma\in S_{|N|+1}}N^{c(\sigma)}=0
\end{equation} 
\end{proposition}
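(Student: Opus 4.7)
The plan is to apply the Stirling number identity \eqref{eqn:stirid} with $x = N$ and $n = |N|+1$, and observe that the left-hand side vanishes because one of its linear factors is zero.

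First I would rewrite the sum on the left of the desired identity by grouping permutations according to their cycle count. By Notation \ref{not:stirling}, the number of $\sigma \in S_{|N|+1}$ with exactly $k$ cycles is $|s(|N|+1, k)|$, so
\begin{equation}
\sum_{\sigma\in S_{|N|+1}}N^{c(\sigma)} = \sum_{k=0}^{|N|+1}|s(|N|+1,k)|\,N^{k}.
\end{equation}
Next, applying \eqref{eqn:stirid} with $x = N$ and $n = |N|+1$ yields
\begin{equation}
\sum_{k=0}^{|N|+1}|s(|N|+1,k)|\,N^{k} = N(N+1)(N+2)\cdots(N+|N|).
\end{equation}

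The key observation is that since $N < 0$, we have $|N| = -N$, so the factor $N + |N|$ on the right is equal to $0$. Hence the entire product vanishes, which gives the claim. The argument is entirely a one-line application of \eqref{eqn:stirid} once the sum is recognized as a polynomial evaluation; there is no substantive obstacle. (Implicitly $c(\sigma)$ here denotes the number of cycles of $\sigma$, consistent with the earlier notation $\gamma^{(m)}(\sigma)$ used in this section.)
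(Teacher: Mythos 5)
Your proposal is correct and is essentially the paper's own proof: the paper likewise groups permutations by cycle count, applies \eqref{eqn:stirid} with $x=N$ and $n=|N|+1$, and concludes that the product $N(N+1)\cdots(N+|N|)$ vanishes because the final factor $N+|N|$ is zero. Your explicit remark identifying $c(\sigma)$ with the cycle count is a sensible clarification but does not change the argument.
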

\begin{proof}
This follows immediately from \eqref{eqn:stirid}:
\begin{equation}
\sum_{\sigma\in S_{|N|+1}}N^{c(\sigma)}=\sum_{k=0}^{|N|+1}|s(|N|+1,k)| N^{k}=N(N+1)\cdots (N+(|N|+1)-1)=0.
\end{equation}
\end{proof}
To prove Lemma \ref{lem:excl}, we will need the following proposition, which is proven by applying the definitions.
\begin{proposition}\label{prop:match}
Suppose that $A, B\in \SP(\HH,\basis)$ are words of length $\ell_A$ and $\ell_B$, respectively.
Assume that $A$ can be expressed as a product
\begin{equation}
A=a_{b_1, i_1}^{e_1}\cdots a_{b_{\ell_A}, i_{\ell_A}}^{e_{\ell_A}}
\end{equation}
and that $B^*$ can be expressed as
\begin{equation}
B^*=a_{b_{-\ell_B}, i_{-\ell_B}}^{e_{-\ell_B}}\cdots a_{b_{-1}, i_{-1}^{e_{-1}}}
\end{equation}
If $(\VP,c)\in \Comp(B^*A)$ then for any $i\in\BN$, there is a unique subset $\MS^{+,b,i}_{A,B}(\VP,c)\subset [\ell_A]$ and a unique subset $\MS^{-,b,i}_{A,B}(\VP,c)\subset -[\ell_B]$ such that the following conditions are satisfied:
\begin{enumerate}
\item $|\MS^{+,b,i}_{A,B}(\VP,c)|=|\MS^{-,b,i}_{A,B}(\VP,c)|=\bw_b^A(i)$;\label{itm:matchcard}
\item If $k\in \MS^{+,b,i}_{A,B}(\VP,c)$ then $e_k=2$, $c(k)=b$, and $i_k=i$\label{itm:crcomp};
\item If $k\in \MS^{-,b,i}_{A,B}(\VP,c)$ then $e_k=1$, $c(k)=b$, and $i_k=i$\label{itm:ancomp};
\item If $k\in \MS^{+,b,i}_{A,B}(\VP,c)$ then $\pi_{\VP}(k)\in \MS^{-,b,i}_{A,B}(\VP,c)$\label{itm:crpair};
\item If $k\in \MS^{-,b,i}_{A,B}(\VP,c)$ then $\pi_{\VP}(k)\in \MS^{+,b,i}_{A,B}(\VP,c)$\label{itm:anpair}.
\end{enumerate}
Here $\pi_\VP$ is the permutation obtained by regarding the pairs of $\VP$ as transpositions.
\end{proposition}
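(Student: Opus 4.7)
The plan is to define the subsets $\MS^{+,b,i}_{A,B}(\VP,c)$ and $\MS^{-,b,i}_{A,B}(\VP,c)$ explicitly from the form demanded by conditions (\ref{itm:crcomp})--(\ref{itm:anpair}), then verify these conditions, and finally establish the cardinality condition (\ref{itm:matchcard}) by a counting argument, from which uniqueness also falls out. Concretely, I would take
\begin{equation*}
\MS^{+,b,i}_{A,B}(\VP,c) := \{k \in [\ell_A] : e_k = 2,\; c(k) = b,\; i_k = i,\; \pi_\VP(k) \in -[\ell_B]\}
\end{equation*}
and dually
\begin{equation*}
\MS^{-,b,i}_{A,B}(\VP,c) := \{k \in -[\ell_B] : e_k = 1,\; c(k) = b,\; i_k = i,\; \pi_\VP(k) \in [\ell_A]\}.
\end{equation*}
Conditions (\ref{itm:crcomp}) and (\ref{itm:ancomp}) hold by construction. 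For condition (\ref{itm:crpair}), if $k \in \MS^{+,b,i}_{A,B}(\VP,c)$ then $\pi_\VP(k) \in -[\ell_B]$, and applying Definition \ref{def:comp} to the pair in $\VP$ containing $k$ yields $e_{\pi_\VP(k)} = 1$, $c(\pi_\VP(k)) = b$, and $i_{\pi_\VP(k)} = i$, placing $\pi_\VP(k)$ in $\MS^{-,b,i}_{A,B}(\VP,c)$. Condition (\ref{itm:anpair}) is symmetric.

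For (\ref{itm:matchcard}) I would argue by tracking where pair partners must live. Every annihilation operator of color $b$ and index $i$ at a position $k \in [\ell_A]$ is the left point of a pair in $\VP$, so its partner lies at a strictly larger position; since every position of $-[\ell_B]$ precedes every position of $[\ell_A]$, this partner must itself lie in $[\ell_A]$. Hence all $\na_{b,i}(A)$ annihilation operators of this color and index in $A$ are paired internally with creation operators inside $A$, accounting for $\na_{b,i}(A)$ of the $\nc_{b,i}(A)$ creation operators of color $b$ and index $i$ in $A$. The remaining
\begin{equation*}
\nc_{b,i}(A) - \na_{b,i}(A) = \bw_b^A(i)
\end{equation*}
such creation operators must pair with annihilation operators at earlier positions, which lie in $-[\ell_B]$, and this gives $|\MS^{+,b,i}_{A,B}(\VP,c)| = \bw_b^A(i)$. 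The equality $|\MS^{-,b,i}_{A,B}(\VP,c)| = |\MS^{+,b,i}_{A,B}(\VP,c)|$ then follows because conditions (\ref{itm:crpair}) and (\ref{itm:anpair}) assert that $\pi_\VP$ restricts to a bijection between the two sets. Uniqueness is then automatic: any pair of subsets $(\MS, \MS')$ fulfilling (\ref{itm:crcomp})--(\ref{itm:anpair}) satisfies $\MS \subseteq \MS^{+,b,i}_{A,B}(\VP,c)$ and $\MS' \subseteq \MS^{-,b,i}_{A,B}(\VP,c)$ by their defining properties, and if (\ref{itm:matchcard}) also holds these inclusions are equalities.

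The main obstacle is essentially absent: the statement is combinatorial bookkeeping, and the only substantive ingredient is the asymmetry that every pair in $\VP$ has its annihilation operator at the smaller position and its creation operator at the larger one, combined with the fact that no position of $-[\ell_B]$ lies to the right of any position of $[\ell_A]$. These two observations together force the annihilation operators inside $A$ to pair only within $A$, which is what drives the cardinality computation and hence the whole proposition.
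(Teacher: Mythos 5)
Your proposal is correct: defining the two sets explicitly by the conditions and counting via the observation that an annihilation operator inside $A$ must be the left point of its pair, hence paired within $[\ell_A]$, is exactly the definition-chasing the paper has in mind, since it states the proposition is ``proven by applying the definitions'' and gives no further argument. Your uniqueness step (any admissible pair of sets is contained in the explicit ones, and equality follows from the cardinality condition) completes the bookkeeping cleanly.
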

\begin{remark}
The sets $\MS^{+,b,i}_{A,B}(\VP,c)$ and $\MS^{-,b,i}_{A,B}(\VP,c)$ may depend on the choice of $(\VP,c)\in\Comp(B^*A)$, but they are uniquely determined by this choice.
\end{remark}
\begin{proof}[Proof of Lemma \ref{lem:excl}]
It will suffice to consider the case $\bw_{b}(i)=|N|+1$, and we will further assume that $b=1$.
Consider a word $A\in\SP(\HH,\basis)$ containing $r\ge |N|+1$ creation operators $a_{b,i}^*$ and $r-|N|-1$ annihilation operators $a_{b,i}$, say
\begin{equation}
A=a_{b_1, i_1}^{e_1}\cdots a_{b_s, i_s}^{e_s}
\end{equation}
We need to show that ${\rho}_{N}(A^*A)=0$.
It will be convenient to view the word $A^*A$ as a product of operators whose terms are indexed by the set $J:=\{-s,-s+1,\ldots,-1,1,\ldots,s-1,s\}$.
Thus $b_k=b_{-k}$, $e_k\ne e_{-k}$, and $i_k=i_{-k}$ for $k\in J$.
By \eqref{eqn:fockcompat},
\begin{equation}\label{eqn:exclfock}
 \rho_N(A^*A)=\sum_{(\VP,c)\in\Comp(A^*A)} \Pt_N(\VP,c)
\end{equation}

For $(\VP,C)\in \Comp(A^*A)$, consider the sets $\MS^{+,b,i}_{A,A}(\VP,c)$ and $\MS^{-,b,i}_{A,A}(\VP,c)$ provided by Proposition \ref{prop:match}, and denote these simply by $\MS^+(\VP,c)$ and $\MS^-(\VP,c)$.
These sets have cardinality $\bw^A_b(i)=N+1$.
The $\IndexSet$-indexed pair partition $(\VP,c)$ can be  seen as a pair partition $\WP_{\VP,c}\in\Comp(A^*A|_{J\setminus (\MS^+(\VP,c)\cup \MS^-(\VP,c))})$ together with a bijection $\iota_{\VP,c}: \MS^+(\VP,c)\to \MS^-(\VP,c)$.

It will suffice to show that for any $F^+,F^-\subset J$ with $|F^+|=|F^-|=|N|+1$ and any $\WP\in\Comp(A^*A|_{J\setminus (F^+\cup F^-)})$,
\begin{equation}
\sum_{\substack{(\VP,c)\in\Comp(A^*A)\\ \WP_{\VP,c}=\WP}}\Pt_N(\VP,c)=0.\label{eqn:sumbij}
\end{equation}
If the sum is empty, there is nothing to show.
Otherwise, define a directed graph $\hat G_{F^-, F^+}$ whose vertices are the elements of the index set $J$, and whose arc set is
\begin{equation}
\left\{(k,k')^{c_{A^*A}(k)}:(k,k')\in\WP\right\}\cup \left\{(k,k')^{\bar c_{A^*A}(k,k')}:(k,k')\in \bar \VP_{A^*A}\right\},
\end{equation}
where $(k,k')^{(b)}$ is as in Notation \ref{not:inv}.
A vertex $k\in J\setminus (F^+\cup F^-)$ of $\hat G_{F^-, F^+}$ is the start point and end point of exactly one arc.
A vertex $k\in F^+$ is the start point of $1$ arc and is not the end point of any arc.
A vertex $k\in F^-$ is the end point of $1$ arc and is not the start point of any arc.
Therefore, each vertex $k\in F^+$ of $\hat G_{F^-, F^+}$ is the starting point of a maximal path which ends at some vertex $k'\in F^-$.
This gives a bijection $\epsilon:F^+\to F^-$.
(To clarify these notions, we consider a specific case, including diagrams in Example \ref{ex:excl}.)

A term in the sum in \eqref{eqn:sumbij} can be characterized by the bijection $\iota_{\VP,c}: F^+\to F^-$.
The graph $G_{\VP,c}$ can be formed from $\hat G_{F^-, F^+}$ by adding the edges arising from $\iota_{\VP,c}$.
The number of maximal increasing paths $m(G_{\VP,c})$ does not depend on the choice of $\iota_{\VP,c}$, and we denote the common value by $m$. 
Furthermore $\gamma(G_{\VP,c})=\gamma(\hat F)+\gamma(\iota_{\VP,c}\epsilon^{-1})$ where $\gamma(\iota_{\VP,c}\epsilon^{-1})$ is the number of cycles of $\iota_{\VP,c}\epsilon^{-1}$ as a permutation on $F^-$.
As $\iota_{\VP,c}$ ranges over all bijections $F^+\to F^-$, the permutation $\iota_{\VP,c}\epsilon^{-1}$ ranges over the symmetric group, whence
\begin{equation}
\begin{split}
\sum_{\substack{(\VP,c)\in\Comp(A^*A)\\ \WP_{\VP,c}=\WP \\ G^\pm(\VP,c)=F^\pm}}\Pt_N(\VP,c)&=\sum_{\sigma\in S_{N+1}}\left(\frac{1}{N}\right)^{m-(\gamma(\hat F)+\gamma(\sigma))}\\
&=\left(\frac{1}{N}\right)^{m-\gamma(\hat F)}\sum_{\sigma\in S_{N+1}}N^{\gamma(\sigma)}\\
&=0.
\end{split}
\end{equation}
\end{proof}

\begin{example}\label{ex:excl}
We consider a simple example, with a diagram, to clarify some of the ideas in the proof of Lemma \ref{lem:excl}.
Let $N=-1$, let $a=a_{1,1}$ and $a^*=a_{1,1}^*$ and define $A:=a^*aa^*a^*$ so that $\bw^A(1)=3-1=2>|N|=1$.
If $(\VP,c)\in\Comp(A^*A)$ then the sets $\MS_{A,A}^{+,1,1}(\VP,c)$ and $\MS_{A,A}^{-,1,1}(\VP,c)$ have cardinality $\bw^A(1)=2$.
As usual, we refer to these sets by $\MS^+(\VP,c)$ and $\MS^-(\VP,c)$.
Indexing the product on $[-4,4]\setminus\{0\}$, we will consider the $(\VP,c)\in\Comp(A^*A)$ having $\MS^+(\VP,c)=\{1,4\}$ and $\MS^-(\VP,c)=\{-3,-1\}$.
There are two such $(\VP,c)$, with pair partitions 
\begin{equation}
\VP_1=\{(-4, -2), (-3,-1),(-1,4), (2,3)\}\quad\text{and}\quad \VP_2=\{(-4, -2), (-3,4),(-1,1), (2,3)\}
\end{equation}
and with color functions $c_1$ and $c_2$ defined to be $1$ on all pairs of their respective pair partitions.

The graph $\hat G_{\{-3,-1\},\{1,4\}}$ is depicted in Figure \ref{fig:excl}.
This graph, whose name we abbreviate by $\hat G$, can be completed to either of the graphs $G_{(\VP_1,c_1)}$ or $G_{(\VP_2,c_2)}$ by adding the appropriate arcs.
The former arises from the map $\iota_1: \{1,4\}\to \{-3,-1\}$ given by $1\mapsto -3$ and $4\mapsto-1$ and the latter arises from $\iota_2$ with $4\mapsto -3$ and $1\mapsto-1$.
Following the maximal paths of the graph, one sees that the bijection $\epsilon$ is given by $4\mapsto -3$ and $1\mapsto -1$.
Thus, the bijection $\iota_2\epsilon^{-1}$ is the identity permutation on  $\{-3,-1\}$ and $\iota_1\epsilon^{-1}$ is the $2$-cycle on the set $\{-3, -1\}$.
Both of the graphs $G_{\VP_1,c_1}$ and $G_{\VP_2,c_2}$ have $4$ maximal increasing paths, and these graphs have $2$ and $3$ cycles, respectively.
The graph $\hat G$ has exactly $1$ cycle.
\begin{figure}
 \begin{tikzpicture}[shorten >=1pt,->]
  \tikzstyle{vertex}=[circle,fill=black!25,minimum size=14pt,inner sep=2pt]
  \foreach \x in {-4,-3,-2,-1,1,2,3,4} 
    \node[vertex] (\x) at (\x,0) {\x};
  \draw [->] (-4) to[out=45,in=135] (-2);
    \draw [->] (2) to[out=45,in=135] (3);
        \draw [->] (3) to[out=-135,in=-45] (2);
        \draw [->] (4) to[out=-135,in=-45] (-4);
        \draw [->] (1) to[out=-135,in=-45] (-1);
        \draw [->] (-2) to[out=-135,in=-45] (-3);
\end{tikzpicture}
\caption[An example of the graph used in the proof of Lemma \ref{lem:excl}]{The directed graph $\hat G_{\{-1,-3\},\{1,4\}}$ for the word considered in Example \ref{ex:excl}.}
\label{fig:excl}
\end{figure}
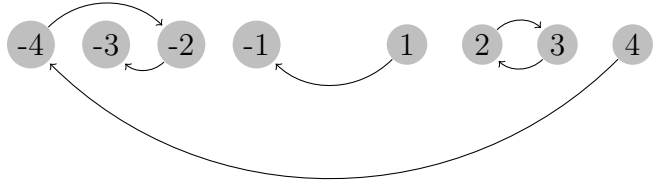
\end{example}

The following is a partial analog of Lemma 5.1 of \cite{BG}.
\begin{proposition}\label{prop:commutation}
Suppose that $A\in \SP_{\bw}(\HH,\basis)$ and $b\in\IndexSet$ with $\left|\bw_b\right|\ge \left|\bw_{-b}\right|$.
Then for $i\in\BN$,
\begin{equation}
a_{b,i}a_{b,i}^*A\Omega_N=\left(1+\frac{1}{N}\NO_{b,i}\right) A\Omega_N.
\end{equation}
\end{proposition}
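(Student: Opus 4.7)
My plan is to prove the identity by pairing against arbitrary $B\Omega_N$ and using the Fock-state formula~\eqref{eqn:fockcompat}. Proposition~\ref{prop:orthog} reduces us to $B\in\SP_\bw(\HH,\basis)$, so the problem becomes
\[
\sum_{(\VP,c)\in\Comp(B^*a_{b,i}a_{b,i}^*A)}\Pt_N(\VP,c)
\;=\;\Bigl(1+\tfrac{\bw_b(i)}{N}\Bigr)\!\!\sum_{(\VP',c')\in\Comp(B^*A)}\Pt_N(\VP',c').
\]
I will partition the left-hand sum according to the pair containing the inserted annihilation at position $|B^*|+1$. In \emph{Case~1} it pairs with the adjacent inserted creation at position $|B^*|+2$, and in \emph{Case~2} it pairs with an $a_{b,i}^*$ at some position $k$ in $A$, which forces the inserted creation to pair with some $a_{b,i}$ at a position $k'$ in $B^*$.

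In Case~1 I will set up a $\Pt_N$-preserving bijection with $\Comp(B^*A)$ by deleting the inserted pair. Since it has color $b$ with adjacent endpoints, this pair raises $\vl^b$ by $1$ only at positions $|B^*|+1$ and $|B^*|+2$, which form a singleton class under $\vcequiv$, and hence $\ZB$ swaps them. So in $G_{\VP,c}$ the insertion produces an isolated two-cycle (one $F$-arc and one $\bar F$-arc between the two new vertices), adding exactly one cycle and one maximal increasing path and leaving $m(G_{\VP,c})-\gamma(G_{\VP,c})$ unchanged relative to the corresponding pairing on $B^*A$. Thus Case~1 contributes $\rho_N(B^*A)$.

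For Case~2 the crucial combinatorial observation is that every $(\VP',c')\in\Comp(B^*A)$ contains exactly $\bw_b(i)$ \emph{cross pairs} of color $b$ and index $i$, i.e., pairs whose annihilation endpoint lies in $B^*$ and whose creation endpoint lies in $A$. This is immediate bookkeeping: annihilations in $A$ must pair with creations in $A$, leaving $\nc_{b,i}(A)-\na_{b,i}(A)=\bw_b(i)$ creations in $A$ forced to pair with annihilations in $B^*$. I will then exhibit a bijection between Case~2 pairings $(\VP,c)$ and pairs $((\VP',c'),(k',k_0))$ consisting of a compatible pairing of $B^*A$ together with a marked cross pair: given the data on the right, delete $(k',k_0)$ from $\VP'$ and insert the pairs $(k',|B^*|+2)$ and $(|B^*|+1,k_0+2)$, shifting the $A$-positions by $2$.

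The main technical step will be showing that this local operation multiplies $\Pt_N$ by exactly $1/N$. By Proposition~\ref{prop:domcomp}, the hypothesis $|\bw_b|\ge|\bw_{-b}|$ places both inserted positions in $\dom_{\VP,c}$, i.e., in the dominant regime. In $G_{\VP,c}$ the single arc from the cross pair is replaced by a detour through the two new vertices: two new $F$-arcs together with one new $\bar F$-arc joining the inserted vertices (this arc arising from $\ZB$ swapping them, by the same $\vl^b$-calculation as in Case~1). Using Propositions~\ref{prop:pprops}, \ref{prop:dom}, and \ref{prop:Zs}, one verifies that the $\bar F$-structure of $G_{\VP',c'}$ away from the insertion is preserved under the shift, while the cycle of $G_{\VP',c'}$ containing the cross pair is simply elongated by two vertices, gaining exactly one additional maximal increasing path (and one decreasing path) without any change in the total cycle count. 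Hence $m-\gamma$ increases by $1$, yielding the factor $1/N$. Summing over the $\bw_b(i)$ cross pairs available in each $(\VP',c')$ produces the Case~2 contribution $\tfrac{\bw_b(i)}{N}\rho_N(B^*A)$, and combining with Case~1 completes the proof. The principal obstacle is this last graph-theoretic verification, which requires careful tracking of how the $\bar F$-arcs are reorganized around the insertion and why the total cycle count is preserved; the dominance hypothesis is precisely what keeps both inserted positions in $\dom_{\VP,c}$ and makes the local picture tractable.
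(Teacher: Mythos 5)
Your proposal is correct and follows essentially the same route as the paper's proof: reduce to the matrix elements $\rho_N(B^*a_{b,i}a_{b,i}^*A)$, split according to whether the inserted annihilation at position $\ell_B+2$... more precisely at $\ell_B+1$ pairs with the adjacent inserted creation or with a creation in $A$, use dominance (via Proposition \ref{prop:domcomp}) to get $(\ell_B+1,\ell_B+2)\in\bar\VP^{(c)}$, and compare $m(G_{\VP,c})-\gamma(G_{\VP,c})$ with that of the contracted pairing of $B^*A$, with the cross-pair count $\bw_b^A(i)$ supplying the multiplicity. The only differences are cosmetic: you count cross pairs by direct bookkeeping where the paper invokes Proposition \ref{prop:match}, and both you and the paper leave the final graph-surgery verification at the level of a careful sketch.
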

\begin{proof}
It will suffice to show that if $B\in \SP_{\bw}(\HH,\basis)$ then
\begin{equation}\label{eqn:comstate}
\rho_N(B^*a_{b,i}a_{b,i}^*A)=\left(1+\frac{\bw_b^A(i)}{N}\right)\rho_N(B^*A).
\end{equation}
To save space, we define $X:=B^*a_{b,i}a_{b,i}^*A$.
We will assume that $b=1$ as the case $b=-1$ is similar.
Let $\ell_A$ be the length of the word $A$ and $\ell_B$ the length of the word $B$, so that $X$ is a word of length $\ell_A+\ell_B+2$.
It will be convenient to choose $J=[\ell_A+\ell_B+2]$ as our index set for the product $X=B^*a_{b,i}a_{b,i}^*A$ and $K=J\setminus\{\ell_B+1,\ell_B+2\}$ as the index set for the product $B^*A$.
These choices allow us to write the products $X$ and $B^*A$ as
\begin{equation}
X=\prod_{k\in J} a_{b_k,i_k}^{e_k}\quad\text{and}\quad B^*A=\prod_{k\in K} a_{b_k,i_k}^{e_k}
\end{equation}
for some choices of $b_k$, $i_k$, and $e_k$.

%First suppose that $\left|\bw_b^A\right|\ge \left|\bw_{-b}^A\right|$.
Using the assumption that $\left|\bw_b\right|\ge \left|\bw_{-b}\right|$, if $(\VP,c)\in\Comp(X)$ then Proposition \ref{prop:domcomp} implies that $\ell_B+1\in\dom_{\VP,c}$, which means that $\ZB_{\VP,c}(\ell_B+1)=\ell_B+2$, whence $(\ell_B+1,\ell_B+2)\in\bar{\VP}^{(c)}$.
Then
\begin{equation}
\rho_N(X)=\sum_{\substack{(\VP,c)\in \Comp(X)\\ (\ell_B+1,\ell_B+2)\in \VP }}\Pt_N(\VP,c)+\sum_{\substack{(\VP,c)\in \Comp(X)\\ (\ell_B+1,\ell_B+2)\not\in \VP }}\Pt_N(\VP,c).
\end{equation}
If $(\VP,c)\in \Comp(X)$ with $ (\ell_B+1,\ell_B+2)\in \VP$ then $(\WP, d)\in \Comp(B^*A)$, where $\WP=\VP\setminus\{(\ell_B+1,\ell_B+2)\}$ and $d$ is the restriction $c|_{\WP}$.
Furthermore, $\bar \WP^{(d)}=\bar{\VP}^{(c)}\setminus \{(\ell_B+1,\ell_B+2)\}$ and $\bar d=\bar c|_{\bar{\WP}}$.
Thus the graph $G_{\VP,c}$ can be formed from the graph $G_{\WP,d}$ by adding the two vertices $(\ell_B+1,\ell_B+2)$ and $1$ arc between these vertices in each direction.
This means that $\gamma(G_{\VP,c})=\gamma(G_{\WP,d})+1$ and $m(G_{\VP,c})=m(G_{\WP,d})+1$, whence $\Pt_N(\VP,c)=\Pt_N(\WP,d)$, and 
\begin{equation}\label{eqn:comsum2pair}
\sum_{\substack{(\VP,c)\in \Comp(X)\\ (\ell_B+1,\ell_B+2)\in \VP }}\Pt_N(\VP,c)=\sum_{(\WP,d)\in \Comp(B^*A)}\Pt_N(\WP,d)= \rho_N(B^*A).
\end{equation}

Now suppose that $(\VP,c)\in \Comp(X)$ with $ (\ell_B+1,\ell_B+2)\not\in \VP$.
We will need the sets $\MS_{a_{b,i}^*A,a_{b,i}^*B}^{+,b, i}(\VP,c)$ and $\MS_{a_{b,i}^*A,a_{b,i}^*B}^{-,b, i}(\VP,c)$ given by Proposition \ref{prop:match}, and we denote them simply by $\MS^{+}(\VP,c)$ and $\MS^{-}(\VP,c)$.
Since we have chosen a different index set than in the statement of Proposition \ref{prop:match}, we have in this case $\MS^+(\VP,c)\subset [\ell_B+2,\ell_A+\ell_B+2]$ and $\MS^-(\VP,c)\subset [\ell_B+1]$.

The assumption $ (\ell_B+1,\ell_B+2)\not\in \VP$ implies that $\pi_\VP(\ell_B+2)\in \MS^+(\VP,c)$ and $\pi_\VP(\ell_B+1)\in \MS^-(\VP,c)$.
In particular, $\pi_\VP(\ell_B+2)\in \MS^-(\VP,c)$ and $\pi_\VP(\ell_B+1)\in \MS^+(\VP,c)$.
If $\WP=\VP\setminus \{(\pi_\VP(\ell_B+2),\ell_B+2), (\ell_B+1, \pi_\VP(\ell_B+1))\}\cup \{(\pi_\VP(\ell_B+2),\pi_\VP(\ell_B+1))\}$ and $d:\WP\to\IndexSet$ is given by $d(p)=c(p)$ for $p\in\VP$ and $d(\pi_\VP(\ell_B+2),\pi_\VP(\ell_B+1))=c(\ell_B+2)=c(\ell_B+1)$ then $(\WP,d)\in \Comp(B^*A)$.
The graphs $G_{\VP,c}$ and $G_{\WP,d}$ have the same number of cycles, but $G_{\VP,c}$ has one more maximal increasing path than $G_{\WP,d}$, whence $\Pt_N(\WP,c)=\frac{1}{N}\Pt_N(\WP,d)$.

The correspondence $(\VP,c)\mapsto(\WP,d)$ is not injective.
Given $(\WP,d)\in \Comp(B^*A)$, one can replace any of the $\bw_1^{(A)}(i)$ pairs $(k,k')$ with $k\in \MS^-(\VP,c)$ and $k'\in \MS^+(\VP,c)$  with the pairs $(k,\ell_B+2)$ and $(\ell_A+2, k')$ to get an element of $\Comp(X)$.
We have thus shown that 
\begin{equation}\label{eqn:comsum2npair}
\sum_{\substack{(\VP,c)\in \Comp(X)\\ (\ell_B+1,\ell_B+2)\not\in \VP }}\Pt_N(\VP,c)=\bw_1^{(A)}(i)\sum_{(\WP,d)\in \Comp(B^*A)}\frac{1}{N}\Pt_N(\WP,d)=\frac{\bw_1^{(A)}(i)}{N} \rho_N(B^*A).
\end{equation}
This proves \eqref{eqn:comstate}. %
\end{proof}

\begin{lemma}
For all $b\in\IndexSet$ and all $i\in\BN$, the creation operator $a_{b,i}^*$ is bounded.
\end{lemma}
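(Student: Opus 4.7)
The plan is to exploit the orthogonal weight decomposition of $\FS^\IndexSet_N(\HH)$ and analyze $a_{b,i}^*$ on each summand, using Proposition~\ref{prop:commutation} when its hypothesis holds and a parallel combinatorial argument otherwise.

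By Proposition~\ref{prop:orthog}, $\FS^\IndexSet_N(\HH)$ decomposes as the orthogonal direct sum $\bigoplus_\bw H_\bw(\HH,\basis)$, and $a_{b,i}^*$ carries $H_\bw$ into $H_{\bw'}$, where $\bw'$ agrees with $\bw$ except that $\bw'_b(i) = \bw_b(i)+1$. Since images from distinct source weights remain mutually orthogonal, it suffices to produce a bound on $\|a_{b,i}^*|_{H_\bw}\|$ which is uniform in $\bw$. For $\psi \in H_\bw$, I would rewrite $\|a_{b,i}^*\psi\|^2 = \langle \psi, a_{b,i}a_{b,i}^*\psi\rangle$ and analyze $a_{b,i}a_{b,i}^*$ on $H_\bw$ by splitting on whether $|\bw_b| \ge |\bw_{-b}|$.

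When $|\bw_b| \ge |\bw_{-b}|$, Proposition~\ref{prop:commutation} gives $a_{b,i}a_{b,i}^*|_{H_\bw} = \bigl(1 + \bw_b(i)/N\bigr)\mathrm{id}_{H_\bw}$. Since $N < 0$ and Lemma~\ref{lem:excl} forces $\bw_b(i) \le |N|$, this scalar lies in $[0,1]$, and hence $\|a_{b,i}^*\psi\|^2 \le \|\psi\|^2$. When $|\bw_b| < |\bw_{-b}|$, the hypothesis of Proposition~\ref{prop:commutation} fails, so I would mimic its combinatorial proof in the subordinate setting. For $A,B\in\SP_\bw$, I would expand $\rho_N(B^* a_{b,i}a_{b,i}^*A) = \sum_{(\VP,c)\in\Comp(B^* a_{b,i}a_{b,i}^* A)} \Pt_N(\VP,c)$ and split the sum according to whether $(\ell_B+1,\ell_B+2)\in\VP$. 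Proposition~\ref{prop:domcomp} places both middle positions into $\subo_{\VP,c}$, so their $\ZB_{\VP,c}$-partners lie outside the interval $[\ell_B+1,\ell_B+2]$; using Propositions~\ref{prop:sub} and~\ref{prop:Zs} to track the resulting graph surgery in $G_{\VP,c}$, one obtains an identity that, together with Lemma~\ref{lem:excl}, bounds $\rho_N(B^* a_{b,i}a_{b,i}^*A)$ by a constant multiple of $\rho_N(B^*A)$, with constant depending only on $N$ and $\bw_b(i) \le |N|$.

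The principal obstacle is the subordinate case. In the dominant setting, the ``case B'' configurations---those in which $\ell_B+1$ is paired with some point outside $\{\ell_B+2\}$---collapse to $(\bw_b(i)/N)\rho_N(B^*A)$ via a transparent bijection of broken-graph configurations. In the subordinate regime this bijection is more delicate, because removing (or merging around) the two subordinate middle positions short-circuits a longer segment of the cycle of $G_{\VP,c}$ passing through them, so one must carefully account both for the unchanged number of cycles and for the change in the number of maximal increasing paths before invoking Lemma~\ref{lem:excl} to cap the resulting scalar. Once this bookkeeping is carried out, combining with the dominant case yields a uniform bound on $\|a_{b,i}^*|_{H_\bw}\|$, completing the proof.
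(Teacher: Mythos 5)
Your reduction to the weight blocks $H_\bw(\HH,\basis)$, and your treatment of the dominant blocks ($|\bw_b|\ge|\bw_{-b}|$) via Proposition \ref{prop:commutation} together with Lemma \ref{lem:excl}, coincide with the paper's argument. The issue is the subordinate blocks, which are the real content of the lemma, and there your text stops at a plan. You propose to expand $\rho_N(B^*a_{b,i}a_{b,i}^*A)$, split on whether $(\ell_B+1,\ell_B+2)\in\VP$, and then ``carry out the bookkeeping,'' but you never state the identity, or even the constant, that this is supposed to produce, and the bookkeeping is not routine: with your word order the two inserted operators may or may not be $\VP$-paired, and in the unpaired case removing them forces you to re-pair their $\VP$-partners (one inside $B^*$, one inside $A$) and to re-match their $\bar\VP^{(c)}$-partners $u<\ell_B+1$ and $v>\ell_B+2$, tracking how $\vls_{\VP,c}$, the relation $\vcequiv$, and $\ZB_{\WP,d}$ change, and then computing the effect on both the cycle count and the number of maximal increasing paths of $G_{\VP,c}$ versus $G_{\WP,d}$. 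Note also that on subordinate blocks $a_{b,i}a_{b,i}^*$ is \emph{not} $1+\tfrac1N\NO_{b,i}$, so no shortcut through Proposition \ref{prop:commutation} is available. Since none of this is carried out, the proof as written has a genuine gap exactly at its crux.

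The paper sidesteps the two-case analysis by bounding the adjoint instead: on the subordinate part it estimates $\rho_N(B^*a_{b,i}^*a_{b,i}A)$, i.e.\ the annihilation operator restricted to the subspace with $|\bw_b|\le|\bw_{-b}|$. With this order the inserted pair can never lie in $\VP$ (its left member is a creation operator), and since both inserted points lie in $\subo_{\VP,c}$ they are forced to be $\bar\VP^{(c)}$-partners of each other; so there is a single case, in which the increasing path $\pi_{\VP}(\ell_B+1)\to\ell_B+1\to\ell_B+2\to\pi_{\VP}(\ell_B+2)$ is contracted to one arc. This preserves both the number of cycles and the number of maximal increasing paths, the correspondence $(\VP,c)\mapsto(\WP,d)$ is $\bw_b(i)$-to-one, and Lemma \ref{lem:excl} caps $\bw_b(i)$ by $|N|$, giving $\rho_N(B^*a_{b,i}^*a_{b,i}A)\le|N|$ and hence a bound of order $\sqrt{|N|}$ for the creation operator on that part. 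If you wish to keep your route of analyzing $a_{b,i}a_{b,i}^*$ directly on subordinate blocks, you must actually prove the corresponding combinatorial identity there; otherwise, switching to the adjoint as in the paper closes the gap with far less bookkeeping.
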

\begin{proof}
Let
\begin{equation}
\FS_{<} = \bigoplus_{|\bw_b|< |\bw_{-b}|} H_{\bw}(\HH,\basis)\quad\text{and}\quad \FS_{\le} = \bigoplus_{|\bw_b|\le |\bw_{-b}|} H_{\bw}(\HH,\basis),
\end{equation}
and define $\quad \FS_{\ge}$ and $\FS_{>}$ analogously.
Then we have two decompositions of $\FS_N^\IndexSet(\HH)$:
\begin{equation}
\FS_N^\IndexSet(\HH) = \FS_{\le}\oplus \FS_>=\FS_<\oplus\FS_\ge.
\end{equation}
Using these decompositions, we can write a creation operator $a_{b,i}^*$ as
\begin{equation}
a_{b,i}^* = a_{b,i,<}^*\oplus a_{b,i,\ge}^*
\end{equation}
where $a_{b,i,<}^*: \FS_<\to \FS_\le$ and $a_{b,i,<}^*: \FS_\ge\to \FS_>$ are the restrictions of $a_{b,i}^*$.
It will suffice to show that $a_{b,i,<}^*$ and $a_{b,i,\ge}^*$ are bounded operators.

Boundedness of the operator $a_{b,i,\ge}^*$ is an immediate consequence of Proposition \ref{prop:commutation}, so we need only show that $a_{b,i,<}^*$ is bounded.
This will follow from boundedness of its adjoint, which we denote by $a_{b,i,<}$.
We will find an upper bound for
\begin{equation}
 \rho_N\left(B^*a_{b,i}^*a_{b,i}A\right)
\end{equation}
with $A,B\in \SP_\bw(\HH,\basis)$ having norm $1$ and $|\bw_b|\le |\bw_{-b}|$.
By Lemma \ref{lem:excl}, we can assume that $0\le \bw_b(i)\le |N|$.

Denote by $\ell_A$ the length of the word $A$ and by $\ell_B$ the length of the word $B$.
Let $X=B^*a_{b,i}a_{b,i}^*A$ so that $X$ has length $\ell_A+\ell_B+2$.
Index the product $X$ on the set $J=[\ell_A+\ell_B+2]$ and let $K=J\setminus\{\ell_B+1,\ell_B+2\}$ as the index set for the product $B^*A$.
These choices permit us to write the products $X$ and $B^*A$ as
\begin{equation}
X=\prod_{k\in J} a_{b_k,i_k}^{e_k}\quad\text{and}\quad B^*A=\prod_{k\in K} a_{b_k,i_k}^{e_k}
\end{equation}
for some choices of $b_k$, $i_k$, and $e_k$.

Suppose that $(\VP,c)\in\Comp(X)$.
We will assume here that $b=1$, as the case $b=-1$ is very similar.
By the assumption $|\bw_b|\le |\bw_{-b}|$ and Proposition \ref{prop:domcomp}, $\ell_B+1, \ell_B+2\in\subo_{\VP,c}$.
Thus $(\ell_B+1,\ell_B+2)\in (\bar{\VP},\bar c)$.
Letting $\pi_{\VP}$ be the permutation arrived at by treating $\VP$ as a product of transpositions, $\pi_{\VP}(\ell_B+1)\in [\ell_B]$ and $\pi_{\VP}(\ell_B+2)\in [\ell_B+3, \ell_A+\ell_B+2]$,.
Thus, there exists an increasing path in $G_{\VP,c}$, $\pi_{\VP}(\ell_B+1)\to \ell_B+1\to \ell_B+2\to \pi_{\VP}(\ell_B+2)$.
Replacing this path with a single edge, $(\pi_{\VP}(\ell_B+1), \pi_{\VP}(\ell_B+2))$ gives the graph $G_{\WP,d}$ for $\WP=\VP\setminus \{(\ell_B+1,\ell_B+2)\}\cup (\pi_{\VP}(\ell_B+1), \pi_{\VP}(\ell_B+2))$ and $d(p)=c(p)$ for $p\in \VP$ and $d(\ell_B+1,\ell_B+2)=c(\pi_{\VP}(\ell_B+1), \ell_B+1)$.
The graphs $G_{\WP,d}$ and $G_{\VP,c}$ have the same numbers of cycles and maximal increasing paths, and the correspondence $(\VP,c)\mapsto (\WP,d)$ is $\bw_b(i)$-to-$1$.
Since $0\le \bw_b(i)<|N|$, it follows that $\rho_N\left(B^*a_{b,i}^*a_{b,i}A\right)\le |N|$, whence $\left\|a_{b,i,<}^*\right\|<\sqrt{|N|}$.
\end{proof}

\begin{proposition}\label{prop:wlim}
For $M\in\BO(\FS_N^\IndexSet(\HH))$, $b\in\IndexSet$ and $n\in\BN$ define
\begin{equation}
\Phi_{b,n}(M):=\omega_{b,2n}\cdots \omega_{b,n+1}M\omega_{b,n+1}\cdots \omega_{b,2n}.
\end{equation}
The following limiting relations hold:
\begin{enumerate}
\item $\wlim_{n\to\infty} \Phi_{b,n}(a_{b,i}^*a_{b,i}^*) = 0$; \label{itm:2cr}
\item $\wlim_{n\to\infty} \Phi_{b,n}(a_{b,i}a_{b,i}) = 0$; \label{itm:2an}
\item $\wlim_{n\to\infty} \Phi_{b,n} (a_{b,i}a_{b,i}^*) =1+\frac{1}{N}\NO_{b,i}; $\label{itm:ancr}
\item $\wlim_{n\to\infty} \Phi_{b,n} (a_{b,i}^*a_{b,i}) =\frac{1}{N^2} \NO_{b,i} $;\label{itm:cran}
\item $\wlim_{n\to\infty} \Phi_{b,n} (1) = 1$;\label{itm:one}
\item $\wlim_{n\to\infty} \Phi_{b,n}(\NO_{b,i}) = \NO_{b,i}$.\label{itm:no}
\end{enumerate}
\end{proposition}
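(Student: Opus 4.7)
The plan is to verify each limit on the dense family of vectors $\{A\Omega_N : A \in \SP(\HH, \basis)\}$, appealing to the uniform boundedness of $\Phi_{b,n}(M)$ (a consequence of the boundedness of each $\omega_{b,k}$ and of $\NO_{b,i}$ by Corollary~\ref{cor:numbounded}). The key preliminary observation is that $a_{b,k}(A\Omega_N) = 0$ whenever the index $k$ exceeds all indices appearing in $A$: the pair partition expansion of $\rho_N(X^* a_{b,k} A)$ vanishes for every $X$ because $a_{b,k}$ has no creation of the same index to pair with on its right. Applying this inductively to the annihilation parts of $\omega_{b,2n}, \ldots, \omega_{b,n+1}$ (each applied to a vector whose fresh index has not yet appeared), and using self-adjointness of the $\omega_{b,k}$, yields the reduction
\begin{equation*}
\langle B\Omega_N, \Phi_{b,n}(M) A\Omega_N\rangle = \langle A_n B\Omega_N, M A_n A\Omega_N\rangle, \qquad A_n := a^*_{b,n+1} \cdots a^*_{b,2n},
\end{equation*}
valid for $n$ large enough that $\{n+1, \ldots, 2n\}$ is disjoint from the indices of $A$ and $B$.

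Items~\ref{itm:one}, \ref{itm:ancr} and \ref{itm:no} follow from this reduction and Proposition~\ref{prop:commutation}. For item~\ref{itm:one}, the right-hand side is $\rho_N(B^* A_n^* A_n A)$; peeling off the innermost pair $a_{b,k} a^*_{b,k}$ (for $k = n+1, \ldots, 2n$ in turn), Proposition~\ref{prop:commutation} gives $a_{b,k} a^*_{b,k}$ acting as $1 + \NO_{b,k}/N$ on the intermediate vector, whose $\bw_b(k) = 0$ (fresh index), so the pair collapses to the identity, and the product reduces to $\rho_N(B^* A)$. For item~\ref{itm:ancr}, Proposition~\ref{prop:commutation} applied to $a_{b,i} a^*_{b,i}$ gives $(1 + \bw^A_b(i)/N) A_n A\Omega_N$ (since $A_n$ does not involve index $i$), and item~\ref{itm:one} handles the remaining factors. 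For item~\ref{itm:no}, $A_n A\Omega_N$ is an eigenvector of $\NO_{b,i}$ with eigenvalue $\bw^A_b(i)$, so again item~\ref{itm:one} finishes.

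Items~\ref{itm:2cr}, \ref{itm:2an}, and \ref{itm:cran} require direct analysis of the pair partition expansion $\rho_N(B^* A_n^* M A_n A) = \sum_{(\VP, c) \in \Comp(\cdot)} \Pt_N(\VP, c)$. For large $n$, each of the $n$ pairs of $\omega$ factors in the middle is forced by compatibility to pair within itself (annihilation on the left, creation on the right), producing $n$ nested ``bridge'' pairs, while the two operators of the central $M$ must pair with operators of index $i$ residing in $B^*$ or $A$. For items~\ref{itm:2cr} and \ref{itm:2an}, tracing through $G_{\VP, c}$ shows that the chosen central pairings merge the bridges into a single long cycle of length linear in $n$; the resulting $m(G_{\VP, c}) - \gamma(G_{\VP, c})$ grows without bound, so each $\Pt_N(\VP, c) = (1/N)^{m - \gamma}$ tends to zero, and the finite sum vanishes. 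For item~\ref{itm:cran}, one bridge is instead absorbed into a length-$6$ cycle contributing a factor $1/N^2$, while the remaining $n - 1$ bridges form $2$-cycles of unit contribution; summing over the available pairing choices and invoking Proposition~\ref{prop:orthog} to enforce $\bw$-matching gives the limit $\bw^A_b(i)/N^2 \cdot \rho_N(B^* A) = \langle B\Omega_N, (\NO_{b,i}/N^2) A\Omega_N\rangle$.

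The principal obstacle will be the graph combinatorics needed for items~\ref{itm:2cr}, \ref{itm:2an}, and \ref{itm:cran}: one must carefully track how the cycles of $G_{\VP, c}$ reshape under the addition of bridge pairs together with the pairings involving the central $M$, and in particular establish the linear growth of $m - \gamma$ in the first two cases and isolate the length-$6$ absorbing cycle contribution in the third.
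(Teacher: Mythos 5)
Your reduction of $\langle B\Omega_N,\Phi_{b,n}(M)A\Omega_N\rangle$ to $\langle A_nB\Omega_N,\,M A_nA\Omega_N\rangle$ is correct and is essentially what the paper uses implicitly, and your treatment of items \ref{itm:one}, \ref{itm:ancr}, \ref{itm:no} (and of \ref{itm:2an} via \ref{itm:2cr}) follows the paper's route for the parts it calls straightforward. Two caveats, though. Your peeling for item \ref{itm:one} invokes Proposition \ref{prop:commutation} on the intermediate vectors $a^*_{b,k+1}\cdots a^*_{b,2n}A\Omega_N$, but that proposition carries the hypothesis $\left|\bw_b\right|\ge\left|\bw_{-b}\right|$, which fails for the innermost peels when $A$ is $(-b)$-heavy. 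The needed collapse $a_{b,k}a^*_{b,k}\eta=\eta$ for a fresh index $k$ is still true --- in the subordinate regime the transition map $j_b$ is the identity embedding, so $j_b^*j_b=1$ just as in the dominant regime --- but that requires a short supplementary argument you do not give. Also, the claimed uniform bound on $\|\Phi_{b,n}(M)\|$ does not follow merely from boundedness of each $\omega_{b,k}$ and of $\NO_{b,i}$, since the norms of the $2n$ factors multiply; the paper glosses over this as well, so I only flag it.

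The genuine gap is exactly where you defer to ``the principal obstacle'': items \ref{itm:2cr}, \ref{itm:2an}, \ref{itm:cran} are the substance of the proposition, and your outline asserts rather than establishes the graph-theoretic facts. For item \ref{itm:2cr} the paper shows, using Proposition \ref{prop:domcomp} and the definition of $\ZB_{\VP,c}$, that any compatible $(\VP,c)$ contains the forced bridge pairs together with a chain of $\vcequiv$-relations linking consecutive bridges; this forces $G_{\VP,c}$ to contain a single cycle in which each bridge arc is a separate maximal increasing path, so that once $n$ is large the cycle has at least $r$ maximal increasing paths, giving $|\Pt_N(\VP,c)|\le |N|^{1-r}$, and since $|\Comp(X_n)|$ is bounded independently of $n$ the matrix elements vanish. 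For item \ref{itm:cran} the paper builds an explicit correspondence $(\VP,c)\mapsto(\WP,d)\in\Comp(B^*A)$, checking that dominant bridges split off as two-cycles while subordinate bridges splice into existing cycles without changing $m-\gamma$, that the cycle through the two external partners $k^\pm$ loses exactly two maximal increasing paths (the $1/N^2$), and that the correspondence is $\bw^A_b(i)$-to-one. Your ``length-$6$ cycle'' picture is not accurate in general (the relevant cycle also carries the rest of the cycle through $k^\pm$ in $G_{\WP,d}$ and any spliced subordinate bridges), and the multiplicity $\bw^A_b(i)$ comes from this counting, not from Proposition \ref{prop:orthog}. Until these verifications are carried out, the proposal is an outline of the paper's argument rather than a proof.
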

\begin{proof}
Items \ref{itm:one} and \ref{itm:no} are straightforward, and item \ref{itm:ancr} follows from Proposition \ref{prop:commutation} together with items \ref{itm:one} and \ref{itm:no}.
Item \ref{itm:2an} will follow immediately from \ref{itm:2cr} and continuity of the map $X\mapsto X^*$ in the weak operator topology.
It will therefore suffice to prove items \ref{itm:2cr} and \ref{itm:cran}.
We will assume that $b=1$ as the case $b=-1$ is similar.

For the proof of \ref{itm:2cr},  fix words $A,B\in \SP(\HH,\basis)$, and let $\ell_A$ be the length of the word $A$ and $\ell_B$ the length of the word $B$.
We index the terms in
\begin{equation}
X_n:=B^*a_{b,2n}\cdots a_{b,n+1}a_{b,i}^*a_{b,i}^*a_{b,n+1}^*\cdots a_{b,2n}^*A,
\end{equation}
considered as a product of the operators $a_{b,k}^e$, with the set
\begin{equation}\label{eqn:Jindexdef}
J_n:=[-\ell_B-n-1, \ell_A+n+1]\setminus\{0\}.
\end{equation}
We write the word $X_n$ as a product
\begin{equation}
X_n:=\prod_{k\in J_n} a_{b_k,i_k}^{e_k}.
\end{equation}
By the definition of $X_n$, $b_k=1$ for $k\in [-n-1,n+1]\setminus\{0\}$, $e_k=1$ for $k\in -[2,n+1]$, $e_k=2$ for $k\in [n+1]\cup\{-1\}$ and $i_k=n+|k|-1$ for $k\in [-n+1,n+1]\setminus \{-1,0,1\}$.
Moreover $i_1=i_{-1}=i$.

Since for $k\in[2,n+1]$, $k$ and $-k$ are the only indices for which  an operator $a_{n+k-1}^e$ appears in the product $X_n$, if  $(\VP,c)\in\Comp(X_n)$ then $(k,-k)\in\VP$ for $k\in [2,n]$.
Assume that $n$ is large enough that $n+|\bw^{A}_{1}|-|\bw^{A}_{-1}|>2r$ for some fixed $r\in\BN$.
Together with Proposition \ref{prop:domcomp}, this condition ensures that $k\in\dom_{\VP,c}$ for all $k\in [2r]$.
One can check that
\begin{equation}
-1\vcequiv-2, \ 1\vcequiv -3,\  2\vcequiv -4,\cdots, 2r\vcequiv -2r-2,
 \end{equation}
 whence $(-2,-1), (-3,1), (-4,2),\cdots, (-r-2,r)\in\VP$.
 This means that the graph $G_{\VP,c}$ has a path 
 \begin{equation}
-2\to 2\to -4\to 4 \to -6\to 6\to \cdots \to -2r\to 2r.
 \end{equation}
 Each arc $(-k,k)$ for $k\in [r]$ is a maximal increasing path, so the cycle containing this path has at least $r$ maximal increasing paths, whence $\Pt_N(\VP,c)\le N^{1-r}$.
 Since the cardinality $\left|\Comp(X_n)\right|$ does not depend on $n$, it follows that
 \begin{equation}
  \rho_{N}(X_n)=\sum_{(\VP,c)\in\Comp(X_n)}\Pt_N(\VP,c)\le CN^{1-r}
 \end{equation}
 for some constant $C$, whence $\Phi_n(a_{b,i}^*a_{b,i}^*)\to 0$ weakly.
 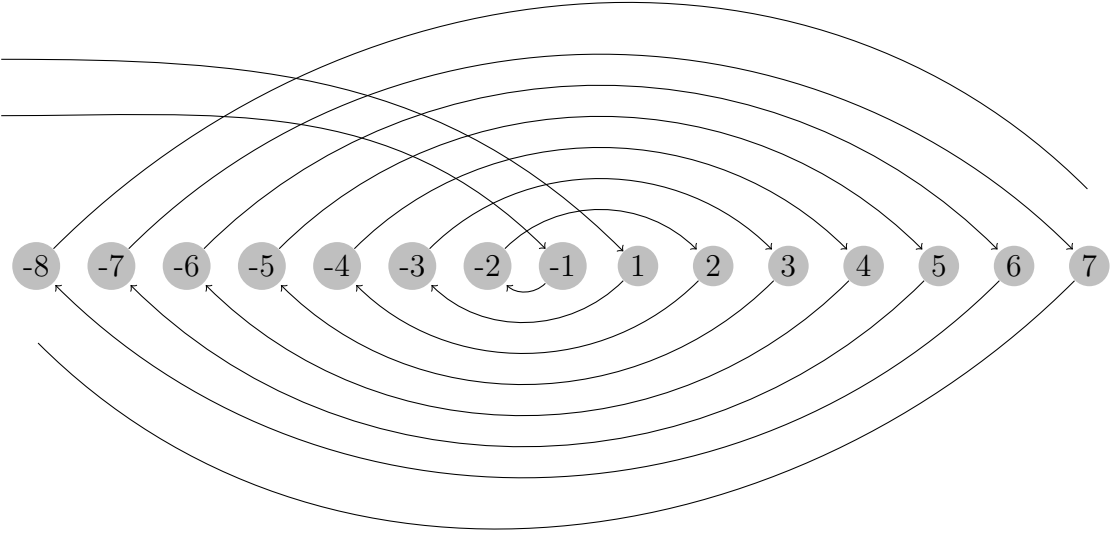
\begin{figure}
 \label{fig:2cr}
  \begin{tikzpicture}[shorten >=1pt,->]
  \tikzstyle{vertex}=[circle,fill=black!25,minimum size=14pt,inner sep=2pt]
  \foreach \x in {-8,-7,-6,-5,-4,-3,-2,-1} 
    \node[vertex] (\x) at (\x,0) {\x};
    \foreach \x in {1,2,3,4,5,6,7} 
      \node[vertex] (\x) at (\x-1,0) {\x};
  \draw [->] (-2) to[out=45,in=135] (2);
\draw [->] (-3) to[out=45,in=135] (3);
\draw [->] (-4) to[out=45,in=135] (4);
\draw [->] (-5) to[out=45,in=135] (5);
\draw [->] (-6) to[out=45,in=135] (6);
\draw [->] (-7) to[out=45,in=135] (7);
\draw [-] (-8) to[out=45,in=135] (6,1);
\draw [->] (-1) to[out=-135,in=-45] (-2);
\draw [->] (1) to[out=-135,in=-45] (-3);
\draw [->] (2) to[out=-135,in=-45] (-4);
\draw [->] (3) to[out=-135,in=-45] (-5);
\draw [->] (4) to[out=-135,in=-45] (-6);
\draw [->] (5) to[out=-135,in=-45] (-7);
\draw [->] (6) to[out=-135,in=-45] (-8);
\draw [-] (7) to[out=-135,in=-45] (-8,-1);
\draw [<-] (-1) to[out=135,in=0] (-8.5,2);
\draw [<-] (1) to[out=135,in=0] (-8.5,2.75);
\end{tikzpicture}
 \caption[A diagram for the proof of Proposition \ref{prop:wlim}]{Part of the directed graph $\hat G_{\VP,c}$ considered in the proof of item \ref{itm:2cr} of \ref{prop:wlim}.}
 \end{figure}

We now move on to proving item \ref{itm:cran}.
Fix  words $A,B\in \SP(\HH, \basis)$ of length $\ell_A$ and $\ell_B$ with $|\bw^{A}|=|\bw^{B}|$.
It will suffice to show that 
\begin{equation}\label{eqn:wlimstate}
{\rho}_N(B^*a_{b,2n}\cdots a_{b,n+1} a_{b,i}^*a_{b,i}a_{b,n+1}^*\cdots a_{b,2n}^*A)=\frac{\bw^{A}_1(i)}{N^2} {\rho}_N(B^*A)
\end{equation}
for $n$ sufficiently large.

We assume that $n$ is large enough that $n+|\bw^{A}_{1}|> |\bw^{A}_{-1}|$ and that no $a_{b,m}^e$ with $m>n$ appears in either word $A$ or $B$.
We let
\begin{equation}
X_n:=B^*a_{b,2n}\cdots a_{b,n+1} a_{b,i}^*a_{b,i}a_{b,n+1}^*\cdots a_{b,2n}^*A,
\end{equation}
and write this product as
\begin{equation}
X_n:=\prod_{k\in K_n} a_{b_k,i_k}^{e_k}.
\end{equation}
where $K_n:=\bigcup_{l\in \{-1,0,1\}}K_n^{(l)}$ with
\begin{equation}
K_{n}^{(-1)}=-[\ell_B],\quad K_n^{0}=\left\{\overline{-n-1},\ldots, \overline{-1}, \overline{1},\cdots,\overline{n+1}\right\},\quad K_n^{(1)}=[\ell_A],
\end{equation}
where for $\overline{k}$ denotes a distinct copy of the integer $k\in\BZ$.
We order $K_n$ by imposing the usual order on $\BZ$ on each $K_{n}^{(l)}$ and for $k\in K_{n}^{(l)}$ and $k'\in K_{n}^{(l')}$ with $l<l'$ set $k<k'$.
Setting $K_n':=K_n^{(-1)}\cup K_n^{(1)}$,
\begin{equation}
B^*A=\prod_{k\in K_n'} a_{b_k,i_k}^{e_k}.
\end{equation}

If $(\VP,c)\in \Comp(X_n)$ then as in the proof of item \ref{itm:2cr}, $(\overline{-k},\overline{k})\in \VP$ for $k\in[2,n+1]$.
Let $k^+,k^-\in K_n'$ be such that $(k^-,\overline{-1}), (\overline{1},k^{+})\in\VP$.
Let
\begin{equation}
\WP:=\VP\setminus\{(\overline{-k},\overline{k}):k\in[2,n]\}\cup\{(k^-,k^+)\}
\end{equation}
and define $d:\WP\to\IndexSet$ by $d(p)=c(p)$ for $c\in \VP\setminus\{(\overline{-k},\overline{k}):k\in[2,n]\}$ and $d(k^-,k^+)=c(k^-)=c(k^+)$.
Then $(\WP,d)\in\Comp(B^*A)$, and $\vls_{\VP,c}(k)=\vls_{\WP,d}(k)$ for any $k\in K_n'$.
By Proposition \ref{prop:domcomp}, $\overline{-1}, \overline{1}\in\dom_{\VP,c}$ with $\overline{-1}\vcequiv \overline{-2}$ and $\overline{1}\vcequiv \overline{2}$ so that $(\overline{-2},\overline{-1}), (\overline{1},\overline{2})\in \bar{\VP}^{(c)}$.
If $k\in [2,n]$ and $\overline{k}\in\dom_{\VP,c}$ then $(\overline{-k},\overline{k})\in\bar{\VP}^{(c)}$ so that $\overline{-k}$ and $\overline{k}$ are on a cycle with exactly $1$ maximal increasing path.
If instead $k\in [2,n]$ with $\overline{k}\in\subo_{\VP,c}$ then $\ZB_{\VP,c}(\overline{k})=\ZB_{\WP,d}(\overline{-k})$ and $\ZB_{\VP,c}(\overline{-k})=\ZB_{\WP,d}(\overline{k})$.
%TODO: flesh this out a bit more.

This shows that the correspondence $(\VP,c)\mapsto (\WP,d)$ preserves the cycle structure of the corresponding graph except that it removes some number of cycles with exactly $1$ maximal increasing path and reduces by $2$ the number of maximal increasing paths in the cycle through $k^+$ and $k^-$.
As such, $\Pt_N(\VP,c)=\frac{1}{N^2}\Pt_N(\WP,d)$. 
Similar to the proof of Proposition \ref{prop:commutation}, there are $\bw_1^{(A)}$ pairs $(\VP,c)$ mapped to of each  $(\WP,d)$ by the correspondence just described, so \eqref{eqn:wlimstate} follows.
\end{proof}
\begin{proposition}\label{prop:1dim}
Let $\bzero_b:\BN\to\BZ$ be the constant function $\bzero_b(n)=0$ for $b\in\IndexSet$.
Then $H_\bzero(\HH)=\BC\Omega_N$.
\end{proposition}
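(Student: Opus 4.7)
The plan is to realize the GNS space $\FS_N^\IndexSet(\HH)$ inside the concrete Vershik--Kerov Fock-like space built from spherical representations of $S_\infty\times S_\infty$ and observe that its level-$\bzero$ component is trivially one-dimensional. First I would apply Theorem~\ref{thm:sppf} to the Thoma parameters $\alpha,\beta$ associated with $\varphi_N$, which identifies $\Pt_N$ with the function on $\{-1,1\}$-indexed pair partitions produced by Theorem~\ref{thm:pf} from the spherical Vershik--Kerov data $(V_{n_{-1},n_1}^{(\alpha,\beta)}, j_b^{(\alpha,\beta)}, \bone_0)$. Consequently the vacuum state on $\FS_{V^{(\alpha,\beta)}, j^{(\alpha,\beta)}}(\HH)$ with distinguished unit vector $\bone_0 \otimes \Omega$ equals $\rho_N$, and the universal property of the GNS construction yields an isometric $\COAI(\HH)$-intertwiner
\begin{equation*}
U:\FS_N^\IndexSet(\HH)\longrightarrow\FS_{V^{(\alpha,\beta)},j^{(\alpha,\beta)}}(\HH),\qquad U(\Omega_N)=\bone_0\otimes\Omega.
\end{equation*}

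The crucial observation is essentially trivial: the set $\tilde\SX_0$ consists of the single point $(\emptyset,\emptyset)$, so $V_{0,0}^{(\alpha,\beta)}=\BC\,\bone_0$ is one-dimensional, and hence the level-$\bzero$ component $V_{0,0}^{(\alpha,\beta)}\otimes_s\HH^{\otimes\bzero}$ of the Fock-like space reduces to $\BC(\bone_0\otimes\Omega)$.

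Every word $A\in\SP_\bzero(\HH,\basis)$ contains equal numbers of $b$-colored creation and annihilation operators at each index $n\in\BN$, and since a $b$-colored creation (respectively annihilation) operator shifts the Fock level by $+\delta_b$ (respectively $-\delta_b$), the vector $A(\bone_0\otimes\Omega)$ must lie in the level-$\bzero$ component. Therefore $U(H_\bzero(\HH,\basis))\subseteq\BC(\bone_0\otimes\Omega)$, whence $H_\bzero(\HH,\basis)$ is at most one-dimensional; since it contains the nonzero vector $\Omega_N$ (corresponding to $A=1\in\SP_\bzero$), it must equal $\BC\Omega_N$. I anticipate no substantial obstacle here; the only point worth noting is that $U$ embeds $\FS_N^\IndexSet(\HH)$ as the cyclic subrepresentation of the Vershik--Kerov Fock-like space, so cyclicity of $\bone_0\otimes\Omega$ in the latter space is not required for the argument to go through.
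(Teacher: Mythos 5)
Your proof is correct, but it takes a genuinely different route from the paper's. The paper stays entirely inside the pair-partition calculus of \primarydivname~\ref{pri:Nex}: for a word $A\in\SP_\bzero(\HH,\basis)$ it shows $\|A\Omega_N-\rho_N(A)\Omega_N\|=0$ by observing that no $(\VP,c)\in\Comp(A^*A)$ can contain a pair joining a letter of $A^*$ to a letter of $A$ (since $\bw^{A}=\bzero$, the $A^*$ half is balanced in each $(b,i)$, so all its creators are already consumed by its own annihilators), which forces $\Comp(A^*A)\cong\Comp(A^*)\times\Comp(A)$ and hence $\rho_N(A^*A)=\rho_N(A^*)\rho_N(A)=|\rho_N(A)|^2$, using the factorization of $\Pt_N$ over such side-by-side unions that is visible from the graph formula. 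You instead transport the problem to the concrete spherical Vershik--Kerov model: by Theorem~\ref{thm:pf} (together with Theorem~\ref{thm:sppf}, which is how $\Pt_N$ is attached to $\psi_N$), the vacuum state of $\FS_{V^{(\alpha,\beta)},j^{(\alpha,\beta)}}(\HH)$ is $\rho_N$, GNS uniqueness gives the isometric intertwiner $U$ on the algebraic domain, and then the grading argument plus $\dim V_{0,0}^{(\alpha,\beta)}=1$ finishes the proof; your remark that cyclicity of $\bone_0\otimes\Omega$ is not needed is exactly the right point, given Remark~\ref{rk:diff}. Your approach buys conceptual transparency (the level grading does all the work, no combinatorics of $\Comp(A^*A)$ or multiplicativity of $\Pt_N$ is needed) and in fact yields more, namely that $U$ carries each $H_\bw(\HH,\basis)$ into the corresponding graded component of the concrete Fock-like space; its cost is reliance on the concrete realization and on Theorem~\ref{thm:sppf}, whose proof the paper only writes out when $\sum_i\alpha_i=1$ (though for your argument only the identification of $\rho_N$ with the vacuum state of the concrete model is needed, which is essentially Theorem~\ref{thm:pf} plus the definition of $\Pt_N$). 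The paper's argument, by contrast, is self-contained within the combinatorial framework already in use in this \lprimarydivname, at the price of leaving the factorization of $\Pt_N$ across the two halves implicit.
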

\begin{proof}
It is sufficient to show that for any $A\in\SP_\bzero$,
\begin{equation}
\|{\rho}_N(A)\Omega_N-A\Omega_N\|_N=0.
\end{equation}
This follows from the fact that $(\VP,c)\in\Comp (A^*A)$ cannot have any pairs $(k,k')$ with $k\le\ell_A$ and $k'>\ell_A$, where $\ell_A$ is the length of the word $A$.
\end{proof}
The following is a partial
\begin{proposition}\label{prop:vac}
If $N<0$ and $\HH$ is an infinite dimensional real Hilbert space, then
 $\Gamma_N^\IndexSet(\HH)$ contains the projection onto the vacuum vector $\Omega_N$.  %=\BO(\FS_N(\HH))$.
\end{proposition}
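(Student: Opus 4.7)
The plan is to realize the vacuum projection as the infimum of a family of spectral projections of the number operators $\NO_{b,i}$, after first placing each $\NO_{b,i}$ inside $\Gamma_N^\IndexSet(\HH)$ with the aid of Proposition \ref{prop:wlim}.

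First I would observe that, by the boundedness lemma preceding Proposition \ref{prop:wlim}, each field operator $\omega_{b,i}=a_{b,i}+a_{b,i}^*$ is a bounded self-adjoint element of $\Gamma_N^\IndexSet(\HH)$, so $\omega_{b,i}^2\in\Gamma_N^\IndexSet(\HH)$. Moreover $\Phi_{b,n}$ is implemented by the element $C_n:=\omega_{b,2n}\cdots\omega_{b,n+1}\in\Gamma_N^\IndexSet(\HH)$ via $M\mapsto C_n M C_n^*$, so $\Phi_{b,n}(\omega_{b,i}^2)\in\Gamma_N^\IndexSet(\HH)$ for every $n$. Expanding
\begin{equation*}
\omega_{b,i}^2 = a_{b,i}^2 + a_{b,i}a_{b,i}^* + a_{b,i}^*a_{b,i} + (a_{b,i}^*)^2
\end{equation*}
and applying items \ref{itm:2cr}--\ref{itm:cran} of Proposition \ref{prop:wlim} term by term gives
\begin{equation*}
\wlim_{n\to\infty}\Phi_{b,n}(\omega_{b,i}^2)\;=\;1 + \frac{N+1}{N^2}\NO_{b,i}.
\end{equation*}
Any WOT-convergent sequence is norm-bounded, and a von Neumann algebra is WOT-closed, so this limit lies in $\Gamma_N^\IndexSet(\HH)$. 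Whenever $N\ne -1$ the coefficient $(N+1)/N^2$ is nonzero, and hence $\NO_{b,i}\in\Gamma_N^\IndexSet(\HH)$.

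Second, once $\NO_{b,i}\in\Gamma_N^\IndexSet(\HH)$, Lemma \ref{lem:excl} together with Corollary \ref{cor:numbounded} gives that $\NO_{b,i}$ is a bounded self-adjoint operator with integer spectrum in $\{0,1,\ldots,|N|\}$, so its spectral projection $E_{b,i}$ onto the eigenvalue $0$ lies in $\Gamma_N^\IndexSet(\HH)$. Third, I would assemble the vacuum projection from these $E_{b,i}$: for each finite $F\subset\IndexSet\times\BN$ set $P_F:=\bigwedge_{(b,i)\in F}E_{b,i}\in\Gamma_N^\IndexSet(\HH)$, and observe that as $F$ grows the projections $P_F$ form a decreasing net, which therefore converges in the strong operator topology to a projection $P\in\Gamma_N^\IndexSet(\HH)$. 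The range of $P$ is $\bigcap_{(b,i)}\ker\NO_{b,i}=H_{\bzero}(\HH)$, and by Proposition \ref{prop:1dim} this subspace equals $\BC\Omega_N$, so $P$ is precisely the projection onto the vacuum.

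The main obstacle is the exceptional value $N=-1$, where $(N+1)/N^2$ vanishes and the square $\omega_{b,i}^2$ yields no information about $\NO_{b,i}$ in the $\Phi_{b,n}$-limit. In that case the exclusion principle already forces $\NO_{b,i}\in\{0,1\}$, so $\NO_{b,i}$ is itself a projection and must be recovered by other means: one could attempt to compute $\wlim_{n\to\infty}\Phi_{b,n}(\omega_{b,i}^{2k})$ for some $k\ge 2$ by using that $\Phi_{b,n}$ is a $*$-homomorphism (together with strong-operator estimates for the product), or to exploit the Fermionic-type relations among the $\omega_{b,i}$ to identify $\Gamma_{-1}^\IndexSet(\HH)$ with a CAR-like algebra for which the Fock vacuum state is pure and the rank-one vacuum projection belongs automatically to the generated von Neumann algebra.
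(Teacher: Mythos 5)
Your proposal is correct and follows essentially the same route as the paper: one computes $\wlim_{n\to\infty}\Phi_{b,n}(\omega_{b,i}^2)=1+\frac{N+1}{N^2}\NO_{b,i}$ to place $\NO_{b,i}$ in $\Gamma_N^\IndexSet(\HH)$, and then obtains the vacuum projection as the infimum of the kernel projections of the $\NO_{b,i}$, identified with $P_{\Omega_N}$ via Proposition \ref{prop:1dim}. The caveat you raise at $N=-1$, where the coefficient $(N+1)/N^2$ vanishes and $\omega_{b,i}^2$ gives no information about $\NO_{b,i}$, is a genuine gap that the paper's own proof silently shares, so flagging it (with possible remedies) is an improvement rather than a deviation, though your sketch for that case is not yet a complete argument.
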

\begin{proof}
%We will show, more precisely, that $\Gamma_N^\IndexSet(\HH)=\BO\left( \FS_N^\IndexSet(\HH)\right)$, where $ \FS_N^\IndexSet(\HH):=\overline{\Gamma_N^\IndexSet(\HH)\Omega_N}$.
%We begin by proving that the projection $P_{\Omega_N}$ onto the vacuum vector $\Omega_N$ in $\FS_N(\HH)$ is in $\Gamma_N^\IndexSet(\HH)$.

By Proposition \ref{prop:wlim},
\begin{equation}
\wlim_{n\to\infty}\Phi_n(\omega_{b,i}^2)=1+\frac{N+1}{N^2}\NO_{b,i}.
\end{equation}
In particular, $\NO_{b,i}\in\Gamma_N^\IndexSet(\HH)$.

It is a consequence of the definitions of the operators $\NO_{b,i}$ that
\begin{equation}
\ker \NO_{b,i}=\bigoplus_{\bw_b(i)=0}H_{\bw}(\HH,\basis).
\end{equation}
If $P_{b,i}$ is the projection onto $\ker \NO_{b,i}$ and $P_{\Omega_N}$ is the projection onto the vacuum vector then by Proposition \ref{prop:1dim},
\begin{equation}
P_{\Omega_N}=\inf\{P_{b,i}:b\in\IndexSet, i\in\BN\},
\end{equation}
whence $P_{\Omega_N}\in \Gamma_N^\IndexSet(\HH)$.
\end{proof}

In \cite{BG}, Bo{\.z}ejko and Gu{\c{t}}{\u{a}} used a result analogous to Proposition \ref{prop:vac} to show that a von Neumann algebra under consideration was in fact the whole space of bounded operators.
However, they worked in a setting with a cyclic vacuum vector.
We do not yet know whether the vacuum vector is cyclic in our context.
Accordingly, the best that we can prove is the following.
\begin{proposition}
Let $\hat \FS_N^\IndexSet(\HH):=\overline{\Gamma_N^\IndexSet(\HH)\Omega_N}$, and define $\hat \Gamma_N^\IndexSet(\HH)=\left\{X|_{\hat \FS_N^\IndexSet(\HH)}: X\in \Gamma_N^\IndexSet(\HH)\right\}$.
Then $\hat \Gamma_N^\IndexSet(\HH)=\BO(\hat \FS_N^\IndexSet(\HH))$.
\end{proposition}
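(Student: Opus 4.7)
The plan is to derive the conclusion by combining Proposition \ref{prop:vac} with a standard argument about von Neumann algebras having a cyclic vector that is also the range of a minimal projection in the algebra. First I will check that $\hat{\FS}_N^\IndexSet(\HH)$ is invariant under $\Gamma_N^\IndexSet(\HH)$, so that $\hat{\Gamma}_N^\IndexSet(\HH)$ genuinely makes sense as a $*$-algebra acting on $\hat{\FS}_N^\IndexSet(\HH)$. This is immediate from the definition: for any $A, B \in \Gamma_N^\IndexSet(\HH)$ we have $AB\Omega_N \in \Gamma_N^\IndexSet(\HH)\Omega_N$, and $A$ is bounded, so $A$ maps the closure $\hat{\FS}_N^\IndexSet(\HH)$ into itself. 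A brief check (using that $\Gamma_N^\IndexSet(\HH)$ is weakly closed and that weak limits commute with restriction to a closed invariant subspace) shows that $\hat{\Gamma}_N^\IndexSet(\HH)$ is a von Neumann algebra on $\hat{\FS}_N^\IndexSet(\HH)$.

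Next I will record the two crucial features of this action: (a) the vacuum $\Omega_N$ is cyclic for $\hat{\Gamma}_N^\IndexSet(\HH)$ on $\hat{\FS}_N^\IndexSet(\HH)$, which is true by the very definition $\hat{\FS}_N^\IndexSet(\HH) := \overline{\Gamma_N^\IndexSet(\HH)\Omega_N}$; and (b) the restriction $\hat{P}_{\Omega_N}$ of $P_{\Omega_N}$ to $\hat{\FS}_N^\IndexSet(\HH)$ lies in $\hat{\Gamma}_N^\IndexSet(\HH)$, thanks to Proposition \ref{prop:vac}, and it is a rank-one projection onto $\BC\Omega_N \subset \hat{\FS}_N^\IndexSet(\HH)$.

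The conclusion now follows from the standard fact that a von Neumann algebra with a cyclic vector spanning the range of a projection in the algebra must be all of $\BO$. Explicitly, I will verify that the commutant $\hat{\Gamma}_N^\IndexSet(\HH)'$ inside $\BO(\hat{\FS}_N^\IndexSet(\HH))$ is trivial. Given $B \in \hat{\Gamma}_N^\IndexSet(\HH)'$, the operator $\hat{P}_{\Omega_N} B \hat{P}_{\Omega_N}$ acts on the one-dimensional subspace $\BC\Omega_N$, so it equals $\lambda \hat{P}_{\Omega_N}$ for some $\lambda \in \BC$. Since $B$ commutes with $\hat{P}_{\Omega_N}$, this gives $B\Omega_N = \lambda \Omega_N$. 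For any $A \in \hat{\Gamma}_N^\IndexSet(\HH)$,
\begin{equation}
BA\Omega_N = AB\Omega_N = \lambda A \Omega_N,
\end{equation}
and cyclicity of $\Omega_N$ forces $B = \lambda I$. Hence $\hat{\Gamma}_N^\IndexSet(\HH)' = \BC I$, and by the double commutant theorem $\hat{\Gamma}_N^\IndexSet(\HH) = \BO(\hat{\FS}_N^\IndexSet(\HH))$.

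There is no really hard step here; the main thing to be careful about is the bookkeeping between $\Gamma_N^\IndexSet(\HH)$ acting on $\FS_N^\IndexSet(\HH)$ and the restricted algebra $\hat{\Gamma}_N^\IndexSet(\HH)$ acting on the invariant subspace $\hat{\FS}_N^\IndexSet(\HH)$. In particular one should note that $P_{\Omega_N}$ is nonzero on $\hat{\FS}_N^\IndexSet(\HH)$ precisely because $\Omega_N \in \hat{\FS}_N^\IndexSet(\HH)$, so $\hat{P}_{\Omega_N}$ really does have rank one; this uses nothing beyond the identity $\Omega_N = I \cdot \Omega_N \in \Gamma_N^\IndexSet(\HH)\Omega_N$.
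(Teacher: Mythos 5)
Your proof is correct and follows essentially the same route as the paper: Proposition \ref{prop:vac} puts $P_{\Omega_N}$ in the algebra, commuting with it forces any element of the commutant to fix (a multiple of) $\Omega_N$, and cyclicity of $\Omega_N$ on $\hat{\FS}_N^\IndexSet(\HH)$ plus the double commutant theorem finishes the argument. Your version is slightly more careful than the paper's (you treat a general element of the commutant rather than just projections and verify the invariance/von Neumann algebra bookkeeping explicitly), but it is the same proof in substance.
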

\begin{proof}
Suppose that $Q$ is a nonzero projection in $\hat \Gamma_N^\IndexSet(\HH)'$.
For $X\in \hat \Gamma_N^\IndexSet(\HH)$,
\begin{equation}
QX\Omega_N=X(Q\Omega_N)=X\Omega_N.
\end{equation}
In arriving at the second equality, we use the assumption that $Q$ commutes with $P_{\Omega_N}\in\hat \Gamma_N^\IndexSet(\HH)$, the projection onto the vacuum.
Since $\Omega_N$ is cyclic for the action of $\hat \Gamma_N^\IndexSet(\HH)$ on $ \hat \FS_N^\IndexSet(\HH)$, it follows that $Q=1$, whence $\hat \Gamma_N^\IndexSet(\HH)'=\BC$.
\end{proof}

%%%%%%%%%%%%%%%%%%%%%%%%%%%%%%%%%%%%%%%%%%%%%%%%%%%%%%%%%%%%%%
\primarydivision{The $q_{ij}$-product of generalized Brownian motions}
\label{pri:qij}
In this \lprimarydivname, we present a generalization of {Gu{\c{t}}{\u{a}}'s $q$-product of noncommutative generalized Brownian motions.
\begin{definition}
For $\VP\in \PP(\infty)$, define the set of crossings of $\VP$ by
\begin{equation}
 \crossings(\VP)=\{((a_1,z_1),(a_2,z_2))\in \VP\times \VP :a_1<a_2<z_1<z_2\}.
\end{equation}
If $(\VP,c)\in \IPPi$, we also define $\crossings(\VP,c)=\crossings(\VP)$.
Suppose that for each $i\in \IndexSet$, a positive-definite function $\Pt_i:\PP(\infty)\to\BC$ is given, and that we have a (possibly infinite) matrix $Q=(q_{ij})_{i,j\in \IndexSet}$ with $q_{ij}=q_{ji}$ and $q_{ij}\in[-1,1]$.
Then we define the $Q$-product of the $\Pt_i$ to be the function on $\PP^\IndexSet(\infty)$ given by
\begin{equation}
 \left(*^{Q}_{bn \IndexSet} \Pt_b\right)(\VP,c):=\prod_{(p,p')\in\crossings(\VP)}q_{c(p),c(p')}\prod_{b\in \IndexSet} \Pt_b(c^{-1}(b)).
\end{equation}
\end{definition}

\begin{definition}
 A function $\Pt:\IPPi\to\BC$ is said to be multiplicative if for every $k,l,n\in\BN$ with $1\le k\le l\le n$ and any $\IndexSet$-colored pair partitions $\VP_1\in\IPPi(\{1,\ldots,k,l,\ldots,n\})$ and $\VP_2\in\IPPi(\{k+1,\ldots, l-1\})$, we have $\Pt(\VP_1\cup\VP_2)=\Pt(\VP_1)\cdot \Pt(\VP_2)$.
\end{definition}

\begin{proposition}
Suppose that $\Pt_b:\PP(\infty)\to\BC$ ($b\in \IndexSet$) are multiplicative positive definite functions such that $\Pt(\VP,c)=1$ whenever $\VP$ is the element of $\PPi$ with only one pair. Suppose also that for each $i,j\in \IndexSet$, some symmetric $Q=(q_{ij})_{i,j\in \IndexSet}$ with $q_{ij}\in[-1,1]$ is given.
Then $\left(*^{Q}_{a\in \IndexSet} \Pt_b\right)(\VP,c)$ is a positive definite function on $\PP^\IndexSet(\infty)$.
\end{proposition}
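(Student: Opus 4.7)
The plan is to exploit the factorization
\begin{equation*}
\left(*^{Q}_{b \in \IndexSet} \Pt_b\right)(\VP,c) = \Pi(\VP, c) \cdot K(\VP, c),
\end{equation*}
where $\Pi(\VP, c) := \prod_{b \in \IndexSet} \Pt_b(c^{-1}(b))$ encodes the underlying processes and $K(\VP, c) := \prod_{(p,p') \in \crossings(\VP)} q_{c(p), c(p')}$ encodes the crossing weights. The goal is to show each factor is positive definite on $\IPPi$ and then combine them via a Schur--product (tensor-product-of-representations) argument on the $*$-semigroup $\IBPPi$, closing the argument with Theorem \ref{thm:pd}.

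For positive definiteness of $\Pi$, I would apply the one-process version of Theorem \ref{thm:hs} (Theorem 2.7 of \cite{GM1}) to each $\Pt_b$ to obtain Hilbert spaces $V_n^{(b)}$, representations $U_n^{(b)}$ of $S_n$, isometries $j^{(b)} : V_n^{(b)} \to V_{n+1}^{(b)}$, and unit vectors $\xi_b \in V_0^{(b)}$ realizing $\Pt_b$. Setting $V_\bn := \bigotimes_b V_{\bn(b)}^{(b)}$ with the tensor product representation of $S_\bn$, transition maps $j_b$ acting as $j^{(b)}$ on the $b$-th tensor factor and the identity elsewhere, and $\xi := \bigotimes_b \xi_b$, the intertwining relation \eqref{eqn:intertwine} is inherited from the one-process case, while different-color creation and annihilation operators commute. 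Theorem \ref{thm:pf} then delivers a Fock state, and a direct computation generalizing Proposition \ref{prop:2} shows its associated function on pair partitions is precisely $\Pi$.

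For positive definiteness of $K$, observe that $K = *^Q_{b \in \IndexSet} \mathbf{1}$, where $\mathbf{1}$ is the constant function $1$ on $\PP(\infty)$ (itself positive definite, corresponding via Wick's formula to classical Gaussian Brownian motion). For constant $q_{ij} = q$ this reduces to Bo\.zejko--Speicher's $q$-Fock space \cite{BS1991}; for the general symmetric matrix case with $q_{ij} \in [-1,1]$ one would construct creation and annihilation operators $a_i^*(f), a_i(f)$ on a colored full Fock space satisfying the twisted commutation relations $a_i(f) a_j^*(g) - q_{ij} a_j^*(g) a_i(f) = \delta_{ij} \langle f, g \rangle$, and verify positive semi-definiteness of the deformed inner product on mixed-color tensors by adapting the combinatorial factorization of the symmetrization operator due to Bo\.zejko and Speicher. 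This positivity of the mixed $q_{ij}$-Fock space inner product is the main technical obstacle of the proof.

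Finally, by Theorem \ref{thm:pd} both $\widehat{\Pi}$ and $\widehat{K}$ are positive definite on $\IBPPi$, and pointwise products of positive definite functions on a $*$-semigroup are positive definite: given $*$-representations $(H_i, \pi_i, \xi_i)$ realizing $\widehat{\Pi}$ and $\widehat{K}$ respectively, the tensor product $\bpd \mapsto \pi_1(\bpd) \otimes \pi_2(\bpd)$ is a $*$-representation of $\IBPPi$ with matrix coefficient $\widehat{\Pi}\cdot\widehat{K} = \widehat{\Pi \cdot K}$ at $\xi_1 \otimes \xi_2$. A final application of Theorem \ref{thm:pd} then transfers positive definiteness of $\widehat{*^Q \Pt_b}$ from $\IBPPi$ back to $\IPPi$, completing the proof.
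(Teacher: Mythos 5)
Your proposal is correct in substance and rests on the same two pillars as the paper's proof --- the Schur product theorem and Bo{\.z}ejko--Speicher positivity for the $q_{ij}$-deformed commutation relations --- but you assemble them differently. The paper never splits the $Q$-product into two functions on $\IPPi$: it works directly with the kernels $k_\bn(\bpd_1,\bpd_2)=\bigl(*^{Q}_{b}\Pt_b\bigr)(\bpd_1^*\cdot\bpd_2)$ on $\IBPPn$, factors each kernel into the crossing part and the $\Pt_b$ parts, cites \cite{BS1994} for the crossing kernel, takes Schur products, and then constructs the representation of the $*$-semigroup $\IBPPi$ by hand; this last step is exactly where multiplicativity and the normalization of $\Pt_b$ on the one-pair partition are used (they make the transition maps $j_b$ well defined on the GNS spaces). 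You instead realize each factor separately as a Fock state --- the process factor $\Pi$ via the one-process version of Theorem \ref{thm:hs} plus a tensor-product construction generalizing Proposition \ref{prop:2}, the crossing factor $K$ via a mixed $q_{ij}$-Fock space --- and then multiply at the level of $\hat\Pt$ on $\IBPPi$, using both directions of Theorem \ref{thm:pd}. This is a genuinely more modular route, and notably it nowhere uses multiplicativity or the one-pair normalization, so it would establish bare positive definiteness under weaker hypotheses; the price is that you need the converse implication in Theorem \ref{thm:pd}, an incomplete tensor product along the distinguished vectors $\xi_b$ if $\IndexSet$ is infinite, and you should note that Theorem \ref{thm:hs} only supplies densely defined transition maps, not isometries.

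Two caveats. First, the step you call ``the main technical obstacle'' --- positivity of the deformed inner product for the relations $a_i(f)a_j^*(g)-q_{ij}a_j^*(g)a_i(f)=\delta_{ij}\langle f,g\rangle$ with $|q_{ij}|\le 1$ --- is not something to re-derive: it is precisely the theorem of Bo{\.z}ejko and Speicher in \cite{BS1994}, and the paper's proof simply cites it (together with the standard Wick-type pairing computation identifying the vacuum state of this algebra as the Fock state whose function on $\IndexSet$-indexed pair partitions is $K$). As written, your proposal leaves its hardest ingredient unproven; citing \cite{BS1994} closes it. Second, combining $\hat\Pi$ and $\hat K$ through a tensor product of GNS representations is slightly delicate, since the GNS construction for a $*$-semigroup requires boundedness hypotheses; it is simpler, and sufficient, to observe that for any $d_1,\ldots,d_n\in\IBPPi$ the matrix $\bigl[(\hat\Pi\hat K)(d_i^*d_j)\bigr]$ is the entrywise product of two positive semidefinite matrices, so the Schur product theorem applies directly at the kernel level --- which is also how the paper argues.
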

\begin{remark}
 In \cite{Guta}, the number of crossings between pairs of different colors was used instead of all crossings.
Of course, if we wish to impose this restriction in our framework, we can assume $q_{bb}=1$ for all $b\in\IndexSet$.
\end{remark}

The proof is essentially the same as the proof of positive definiteness of the $q$-product in \cite{Guta} but we present the argument again here for completeness.
\begin{proof}
 As a first step, we show that for each $\bn:\IndexSet \to\BN$, the kernel $k_\bn$ defined on $\BPP^\IndexSet(\bn,\bzero)$ by
\begin{equation}
 k_\bn(\bpd_1,\bpd_2)=\left(*^{Q}_{b\in \IndexSet} \Pt_b\right)(\bpd_1^*\cdot \bpd_2).
\end{equation}
is positive definite.
Using the definition of the $Q$ product,
\begin{equation}
 k_\bn(\bpd_1,\bpd_2)=\prod_{\substack{(p,p')\in\crossings(\bpd_1^*\cdot\bpd_2)\\p\in (\bpd_1^*\cdot\bpd_2)_b\\p'\in (\bpd_1^*\cdot\bpd_2)_{b'}}}q_{b,b'}\prod_{b\in \IndexSet} \Pt_b((\bpd_1^*\cdot\bpd_2)_b),
\end{equation}
where the subscript $b$ refers to the $b$-colored pair partition.
Since the $\Pt_b$ are positive definite and the pointwise product of positive definite kernels is positive definite, if we can show that
\begin{equation}
 k_\bn'(\bpd_1,\bpd_2)=\prod_{\substack{(p,p')\in\crossings(\bpd_1^*\cdot\bpd_2)\\p\in (\bpd_1^*\cdot\bpd_2)_b\\p'\in (\bpd_1^*\cdot\bpd_2)_{b'}}}q_{b,b'}
\end{equation}
then positive definiteness of $k_n$ will follow.
However, positive definiteness of $k_\bn'$ follows from positivity of the vacuum state on a $*$-algebra generated by annihilation operators $a_{b,i}$ for $i=1,\ldots,\bn(b)$ satisfying the commutation relation
\begin{equation}
 a_{b,i}a_{c,j}^*-q_{b,c}a_{c,j}^*a_{b,i}=\delta_{b,c}\delta_{i,j}.
\end{equation}
Positivity of that state has already been proven by Bo{\.z}ejko and Speicher in \cite{BS1994}.
 
For each $\bn$ denote the complex Hilbert space generated by the positive definite kernel $k_\bn$ by $V_n$ and let $\lambda_\bn:\BPP^\IndexSet(\bn,\bzero)\to V_n$ be the Gelfand map, i.e. $\left<\lambda_\bn(\bpd_1),\lambda_\bn(\bpd_2)\right>=k_\bn(\bpd_1,\bpd_2)$.
The natural action of the symmetric group $S(\bn)$ on $\BPP^\IndexSet(\bn,\bzero)$ preserves $k_\bn$, and thus gives rise to a unitary representation $U_\bn$ on $V_\bn$.
On $V:=\bigoplus_{\bn}V_\bn$, define the operators $j_b$ (for $a\in \IndexSet$) by $j_b\lambda_\bn(\bpd_1)=\lambda_{\bn+\delta_b}(\bpd_{b,0}\cdot\bpd_1)$.
By multiplicativity of $\Pt_b$ ($b\in \IndexSet$),
\begin{equation}
 k_\bn(\bpd_{b,0}\cdot \bpd_1,\bpd_{b,0}\cdot\bpd_2)=k_\bn(\bpd_1,\bpd_2),
\end{equation}
which shows that the definition of $j_b$ makes sense.
Since $j_b$ also satisfies the requisite intertwining property, we have a representation of the $*$-semigroup $\BPP^\IndexSet(\infty)$ on $V$ with respect to the extension of $\left(*^{Q}_{b\in \IndexSet} \Pt_b\right)$ to the broken pair partitions.
\end{proof}
As in the case of \cite{Guta}, we can use this construction to define new positive definite functions on pair partitions provided that our index set $\IndexSet$ is finite.
Assume that $\IndexSet$ is finite and $\Pt_b$ is a multiplicative positive definite function for each $b\in \IndexSet$.
On the Fock-like space $\FS_{\left(*^{Q}_{b\in \IndexSet} \Pt_b\right)}(\HK)$, we can define creation operators
\begin{equation}
 a^*(f):=\frac{1}{\sqrt{|\IndexSet|}}\sum_{b\in \IndexSet} a_b^*(f)
\end{equation}
for $f\in \HK$.
The restriction of the vacuum state to the $*$-algebra generated by the $a^*(f)$ is a Fock state, and we denote the associated positive definite function on $\BPP(\infty)$ by $\left(*^{Q}_{b\in \IndexSet} \Pt_b\right)^{(r)}$.
Explicitly, this function is given by
\begin{equation}
 \left(*^{Q}_{b\in \IndexSet} \Pt_b\right)^{(r)}(\VP)=\frac{1}{|\IndexSet|^{|\VP|}}\sum_{c:\VP\to I}\prod_{(p,p')\in\crossings(\VP)}q_{c(p),c(p')}\prod_{b\in \IndexSet} \Pt_b(c^{-1}(b)).
\end{equation}
In the case that the functions $\Pt_b$ are all the same, $\Pt_b=\Pt$ for all $b\in \IndexSet$, we write $\Pt_Q^{*\IndexSet}$ for $\left(*^{Q}_{b\in \IndexSet} \Pt\right)^{(r)}$, and in the case $\IndexSet=[n]:=\{1,\ldots, n\}$, we write $\Pt_Q^{*n}$.
\begin{remark}
One can ask the question of whether a central limit theorem may be found in this context, similar to the Central Limit Theorem of \cite{Guta}.
The Central Limit Theorem of Gu{\c{t}}{\u{a}} concerns the $q$-product of $n$ copies of a function $\Pt:\PP(\infty)\to\BC$.
By definition, the $q$-product is a function on $\PP^\IndexSet(\infty)$ for some set $\IndexSet$ of cardinality $n$.
However, we can define a function on (uncolored) pair partitions by taking a normalized sum over \textit{all} colorings.
The content of Gu{\c{t}}{\u{a}}'s Central Limit Theorem is that this normalized sum converges to the positive definite function arising from the algebra of $q$-commutation relations, $\Pt_q(\VP)=q^{\crossings(\VP)}$.

An analogous result in the $q_{ij}$ setting will at least require additional assumptions on the $q_{ij}$.
One can assume, for instance, that $q_{ij}\to q$ as $i,j\to\infty$.
In this case, the argument of Gu{\c{t}}{\u{a}} \cite{Guta} can be extended to show an analogous result.
We prefer to pursue a different line of inquiry, namely the case in which the $q_{ij}$ are periodic in the indices $i$ and $j$.
\end{remark}

\begin{theorem}[Central Limit Theorem]\label{thm:CLT}
 Let $Q\in M_N(\BR)$ be an $N\times N$ real symmetric matrix.
Let $Q_n$ be the $n\times n$ symmetric matrix with entries $\tilde q_{ij}$ where $\tilde q_{ij}=q_{\bar i\bar j}$ for $\bar i,\bar j$ such that $1\le \bar i,\bar j\le n$ and $i\equiv \bar i \pmod{N}$, $j\equiv\bar j\pmod{N}$.
Let $\Pt:\PP\to\BC$ be a positive definite multiplicative function such that $\Pt(\VP_1)=1$ where $\VP_1$ is the pair partition consisting of a single pair.
Then $\Pt_{Q_n}^{*n}$ converges pointwise to $\Pt_Q$, where
\begin{equation}
 \Pt_{Q}(\VP)=N^{-|\VP|}\sum_{d:\VP\to[N]}\prod_{(p,p')\in \crossings(\VP)}q_{d(p)d(p')}.
\end{equation}  
\end{theorem}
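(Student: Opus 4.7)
The strategy is to isolate the dominant combinatorial term in the defining sum
\begin{equation*}
\Pt_{Q_n}^{*n}(\VP) = \frac{1}{n^{|\VP|}} \sum_{c: \VP \to [n]} \prod_{(p,p') \in \crossings(\VP)} \tilde q_{c(p)c(p')} \prod_{b=1}^n \Pt(c^{-1}(b)),
\end{equation*}
and to show that the remainder is $O(1/n)$. I will split according to whether $c$ is injective, i.e., whether $|c^{-1}(b)| \le 1$ for every $b \in [n]$.

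Multiplicativity of $\Pt$ applied with $\VP_1$ the single-pair partition and $\VP_2$ empty yields $\Pt(\emptyset) = 1$, so for every injective $c$ each factor $\Pt(c^{-1}(b))$ equals either $\Pt(\emptyset)$ or $\Pt(\VP_1)$, hence is $1$. The non-injective contribution is controlled via the uniform bound $|\Pt(c^{-1}(b))| \le C_{|\VP|}$ (only finitely many pair partitions on at most $2|\VP|$ points arise as color classes), together with $|\tilde q_{ij}| \le 1$ and the elementary estimate $n^{|\VP|} - n(n-1)\cdots(n-|\VP|+1) = O(n^{|\VP|-1})$ on the number of non-injective colorings; these combine to give a non-injective contribution of $O(1/n)$.

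For the injective part, I parameterize injective $c : \VP \hookrightarrow [n]$ by their reduction modulo $N$, namely $d : \VP \to [N]$ with $d(p) \equiv c(p) \pmod N$. Writing $n = kN + r$ with $0 \le r < N$, the number $m_b$ of elements of $[n]$ in the residue class $b$ is $k$ or $k+1$, hence $n/N + O(1)$, so the number of injective $c$ reducing to a fixed $d$ is
\begin{equation*}
\prod_{b=1}^N m_b(m_b-1)\cdots(m_b - |d^{-1}(b)| + 1) = (n/N)^{|\VP|} + O(n^{|\VP|-1}).
\end{equation*}
Since $\tilde q_{c(p)c(p')} = q_{d(p)d(p')}$ whenever $c$ reduces to $d$, the injective contribution rearranges as
\begin{equation*}
\frac{1}{n^{|\VP|}} \sum_{d: \VP \to [N]} \bigl((n/N)^{|\VP|} + O(n^{|\VP|-1})\bigr) \prod_{(p,p') \in \crossings(\VP)} q_{d(p)d(p')},
\end{equation*}
which converges to $N^{-|\VP|} \sum_{d: \VP \to [N]} \prod_{(p,p') \in \crossings(\VP)} q_{d(p)d(p')} = \Pt_Q(\VP)$.

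The main obstacle is essentially bookkeeping: quantifying cleanly that the contribution of non-injective colorings is suppressed by a factor of $1/n$ relative to the dominant term, and extracting the leading $(n/N)^{|\VP|}$ asymptotics uniformly over residue patterns $d$. Conceptually, the argument adapts Gu\c{t}\u{a}'s central limit theorem in \cite{Guta} to the periodic setting; the essential new ingredient is that periodicity of $Q_n$ makes reduction modulo $N$ the natural way to decouple injective colorings into the finite sum over $d : \VP \to [N]$ appearing in $\Pt_Q$.
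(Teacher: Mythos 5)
Your proposal is correct and follows essentially the same route as the paper: split the sum over colorings into the injective part, which after reduction modulo $N$ is counted to contribute $N^{-|\VP|}\sum_{d:\VP\to[N]}\prod q_{d(p)d(p')}$ in the limit, and the non-injective part, which is $O(n^{|\VP|-1})$ many terms and hence vanishes after normalization by $n^{|\VP|}$. The paper organizes the non-injective colorings by the induced partition $P(c)$ of $\VP$, but this amounts to the same estimate you give in aggregate, so the two arguments coincide in substance.
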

\begin{proof}
Fix a pair partition $\VP\in\PP(\infty)$.
For a function $c:\VP\to[N]$ denote by $P(c)$ the partition of $\VP$ such that two pairs $p$ and $p'$ are in the same block if and only if $c(p)=c(p')$.
Then
\begin{equation}\label{eqn:Qproddef}
\begin{split}
 \Pt_{Q_n}^{*n}(\VP)&= n^{-|\VP|}\sum_{c:\VP\to[N]}\prod_{(p,p')\in\crossings(\VP)}\tilde q_{c(p),c(p')}\prod_{b\in [n]}\Pt(c^{-1}(b))\\
&=\sum_{\pi\in \Pi(\VP)}n^{-|\VP|}\sum_{\substack{c:\VP\to[n]\\ P(c)=\pi}}\prod_{(p,p')\in\crossings(\VP)}\tilde q_{c(p),c(p')}\prod_{b\in [n]}\Pt(c^{-1}(b)),
\end{split}
\end{equation}
where $\Pi(\VP)$ is the set of all partitions of $\VP$.
We will consider the contribution of the various $\pi\in\Pi(\VP)$ to the sum as $n\to\infty$.

First consider the partition $\pi_1$ of $\VP$ into $|\VP|$ blocks of size $1$, corresponding (for fixed $n$) to injective functions $c:\VP\to[n]$.
For each such $c$ and $a\in [n]$, the pair partition $c^{-1}(b)$ is empty or a single pair, whence $\prod_{b\in [n]}\Pt(c^{-1}(b))=1$.
Furthermore, for each $c$,
\begin{equation}
\prod_{(p,p')\in\crossings(\VP)}\tilde q_{c(p),c(p')}=\prod_{(p,p')\in\crossings(\VP)}q_{\tilde c(p),\tilde c(p')},
\end{equation}
where $\tilde c: \VP\to[N]$ is the map such that $c(p)\equiv \tilde c(p)\pmod{N}$ for all $p\in \VP$.
If $M$ is the natural number such that $MN<n\le (M+1)N$, then for each function $d:\VP\to[N]$, the number $m_n$ of injective maps $c:\VP\to[n]$ such that $\tilde c=d$ is between $M(M-1)\cdots (M-|\VP|+1)$ and $(M+1)(M)\cdots (M-|\VP|)$.
In particular $m_n/n^{|\VP|}\to N^{-|\VP|}$ as $n\to\infty$. 
Thus, as $n\to\infty$ the contribution to the sum in \eqref{eqn:Qproddef} by the term corresponding to $P_1$ converges to
\begin{equation}
 \frac{1}{N^{|\VP|}}\sum_{\substack{d:\VP\to[N]}}\prod_{(p,p')\in\crossings(\VP)}\tilde q_{d(p),d(p')}=\Pt_{Q}(\VP).
\end{equation}

Now we will show that any other partition $\pi\ne\pi_1$ contributes $0$ to the sum in \eqref{eqn:Qproddef} in the limit as $n\to\infty$.
Such a partition $\pi$ has at most $|\VP|-1$ blocks.
For a given $n$, the number of maps $c:\VP\to[n]$ with $P(c)=\pi$ is  
\begin{equation}
n(n-1)\cdots(n-|P|+1)\le n(n-1)\cdots(n-|\VP|+2)<n^{|\VP|-1}.
\end{equation}
Thus, the contribution of the term indexed by $P$ is indeed $0$ in the limit.
\end{proof}
%%%%%%%%%%%%%%%%%%%%%%%%%%%%%%%%%%%%%%%%%%%%%%%%%%%%%%%%%%%%%%
\bibliographystyle{amsalpha}
\bibliography{bibliography}
\end{document}